\documentclass[11pt,reqno]{amsart}
\usepackage {amssymb}
\usepackage {amsmath}
\usepackage {bbm}
\usepackage{amsthm}
\usepackage{mathtools}
\usepackage{graphicx}
\usepackage {amscd}
\usepackage {epic}
\usepackage{array}

\usepackage{mathrsfs}

\usepackage{etaremune}

\DeclareFontFamily{U}{dutchcal}{\hyphenchar\font=-1}
\DeclareFontShape{U}{dutchcal}{m}{n}{ <-> dutchcal-r}{}
\DeclareSymbolFont{dutchletters}{U}{dutchcal}{m}{n}

\DeclareMathSymbol{\mathdutchcal}{0}{dutchletters}{"41} 
\newcommand{\dcal}[1]{\text{\usefont{U}{dutchcal}{m}{n}#1}}

\usepackage[dvipsnames]{xcolor}
\usepackage[colorlinks,citecolor=OliveGreen,linkcolor=Mahogany,urlcolor=Plum,pagebackref]{hyperref}
\usepackage{cleveref}
\usepackage[alphabetic]{amsrefs}

\usepackage{enumerate}
\usepackage{tikz}
\usepackage{tikz-cd}
\usepackage{verbatim,color,geometry}
\usepackage[all]{xy}
\usepackage{enumitem}
\usepackage{bm}
\usepackage{longtable}
\usetikzlibrary{calc}
\usepackage{textcomp}
\usepackage{tablefootnote}
\usepackage{float}
\newcommand{\specialcell}[2][l]{%
  \begin{tabular}[#1]{@{}l@{}}#2\end{tabular}}
\newcommand{\mtc}[1]{\mathcal{#1}}
\newcommand{\mtf}[1]{\mathfrak{#1}}
\newcommand{\mts}[1]{\mathscr{#1}}
\newcommand{\ove}[1]{\overline{#1}}
\newcommand{\wt}[1]{\widetilde{#1}}

\newcommand{\cD}{\dcal{D}}
\newcommand{\cE}{\dcal{E}}
\newcommand{\cF}{\dcal{F}}

\newcommand{\cH}{\dcal{H}}

\newcommand{\cL}{\dcal{L}}
\newcommand{\cM}{\mathcal{M}}
\newcommand{\cN}{\mathcal{N}}
\newcommand{\cO}{\mathcal{O}}

\newcommand{\cS}{\mathcal{S}}

\newcommand{\cX}{\mathcal{X}}

\newcommand{\fm}{\mathfrak{m}}

\newcommand{\bP}{\mathbb{P}}
\newcommand{\bC}{\mathbb{C}}
\newcommand{\bR}{\mathbb{R}}
\newcommand{\bA}{\mathbb{A}}
\newcommand{\bQ}{\mathbb{Q}}
\newcommand{\bZ}{\mathbb{Z}}
\newcommand{\bD}{\mathbb{D}}

\newcommand{\bG}{\mathbb{G}}
\newcommand{\bF}{\mathbb{F}}

\newcommand{\bN}{\mathbb{N}}

\newcommand{\bH}{\mathbb{H}}

\newcommand{\hvol}{\widehat{\mathrm{vol}}}

\newcommand{\cI}{\dcal{I}}
\newcommand{\Bl}{\mathrm{Bl}}

\newcommand{\Id}{\mathrm{Id}}
\newcommand{\coker}{\mathrm{Coker}}
\newcommand{\Ker}{\mathrm{Ker}}
\newcommand{\Gr}{\mathrm{Gr}}

\newcommand{\tF}{\widetilde{F}}
\newcommand{\tY}{\widetilde{Y}}
\newcommand{\tS}{\widetilde{S}}
\newcommand{\tB}{\widetilde{B}}
\newcommand{\hE}{\widehat{E}}
\newcommand{\hY}{\widehat{Y}}
\newcommand{\hS}{\widehat{S}}
\newcommand{\hB}{\widehat{B}}
\newcommand{\bfA}{\mathbf{A}}
\newcommand{\bfP}{\mathbf{P}}
\newcommand{\Sym}{\mathrm{Sym}}

\newcommand{\sD}{\mathscr{D}}

\newcommand{\sG}{\mathscr{G}}

\newcommand{\sS}{\mathscr{S}}

\newcommand{\sV}{\mathscr{V}}
\newcommand{\sW}{\mathscr{W}}
\newcommand{\sX}{\mathscr{X}}

\newcommand{\sZ}{\mathscr{Z}}

\newcommand{\rB}{\mathrm{B}}

\DeclareMathOperator{\Aut}{Aut}

\DeclareMathOperator{\vol}{vol}

\DeclareMathOperator{\Bs}{Bs}

\DeclareMathOperator{\Spec}{Spec}
\DeclareMathOperator{\Nklt}{Nklt}

\DeclareMathOperator{\ch}{ch}
\DeclareMathOperator{\BB}{BB}

\DeclareMathOperator{\Pic}{Pic}

\DeclareMathOperator{\coeff}{coeff}
\DeclareMathOperator{\id}{id}
\DeclareMathOperator{\im}{Im}
\DeclareMathOperator{\ord}{ord}

\DeclareMathOperator{\Proj}{Proj}

\DeclareMathOperator{\sing}{sing}
\DeclareMathOperator{\Supp}{Supp}
\DeclareMathOperator{\diag}{diag}
\DeclareMathOperator{\Kss}{Kss}

\DeclareMathAlphabet{\mathbbb}{U}{bbold}{m}{n}

\newcommand{\PGL}{\mathrm{PGL}}

\newcommand{\Hom}{\mathrm{Hom}}
\newcommand{\RHom}{\mathrm{RHom}}

\newcommand{\tE}{\widetilde{E}}

\newcommand{\tX}{\widetilde{X}}
\newcommand{\tD}{\widetilde{D}}
\newcommand{\cP}{\dcal{P}}

\newcommand{\sF}{\mathscr{F}}

\newcommand{\fM}{\mathfrak{M}}
\newcommand{\fF}{\mathfrak{F}}
\newcommand{\fP}{\mathfrak{P}}

\newcommand{\K}{\mathrm{K}}
\newcommand{\Cl}{\mathrm{Cl}}
\newcommand{\can}{\mathrm{can}}
\newcommand{\ter}{\mathrm{ter}}
\newcommand{\Kst}{\mathrm{Kst}}
\newcommand{\Def}{\mathrm{Def}}
\newcommand{\Ext}{\mathrm{Ext}}
\newcommand{\plt}{\mathrm{plt}}
\newcommand{\uOmega}{\underline{\Omega}}

\newcommand{\Gor}{\mathrm{Gor}}

\newcommand{\ADE}{\mathrm{ADE}}

\newcommand{\rk}{\mathrm{rk}}
\newcommand{\bL}{\mathbb{L}}

\newcommand{\sslash}{\mathbin{/\mkern-6mu/}}

\newcommand{\YL}[1]{{\textcolor{blue}
{[Yuchen: #1]}}}

\newcommand{\AP}[1]{{\textcolor{red}{Andrea: #1}}}

\numberwithin{equation}{section}

\newtheorem{prop} {Proposition} [section]
\newtheorem{thm}[prop] {Theorem} 
\newtheorem{theodef}[prop]{Theorem-Definition}
\newtheorem{lem}[prop] {Lemma}
\newtheorem{cor}[prop]{Corollary}
\newtheorem{prop-def}[prop]{Proposition-Definition}

\newtheorem{conj}[prop]{Conjecture}

\newtheorem{theorem}[prop]{Theorem}
\newtheorem{lemma}[prop]{Lemma}

\theoremstyle{definition}
 
\newtheorem{rem}[prop] {Remark}

\newtheorem{defn}[prop]{Definition}

\newtheorem{exam}[prop]{Example}

\newtheorem{remark}[prop]{Remark}

\title{The boundary of K-moduli of prime Fano threefolds of genus twelve}

\author{Anne-Sophie Kaloghiros}
\address{Department of Mathematics, Brunel University of London, Uxbridge UB8 3PH, United Kingdom}
\email{anne-sophie.kaloghiros@brunel.ac.uk}

\author{Yuchen Liu}
\address{Department of Mathematics, Northwestern University, 2033 Sheridan Rd, Evanston, IL 60208, USA}
\email{yuchenl@northwestern.edu}

\author{Andrea Petracci}
\address{Dipartimento di Matematica, Università di Bologna, Piazza di Porta San Donato 5, Bologna 40126, Italy}
\email{a.petracci@unibo.it}

\author{Junyan Zhao}
\address{Department of Mathematics, University of Maryland, 4176 Campus Dr, College Park, MD 20742, USA}
\email{jzhao81@umd.edu}

\date{\today}

\begin{document}

\begin{abstract}
We study the K-moduli stack of prime Fano threefolds of genus twelve, known as $V_{22}$. We prove that its boundary, which parametrizes singular members, is purely divisorial and consists of four irreducible components corresponding to the four families of Prokhorov’s one-nodal $V_{22}$.

A key ingredient is a modular relation between Fano threefolds $X$ and their anticanonical K3 surfaces $S$. 
We prove that the forgetful morphism from the moduli of Fano--K3 pairs $(X,S)$ where $X$ is a K-semistable degeneration of $V_{22}$ to the moduli space of genus $12$ polarized K3 surfaces $(S,-K_X|_S)$ is an open immersion. 
In particular, the K-moduli of $V_{22}$ is governed by the moduli of their anticanonical K3 surfaces, providing a modular realization of Mukai's philosophy. 
Along the way, we develop a general deformation framework for Fano threefolds of large volume, which may be useful beyond the study of K-moduli.
\end{abstract}

\maketitle
\tableofcontents

\section{Introduction}

%\AS{I have added an introduction' file: these are suggestions of reformulations and simplifications for Sections 1 until 1.4. The introduction could be shortened: most of what is in Section 1.5 appears earlier and could be made more explicit when it first appears. That would not necessarily be better from the point of view of readability. I didn't want to interfere with the introduction file itself. Please comment out when read.}

K-stability was introduced by differential geometers \cite{Tia97, Don02} to characterize the existence of K\"ahler--Einstein metrics on Fano varieties. The algebraic reformulation of the theory has led to substantial developments, chief among them the construction of the \emph{K-moduli space} %a projective good moduli space 
that parametrizes K-polystable Fano varieties; see \cite{Xu25}. 

Over the past decade, significant progress has been made in explicit K-stability in dimension three -- that is in understanding K-stability of smooth Fano threefolds and the associated K-moduli spaces; see e.g.\ \cite{LX19, AZ23, ADL21, ACC+}. Yet, relatively few result are known about K-moduli of prime Fano threefolds (those with Picard rank and Fano index one) because current techniques provide little control over their degenerations. In particular, a description of the boundary of the K-moduli space is known for no family of prime Fano threefolds. 

An especially interesting family of prime Fano threefolds is that of genus twelve: these were first constructed by Iskovskikh and are known as \emph{Fano threefolds of type $V_{22}$}, they have trivial intermediate Jacobian and form a $6$-dimensional family. Further, while a general Fano threefold of type $V_{22}$ is K-stable, some smooth $V_{22}$ are strictly K-semistable; see \cite{Tia97}. The precise description of which smooth $V_{22}$ are K-polystable is not known, and is the object of Donaldson's conjecture \cite{Donaldson08}. %\footnote{\YL{if we mention Donaldson then Tian should be cited too.}}\footnote{\AS{Do you mean the construction of non K-ps $V_{22}$ in Corollary 1.3 of Tian 97? I am not aware of a precise conjecture of Tian's}}

\smallskip

The goal of this paper is to describe the boundary of the K-moduli stack of $V_{22}$. The methods we develop highlight the role of anticanonical K3 surfaces in controlling degenerations of $V_{22}$. 

\subsection{K-moduli of $V_{22}$ and Mukai's philosophy}

Let $\cM^\K$ denote the K-moduli stack of $V_{22}$ with reduced structure (cf.~\Cref{defn:different K-moduli stacks}), which parametrizes K-semistable Fano threefolds admitting a $\bQ$-Gorenstein deformation to a smooth $V_{22}$. 

In \cite{Pro16}, Prokhorov classified all $\bQ$-Gorenstein degenerations of smooth $V_{22}$ with a single ordinary double point, called \emph{one-nodal $V_{22}$}, into four families. Subsequently, it was proved in \cite{DK25} that a general member of each of these families is K-polystable. Our first main theorem describes the boundary of the K-moduli stack $\cM^{\K}$: we show that every singular K-semistable degeneration of $V_{22}$ lies in the closure of the locus of one-nodal $V_{22}$.% \AP{Shouldn't we write $V_{22}$'s ?}\textcolor{blue}{After double checking with Anne-Sophie, we agree that we should treat it as same form in singular and plural. } \AS{I don't like using $V_{22}$s or $V_{22}$ for plural. I prefer Fano threefolds of type $V_{22}$- in the book we used ``Fano threefolds $V_{22}$"- but it's too long, so i suppose $V_{22}$ is fine so long as we are consistent.}

\begin{thm}\label{thm:main theorem 1}
Every singular K-semistable $V_{22}$ is a degeneration of one-nodal $V_{22}$. In particular, the boundary of $\cM^{\K}$ parametrizing singular members is purely divisorial and consists of four irreducible components, each corresponding to one of the four families of one-nodal $V_{22}$. Moreover, every K-semistable $V_{22}$ with isolated singularities has at worst nodal singularities.
\end{thm}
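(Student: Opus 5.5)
The plan is to deduce the statement from the Fano--K3 open immersion announced in the abstract, together with a local deformation analysis at singular points. Let $X$ be a singular K-semistable $V_{22}$ in $\cM^\K$. By the large-volume deformation framework, a general anticanonical member $S\in|-K_X|$ is a K3 surface with at worst ADE singularities, and $(X,S)$ is a point of the moduli of Fano--K3 pairs. The forgetful morphism to the moduli $\mathcal{F}_{12}$ of genus $12$ polarized K3 surfaces is an open immersion. Hence near $[(X,S)]$ the stack of pairs is smooth of dimension $19$. Since $\dim|-K_X|=13$ and $\dim \cM^\K = 6$, it follows that $\cM^\K$ is smooth, or at least unobstructed, at $[X]$ in the appropriate sense. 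The locus of smooth $V_{22}$ pulls back to the complement of a union of Noether--Lefschetz-type divisors in $\mathcal{F}_{12}$. That complement is the locus where $\Pic(S)$ is generated by $-K_X|_S$, by Mukai's description of smooth $V_{22}$ as determined by their K3 sections.

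Next I would show that the singular locus of $\cM^\K$ is a divisor. Pulled back to the pair moduli, it is contained in a finite union of Noether--Lefschetz divisors $D_L\subset \mathcal{F}_{12}$, indexed by rank-two lattices $L\ni H$ with $H^2=22$. In an open subset of $\mathcal{F}_{12}$ the boundary is therefore purely of codimension one. Concretely, a $\bQ$-Gorenstein smoothing of $X$ forces the extra curve classes on $S$ near a singular point to be vanishing-cycle-like classes. The rigidity of $X$ with respect to $S$, which is the injectivity part of the open immersion, then implies that the singular locus is the preimage of a union of these divisors. Each irreducible component of the boundary is thus a divisor.

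To identify the four components, I would use Prokhorov's classification \cite{Pro16} together with \cite{DK25}. Each family of one-nodal $V_{22}$ contains K-polystable members, so each gives a boundary divisor, and these correspond to four lattices $L$. I then need to show that no other $D_L$ meets the image. Here I would use the K-semistability bounds, namely local volume $\hvol(x,X)\ge \frac{81}{64}\cdot\frac{(-K_X)^3}{27}$-type estimates of Liu's, which give $\hvol(x,X) \ge 4\cdot 22/27$ roughly. These rule out singularities worse than $cA$-type with small multiplicity, and restrict the lattices to those realized by a $(-2)$- or small-degree class giving a nodal contraction. Finally, for isolated singularities: a $\bQ$-Gorenstein smoothable isolated threefold singularity with a one-parameter smoothing to smooth $V_{22}$, whose local deformation space maps onto a smooth divisor germ (the boundary), must have a one-dimensional discriminant transverse locus. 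Combined with the volume bound, this forces the Milnor number, or equivalently the local smoothing rank, to be $1$, i.e.\ an ordinary node.

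The main obstacle is the codimension-one and lattice-identification step. It needs control of arbitrary singular K-semistable $X$, possibly with non-isolated or non-Gorenstein singularities, through the K3 surface $S$ alone. I would first try to show that every $X$ is Gorenstein with terminal singularities, using the volume bound and index considerations with $(-K_X)^3=22$. Then I would apply Namikawa's smoothing theory to get the local-to-global surjectivity that links the singularity to Noether--Lefschetz classes on $S$.
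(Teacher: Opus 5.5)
Your proposal has genuine gaps in each of its substantive steps.

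\textbf{The nodality step fails.} The local-volume bound is $\hvol(x,X)\ge \frac{27}{64}\cdot 22$, not the constants you wrote. It only restricts to isolated $cA_{\le 2}$ points, and these include $cA_1$ points $x^2+y^2+z^2+w^k$ of arbitrarily large Milnor number, so it cannot force $\mu=1$. Nothing makes the boundary a smooth divisor germ at $[X]$ either: near an $A_2$ point it would be the pullback of a cuspidal discriminant. Deformation theory alone can at best put such an $X$ in the closure of one-nodal $V_{22}$; ruling out a K-semistable non-nodal point needs an input your plan lacks. The paper's argument runs as follows.
\begin{enumerate}
\item Blow up a singular point $p$. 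If $p$ is not a node, the exceptional quadric is singular, so $\rho(\tX)=2$.
\item The lattice on a smooth elephant $\tS$ is then $\langle 22\rangle\oplus\langle -2\rangle$, primitive in $\Lambda_{\rm K3}$.
\item Smoothness of $\Def_{(\tX,\tS)}\to\Def_{(\tS,\Lambda)}$ lets you deform so that $\tS_t$ is a very general point of the nodal divisor $\cD^{22}_{0,-2}$.
\item The anticanonical model $\overline{X}_t$ is a K-semistable deformation of $X$ that stays singular: an exceptional divisor of log discrepancy $\le 2$ over a point persists, which is impossible over a smooth point.
\item A very general nodal K3 already lies in a smooth K-semistable $V_{22}$ (Lefschetz pencil), contradicting injectivity of $\Phi$.
\end{enumerate}

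\textbf{Your first step is wrong as stated.} Smooth $V_{22}$ do carry anticanonical K3 surfaces of higher Picard rank, and many Noether--Lefschetz divisors besides the four Prokhorov ones meet the image of $\Phi$. So ``no other $D_L$ meets the image'' is the wrong target. What must be shown is that \emph{singular} pairs land in $\cD^{22}_{11,4}\cup\cD^{22}_{9,2}\cup\cD^{22}_{6,0}\cup\cD^{22}_{5,0}$. Your ``vanishing-cycle-like classes'' argument for this is only asserted.

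\textbf{The plan to prove every K-semistable $X$ is terminal cannot succeed.} The paper's appendix constructs K-stable $V_{22}$ with $A_\infty$-singularities along a rational curve of anticanonical degree $3$. The non-isolated case needs its own argument.
\begin{enumerate}
\item Exclude singularity along a line by a $\beta$-invariant computation on $\Bl_\ell X$.
\item Blow up the singular curve $C=\bigsqcup C_i$. The exceptional $(-2)$-curves on $\tS$ over the $d_i\ge 2$ points of $C_i\cap S$ have identical intersection numbers with every line bundle on $\tX$.
\item By the Beauville-type theorem these curves disappear on a very general deformation, so $X$ deforms to a terminal $X_t$.
\item Either $X_t$ is a terminal $V_{22}$, and the nodal case applies, or $X$ is also a degeneration of another volume-$22$ family.
\item In the latter case, \textnumero2.15 and \textnumero2.16 are excluded by \cite{LZ25} and by smoothness of $\cM^\K_{\textup{\textnumero2.16}}$. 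For \textnumero3.6, its lattice contains $\Lambda_{\mathrm{I}}$ primitively, and with the open immersion this puts $X$ in the closure of Type~I.
\end{enumerate}

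Only after this is pure divisoriality formal: the boundary is the union of the closures of the four Prokhorov loci, each nonempty in $\cM^\K$ by \cite{DK25}.
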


Our starting point is the moduli continuity method which bounds the singularities of K-semistable degenerations via local volumes; see \cite{LX19, LZ25, Liu25}. Unlike in the cases of the deformation families studied in \cite{LX19,LZ25}, there is no a-priori meaningful compactification of the moduli of smooth $V_{22}$. For instance, the GIT quotient associated to the Grassmannian construction of $V_{22}$ (cf.\ \cite[Section 3.1]{GLT15}) is difficult to analyze and its boundary has no satisfactory modular interpretation (cf.\ Remark \ref{rmk:picrank}). In addition, the singularity estimates by themselves are not strong enough to control the singularities of K-semistable degenerations using classification results. This calls for a complementary perspective. 

The central idea of this paper follows Mukai’s philosophy, further developed by Beauville, that the geometry and deformation theory of a Fano threefold are governed by its anticanonical K3 sections (cf. \cite{Muk88,Muk92,Muk02,Bea04}). %We study the moduli of Fano--K3 pairs and show that the deformation theory of the ambient K-semistable Fano threefold is controlled by that of its anticanonical K3 surface. 
Concretely,  let $\cP^{\K,\ADE}$ denote the moduli stack of Fano--K3 pairs $(X,S)$ where $X$ is a K-semistable $V_{22}$ and $S\in |-K_X|$ is an ADE K3 surface (cf.~\Cref{defn:different K-moduli stacks}), and let $\cF_{22}$ denote the moduli stack of polarized K3 surfaces of degree $22$ (cf.~\Cref{defn:moduli of K3}). These stacks are related by the diagram 
\[
\begin{tikzcd}[ampersand replacement=\&]
	\& {\cP^{\K,\ADE}} \&\&\& {(X,S)} \& \\
	{\cM^\K} \&\& {\cF_{22}} \& X \&\& {(S,-K_X|_S)}
	\arrow["\Psi"', from=1-2, to=2-1]
	\arrow["\Phi", from=1-2, to=2-3]
	\arrow[maps to, from=1-5, to=2-4]
	\arrow[maps to, from=1-5, to=2-6]
\end{tikzcd},
\]
where $\Psi$ is surjective (cf.~\Cref{thm:volume bound}) and $\Phi$ is birational (cf.~\cite[Theorem 1.3]{BKM25}), facts that go back to Mukai. Our second main result strengthens this statement and shows that $\Phi$ is an open immersion, thereby providing a precise modular realization of Mukai's philosophy. %  The precise relationship between the deformation theories of the K3 surface and the Fano threefold is established as follows. 

\begin{thm}\label{thm:Fano-K3}
The forgetful morphism
\[
\Phi:\cP^{\K,\ADE} \longrightarrow \cF_{22}, \qquad (X,S) \ \mapsto\ \big(S, -K_X|_S\big),
\]
is an open immersion. Its image is contained in the union of the Brill--Noether general locus and the four Noether--Lefschetz divisors corresponding to Prokhorov's degeneration of types~{\rm{I--IV}}.
\end{thm}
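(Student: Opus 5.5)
To show that $\Phi$ is an open immersion, I will check three things: $\Phi$ is representable and unramified, $\Phi$ is étale, and $\Phi$ is injective on geometric points together with isomorphisms on automorphism groups. An étale, universally injective, representable morphism of algebraic stacks which induces isomorphisms on stabilizers is an open immersion. Both stacks are smooth: $\cF_{22}$ is smooth of dimension $19$, since ADE K3 surfaces have unobstructed polarized deformations. So it suffices to prove that $\Phi$ is injective on points and stabilizers, and that the differential
\[
T_{(X,S)}\cP^{\K,\ADE}\to T_{(S,L)}\cF_{22}, \qquad L=-K_X|_S,
\]
is an isomorphism, with $\cP^{\K,\ADE}$ unobstructed at $(X,S)$.

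\textbf{Deformation step.} This is where I would use the general deformation framework for Fano threefolds of large volume. Deformations of the pair $(X,S)$ are governed by $\Ext^i(\Omega_X(\log S),\cO_X)$, or a suitable sheaf-theoretic version allowing the singularities of $X$. There is an exact sequence
\[
0\to T_X(-S)\to T_X(-\log S)\to T_S\to 0,
\]
and $T_X(-S)=T_X\otimes\omega_X$. By Serre duality, the groups $H^i(T_X\otimes\omega_X)$ are dual to $H^{3-i}(\Omega_X)$, or to the reflexive version of it.

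For a K-semistable degeneration of $V_{22}$ I would aim to show:
\begin{itemize}
\item the singularities are mild enough (the local volume bounds of \Cref{thm:volume bound}) that $H^0$ and $H^1$ of $T_X(-S)$ vanish;
\item $H^2(T_X(-S))\cong H^1(\Omega_X)^\vee$ is $1$-dimensional, coming from $\rho=1$ and the trivial intermediate Jacobian;
\item this class exactly cuts out the condition that $c_1(-K_X)|_S$ stays algebraic.
\end{itemize}
The map from deformations of $(X,S)$ to deformations of $S$ is then injective, and its image is the $19$-dimensional locus of polarized deformations of $(S,L)$. The local-to-global Ext terms supported at the singular points must also be handled. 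Here the natural route is to show that $S$ can be chosen, or is automatically, away from the non-ADE singular locus, or that the local $\mathcal{E}xt^1$ contributions match on both sides, because the local deformations of a singularity of $X$ restrict to those of $S$ through the hyperplane section.

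\textbf{Injectivity step.} I would reconstruct $(X,S)$ from $(S,L)$ by Mukai's method. Take the Mukai vector bundle $E$ on $S$ (rank $2$ or the appropriate rank-$4$ stable bundle) with $\dim H^0(E)$ as in the $V_{22}$ construction. I would show that $E$ extends uniquely to $X$ via restriction from $X$, using vanishing of $H^1(E_X(-1))$ and its twists. The embedding of $X$ into the Grassmannian is then determined by $S$, and $X$ is recovered as the unique Fano threefold containing $S$ as a linear section. The same argument, applied to automorphisms, gives $\Aut(X,S)\cong\Aut(S,L)$, since any automorphism of $(S,L)$ lifts uniquely through the linear system. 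For the Brill--Noether special cases, namely the four Prokhorov types, I would argue lattice-theoretically: the extra class in $\Pic(S)$ is the restriction of the exceptional data of $X$ (planes or cones through the node), which gives the Noether--Lefschetz divisors I--IV.

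\textbf{Image statement.} This follows from \Cref{thm:main theorem 1} combined with the injectivity step. A K-semistable $X$ is either smooth, in which case $S$ is Brill--Noether general by Mukai, or a degeneration of a one-nodal $V_{22}$, whose K3 sections have Picard lattice containing the type I--IV vectors.

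\textbf{Main obstacle.} I expect the main obstacle to be the deformation-theoretic comparison at singular, non-isolated or non-nodal, K-semistable $X$. The difficulty is proving the vanishing of $H^1(T_X(-S))$ and controlling the local $\mathcal{E}xt^1$ terms, so that $\cP^{\K,\ADE}$ is smooth and the differential of $\Phi$ is bijective. My first attempt would be to deduce these vanishings from Kawamata--Viehweg type vanishing on a resolution, together with the bound $\hvol>$ some explicit constant, which forces rational, hence klt Gorenstein, singularities with $S$ Cartier.
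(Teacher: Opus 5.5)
At smooth points your deformation step is the paper's Beauville-type computation (\Cref{thm:beauville}), and so is its extension to Gorenstein terminal points, which additionally needs \Cref{cor:h12} and the Du Bois argument. It does show that $\Phi$ is étale there. However, your overall strategy is to prove directly that $\Phi$ is étale, injective on points and injective on stabilizers everywhere, and this has genuine gaps exactly on the boundary.

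(i) For $X$ with $A_\infty$ or $D_\infty$ singularities along curves you give no argument. $S$ cannot avoid a singular curve, since $S$ is ample. The sheaf $\mathcal{E}xt^1(\Omega_X,\cO_X)$ is supported on that curve and has sections (three-dimensional in the example of \Cref{appendix B}), and nothing shows these match the deformations of the $A_1$ points of $S$. Worse, $\cP^{\K,\ADE}$ is only the reduced $V_{22}$-component of the K-moduli stack. At a common degeneration of $V_{22}$ and family \textnumero3.6, which is not excluded a priori, $\Def_X$ and hence $\Def_{(X,S)}$ are reducible. Then $T\Def_{(X,S)}$ has dimension $>19$, so no tangent-space isomorphism onto $T\cF_{22}$ can hold, and the tangent space of the reduced component is not computable by your cohomological method.

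(ii) Mukai reconstruction, and \cite{BKM25}, apply only to Brill--Noether general $(S,L)$. The K3 sections of singular members are Brill--Noether special; this already holds for one-nodal $V_{22}$ by \Cref{lem:K3 lattice of Type1-4}. Naming the extra lattice class does not show that $X$ is unique.

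(iii) ``Lifting through the linear system'' does not make $\Aut(X,S)\to\Aut(S)$ injective. An automorphism fixing $S$ pointwise acts on $H^0(X,-K_X)$ as $\diag(\zeta_m,1,\dots,1)$, and the quartic $x^4+f=0$ shows this can be nontrivial. The paper excludes it by showing that $X/\langle g\rangle$ is a Gorenstein Fano threefold of volume $22m^2$, which contradicts \cite{Pro05} when $m\ge 2$.

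The missing idea is Zariski's main theorem, which makes injectivity and étaleness outputs rather than inputs. Since $\cF_{22}$ is smooth, $\cP^{\K,\ADE}$ is reduced, and $\Phi$ is representable (by (iii)) and birational (by \cite{BKM25} at a very general point), it suffices to prove that $\Phi$ is quasi-finite. This can be checked stratum by stratum:
\begin{enumerate}
\item On the terminal locus it is \Cref{thm:beauville} with $h^{1,2}=0$.
\item On the non-isolated strata, apply \Cref{thm:beauville} to the blow-up $\tX$ of the singular curve. Its small deformation has a terminal anticanonical model, so the relative dimension is $h^{1,2}(\tX)\le h^{1,2}(X_t)=0$ (\Cref{cor:small deformation small anti-can morphism}).
\item Families \textnumero2.15 and \textnumero2.16 are excluded by \cite{LZ25} and \Cref{appendix A}. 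On the possible locus $\cP^{\rm int}$ meeting \textnumero3.6, use $\dim\cP^{\rm int}\le 17$ together with purity of the exceptional locus of a birational morphism to a smooth target.
\end{enumerate}
Smoothness of $\cM^\K$ is then a corollary rather than an input. Your image argument via \Cref{thm:main theorem 1} is the paper's, and it is non-circular only because the open immersion is proved first.
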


% the complement of the seven Noether--Lefschetz divisors
% \[
% \cD_{1,0}^{22},\ \ \cD_{2,0}^{22},\ \ \cD_{3,0}^{22},\ \ \cD_{4,0}^{22},\ \ 
% \cD_{7,2}^{22},\ \ \cD_{8,2}^{22},\ \ \cD_{10,4}^{22}.
% \]
In particular, a K-semistable 
$V_{22}$ is uniquely determined by its anticanonical polarized K3 surfaces, and the geometry of the K-moduli stack $\cM^{\K}$ is governed by the moduli of polarized K3 surfaces. As a consequence, the K-moduli stack $\cM^\K$ is smooth (cf. \Cref{cor:reduced K-moduli of V22 is smooth}).

It is natural to expect that Theorem \ref{thm:Fano-K3} can be extended to any smoothable Gorenstein canonical Fano threefold. In Section \ref{sec:Reconstruction}, we explore another formulation of the reconstruction principle: starting from a polarized K3 surface lying in one of the remaining Noether--Lefschetz divisors of $\cF_{22}$, we construct a Gorenstein canonical Fano threefold containing it as an anticanonical divisor. This leads to Conjecture \ref{conj:Fano-K3-inj}, which predicts that the moduli stack of Fano--K3 pairs $(X,S)$, with $X$ a Gorenstein canonical degeneration of $V_{22}$, maps isomorphically onto $\cF_{22}$. The constructions carried out there provide further evidence that the geometry of $V_{22}$ is entirely encoded by its anticanonical K3 surface.

\smallskip
As a byproduct of our study of the K-moduli of $V_{22}$, we prove the rationality of the moduli of degree $22$ K3 surfaces, which we believe was already known to Mukai. In fact, in \cite[Corollary~0.5]{Muk88}, he states that $\cF_{22}$ is unirational and attributes this to Iskovskikh \cite{Isk77}. Although the ingredients of the argument are known, we are unaware of a reference in the literature; therefore, we record it here for completeness.

\begin{thm}\label{thm:rationality of F22}
The moduli stack $\cF_{22}$ of polarized K3 surfaces of degree $22$ and its coarse moduli space $\fF_{22}$ are rational.
\end{thm}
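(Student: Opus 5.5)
We prove rationality of $\cF_{22}$ through the open immersion $\Phi$ of \Cref{thm:Fano-K3}, restricted to smooth Fano threefolds. Let $\cP^\circ\subset\cP^{\K,\ADE}$ be the open substack of pairs $(X,S)$ with $X$ a smooth K-stable $V_{22}$ with finite automorphism group and $S\in|-K_X|$ a smooth K3 surface. Since $\Phi$ is an open immersion between irreducible stacks of the same dimension ($\dim\cF_{22}=19=6+13$), $\Phi(\cP^\circ)$ is a dense open substack of $\cF_{22}$. So $\cF_{22}$, and likewise the coarse space $\fF_{22}$, is birational to $\cP^\circ$ and its coarse space. It therefore suffices to show that the coarse space of $\cP^\circ$ is rational; rationality of a stack here means rationality of a dense open coarse space, or equivalently of the function field, since generic stabilizers are trivial for degree $22$ K3 surfaces.

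Next we describe $\cP^\circ$ as a projective bundle over the moduli of $V_{22}$. The map $\Psi\colon\cP^\circ\to\cM^\circ$, $(X,S)\mapsto X$, is an open part of the relative linear system $\bP(H^0(X,-K_X)^\vee)$, of dimension $13$, quotiented by $\Aut(X)$. Over a dense open set $X$ has trivial automorphism group: a general $V_{22}$ has $\Aut(X)=1$, which follows from Prokhorov's and Kuznetsov–Prokhorov–Shramov's descriptions. There is a universal family over an open subset of the coarse space $M$, and $\pi_*\omega^{-1}$ is a vector bundle of rank $14$. Hence $\cP^\circ$ is birational to $M\times\bP^{13}$, and it suffices to show that $M$, the $6$-dimensional moduli of $V_{22}$, is rational.

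For the rationality of $M$ we use Mukai's description: a smooth $V_{22}$ is $\mathrm{VSP}(F,6)$ for a plane quartic $F$, the variety of sums of powers. Equivalently, the Hilbert scheme of lines/conics identifies the $V_{22}$ with its associated plane quartic, and this gives a birational equivalence between $M$ and the moduli $\fM_3$ of plane quartics. Alternatively, via Schreinemakers–Mukai, $V_{22}$ is determined by a net of trivectors in $\wedge^2 V_7^\vee$ modulo $\mathrm{GL}_7$, which again reduces to quartics. Rationality of the moduli of plane quartics, i.e.\ of $M_3$, is Katsylo's theorem. Hence $M$ is rational, and so are $\cP^\circ$ and $\cF_{22}$.

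We expect the main obstacle to be making the birational identification of $M$ with the moduli of plane quartics precise at the level of coarse spaces. This requires that the map $F\mapsto\mathrm{VSP}(F,6)$ be generically injective on isomorphism classes, that is, that the quartic be recoverable from $X$. This is Mukai's result, via the curve of $\sigma$-planes or lines; we cite Mukai and Schreyer for it. We also need that generic stabilizers match, which holds because both sides are generically trivial.
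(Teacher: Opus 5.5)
Your proposal is correct and follows essentially the same route as the paper: trivial generic stabilizers on $\cF_{22}$, reduction to $\cP^{\K,\ADE}$ via the open immersion $\Phi$, a birational $\bP^{13}$-bundle structure over the moduli of $V_{22}$ using generic triviality of $\Aut(X)$, and Mukai's birational identification of the moduli of $V_{22}$ with $\cM_3$, whose rationality is Katsylo's theorem. The one input you source loosely is the generic triviality of $\Aut(X)$. The paper takes this from \cite[Theorem~A.1]{DM22}, whereas finiteness results such as those of Kuznetsov--Prokhorov--Shramov do not by themselves give it.
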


%\AP{Can we add, to be more precise, that $\fF_{22}$ is the coarse moduli space of the stack $\cF_{22}$? At first sight, these two different F's seem equal.} I changed to "both are rational"

\smallskip

\subsection{General deformation framework for K-moduli of Fano threefolds}\label{sec:deformation-intro}

The goal of this subsection is to isolate the deformation-theoretic input that allows us to control K-semistable degenerations of Fano threefolds via their anticanonical K3 surfaces. Since existing approaches are not sufficient to treat prime Fano threefolds, we develop a framework based on the moduli continuity method that systematically combines deformation theory and K-stability. This framework applies directly to Fano threefolds of volume at least $22$, where K-stability forces singularities into a tightly constrained list (cf.~\Cref{thm:volume bound}). We expect this method to have wider applications.

A fundamental input is a deformation result generalizing Beauville's theorem for smooth Fano threefolds and smooth K3 surfaces (cf.~\cite{Bea04}) to a mildly singular weak Fano--K3 setting. Let $X$ be a Gorenstein terminal weak Fano threefold, and let $S \in |-K_X|$ be a K3 surface with ADE singularities. Let $\Lambda \subseteq \Pic(S)$ denote the saturation of the image of $\Pic(X) \to \Pic(S)$. We prove that the forgetful morphism
\[
\Def_{(X,S)} \longrightarrow \Def_{(S,\Lambda)}
\]
is smooth of relative dimension $h^{1,2}(X)$ (see \Cref{thm:beauville} and \Cref{thm:smooth dominant forgetful map}). This Beauville-type theorem serves as the main structural mechanism of our approach, showing that the deformation of the threefolds is largely governed by their anticanonical K3 surfaces together with the induced lattices. As a consequence, we derive the invariance of $h^{1,2}$ in families of Gorenstein terminal weak Fano threefolds (see Corollary \ref{cor:h12}), a result of independent interest.

K-stability imposes additional constraints on possible degenerations. Recent results \cite{LX19,LZ25,Liu25} show that K-semistable degenerations of Fano threefolds with sufficiently large volume admit only hypersurface singularities of specific types; see \Cref{thm:volume bound}. Building on this, we prove that the anticanonical divisor is very ample (cf.~\Cref{thm:Kss-very-ample}) and that Fano threefolds singular along a line are K-unstable (cf.~\Cref{thm:sing-line}). These results imply that K-semistable degenerations admit partial smoothings and enjoy well-behaved deformation theory (cf.~\Cref{thm:smoothable}).

Taken together, these results allow us to control K-semistable degenerations via their anticanonical K3 surfaces, forming the main technical input for the proofs of Theorems \ref{thm:main theorem 1} and \ref{thm:Fano-K3}.

\smallskip

\subsection{Outline of the proof} 

We now outline the main ideas and strategy underlying the proofs of Theorems \ref{thm:main theorem 1} and \ref{thm:Fano-K3}. The two results are closely intertwined:  understanding the structure of the forgetful morphism is the key input in describing the boundary of $\cM^{\K}$.

Let $X$ be a singular member of $\cM^{\K}$. By \cite{Liu25}, $X$ has either isolated $cA_{\leq 2}$-singularities or non-isolated $A_\infty$ or $D_\infty$ singularities (cf. \Cref{defn:non-isolated-singularities}). We show that every such $X$ deforms to a one-nodal $V_{22}$, and that the analysis of these degenerations simultaneously determines the structure of the forgetful morphism $\cP^{\rm K,ADE}\to \cF_{22}$.

The argument relies on the deformation framework developed in Section \ref{sec:deformation-intro}, which allows us to control deformations of $X$ via the pair $(X,S)$, where $S\in |-K_X|$ is an anticanonical K3 surface. We analyze separately the isolated and non-isolated cases. The isolated case follows directly from this deformation theory, while the non-isolated case requires further geometric input and ultimately yields the structural description of the forgetful morphism.

First, suppose that $X$ has only isolated singularities, and hence is  terminal. In Theorem \ref{cor:deforms to a one-nodal}, we show that $X$ must be nodal.  Indeed, blowing up a singular point $p \in X$ yields a Gorenstein terminal weak Fano threefold $\tX$ with exceptional divisor $E$. If $S$ is a general anticanonical K3 surface $S$ passing through $p$, its strict transform $\tS$ is an anticanonical K3 surface of $\tX$. If $p$ were not nodal, then $\Pic(\tX)$ would have rank two, and be generated by the pullback of $-K_X$ and $E$. Applying \Cref{thm:beauville} to $(\tX,\tS)$ produces a deformation $(\tX_t,\tS_t)$ such that $\tS_t$ corresponds to a general K3 surface in the nodal divisor $\cD_{0,-2}^{22}$. Passing to the anticanonical model gives a deformation $X_t$ of $X$, which must be a smooth $V_{22}$ by the injectivity of the forgetful map over the terminal locus. This contradicts a minimal log discrepancy argument, and hence $X$ is nodal. By \cite{Nam97}, $X$ deforms to a one-nodal $V_{22}$.

Next, suppose that $X$ has non-isolated singularities. We show in \Cref{appendix B} that there are such examples. As a first step, we show in \Cref{thm:sing-line} that the singular locus of $X$ cannot contain a line. Blowing up the one-dimensional singular locus produces a Gorenstein terminal weak Fano threefold $\tX$. If $S\subset X$ is a general anticanonical K3 surface, its strict transform $\tS \subset \tX$ contains exceptional $(-2)$-curves that do not arise from restrictions of line bundles on $\tX$. By \Cref{thm:beauville}, these curve classes disappear under general deformation, so $\tX$ deforms to a Gorenstein terminal weak Fano threefold whose anticanonical model $X_t$ is terminal. 

If $X_t$ itself is a degeneration of $V_{22}$, then we reduce to the terminal case treated above. Otherwise, by \cite{Nam97}, $X_t$ deforms to another smooth Fano threefold of volume $22$, namely to a member of families \textnumero 2.15, \textnumero 2.16, or \textnumero 3.6. The first two possibilities are excluded by \cite{LZ25} and \Cref{appendix A}, which show that their K-moduli stacks form disjoint connected components. For family \textnumero 3.6, a dimension count shows that the corresponding pair moduli stack has codimension at least two in $\cP^{\rm K,ADE}$. Using purity of the exceptional locus and smoothness of $\cF_{22}$, we deduce that the forgetful map $\cP^{\rm K,ADE} \to \cF_{22}$ is an open immersion. This structural input allows us to control the anticanonical K3 surfaces arising from $X$: in particular, a general anticanonical K3 surface of $X$ lies in the Noether–Lefschetz divisor corresponding to one-nodal $V_{22}$ of Prokhorov type~I, and hence $X$ is a degeneration of such varieties, completing the proof of Theorem~\ref{thm:main theorem 1}. Finally, combining the open immersion with the description of its image yields Theorem~\ref{thm:Fano-K3}.

\smallskip
\subsection{History and prior work}

We briefly review the history of the study of $V_{22}$ and the progress and current status of the K-moduli of Fano threefolds.

\subsubsection{History of $V_{22}$}

The varieties $V_{22}$ occupy a distinguished position among Fano threefolds. They are one of the four deformation families of smooth Fano threefolds with $b_2=1$ and $b_3=0$, alongside $\bP^3$, the smooth quadric threefold $Q^3$, and the quintic del Pezzo threefold $V_5$. Unlike the first three, which are rigid and admit explicit descriptions, the $V_{22}$ form a nontrivial $6$-dimensional moduli family, despite having trivial intermediate Jacobian.

%A smooth Fano threefold $X$ is called \emph{prime} if $\Pic(X) \simeq \bZ \cdot [-K_X]$. In his early work, Fano asserted without proof that such varieties always contain lines (cf.~\cite{Fano37}). Based on this assertion, Roth claimed that prime Fano threefolds exist only for $g \leq 10$ (cf.~\cite{Roth50,Roth55}). This picture was clarified by Iskovskikh in his classification of prime Fano threefolds~\cite{Isk78}. Using the method of double projection from a line, he proved Fano’s assertion and showed that, in addition to the known families with $g \leq 10$, there also exist prime Fano threefolds of genus $12$. These are the varieties now known as $V_{22}$.

The $V_{22}$ were first discovered by Iskovskikh in his classification of \emph{prime} Fano threefolds; see \cite{Isk78}. In this work, Iskovskikh used anticanonical K3 surfaces as auxiliary tools to study linear systems on Fano threefolds. Mukai later introduced a complementary perspective, in which K3 surfaces play a central role in the construction of Fano threefolds. This viewpoint can be regarded as a higher-dimensional analogue of the reconstruction of K3 surfaces from canonical curves; see \cite{SD74}. In \cite{Muk88,Muk02}, Mukai realized both polarized K3 surfaces and prime Fano threefolds as linear sections of homogeneous varieties, and formulated a reconstruction principle starting from Brill--Noether general polarized K3 surfaces. This viewpoint was recently placed on a firm foundation by Bayer, Kuznetsov, and Macrì (cf.~\cite{BKM24,BKM25}), who proved that a smooth prime K3 surface of genus $12$ admits a unique embedding, up to isomorphism, as an anticanonical divisor in a smooth $V_{22}$.

From the perspective of K-stability, the $V_{22}$ also play a significant role. It was once expected that a Fano manifold with finite automorphism group should admit a K\"ahler--Einstein metric. However, Tian \cite{Tia97} showed that certain $V_{22}$ without nontrivial holomorphic vector fields do not admit K\"ahler--Einstein metrics. This phenomenon led Tian to introduce the notion of K-stability as a criterion for the existence of K\"ahler--Einstein metrics. Subsequently, Donaldson \cite{Donaldson07,Donaldson08} proved that the Mukai–Umemura threefold $V_{22}^{\mathrm{MU}}$ is K-polystable. More recently, it was shown in \cite{CS23,Fuj23} that every smooth $V_{22}$ admitting a faithful $\bG_m$-action is K-polystable. It is now widely expected that every smooth $V_{22}$ is K-semistable; see e.g.\ \cite[Problem 10]{XZ26}.

\subsubsection{K-moduli of Fano threefolds}

One of the most successful approaches to studying K-moduli of Fano varieties is the \emph{moduli continuity method}. Roughly speaking, one starts with a concrete parameter space for a given family of varieties—often arising from a Hilbert scheme or a GIT construction—and compares it with the K-moduli space via the theory of K-stability. The key steps are to identify a candidate compact moduli space and to control the possible K-semistable degenerations, typically using a priori estimates on the local volumes of singularities. This strategy has been successfully applied in several settings; see, for instance, \cite{MM93, OSS16, SS17, LX19, Liu22, ADL21, LZ25,Zha26}.

A prominent example is the work of \cite{LX19} on cubic threefolds. In that case, cubic threefolds admit a natural GIT compactification as hypersurfaces in $\bP^4$. The key point is that strong control on the singularities of K-semistable degenerations implies that any K-semistable limit must again be a cubic threefold. This allows one to identify the K-moduli space with the corresponding GIT quotient.

Another successful application appears in the work of \cite{ADL21} on quartic K3 surfaces. There, the authors study the K-moduli of pairs $(\bP^3,cS)$ with $S\in|\cO_{\bP^3}(4)|$ and analyze its wall-crossing behavior as the coefficient $c$ varies. In particular, they show that the resulting K-moduli spaces interpolate between the GIT moduli and the Baily--Borel compactification of quartic K3 surfaces. This reflects the classical relation between Fano varieties and K3 surfaces and suggests that it admits a modular interpretation.

For the Fano threefolds $V_{22}$ considered in this paper, however, applying these ideas presents additional challenges. Unlike cubic threefolds, the varieties $V_{22}$ do not admit a natural description as hypersurfaces or complete intersections, and no explicit GIT compactification of their moduli is currently known. Furthermore, existing singularity estimates for K-semistable degenerations do not appear strong enough to ensure that the limits remain within the same geometric class of varieties. On the other hand, the moduli space $\cF_{22}$ of polarized K3 surfaces of degree $22$ contains many Noether--Lefschetz divisors, leading to a much richer boundary structure. 

The main technical contribution of this paper is the development of a deformation-theoretic framework that allows us to control degenerations of $V_{22}$ via their anticanonical K3 surfaces, thereby providing a systematic bridge between the K-moduli of $V_{22}$ and the moduli of polarized K3 surfaces.

%The results of this paper suggest that a complete determination of the K-moduli of $V_{22}$ should be accessible. We plan to address this problem in future work, including establishing the K-semistability of smooth $V_{22}$, building on the deformation-theoretic framework developed here.

\subsection{Conventions and notations}

We adopt the following conventions throughout this paper.
\begin{itemize}
    \item We work over the field $\bC$ of complex numbers.
    \item We follow the conventions of \cite{KM98} regarding singularities of varieties and log pairs.
    \item Throughout this paper, we use calligraphic letters $\cM,\cF,\cP$ to denote moduli stacks, and the corresponding fraktur letters $\fM,\fF,\fP$ to denote their good or coarse moduli spaces. We use script letters such as $\sX,\sS$ to denote the total spaces of families.
    \item We do not distinguish between an object parametrized by a stack and the corresponding point of the stack. For instance, when we write $X\in \cM^{\K}$, we mean that $X$ is a variety represented by a $\bC$-point of $\cM^{\K}$.
\end{itemize}

For the reader’s convenience, we collect here the notation for the moduli stacks and spaces most frequently used throughout the paper.

\renewcommand{\arraystretch}{1.5}
\begin{center}
\begin{longtable}{| p{.13\textwidth} | p{.75\textwidth} |}
    \hline \textbf{Notation} & \textbf{Definition/Description}   \\ 
    \hline $\cM^\K_{3,22} $&   K-moduli stack of Fano threefolds of volume 22\\ \hline
    $\cM^\K $&  K-moduli stack of $V_{22}$ with reduced stack structure\\ \hline
     $\fM^\K $&  K-moduli space of $V_{22}$\\ \hline
    $\cP^{\K}_{3,22,1}(c)$ &  K-moduli stack of threefold pairs $(X,cS)$, $0<c<1$, such that $\vol(X)=22$ and $S$ is an integral divisor satisfying $K_X+S\sim_{\bQ}0$  \\ \hline
    
    $\cP^{\K, \ADE}$ &  moduli stack of pairs $(X,S)$ such that $X\in \cM^{\K}$ and $S\in |-K_X|$ is ADE\\ \hline
    $\cF_{d} $&   moduli stack of primitively polarized ADE K3 surfaces of degree $d$\\ \hline
    $\overline{\fF}_{d}^{\BB} $& Baily--Borel compactification of the coarse moduli space $\fF_d$ of $\cF_d$\\ \hline
     $\cF_{d,\Lambda} $&   Noether--Lefschetz locus in $\cF_{d} $ associated to the lattice $\Lambda$\\ \hline
     $\cF_{(\Lambda,h)} $&   moduli stack of $(\Lambda,h)$-polarized K3 surfaces\\
     \hline

    \caption{Notations and descriptions of moduli stacks}
    \label{tab:notations}
\end{longtable}

\end{center}

\subsection*{Acknowledgements}
We are grateful to Chenyang Xu for fruitful discussions at an early stage of this project. We thank Philip Engel and Jakub Witaszek for helpful conversations, and Gavril Farkas, Klaus Hulek, Yuri Prokhorov, and Alessandro Verra for answering our questions. We also thank Dori Bejleri and Patrick Brosnan for valuable feedback. Finally, we are especially grateful to Alexander Kuznetsov for his detailed comments on the draft.

ASK was supported by EPSRC grant EP/V056689/1.
YL is partially supported by NSF CAREER Grant DMS-2237139 and an AT\&T Research Fellowship from Northwestern University.
AP acknowledges support from INdAM--GNSAGA and from the European Union -- NextGenerationEU under the National Recovery and Resilience Plan (PNRR), Mission 4 ``Education and Research,'' Component 2 ``From Research to Business,'' Investment 1.1, PRIN 2022, \emph{Geometry of algebraic structures: moduli, invariants, deformations}, DD No.~104 (2/2/2022), proposal code 2022BTA242, CUP J53D23003720006.
JZ is supported by the UMD Postdoctoral Travel Grant and the Simons Travel Grant.

\medskip
\section{Preliminaries}

\subsection{Geometry of Fano threefolds}

\begin{defn}
    A \emph{log Fano pair} (resp. \emph{log weak Fano pair}) $(X,D)$ consists of a normal projective variety $X$ and a boundary divisor $D$ such that the log anticanonical divisor $-K_X-D$ is an ample (resp. a nef and big) $\mathbb{Q}$-Cartier $\bQ$-divisor, and $(X,D)$ has klt singularities. If $D=0$, then $X$ is called a \emph{$\mathbb{Q}$-Fano variety} (resp. \emph{weak $\mathbb{Q}$-Fano variety}).

    By \cite{BCHM10}, if $X$ is a weak $\bQ$-Fano variety, the anti-canonical divisor $-K_X$ is always big and semiample and its ample model $\overline{X} :=\Proj R(-K_X)$ is a $\bQ$-Fano variety. We call $\overline{X}$ the \emph{anticanonical model} of $X$. 
    
  If a (weak) $\bQ$-Fano variety $X$ is Gorenstein, then it necessarily has canonical singularities. In this case, we say that $X$ is a \emph{Gorenstein canonical} (weak) Fano variety, or (\emph{weak}) \emph{Fano variety} for abbreviation.
\end{defn}

\begin{defn}
Let $X$ be a Gorenstein canonical weak Fano threefold. The \emph{volume} of $X$, denoted by $\vol(X)$, is $(-K_X)^3$, and the \emph{genus} of $X$, denoted by $g(X)$, is $\frac{(-K_X)^3}{2} + 1$.
\end{defn}

\begin{theorem}\label{thm:generalelephant}\textup{(General elephants, cf.~\cite{Rei83,Sho79})}
    Let $X$ be a Gorenstein canonical weak Fano threefold. Then $|-K_X|\neq \emptyset$, and a general element $S\in|-K_X|$ is a K3 surface with at worst ADE singularities.
\end{theorem}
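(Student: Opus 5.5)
The plan is to reduce to a statement about lc thresholds, following the Shokurov/Kawamata approach, and then to identify the general member by adjunction and cohomology.

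\textbf{Nonvanishing and reduction.} First I take a crepant $\bQ$-factorial terminalization $f\colon Y\to X$. Since $K_Y=f^*K_X$ is Cartier, $Y$ is a Gorenstein terminal weak Fano threefold. Moreover $f_*\cO_Y(-K_Y)=\cO_X(-K_X)$, so $|-K_Y|=f^*|-K_X|$. The general member of $|-K_X|$ is the image of the general member of $|-K_Y|$, and it has ADE singularities if and only if the latter has only canonical singularities away from contracted curves. So it suffices to treat $X$ and $Y$ in parallel.

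Kawamata--Viehweg vanishing gives $H^i(\cO_X)=0$ and $H^i(\cO_X(-K_X))=0$ for $i>0$, since $-K_X$ is nef and big. Riemann--Roch for Gorenstein canonical threefolds, together with $\chi(\cO_X)=1$ and $-K_X\cdot c_2=24$, then gives
\[
h^0(-K_X)=\chi(-K_X)=\tfrac12(-K_X)^3+3\ \ge\ 4 .
\]
In particular $|-K_X|\neq\emptyset$.

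\textbf{Key step: the general member is log canonical.} Let $D\in|-K_X|$ be general and let $c=\mathrm{lct}(X;|-K_X|)$. I want to show $c\ge 1$. Suppose instead $c<1$. After a standard tie-breaking perturbation, using that $-K_X$ is big, I obtain a minimal lc centre $Z$ of $(X,cD')$ contained in $\Bs|-K_X|$.

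Write $-K_X=(K_X+cD')+(1-c)(-K_X)$ up to linear equivalence, and the correction $-(K_X+cD')-K_X\cdot 0$ is nef and big. Nadel/Kawamata--Viehweg vanishing then gives $H^1(X,\mathcal J\otimes\cO_X(-K_X))=0$ with $\mathcal J$ the multiplier ideal cutting out $Z$. Hence
\[
H^0(X,-K_X)\twoheadrightarrow H^0(Z,-K_X|_Z).
\]
By Kawamata subadjunction, $Z$ is a point, a rational curve, or a surface on which $-K_X|_Z$ has enough sections. For instance, if $Z$ is a point, or if $Z\cong\bP^1$ and $\deg(-K_X|_Z)\ge0$, the right-hand side is nonzero. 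This produces a section of $-K_X$ not vanishing identically on $Z$, contradicting $Z\subset\Bs|-K_X|$. The same argument excludes a fixed divisorial component. Therefore $c\ge1$ and $(X,D)$ is lc. By Bertini, $D$ is smooth away from $\Bs|-K_X|\cup\sing X$. I expect this step to be the main obstacle: the case analysis of the lc centre and the positivity of $-K_X|_Z$, which is where Shokurov's original argument is delicate.

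\textbf{Identifying $S$.} Let $S=D$ be general. It is Cartier and connected, and it is reduced and irreducible since there are no fixed components and $h^0\ge4$. Adjunction gives $K_S\sim (K_X+S)|_S\sim0$, so $S$ is Gorenstein with trivial dualizing sheaf. The sequence
\[
0\to\cO_X(K_X)\to\cO_X\to\cO_S\to0,
\]
combined with $H^1(\cO_X)=0$ and $H^2(\cO_X(K_X))\cong H^1(\cO_X)^\vee=0$, gives $H^1(\cO_S)=0$.

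Inversion of adjunction shows $S$ is slc. It is normal because its singular set is finite: $(X,S)$ is lc, $S$ is generic, and $X$ can be taken terminal after the reduction above. So $S$ is a normal Gorenstein lc surface with $K_S\sim0$. If $S$ had a non-canonical (elliptic or cusp) singularity, the minimal resolution $\tilde S$ would have $K_{\tilde S}=-\sum E_i$ with $E_i$ effective and nonzero. Then $\tilde S$ would be ruled over a curve of genus $q$ and $\chi(\cO_S)=\chi(\cO_{\tilde S})+\sum p_g(\text{sing})$. Comparing with $\chi(\cO_S)=2$, which follows from $h^1(\cO_S)=0$ and $h^2(\cO_S)=h^0(K_S)=1$, forces $q\ge 1$, and a Leray spectral sequence argument then contradicts $h^1(\cO_S)=0$. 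Hence $S$ has only canonical Gorenstein, i.e.\ ADE, singularities, and its minimal resolution is a K3 surface. This is what the theorem asserts.
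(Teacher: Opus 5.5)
The paper gives no proof of this statement; it is quoted from Shokurov and Reid. Judged on its own, your proposal has a genuine gap.

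\textbf{The gap: you prove at most lc, but the theorem needs plt.} Your threshold argument only excludes $c<1$, so at best it shows that $(X,S)$ is log canonical for general $S$. What the theorem needs is that $(X,S)$ is \emph{plt}, and your last paragraph cannot bridge the difference.

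For normality: Bertini gives smoothness of $S$ only off $\Bs|-K_X|\cup\operatorname{Sing}X$. But $\Bs|-K_X|$ can be a curve contained in every member, e.g.\ $X=\bP^1\times S_1$ with $S_1$ a del Pezzo surface of degree $1$. Log canonicity still allows $S$ to be singular along such a curve, even non-normal with a double curve, so ``the singular set is finite'' is not justified.

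For the ADE claim: the Euler characteristic count is wrong. With $k$ simple elliptic or cusp points one gets $2=\chi(\cO_S)=1-q+k$, which is satisfied by $q=0$, $k=1$. Concretely, a general quartic $x_3f_3(x_0,x_1,x_2)+f_4(x_0,x_1,x_2)=0$ in $\bP^3$ is a normal Gorenstein log canonical surface with $K_S\sim0$ and $h^1(\cO_S)=0$. Yet it has a simple elliptic point and a rational resolution, and it is even a member of $|-K_{\bP^3}|$. So no intrinsic property of $S$ that you derive forces ADE singularities; generality has to enter through the pair.

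Once $(X,S)$ is plt, $S$ is normal (Koll\'ar--Mori, Proposition~5.51) and klt by adjunction, hence Gorenstein canonical, i.e.\ ADE. Plt-ness together with connectedness of $S$ also gives irreducibility. That does \emph{not} follow from ``no fixed components and $h^0\ge4$'': you have not excluded $|-K_X|$ being composed with a pencil, nor fixed components of multiplicity one, which your $c<1$ argument never sees.

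\textbf{How to repair it.} The missing step can be done with your own tools, by running the argument at threshold exactly $1$ with a different boundary.
\begin{itemize}
\item Take $D'=\frac1k(D_1+\dots+D_k)$ with $D_i\in|-K_X|$ general and $k\ge2$. Then $D'\sim_{\bQ}-K_X$ and $(X,D')$ is klt off $\Bs|-K_X|$.
\item If $(X,D_1)$ is not plt, then $(X,D')$ is not klt. Indeed, on a log resolution of the base ideal both pairs have the same coefficients along all divisors with centre in the base locus.
\item First pass to the anticanonical model $\psi\colon X\to\overline X$. It is crepant, $|-K_X|=\psi^*|-K_{\overline X}|$, and $(X,\psi^*\overline S)$ is plt if and only if $(\overline X,\overline S)$ is.
\item With $-K$ ample, the usual tie-breaking yields $\Delta\sim_{\bQ}(1-\eta)(-K)$, $0<\eta\ll1$, with a unique minimal non-klt centre $Z\subseteq\Bs|-K|$ and $\mathcal J(\Delta)=I_Z$ near $Z$.
\item Now $-(K+\Delta)$ is ample and $-K-(K+\Delta)\sim_{\bQ}(1+\eta)(-K)$. (Your own decomposition should read $-K_X-(K_X+cD')\sim_{\bQ}(2-c)(-K_X)$.)
\item Nadel vanishing therefore gives both $H^i(Z,\cO_Z)=0$ for $i>0$ and $H^0(X,-K)\twoheadrightarrow H^0(Z,-K|_Z)$. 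Your case analysis of $Z$ then produces a section not vanishing on $Z$, contradicting $Z\subseteq\Bs|-K|$.
\end{itemize}
This proves plt-ness, and with it the rest of your argument (adjunction, $h^1(\cO_S)=0$) goes through.
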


\begin{thm}[\cite{RS09}]\label{thm:NL-Cl}
Let $X$ be a Gorenstein canonical Fano threefold such that $|-K_X|$ is very ample. Then for a very general $S\in |-K_X|$, the restriction map $\Cl(X) \to \Cl(S)$ is an isomorphism. 
\end{thm}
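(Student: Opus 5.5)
This is a Noether–Lefschetz type statement for Weil divisor class groups, cited from Ravindra–Srinivas. The plan is a Hodge-theoretic monodromy argument on the universal family of anticanonical surfaces, followed by a Grothendieck–Lefschetz comparison with the class group of the complement.

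\textbf{Setup.} Since $|-K_X|$ is very ample, embed $X\subset \bP^N$ by $|-K_X|$, so that the members of $|-K_X|$ are hyperplane sections. Let $U\subset |-K_X|$ be the open set of $S$ that avoid the finitely many non-cDV points of $X$ and are K3 surfaces with ADE singularities, by \Cref{thm:generalelephant}.

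\textbf{Step 1: injectivity.} Take $S$ general. Then $X\setminus S$ is affine, and $S$ is a normal Cartier divisor. A Weil divisor $D$ on $X$ is determined, up to linear equivalence, by its restriction to a neighbourhood of $S$: this is a Grothendieck–Lefschetz statement (SGA2) for $\Cl$, using depth $\geq 3$ of $\cO_X$ along $S$ and ampleness of $S$. This gives that $\Cl(X)\to \Cl(S)$ is injective. Alternatively, one argues with $H^2$: $\Cl(X)\otimes\bQ$ embeds in $H^2$ of a small resolution or of $X_{\mathrm{reg}}$, and Lefschetz applies.

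\textbf{Step 2: surjectivity for very general $S$, by a Noether–Lefschetz argument.}
\begin{itemize}
\item Pass to resolutions so that Hodge theory applies. Since $S$ is an ADE K3, $\Cl(S)=\Pic(\tS)/\langle\text{exceptional curves}\rangle$ for its minimal resolution $\tS$.
\item Over the universal family on $U$, the classes in $H^2(\tS,\bZ)$ that remain algebraic on a very general fibre form a monodromy-invariant sublattice. The algebraic locus of any single class is a countable union of proper closed subsets unless that class is flat.
\item Hence, for very general $S$, every class in $\Cl(S)$ extends to all nearby fibres.
\item By Deligne's global invariant cycle theorem, such invariant classes come from the total space of the universal family. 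That total space is a projective bundle over $X$ (blown up as needed), so the classes come from $H^2$ of a resolution of $X$ together with exceptional classes.
\end{itemize}

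\textbf{Step 3: descend to $\Cl(X)$.} The remaining task is to show that the classes obtained in Step 2 come from Weil divisors on $X$, and not merely from divisors on a resolution. Any extra exceptional contributions map to curves in $S$ lying over singular points of $X$. For general $S$ the surface misses the non-cDV points, and the cDV points of $X$ meet $S$ in ADE points whose exceptional curves are killed in $\Cl(S)$. Checking this compatibility carefully is the main obstacle. The infinitesimal input is the variation of Hodge structure: one needs $H^{2,0}(S)$ to vary nontrivially, i.e.\ the infinitesimal Noether–Lefschetz condition $H^0(\cO_S(S))\to H^1(T_S)$ pairing surjectively against $H^{2,0}$. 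For $-K_X$ very ample this holds because $\cO_S(S)=\cO_S(-K_X|_S)$ is very ample on a K3 surface. I would therefore follow Ravindra–Srinivas, who establish exactly this Grothendieck–Lefschetz/Noether–Lefschetz theorem for class groups of normal threefolds.
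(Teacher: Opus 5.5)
The paper gives no proof of this statement; it imports it from Ravindra--Srinivas \cite{RS09}, so your closing deferral agrees with the paper. The sketch you give before that deferral, however, has a genuine gap at its central step.

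In Step~2, the classes that are algebraic on a very general fibre form a monodromy-\emph{stable} sublattice, not a lattice of monodromy-\emph{invariant} classes. Deligne's global invariant cycle theorem only speaks about the latter. Your own setting shows the difference. Suppose $X$ is singular along a curve $C$ of degree $d\ge 2$. The exceptional curves of $\tS\to S$ over the $d$ points of $S\cap C$ are algebraic on every fibre. Yet the monodromy permutes these points transitively, so only their sums (restrictions of exceptional divisors of $\tX\to X$) are invariant. Since the compactified total space is a projective bundle over $\tX$, the invariant cycle theorem identifies the invariants with $I:=\operatorname{im}(H^2(\tX,\bQ)\to H^2(\tS,\bQ))$. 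Taken literally, your argument would therefore show $\Pic(\tS)_\bQ=I$, which is false. Adding ``together with exceptional classes'' by hand does not rule out further non-invariant algebraic classes.

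The missing idea is the Lefschetz-theoretic one. Let $\mathrm{Exc}$ be the span of the exceptional curves and $V:=(I+\mathrm{Exc})^\perp$ the vanishing part. You need that $\pi_1(U)$ acts irreducibly on $V$, and that $V\otimes\bC\supseteq H^{2,0}(\tS)\neq 0$. The latter holds because $I+\mathrm{Exc}$ is of type $(1,1)$, as $H^2(X,\cO_X)=0$. Then $\Pic(\tS)_\bQ\cap V$ is a monodromy-stable proper subspace of $V$, hence zero, and $\Pic(\tS)_\bQ=I+\mathrm{Exc}$.

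The infinitesimal input you propose in place of this does not hold. With $X$ fixed, the Kodaira--Spencer image of $T_{[S]}|{-K_X}|\simeq H^0(S,\cO_S(S))$ in $H^1(S,T_S)$ has dimension at most $h^0(X,-K_X)-1=g+1$, which is $13$ for a $V_{22}$. The vanishing part of $H^{1,1}(S)$ has dimension $20-\rho(X)$, which is $19$. Since $h^{0,2}(S)=1$, at every point some nonzero complex vanishing $(1,1)$-class is annihilated by all first-order deformations. So the infinitesimal Noether--Lefschetz criterion fails identically, however ample $\cO_S(S)$ is. Contrast quartic surfaces in $\bP^3$, where the count is $19=19$. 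The statement is genuinely global and needs the monodromy argument.

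Finally, your Step~3 is left open, and it is where the singularities of $X$ actually enter. It has two parts: descending from the resolution to class groups, and upgrading the rational statement to an integral one. The latter means showing that the cokernel of $\Cl(X)\to\Cl(S)$ is torsion-free, the class-group analogue of \Cref{lem:primitivity}. This is exactly the content that has to be taken from \cite{RS09}.
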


\begin{defn}
A smooth Fano threefold of Picard rank $1$, Fano index $1$, and genus $12$ is called a \emph{smooth $V_{22}$}. More generally, a Fano threefold is called a $V_{22}$ if it appears as a $\bQ$-Gorenstein degeneration of smooth $V_{22}$.
\end{defn}

By the description of prime Fano threefolds due to Mukai, a smooth $V_{22}$ can be realized as the smooth zero locus of a global section of the vector bundle $(\wedge^2 \mathcal U^*)^{\oplus 3}$ on $\Gr(3,7)$, where $\mathcal U$ denotes the universal subbundle on $\Gr(3,7)$.

\begin{theorem}[One-nodal $V_{22}$; \textup{\cite[Theorem 1.2]{Pro16}}]\label{thm:Prokhorov's type I-IV} Every $V_{22}$ with a single $A_1$-singularity as its singular locus, denoted by $X$, is the midpoint of the following Sarkisov link \[\begin{tikzcd}[ampersand replacement=\&]
	\& Y \&\& {Y^+} \\
	Z \&\& X \&\& {Z^+}
	\arrow["\chi", dashed, from=1-2, to=1-4]
	\arrow["f"{description}, from=1-2, to=2-1]
	\arrow["\pi"{description}, from=1-2, to=2-3]
	\arrow["{\pi^+}"{description}, from=1-4, to=2-3]
	\arrow["{f^+}"{description}, from=1-4, to=2-5]
\end{tikzcd},\] where $\pi$ and $\pi^+$ are small $\bQ$-factorializations, and $\chi$ is a flop. The morphisms $f$
and $f^+$ are extremal contractions described as one of the following four cases:
\renewcommand{\arraystretch}{1.5}
\begin{longtable}{| >{\centering\arraybackslash}p{.04\textwidth} 
                  | >{\centering\arraybackslash}p{.04\textwidth} 
                  | >{\centering\arraybackslash}p{.33\textwidth} 
                  | >{\centering\arraybackslash}p{.04\textwidth} 
                  | >{\centering\arraybackslash}p{.33\textwidth} |}
    \hline 
    \textup{\textnumero} & $Z$ & $f$ & $Z^+$ & $f^+$ \\ \hline 
    \rm I & $\bP^3$ & \rm blowup along a smooth rational quintic curve not in a quadric & $\bP^3$ & \rm blowup along a smooth rational quintic curve not in a quadric \\ \hline
    \rm II & $Q^3$ & \rm blowup along a non-degenerate smooth rational quintic curve & $\bP^2$ & \rm  a conic bundle with discriminant curve of degree 3 \\ \hline
    \rm III & $V_5$ & \rm blowup along a smooth
rational quartic curve & $\bP^1$ & \rm a del Pezzo fibration of degree 6\\ \hline
    \rm IV & $\bP^2$ & \rm $\bP\cE\rightarrow \bP^2$ for a stable bundle $\cE$ of character $\ch(\cE)=(2,0,-4)$ & $\bP^1$ & \rm a del Pezzo fibration of degree 5 \\ \hline
    \caption{Prokhorov’s Type I–IV degenerations}
    \label{tab:description of Prokhorov degeneration}
\end{longtable}
\noindent where $Q^3$ is the smooth quadric threefold in $\bP^4$, and $V_5$ is the smooth quintic del Pezzo threefold.
\end{theorem}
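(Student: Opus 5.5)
The statement is Prokhorov's classification, so the plan follows the standard Sarkisov-link approach. Let $X$ be a $V_{22}$ whose only singularity is a single node $p$.

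\textbf{Step 1: basic invariants.} Since $X$ is a $\bQ$-Gorenstein degeneration of a smooth $V_{22}$, it has the same invariants: $-K_X$ is ample, $(-K_X)^3=22$, and $\Pic(X)=\bZ\cdot K_X$. The Picard group is preserved because $H^2(\cO)=0$ and the node is not $\bQ$-factorial-obstructed in the family; in fact $X$ is non-$\bQ$-factorial with $\rk\Cl(X)=2$. This rank computation follows from Namikawa's smoothing theory \cite{Nam97}: since $b_3$ of the smoothing is $0$, the vanishing cycle relation forces the defect to be one.

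\textbf{Step 2: the two small resolutions.} The node therefore admits two small $\bQ$-factorializations $\pi\colon Y\to X$ and $\pi^+\colon Y^+\to X$, related by the Atiyah flop $\chi$. Both $Y$ and $Y^+$ are smooth weak Fano threefolds with $\rho=2$ and $(-K)^3=22$, and $-K$ is nef and big, contracting exactly one flopping curve. The Mori cone of $Y$ has two rays: one is the flopping curve, and the other gives a $K$-negative extremal contraction $f\colon Y\to Z$. The same holds for $f^+$ on $Y^+$. This is the two-ray game, and it produces the diagram.

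\textbf{Step 3: classify $f$ by Mori--Cutkosky.} Run through the types:
\begin{itemize}
\item $f$ is a divisorial contraction to a smooth Fano threefold $Z$ of Picard rank one and index $r$, blowing up a curve or a point;
\item $f$ is a conic bundle over $\bP^2$;
\item $f$ is a del Pezzo fibration over $\bP^1$.
\end{itemize}
Use the numerics $(-K_Y)^3=22$ and $h^{1,2}(Y)=0$. The second follows from $b_3=0$ of the smoothing, since the resolution adds nothing. Also use $-K_Y\cdot(\text{flopping curve})=0$ and the fact that $|-K_Y|$ is base point free and not very ample on the exceptional locus. Together these yield Diophantine constraints, much as in the classification of weak Fano threefolds with $\rho=2$ (Jahnke--Peternell--Radloff, Cutkosky, Takeuchi).

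For a blowup of a curve $C$ of genus $g$ and degree $d$ on $Z$ of index $r$, the formula $(-K_Y)^3=(-K_Z)^3-2(-K_Z\cdot C)+2g-2$ gives the cases:
\begin{itemize}
\item $Z=\bP^3$: $64-8d+2g-2=22$, so $d=5$, $g=0$;
\item $Z=Q^3$: $d=5$, $g=0$;
\item $Z=V_5$: $d=4$, $g=0$.
\end{itemize}
Here $g=0$ is forced by $h^{1,2}=0$. Point blowups and index-one targets are excluded numerically. For example, $\bP^3$ blown up at a point has volume $56$. The conditions "not in a quadric" and "non-degenerate" come from the requirement that $-K_Y$ be big and nef with a small, rather than divisorial, anticanonical contraction. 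For a conic bundle, $-K_Y^3$ determines the discriminant degree to be $3$. A del Pezzo fibration has degree $5$ or $6$ by computing $-K_Y^2\cdot F$ together with the companion invariants.

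\textbf{Step 4: pair up the two sides.} Applying the same analysis to $Y^+$ gives the list of possibilities for $f^+$. Matching invariants across the flop, for instance $H^2$ and the intersection forms on $N^1(Y)\cong N^1(Y^+)$ identified via $\chi$, rules out all but the four pairings I--IV. Type IV requires identifying $Z$ as a $\bP^1$-bundle $\bP\cE$ over $\bP^2$ with $\ch(\cE)=(2,0,-4)$, read off from $c_1^2-4c_2$ via the volume. Stability of $\cE$ follows from the absence of sections that would create a divisorial contraction.

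\textbf{Main obstacle.} I expect the hardest step to be the pairing in Step 4 and the exclusion of the spurious numerical solutions. Verifying that the extremal ray on $Y^+$ is not divisorial in the excluded cases requires careful computation of the intersection numbers $(-K)^2D$ and $(-K)D^2$ transported through the flop.
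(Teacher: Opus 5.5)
The paper gives no proof of this statement; it quotes \cite[Theorem 1.2]{Pro16}, whose proof is a two-ray game of the kind you set up. Your Steps 1--2 are fine, but the outline has a genuine gap exactly where the classification happens.

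\textbf{The one-sided claims in Step 3 are false.} You assert that for a conic bundle $f\colon Y\to\bP^2$ the volume forces $\deg\Delta=3$, and that a del Pezzo fibration has degree $5$ or $6$. Neither is a one-sided fact. For a conic bundle one only has $(-K_Y)^2\cdot f^*\ell=12-\deg\Delta$, and $(-K_Y)^3$ is not a function of $\Delta$. Type IV of the statement itself contradicts your claim. There $Y=\bP\cE$ is a conic bundle with \emph{empty} discriminant. With $\xi=\cO_{\bP\cE}(1)$, $c_1(\cE)=0$, $c_2(\cE)=4$ and $-K_Y=2\xi+3h$, one gets $(-K_Y)^3=54-8c_2(\cE)=22$. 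So Step 3, taken literally, discards Type IV. That is the one link where neither $f$ nor $f^+$ is divisorial, so your curve-blowup numerics never see it either. (Also, in Type IV it is $Y$, not $Z=\bP^2$, that equals $\bP\cE$.)

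\textbf{Mori's list has cases you never treat.} For smooth $Y$ it contains contractions of a divisor to a singular point: $\bP^1\times\bP^1$ or a quadric cone to a $cA_1$ point, and $\bP^2$ with normal bundle $\cO(-2)$ to a $\frac12(1,1,1)$ point. Your smooth-target numerics do not kill these.
\begin{itemize}
\item In the first two cases $(-K_Z)^3=24$, which forces $Z$ to be a cubic threefold with one singular point. Such $Z$ exist. They are excluded not numerically but because $-K_Y=f^*\cO_Z(1)+(f^*\cO_Z(1)-D)$ is then ample, so there is no flopping curve.
\item In the third case $Z$ is non-Gorenstein with $(-K_Z)^3=45/2$, and it must be ruled out by the link equations.
\end{itemize}

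\textbf{Step 4 misstates its input and is not carried out.} The cubic form on $N^1(Y)\cong N^1(Y^+)$ is \emph{not} preserved by $\chi$. Only $(-K)^2D$ and $(-K)D^2$ are. For the single Atiyah flop, $(D^+)^3=D^3-(D\cdot\gamma)^3$, and this jump is precisely the extra equation. Write the generator $D$ of the other boundary ray of $\mathrm{Mov}(Y)$ in a basis of $\Pic(Y)$. Then impose the invariants of each Mori type of $f^+$: for a conic bundle $(D^+)^3=0$, $(-K)(D^+)^2=2$, $(-K)^2D^+=12-\deg\Delta$; for a del Pezzo fibration $(-K)(D^+)^2=0$, $(-K)^2D^+=K_F^2$. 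This is what produces the four links:
\begin{itemize}
\item For $Z=\bP^3$ one has $(-K_Y)^2(aH-bE)=11(a-2b)$ and $(-K_Y)(aH-bE)^2=4a^2-10ab-2b^2$. Together with the invariants of the possible $f^+$, these leave only $D=3H-E$. Here $D^3=0$ on $Y$ but $(D^+)^3=1$ on $Y^+$, since $D\cdot\gamma=-1$.
\item For $Q^3$ one gets $D=2H-E$ with $(-K)^2D=9$, i.e.\ $\deg\Delta=3$.
\item For $V_5$ one gets $D=H-E$ with $(-K)^2D=6$.
\item For $\bP\cE$ one gets $D=\xi+h$ with $(-K)^2D=5$.
\end{itemize}
This case analysis, run from every possible type of $f$ including the fibration--fibration and $\frac12(1,1,1)$ cases, \emph{is} the proof. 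Your proposal states its outcome without carrying it out.
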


\begin{rem}
The one-nodal degenerations of all prime Fano threefolds are classified in \cite{KP25}.
\end{rem}

\begin{thm}[\textup{\cite[Theorem 1.1]{DK25}}]
   A general one-nodal $V_{22}$ is K-polystable.
\end{thm}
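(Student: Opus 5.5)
The plan is to treat each of Prokhorov's four families (Types I--IV in \Cref{thm:Prokhorov's type I-IV}) separately, by the standard "special member plus openness" route. First, for each type I check that the one-nodal $V_{22}$ of that type form an irreducible family. Each family is dominated by an irreducible parameter space: smooth rational quintics in $\bP^3$ not on a quadric, non-degenerate rational quintics on $Q^3$, rational quartics on $V_5$, and stable bundles with $\ch(\cE)=(2,0,-4)$ on $\bP^2$. These spaces give the families via the Sarkisov links, so "general member" is meaningful. Second, since the $\bQ$-Gorenstein family is flat with constant volume $22$, openness of K-stability (uniform K-stability, equivalently $\delta>1$, is an open condition in $\bQ$-Gorenstein families) reduces the statement to exhibiting \emph{one} K-stable member $X_0$ in each family. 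A K-stable Fano is K-polystable, so a general member is then K-polystable.

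To produce $X_0$, I would choose a member with a large finite symmetry group $G\subset\Aut(X_0)$. For Type I, take the rational normal-type quintic given by a binary quintic with a big finite symmetry group acting on $\bP^1$. For the other types, choose analogous symmetric curves or bundles. I then prove $\delta_G(X_0)>1$, which implies K-stability once $\Aut(X_0)$ is finite. By Zhuang's equivariant criterion it is enough to bound $\beta$ over $G$-invariant divisorial valuations, so the possible centres on $X_0$ are $G$-invariant curves and points. A large group leaves very few such centres, typically the node itself and a handful of orbits.

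For non-nodal centres I would apply the Abban--Zhuang method of admissible flags: take a $G$-invariant surface $S$ through the centre, for instance a hyperplane section or the image of the exceptional divisor of $f$, then a curve on $S$. I compute the refined $S$-invariants using the Zariski decompositions of $-\pi^*K_{X_0}-uS$ on the small resolution $Y$. The Mori cone of $Y$ is explicitly known from the link, with two rays corresponding to $f$ and the flopping curve, so these decompositions are piecewise-polynomial computations. I would do them on $Y$ or $Y^+$ rather than on $X_0$, because $Y$ is smooth near the flopping curve.

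The main obstacle will be the node $p$ itself and valuations centred there. Here I would blow up $p$, take the exceptional quadric $E\cong\bP^1\times\bP^1$ as the first flag member, and compute $S(E)$, which must be strictly less than $A_{X_0}(E)=2$. Then I estimate the refined invariants on $E$ using the two rulings, which correspond to the two small resolutions $Y$ and $Y^+$. Balancing the computation on both sides of the flop is where the numerics are tightest, and I expect the Type IV case (the del Pezzo fibrations of degrees $5$ and $6$) to need the most careful choice of flag.
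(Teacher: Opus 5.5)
The paper contains no proof of this statement. It is imported from \cite{DK25}, so I can only assess your proposal on its own terms and against the analogous argument in \Cref{appendix B}. Your outer architecture is the standard one and is sound:
\begin{itemize}
\item irreducible parameter spaces for Types I--IV;
\item openness of uniform K-stability in $\bQ$-Gorenstein families;
\item one K-stable member per type;
\item Zhuang's theorem that $\beta(F)>0$ for all $G$-invariant prime divisors $F$ over $X_0$ ($G$ reductive) gives K-polystability, hence K-stability when $\Aut(X_0)$ is finite.
\end{itemize}
The gaps are in the verification step.

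\textbf{First gap: invariant curves.} For a finite group $G$ it is false that only a few $G$-invariant centres remain. The node is $\Aut$-fixed and $G$-fixed points are finitely many, but $G$-invariant irreducible curves are not. Pulling back complete intersections of two general very ample divisors on $X_0/G$ gives, by Bertini, irreducible $G$-invariant curves of unbounded degree. So you cannot run one admissible-flag computation per centre, and as written the argument does not terminate. You need a uniform reduction for curve centres. Two options:
\begin{itemize}
\item[(a)] Use $A(F)/S(F)\ge\delta_P(X_0)$ for every point $P$ of the centre. Then prove $\delta_P(X_0)>1$ for all $P$ outside an explicit $G$-invariant closed subset, using flags through $P$ that need not be invariant. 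Only the invariant curves inside that subset remain.
\item[(b)] Argue as in \Cref{appendix B}. If a curve $Z$ has $\delta_Z\le 1$, take a $G$-invariant $D\sim_{\bQ}-K_{X_0}$ and $\lambda<\frac34$ with $Z\subseteq\Nklt(X_0,\lambda D)$. Rule out two-dimensional components of the non-klt locus. Then use Nadel vanishing and connectedness \cite[Corollaries~A.10 and A.14]{ACC+} to force $Z$ to be a smooth rational curve of low degree, which the group can then pin down.
\end{itemize}
You also skip the $G$-invariant surfaces on $X_0$ (divisorial stability, as in \Cref{lem:divisorial stability}), though that part is routine.

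\textbf{Second gap: existence of symmetric members.} You assume suitable symmetric members exist, and this is not automatic. For Type I the most symmetric choice fails. A smooth non-degenerate $\bG_m$-invariant rational quintic in $\bP^3$ is an orbit closure $[y_0^5:y_0^{5-a}y_1^a:y_0^{5-b}y_1^b:y_1^5]$. Smoothness at the two fixed points forces $(a,b)=(1,4)$, and this curve lies on the quadric $x_0x_3=x_1x_2$. Hence no Type I one-nodal $V_{22}$ carries a torus action, and the torus-symmetric object is the non-isolated degeneration of \Cref{appendix B}.

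That appendix in fact gives a Type I route which avoids your finite-group problem:
\begin{itemize}
\item K-polystability of that $X_0$ with $G=\bG_m\rtimes\mu_2$, where after reduction (b) only two invariant lines remain;
\item then a K-stable nearby member (\Cref{thm:existence of K-stable with non-isolated sing});
\item then openness.
\end{itemize}

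If you stay with finite groups, you must check in each type that your member is still one-nodal, or at least a $\bQ$-Gorenstein flat limit of members of that type. You must also check that it has finite $\Aut$. Otherwise you only obtain K-polystability, and openness of K-stability does not apply directly. For Type IV, producing a symmetric stable bundle with $\ch(\cE)=(2,0,-4)$ with these properties is a task you have not addressed.
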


\begin{lem}\label{lem:line-bundle-extension}
Let $\sX\to T$ be a $\bQ$-Gorenstein family of weak Fano varieties over a smooth pointed variety $(0\in T)$. Let $L$ be a line bundle on the central fiber $\sX_0$. Then, after an \'etale base change $(0'\in T')\to (0\in T)$, there exists a unique line bundle $\cL'$ on $\sX'=\sX\times_T T'$ such that
\[
\cL'|_{\sX'_{0'}} \simeq L
\]
under the natural identification $\sX'_{0'} \simeq \sX_0$.
\end{lem}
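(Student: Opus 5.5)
The plan is to show that $H^1(\sX_0,\cO)=H^2(\sX_0,\cO)=0$ and then read off both existence and uniqueness of the extension from the exponential-type sequence on the formal or henselian neighbourhood of the central fiber. The vanishing comes from Kawamata--Viehweg vanishing: $\sX_0$ is a weak Fano variety with klt (indeed canonical Gorenstein) singularities and $-K_{\sX_0}$ is nef and big, so
\[
H^i(\sX_0,\cO_{\sX_0})=H^i(\sX_0,K_{\sX_0}+(-K_{\sX_0}))=0 \qquad \text{for } i>0.
\]
Since $\sX\to T$ is flat and projective, cohomology and base change shows that $R^i f_*\cO_{\sX}=0$ for $i>0$ near $0$, and the same holds on every nearby fiber. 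This is the same setup as the usual argument that $\Pic$ is locally constant and discrete for Fano families.

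Next I would deal with the infinitesimal neighbourhoods. Let $\sX_n=\sX\times_T \Spec \cO_{T,0}/\fm^{n+1}$. The exact sequence $0\to \fm^n/\fm^{n+1}\otimes \cO_{\sX_0}\to \cO_{\sX_n}^*\to \cO_{\sX_{n-1}}^*\to 1$ uses flatness to identify the kernel. From it I get
\[
H^1(\fm^n/\fm^{n+1}\otimes\cO_{\sX_0})\to \Pic(\sX_n)\to\Pic(\sX_{n-1})\to H^2(\fm^n/\fm^{n+1}\otimes\cO_{\sX_0}).
\]
Both outer groups vanish, so $\Pic(\sX_n)\to\Pic(\sX_{n-1})$ is an isomorphism for every $n$. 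Hence $L$ lifts uniquely to a compatible system $(L_n)$, i.e.\ to a formal line bundle on the formal completion $\widehat{\sX}$ along $\sX_0$. Because $\sX\to T$ is projective, Grothendieck existence algebraizes this to a unique line bundle on $\sX\times_T\Spec\widehat{\cO}_{T,0}$.

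To pass from the complete local ring to an étale neighbourhood, I would use Artin approximation applied to the relative Picard functor. By the vanishing of $R^1f_*\cO$ and $R^2f_*\cO$, this functor $\Pic_{\sX/T}$ is representable by an algebraic space that is étale (unramified and formally smooth) over $T$ near $0$. The point $[L]\in \Pic_{\sX/T}(0)$ therefore extends to a section over an étale neighbourhood $(0'\in T')$. To get an actual line bundle and not just a section of the sheafified Picard functor, I would need a section of $f$ or a vanishing Brauer obstruction. Both can be obtained after further étale base change, since $f$ is smooth at some point of the fiber and so admits étale-local sections. This produces $\cL'$.

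For uniqueness, suppose two extensions $\cL',\cL''$ exist. Their difference $M$ is trivial on the central fiber. By the isomorphisms above, $M$ is trivial on every $\sX_n$. Since $f_*M$ is then a line bundle near $0'$ (by $R^1f_*\cO=0$ and base change, $h^0(M_t)=1$), it is trivial after shrinking, and its generating section is nowhere vanishing near the central fiber, so $M\cong\cO$. The main obstacle I expect is the step passing from the formal or sheafified-Picard statement to a genuine line bundle on an étale neighbourhood, where representability of $\Pic_{\sX/T}$ and the section issue must be handled carefully. The vanishing and infinitesimal steps are routine.
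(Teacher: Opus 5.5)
Your proposal is correct and takes essentially the same route as the paper: Kawamata--Viehweg vanishing gives $H^1=H^2=0$ on the fibers, so $\Pic_{\sX/T}\to T$ is unramified and formally smooth, hence \'etale, and the point $[L]$ extends to a section over an \'etale neighbourhood of $0$. Your formal-lifting and Grothendieck-existence steps are redundant once étaleness of $\Pic_{\sX/T}$ is established, but two of your additions make explicit points the paper passes over: that a section of the sheafified Picard functor is represented by an honest line bundle only after a further \'etale base change, and the uniqueness argument via cohomology and base change applied to $\cL'\otimes\cL''^{-1}$.
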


\begin{proof}
By Artin's representability theorem \cite[\href{https://stacks.math.columbia.edu/tag/0D2C}{Tag 0D2C}]{stacks-project}, the relative Picard functor $\Pic_{\sX/T}$ is represented by an algebraic space locally of finite presentation over $T$. Since each fiber $\sX_t$ is weak Fano, Kawamata--Viehweg vanishing gives
\[
H^1(\sX_t,\cO_{\sX_t})\ =\  H^2(\sX_t,\cO_{\sX_t})\ =\  0
\] for every $t\in T$ up to shrinking the base. The vanishing of $H^1$ implies that $\Pic_{\sX/T}\to T$ is unramified, while the vanishing of $H^2$ implies that it is formally smooth (see \cite[Theorem~9.5.11 and Proposition~9.5.19]{FGA}). Hence $\Pic_{\sX/T}\to T$ is formally \'etale. Since it is locally of finite presentation, it follows that it is \'etale. Therefore, after an \'etale base change $(0'\in T')\to (0\in T)$, the given line bundle $L$ determines a unique section of $\Pic_{\sX'/T'}\to T'$ extending the point corresponding to $L\in \Pic(\sX_0)$. This section corresponds to a unique line bundle $\cL'$ on $\sX'$ extending $L$, as desired.
\end{proof}

\begin{lemma}\label{lem:Picard rank one}
    For any $\bQ$-Fano degeneration $X$ of $V_{22}$, one has $\Pic(X)\simeq \bZ\cdot[-rK_X]$, where $r$ is the Gorenstein index of $X$.
\end{lemma}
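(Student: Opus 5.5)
The plan is to place $X$ in a $\bQ$-Gorenstein family $\sX\to T$ over a smooth pointed curve $(0\in T)$ whose general fiber $\sX_t$ is a smooth $V_{22}$ and whose central fiber is $\sX_0\simeq X$. The argument then has three steps: show that $\Pic(X)$ is torsion-free of rank one, identify a generator, and pin down that generator using the Gorenstein index $r$.

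\emph{Rank.} First, $X$ is klt and $-K_X$ is ample, so Kawamata--Viehweg vanishing gives $H^1(X,\cO_X)=H^2(X,\cO_X)=0$. Hence $\Pic(X)\hookrightarrow H^2(X,\bZ)$ via $c_1$, and $\Pic(X)$ is a finitely generated abelian group. By \Cref{lem:line-bundle-extension}, after an étale base change every line bundle $L$ on $X$ extends uniquely to a line bundle $\cL$ on $\sX$. Restricting $\cL$ to a general fiber gives a homomorphism $\Pic(X)\to \Pic(\sX_t)=\bZ\cdot[-K_{\sX_t}]$. I would check that it is a group homomorphism using uniqueness of the extension. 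I would then show it is injective. Suppose $\cL_t$ is trivial. Then $\cL\otimes\cO_\sX(m K_{\sX/T})$ is trivial on general fibers for a suitable $m$, so after shrinking $T$ it is trivial on fibers over $T\setminus 0$. To conclude that it is trivial on the central fiber, I would use that $\Pic_{\sX/T}\to T$ is étale (proved in \Cref{lem:line-bundle-extension}), hence separated enough locally: two sections of an étale map that agree on a dense open subset agree near $0$. More simply, the unique extension of the trivial bundle is the trivial bundle, and sections of an unramified morphism agreeing generically agree.

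\emph{Identifying the image.} This gives $\Pic(X)\hookrightarrow\bZ$, so $\Pic(X)\simeq\bZ\cdot[L_0]$ for some $L_0$ whose extension restricts to $-aK_{\sX_t}$ with $a\geq1$. Since $rK_X$ is Cartier and the $\bQ$-Gorenstein family restricts $\cO(-rK_{\sX/T})$ to $-rK$ on every fiber, the bundle $\cO_X(-rK_X)$ maps to $r$. Therefore $a\mid r$ and $L_0^{\otimes r/a}\simeq\cO_X(-rK_X)$. The extension $\cL$ of $L_0$ satisfies $\cL|_{\sX_t}\simeq\cO(-aK_{\sX_t})$, so $\cL\otimes\cO(aK_{\sX/T})$ is a reflexive rank-one sheaf that is trivial on $\sX\setminus\sX_0$. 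Because $\sX_0$ is a Cartier divisor and $\sX$ is normal, this sheaf is $\cO_\sX(k\sX_0)\simeq\cO_\sX$ locally over $T$. Hence $L_0\simeq\cO_X(-aK_X)$ as reflexive sheaves, so $aK_X$ is Cartier. Minimality of the Gorenstein index then forces $a=r$.

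\emph{Main obstacle.} The delicate step is the injectivity and specialization argument: that a line bundle trivial on the general fiber is trivial on $X$, and that restricting reflexive sheaves $\cO(mK_{\sX/T})$ to the fiber commutes with taking $\cO_X(mK_X)$. The first should follow from the étaleness of $\Pic_{\sX/T}$ in \Cref{lem:line-bundle-extension}. The second is the standard $\bQ$-Gorenstein (Kollár) condition, which I would take as part of the definition of the family.
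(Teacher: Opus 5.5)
Your overall route is the paper's: its proof is just a citation of \Cref{lem:line-bundle-extension}, and your second step is a correct expansion of it. That step shows $a\mid r$, then that $\cL\otimes\cO_{\sX}(aK_{\sX/T})$ is trivial, so $aK_X$ is Cartier and $a=r$. The injectivity step, however, rests on a false principle.

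\textbf{Why the injectivity justification fails.} Étaleness, or even unramifiedness, of $\Pic_{\sX/T}\to T$ only shows that the locus of $T$ where two sections agree is \emph{open}. To pass from agreement over $T\setminus\{0\}$ to agreement at $0$, you need that locus to be \emph{closed}, i.e.\ that $\Pic_{\sX/T}\to T$ is separated. This is not part of \Cref{lem:line-bundle-extension}, and it does not follow from étaleness. For example, the affine line with doubled origin is étale over $\bA^1$, and its two sections agree exactly away from the origin. Your fallback, ``the unique extension of the trivial bundle is trivial'', goes in the wrong direction. Uniqueness applied at a point $t\neq 0$ only gives $\cL\simeq\cO$ over a neighbourhood of $t$, not at $0$.

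\textbf{The fix.} Use the argument from your second step here as well. Suppose $\cL|_{\sX_t}\simeq\cO_{\sX_t}$ for all $t\neq 0$ near $0$; this holds for all such $t$ once it holds generically, since the degree $(\cL_t\cdot(-K_{\sX_t})^2)$ is locally constant. By the seesaw principle, using $h^0(\sX_t,\cO_{\sX_t})=1$, the restriction of $\cL$ to $\sX\setminus\sX_0$ is pulled back from $T\setminus\{0\}$, hence trivial after shrinking $T$. So $\cL\simeq\cO_{\sX}(D)$ with $D$ supported on $\sX_0$. Since $\sX$ is normal and $\sX_0\simeq X$ is an integral Cartier divisor, $D=k\sX_0\sim 0$ locally over $T$, and hence $L\simeq\cO_X$. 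This is exactly the valuative criterion of separatedness for $\Pic_{\sX/T}$, which holds here because the fibers are integral. With this replacement your proof is complete.
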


\begin{proof}
    This follows directly from Lemma \ref{lem:line-bundle-extension}. %the extension of line bundles in $\bQ$-Gorenstein families of weak Fano varieties (see e.g. \cite[Section 2.7]{Zhu20b}).%\footnote{\YL{add a lemma about extension of line bundles.}}
\end{proof}

\subsection{Moduli of K3 surfaces}

\begin{defn}
  A \emph{K3 surface} is a normal projective surface $S$ with at worst ADE singularities satisfying $\omega_S\simeq \mathcal{O}_S$ and $H^1(S, \mathcal{O}_S) =0$. A \emph{polarization} (resp. \emph{quasi-polarization}) on a K3 surface $S$ is an ample (resp. big and nef) line bundle $L$ on $S$. We call the pair $(S,L)$ a \emph{polarized} (resp. \emph{quasi-polarized}) \emph{K3 surface of degree $d$}, where $d=(L^2)$. Since $d$ is always an even integer, we write $d = 2g-2$ and call $g$ the \emph{genus} of $(S,L)$.
\end{defn}

\begin{defn}
   Let $\Lambda_{\rm K3}\cong U^{\oplus 3}\oplus E_8(-1)^{\oplus 2}$ be a fixed copy of the unique even unimodular lattice of signature $(3,19)$, called the \emph{K3 lattice}.
\end{defn}

Let $(S,L)$ be a polarized K3 surface. Then there are three cases based on the behavior of the linear system $|L|$.

\begin{theorem}[cf. \cite{May72, SD74}]\label{Mayer}
Let $(S,L)$ be a polarized K3 surface of genus $g\geq 3$. Then one of the following holds.%\footnote{\YL{when $g=2$, there is a special case in (3) where $|L|$ has a base point which is the contraction of $E$.}}
\begin{enumerate}
    \item \textup{(Generic case)} The linear series $|L|$ is very ample, and the embedding $\phi_{|L|}:S\hookrightarrow |L|^{\vee}$ realizes $S$ as a degree $2g-2$ surface in $\mathbb{P}^{g}$. In this case, a general member of $|L|$ is a smooth non-hyperelliptic curve.
    \item \textup{(Hyperelliptic case)} The linear series $|L|$ is base-point-free, and the induced morphism $\phi_{|L|}$ realizes $S$ as a double cover of a normal surface of degree $g-1$ in $\mathbb{P}^{g}$. In this case, a general member of $|L|$ is a smooth hyperelliptic curve, and $|2L|$ is very ample.
    \item \textup{(Unigonal case)}  The linear series $|L|$ has a base component $E$, which is a smooth rational curve. The linear series $|L-E|$ defines a morphism $S\rightarrow \mathbb{P}^{g}$ whose image is a rational normal curve in $\mathbb{P}^{g}$. In this case, a general member of $|L-E|$ is a union of disjoint elliptic curves, and $|2L|$ is base-point-free.
\end{enumerate}
\end{theorem}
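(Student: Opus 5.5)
This is the classical trichotomy of Mayer and Saint-Donat, extended from smooth K3 surfaces to K3 surfaces with ADE singularities. The plan is to reduce to the smooth case via the minimal resolution and then run Saint-Donat's argument there.

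\textbf{Reduction to the smooth case.} Let $\mu\colon \tS\to S$ be the minimal resolution and set $\tilde L=\mu^*L$. Since ADE singularities are rational Gorenstein and crepant, $\tS$ is a smooth K3 surface and $\tilde L$ is nef and big with $\tilde L^2=2g-2$. Moreover $H^0(\tS,\tilde L)=H^0(S,L)$. Hence base loci, the morphism $\phi_{|L|}$ and general members of $|L|$ can all be read off on $\tS$, provided we keep track of the contracted $(-2)$-curves, which are exactly the curves $C$ with $\tilde L\cdot C=0$.

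\textbf{Base points.} Riemann--Roch and Kawamata--Viehweg vanishing give $h^0(\tilde L)=g+1$. Following Saint-Donat, either $|\tilde L|$ has no fixed component, and then it is base-point-free by the argument via Bertini and Zariski's lemma on moving curves; or it has a fixed part. In the latter case one shows $\tilde L\sim kF+\Gamma$, where $F$ is an elliptic pencil, $\Gamma$ is a $(-2)$-curve with $F\cdot\Gamma=1$, and $k=g$.

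Since $L$ is ample on $S$, the curve $\Gamma$ is not $\mu$-exceptional, because $\tilde L\cdot\Gamma=k-2>0$ for $g\ge 3$. Hence $E=\mu(\Gamma)$ is a smooth rational curve. The system $|L-E|$ is the pullback of $|\cO_{\bP^1}(g)|$ via the elliptic fibration, so its image is a rational normal curve of degree $g$ in $\bP^g$, and its general member is a union of $g$ disjoint elliptic fibers. This is case (3). That $|2L|$ is base-point-free follows because $2\tilde L\cdot \Gamma\ge 2$ and the fixed part disappears, by Saint-Donat's criterion.

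\textbf{Base-point-free case.} Here $\phi=\phi_{|\tilde L|}$ is either birational onto its image or of degree $2$ onto a surface of minimal degree $g-1$ in $\bP^g$. Hyperellipticity of the general curve in $|\tilde L|$ is equivalent to $\phi$ being $2\colon1$, by restricting $\phi$ to a smooth member, which is canonically embedded by adjunction. In the $2\colon1$ case, $|2L|$ is very ample on $S$ by Saint-Donat's theorem applied to $2\tilde L$: it is very ample away from $\tilde L$-trivial curves, and these are precisely the $\mu$-exceptional curves. This gives case (2).

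In the birational case, Saint-Donat shows $\phi$ is a morphism that contracts exactly the curves $C$ with $\tilde L\cdot C=0$ and is otherwise an embedding, with the image projectively normal. Since those curves are exactly the $\mu$-exceptional ones and $S$ is normal, $\phi$ factors through $S$ and gives a closed embedding $S\hookrightarrow\bP^g$. This gives case (1).

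\textbf{Main obstacle.} The substantive step is Saint-Donat's analysis: excluding base points when there is no fixed component, and proving very ampleness modulo contracted curves in the non-hyperelliptic case. We simply cite \cite{May72, SD74} for it. The only new point is checking that the curves contracted by $\tilde L$ coincide with the exceptional locus of $\mu$, which follows from the ampleness of $L$.
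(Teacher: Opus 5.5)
Your proposal is correct in outline and follows the same route as the paper. The paper gives no argument of its own and simply cites Mayer and Saint-Donat; passing to the minimal resolution $\mu\colon \tS\to S$ and applying Saint-Donat to the nef and big class $\tilde L=\mu^*L$ is exactly what the ``cf.'' leaves implicit.

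One inference does not work as written, however. In the unigonal case you conclude that $E=\mu(\Gamma)$ is a smooth rational curve because $\Gamma$ is not $\mu$-exceptional. That does not follow: the image of a smooth curve under a contraction can be singular if the curve meets the exceptional locus. The missing observation is that for every $\mu$-exceptional curve $C$ one has
\[
0\ =\ \tilde L\cdot C\ =\ g\,(F\cdot C)+\Gamma\cdot C,
\]
and both terms are nonnegative, since $F$ is nef and $\Gamma\neq C$. Hence $\Gamma\cap \mathrm{Exc}(\mu)=\emptyset$, and every exceptional curve lies in a fiber of the elliptic fibration. This is what actually gives three facts you used tacitly:
\begin{itemize}
\item $E$ is smooth and lies in the smooth locus of $S$, so it is Cartier (the paper later relies on $E$ avoiding the singular locus, in the proof of \Cref{lem:term-elephant});
\item $\mu^*(L-E)=\tilde L-\Gamma\sim gF$, so $|L-E|$ is identified with $|gF|$;
\item the elliptic fibration descends to $S$, and a general member of $|L-E|$ consists of $g$ smooth fibers missing the singular points.
\end{itemize}

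Two of your citations should also be made precise.
\begin{itemize}
\item Base-point freeness of $|2L|$ does not follow from $2\tilde L\cdot\Gamma\ge 2$. The relevant criterion is that a nef and big class $M$ on a smooth K3 surface has a fixed component only if $M\cdot F'=1$ for some elliptic curve $F'$. This is impossible for $M=2\tilde L$ by parity.
\item To get very ampleness of $|2L|$ in the hyperelliptic case, you must first check that $|2\tilde L|$ is not itself hyperelliptic. By Saint-Donat's list, that would require one of the following, each of which is excluded:
\begin{itemize}
\item $(2\tilde L)^2=2$, impossible since $(2\tilde L)^2=8g-8$;
\item an elliptic curve $F'$ with $2\tilde L\cdot F'=2$, i.e.\ $\tilde L\cdot F'=1$, impossible because $|\tilde L|$ has no fixed component;
\item $2\tilde L\sim 2B$ with $B^2=2$, impossible because $\Pic(\tS)$ is torsion-free and $g\ge 3$.
\end{itemize}
\end{itemize}
Only then does Saint-Donat's description of $\phi_{2\tilde L}$, as a birational morphism contracting exactly the $\mu$-exceptional curves onto a normal image, apply. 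With these additions the argument is complete.
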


\begin{defn}\label{defn:moduli of K3}
   For any even integer $d\ge 2$, let $\Lambda_d=\langle \ell_d\rangle\subseteq \Lambda_{\rm K3}$ be the rank-one sublattice generated by a vector $\ell_d$ with $(\ell_d^2)=d$. 
   The \emph{moduli pseudo-functor $\cF_{d}$ of polarized K3 surfaces of degree $d$} assigns to a base scheme $S$ the set of isomorphism classes of pairs
\[
\left\{(f:\mts{X}\rightarrow S;\varphi)\left| \begin{array}{l} \mts{X}\to S\textrm{ is a proper flat morphism, each geometric fiber}\\ \textrm{$\mts{X}_{\bar{s}}$ is an ADE K3 surface, and $\varphi:\Lambda_d\longrightarrow\Pic_{\mts{X}/S}(S)$ is }\\ \textrm{a group homomorphism such that the induced map }\\ \textrm{$\varphi_{\bar{s}}:\Lambda_d\rightarrow \Pic(\mts{X}_{\bar{s}})$ is an isometric primitive embedding of}\\ \textrm{lattices and that $\varphi_{\bar{s}}(\ell_d)\in \Pic(\mts{X}_{\bar{s}})$ is an ample class.} \end{array}\right.\right\}.
\]
\end{defn}

\begin{theorem}[Moduli of polarized K3 surfaces; cf. \cite{Dol96,AE25}]\label{isommoduli}
    The moduli pseudo-functor $\cF_{d}$ is represented by a $19$-dimensional smooth separated Deligne--Mumford stack, still denoted by $\cF_{d}$. 
    Moreover, $\cF_{d}$ admits a normal quasi-projective coarse moduli space $\fF_{d}$ whose analytification is isomorphic to $\mathbb{D}_{d}/\Gamma_d$, where
\[
\mathbb{D}_{d}:=\mathbb{P}\{\,w\in \Lambda_d^{\perp}\otimes\mathbb{C}\mid (w^2)=0,\ (w\cdot\bar w)>0\,\}
\quad\text{and}\quad
\Gamma_d:=\{\gamma_d\in \mathrm{O}(\Lambda_{\rm K3})\mid \gamma|_{\Lambda_d}=\Id_{\Lambda_d}\}.
\]
\end{theorem}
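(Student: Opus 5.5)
The statement is essentially a citation of known results (Dolgachev, and Alexeev--Engel for ADE K3 surfaces). My plan is to assemble it from standard ingredients rather than prove it from scratch.

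\textbf{Algebraic stack.} First I would show that $\cF_d$ is an algebraic stack. Fix $m\ge 3$. For an ADE K3 surface $S$ with ample $L$ of degree $d$, the bundle $L^{\otimes m}$ is very ample with no higher cohomology (Theorem~\ref{Mayer} and Kawamata--Viehweg). Hence $h^0(L^{m})=m^2d/2+2$ is constant in families, and every family embeds in a fixed $\bP^N$. The locus in the Hilbert scheme whose fibers are ADE K3 surfaces is locally closed, and in fact open, since ADE singularities and the vanishing $\omega\simeq\cO$, $H^1(\cO)=0$ are open conditions in flat families of Gorenstein surfaces. The extra datum $\varphi$ is a section of the relative Picard space carrying $\cO(1)$ to $m\ell_d$. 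By the argument of Lemma~\ref{lem:line-bundle-extension}, $\Pic_{\sX/T}\to T$ is étale, so this gives an étale cover of that locus. Then $\cF_d=[H/\PGL_{N+1}]$.

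\textbf{Deligne--Mumford and separated.} Next I would show that stabilizers are finite and reduced. $\Aut(S,L)$ is a linear algebraic group, and $H^0(S,T_S)=H^0(\Omega^1_S)^\vee$ vanishes for K3 surfaces; for ADE surfaces one uses the reflexive tangent sheaf on the minimal resolution. So $\Aut(S,L)$ is finite and $\cF_d$ is Deligne--Mumford. For separatedness I would use the valuative criterion. Two polarized families over a DVR that agree on the generic fiber give birational K3 surfaces over the central point. Passing to minimal resolutions, the birational map is an isomorphism in codimension one, and matching ample classes forces it to be an isomorphism of the polarized ADE models (the Matsusaka--Mumford argument for non-ruled varieties).

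\textbf{Smoothness and dimension 19.} I would argue as follows. Simultaneous resolution (Brieskorn--Tyurina) together with $H^2(T)=H^0(\Omega^1)^\vee=0$ for the minimal resolution $\tS$ shows that deformations of the ADE surface $S$ are unobstructed. Here one uses that the ADE contraction $\tS\to S$ deforms along the locus where the $(-2)$-curve classes stay algebraic. Keeping $L$ algebraic cuts one equation, so the deformation space is smooth of dimension $20-1=19$. More concretely, $\Def_{(S,L)}$ is identified via the local period map with a smooth hypersurface germ of the period domain. This identification makes contracting the $(-2)$-curves invisible to the Hodge structure.

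\textbf{Coarse space and period map.} Keel--Mori gives a coarse space $\fF_d$. I would then define the period map $\fF_d\to\bD_d/\Gamma_d$ by taking $H^2(\tS,\bZ)$ with the marking, which is well defined modulo $\Gamma_d$. Its injectivity follows from the strong Torelli theorem together with the choice of a Weyl chamber, which picks out the polarized ADE model. Surjectivity follows from surjectivity of the period map and the description of the ample cone. The Baily--Borel theorem then gives quasi-projectivity and normality, since a quotient of a Hermitian symmetric domain by an arithmetic group is normal.

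The main obstacle I expect is the passage from smooth to ADE K3 surfaces: matching the stack structure (as opposed to the quasi-polarized stack) with $\bD_d/\Gamma_d$. This is where I would rely on Alexeev--Engel.
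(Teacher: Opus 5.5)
There is no proof in the paper to compare with: the theorem is quoted from Dolgachev and Alexeev--Engel. Your plan to assemble standard ingredients and defer the ADE case to Alexeev--Engel is therefore in line with the paper. As a sketch, however, three steps fail as written.

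First, the relative Picard space of a family of K3 surfaces is \emph{not} \'etale over the base. The argument of Lemma~\ref{lem:line-bundle-extension} needs $H^2(\cO)=0$, but here $H^2(S,\cO_S)\cong\bC$ is exactly where the obstruction to deforming a line bundle lives, so $\Pic_{\sS/T}\to T$ is only unramified. If it were \'etale, every $L$ would extend over the whole $20$-dimensional $\Def_S$, which contradicts your later count of $19$. What you actually need is weaker and true. Since $L^{\otimes m}\cong\cO(1)$ already extends over the Hilbert scheme, and $\mathrm{ob}(L^{\otimes m})=m\,\mathrm{ob}(L)$ in characteristic $0$, the root $L$ also extends, uniquely because $H^1(S,\cO_S)=0$. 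This is the argument of Remark~\ref{rmk:beauville}, and it makes the root locus \'etale over your Hilbert-scheme locus.

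Second, for singular $S$ the local period map is not defined on $\Def_{(S,L)}$, so that germ is \emph{not} a hypersurface germ of the period domain. The period map lives on $\Def_{(\tS,\sigma^*L)}$, which it identifies with a germ of $\bD_d$. By Burns--Wahl, the blow-down $\Def_{\tS}\to\Def_S$ is defined on all of $\Def_{\tS}$, not only where the $(-2)$-classes stay algebraic (that locus is where the singularities survive). It is a finite Galois cover whose group is the Weyl group $W$ of the exceptional configuration. Hence $\Def_{(S,L)}=\Def_{(\tS,\sigma^*L)}/W$. This is smooth of dimension $19$ by Chevalley's theorem, because the roots are orthogonal to and independent of $\sigma^*L$, so $W$ acts on the $19$-dimensional germ by reflections.

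So the contraction is very much visible to the Hodge structure: $W$ sits in the stabilizer of the period point in $\Gamma_d$, with quotient $\Aut(S,L)$. This local model is what upgrades your bijective period map to an isomorphism $\fF_d^{\mathrm{an}}\cong\bD_d/\Gamma_d$. It also shows that $\cF_d$ is not the orbifold $[\bD_d/\Gamma_d]$; only the coarse spaces agree, and that is all the statement claims.

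Third, Baily--Borel together with an analytic isomorphism does not yet make $\fF_d$ quasi-projective. You need Borel's extension theorem, which makes the period map algebraic; an algebraic morphism whose analytification is an isomorphism is then an isomorphism. Alternatively, use an independent construction of $\fF_d$ such as Viehweg's.

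Your Deligne--Mumford, separatedness, and Torelli bijectivity steps are fine.
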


\begin{thm}[Baily--Borel compactification; cf. \cite{BB66}]\label{thm:BB_K3}
There exists a normal projective variety $\ove{\fF}_d^{\BB}$, called the \emph{Baily--Borel compactification}, together with an open immersion
$\fF_d \hookrightarrow \ove{\fF}_d^{\BB}$
such that:
\begin{enumerate}
    \item $\ove{\fF}_d^{\BB}$ is the Proj of the graded ring of $\Gamma_d$-automorphic forms on $\bD_d$, and the inclusion identifies $\fF_d$ with a Zariski open dense subset;
    \item the boundary $\ove{\fF}_d^{\BB}\setminus \fF_d$ is a finite union of locally closed strata of dimension $0$ and $1$;
    \item these strata are in bijection with $\Gamma_d$-equivalence classes of primitive isotropic sublattices of $\Lambda_d^{\perp}$ of rank $1$ (giving $0$-dimensional cusps) and rank $2$ (giving $1$-dimensional cusps).
\end{enumerate}
\end{thm}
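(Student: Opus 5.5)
The plan is to reduce the statement to the general Baily--Borel theorem for arithmetic quotients of bounded symmetric domains, and then to identify the rational boundary components in the orthogonal case.

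\emph{Step 1: reduction to an arithmetic quotient.} The lattice $\Lambda_d^\perp\subseteq\Lambda_{\rm K3}$ has signature $(2,19)$. Hence each connected component of $\bD_d$ is a bounded symmetric domain of type IV for the real group $\mathrm{SO}(2,19)$. The restriction map $\Gamma_d\to \mathrm{O}(\Lambda_d^\perp)$ has finite kernel, and its image has finite index (it contains the stable orthogonal group). So $\Gamma_d$ acts on $\bD_d$ through an arithmetic subgroup of $\mathrm{O}(\Lambda_d^\perp\otimes\bQ)$. By \Cref{isommoduli}, $\fF_d^{\rm an}\simeq\bD_d/\Gamma_d$, so we are exactly in the setting of \cite{BB66}. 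I would pass to a neat finite-index subgroup where convenient and take invariants at the end; this preserves normality and projectivity.

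\emph{Step 2: construct the compactification and prove (1).} Form the Satake--Baily--Borel space $\bD_d^*=\bD_d\sqcup\bigsqcup F$, where $F$ runs over the rational boundary components, with the Satake topology defined by Siegel sets. By reduction theory, $\bD_d^*/\Gamma_d$ is a compact Hausdorff normal analytic space. Next, use Poincaré--Eisenstein series of high weight $k$ to show two things: the graded ring $\bigoplus_k M_k(\Gamma_d)$ of automorphic forms is finitely generated, and its forms separate points and tangent vectors on $\bD_d^*/\Gamma_d$. This identifies $\bD_d^*/\Gamma_d$ with $\Proj\bigoplus_k M_k(\Gamma_d)$, which gives a normal projective variety $\ove{\fF}_d^{\BB}$. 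The complement of the boundary, $\bD_d/\Gamma_d$, is Zariski open. To see that its algebraic structure agrees with the given quasi-projective variety $\fF_d$, apply Borel's extension theorem: the period map from $\fF_d$ extends, and GAGA on the normal projective target then forces it to be an algebraic open immersion.

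\emph{Step 3: identify the strata and prove (2)--(3).} For a type IV domain attached to a form of signature $(2,n)$, the rational boundary components correspond to nonzero primitive isotropic sublattices $I\subseteq\Lambda_d^\perp$. Since the real index is $2$, such $I$ has rank $1$ or $2$. A rank-$1$ lattice $I$ gives a point. A rank-$2$ lattice $I$ gives a copy of the upper half-plane, namely the $\bP(I\otimes\bC)$ minus its real points; its stabilizer in $\Gamma_d$ acts through an arithmetic subgroup of $\mathrm{SL}_2(\bZ)$, so the stratum is a modular curve. The incidence relations in the closure come from inclusions $I_1\subset I_2$. Finiteness of the strata reduces to finiteness of the $\Gamma_d$-orbits of primitive isotropic sublattices. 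This follows from reduction theory for arithmetic groups, or more concretely from Eichler's criterion together with the finiteness of isometry classes of the lattices $I^\perp/I$.

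I expect the main obstacle to be Step 2, and specifically the projectivity argument. This requires the convergence estimates for Poincaré--Eisenstein series and the proof that they embed the compact normal space into projective space. These are the analytic heart of \cite{BB66}, and in practice I would cite them rather than redo them. The lattice-theoretic content specific to K3 surfaces is confined to Step 3 and is routine.
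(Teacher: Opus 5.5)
Your proposal is correct and follows the same route as the paper, which gives no argument beyond citing \cite{BB66}. Your plan is to apply the general Baily--Borel theorem to the arithmetic group $\Gamma_d$ acting on the type IV domain $\bD_d$ of signature $(2,19)$, and then to identify the cusps with $\Gamma_d$-orbits of primitive isotropic sublattices of $\Lambda_d^{\perp}$ of rank $1$ and $2$; this is exactly the standard content behind that citation.

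Two small refinements. The restriction $\Gamma_d\to \mathrm{O}(\Lambda_d^{\perp})$ is in fact injective, with image exactly the stable orthogonal group, because $\Lambda_d\oplus\Lambda_d^{\perp}$ has finite index in $\Lambda_{\rm K3}$. Also, Borel's extension theorem, which you rightly use to match the algebraic structure of $\fF_d$ with the Baily--Borel one, is stated for torsion-free arithmetic groups, so you should run it on a neat level cover and then descend by the finite quotient.
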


\begin{defn}\label{defn:NL divisor}
Let $\Lambda\subseteq \Lambda_{\rm K3}$ be a primitive hyperbolic sublattice containing $\ell_d$. 
The \emph{Noether--Lefschetz locus associated to $(\Lambda,\ell_d)$} is the closed substack 
\[
\cF_{d,\Lambda}\subseteq \cF_d
\]
defined as the closure of the locus of polarized K3 surfaces such that the Picard lattice of its minimal resolution contains $\Lambda$ as a primitive sublattice. If $\rk(\Lambda)=2$ and the Gram matrix of $\Lambda$ with respect to $\ell_d$ and another vector $v$ is
\[
\begin{pmatrix}
d & h\\
h & m
\end{pmatrix},
\]
then $\cF_{d,\Lambda}$ is called a \emph{Noether--Lefschetz divisor} and is denoted by $\cD^d_{h,m}$. 
\end{defn}

\begin{lemma}\label{lem:K3 lattice of Type1-4}
The Gram matrices of a very general anticanonical K3 surface on a one-nodal $V_{22}$ of types {\rm I–IV} are
\[
\Lambda_{\mathrm{I}}=\begin{pmatrix}22&11\\11&4\end{pmatrix},\quad
\Lambda_{\mathrm{II}}=\begin{pmatrix}22&9\\9&2\end{pmatrix},\quad
\Lambda_{\mathrm{III}}=\begin{pmatrix}22&6\\6&0\end{pmatrix},\quad
\Lambda_{\mathrm{IV}}=\begin{pmatrix}22&5\\5&0\end{pmatrix}.
\]
\end{lemma}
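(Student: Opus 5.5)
The plan is to compute the lattice from the Sarkisov link of Theorem~\ref{thm:Prokhorov's type I-IV}.

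\textbf{Setup.} Let $X$ be a one-nodal $V_{22}$ with node $p$, and let $\pi\colon Y\to X$ be one of its small $\bQ$-factorializations. A general $S\in|-K_X|$ misses $p$, so $S$ is a smooth K3 surface and $\pi^{-1}(S)\simeq S$. Since $\pi$ is small and crepant, we have $\Cl(X)\simeq\Pic(Y)$, which has rank two. Moreover $-K_Y=\pi^*(-K_X)$ and $\pi^*(-K_X)|_S=-K_X|_S$.

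\textbf{Step 1: the lattice equals the image of $\Pic(Y)$.} I would apply Theorem~\ref{thm:NL-Cl}, using that $-K_X$ is very ample on a one-nodal $V_{22}$. This is known from Prokhorov's description via the anticanonical embedding in $\bP^{13}$; it could also be obtained from the very-ampleness result for K-semistable degenerations proved later in the paper, applied to a K-polystable general member together with openness. It gives, for very general $S$, an isomorphism $\Cl(X)\to\Cl(S)=\Pic(S)$. Consequently the Picard lattice of $S$ is exactly the image of $\Pic(Y)$, and it is automatically primitive.

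Its intersection form is computed on $Y$ via
\[
(D_1|_S\cdot D_2|_S)=D_1\cdot D_2\cdot(-K_Y).
\]
It therefore suffices to choose a second generator $v$ of $\Pic(Y)$ alongside $-K_Y$ and to compute $(-K_Y)^2\cdot v$ and $(-K_Y)\cdot v^2$. For $v$ I take the pullback of a generator of $\Pic$ of $Z$ or $Z^+$; here one must check that $\{-K_Y,v\}$ is a $\bZ$-basis, which is clear from the explicit contraction.

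\textbf{Step 2: the intersection numbers.}
\begin{itemize}
\item Type~I: take $v=f^*H$ on $Z=\bP^3$, so $-K_Y=4H-E$ with $E$ the exceptional divisor over the quintic $C$. Then $v^2\cdot(-K_Y)=4$. Using $H^2E=0$ and $HE^2=-\deg C=-5$, we get $v\cdot(-K_Y)^2=16-5=11$.
\item Type~II: take $v=(f^+)^*\ell$ for a line $\ell\subset\bP^2$. Then $v^2\cdot(-K_Y)$ is the anticanonical degree of a conic, which is $2$. The quantity $v\cdot(-K_Y)^2$ is the anticanonical degree of the preimage of a line. This can be checked from the $Q^3$ side: with $-K_Y=3H-E$ there one gets the Gram matrix with entries $6$ and $18-5=13$, of discriminant $22\cdot 6-13^2=-37$, which forces $v\cdot(-K_Y)^2=9$ via $22\cdot2-9^2=-37$.
\item Types~III and~IV: take $v$ to be the class of a fibre $F$ of the del Pezzo fibration over $\bP^1$. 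Then $F^2=0$ and $F\cdot(-K_Y)^2=K_F^2$, which equals $6$ and $5$ respectively.
\end{itemize}

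\textbf{Main obstacle.} The delicate point is Step~1, namely justifying the very ampleness needed for Theorem~\ref{thm:NL-Cl}, together with checking that the chosen $v$ and $-K_Y$ generate $\Pic(Y)$ rather than a finite-index sublattice. The latter is confirmed by the computed discriminants $-37,-37,-36,-25$, which must agree with $\operatorname{disc}\Pic(Y)$ obtained from either side of the link.
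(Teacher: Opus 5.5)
Your proposal is correct and is essentially the paper's argument. Both work on the small resolution from Prokhorov's link, where $\Pic(S)$ is generated by $-K_X|_S$ and a class pulled back from the link; the paper does Type~I via $\bP^3$ exactly as you do, leaves implicit the generation claim that you make explicit with Theorem~\ref{thm:NL-Cl}, and calls the other types analogous. Two small fixes: the Type~I discriminant is $22\cdot 4-11^2=-33$, not $-37$; and in Type~II you should not obtain $v\cdot(-K_Y)^2=9$ from an asserted basis property, because on $\Bl_C Q^3$ the pair $\{3H-E,\,2H-E\}=\{-K_Y,\,-K_Y-H\}$ is visibly a $\bZ$-basis with Gram matrix $\Lambda_{\mathrm{II}}$ (and $2H-E$ is precisely the class of $(f^+)^*\ell$ after the flop).
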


\begin{proof}
This follows by direct computation from Prokhorov’s description of the four types. 
For instance, by \Cref{thm:Prokhorov's type I-IV}, a Type~I $V_{22}$ $X$ is the anticanonical model of the blowup $\bP^3$ along a smooth quintic rational curve. Hence a very general anticanonical K3 surface $S\in |-K_X|$ is isomorphic to a quartic K3 surface in $\bP^3$ containing such a curve. The classes $-K_X|_S$ and $\cO_{\bP^3}(1)|_S$ then generate $\Pic(S)$ with intersection form $\Lambda_{\mathrm{I}}$. The remaining cases are analogous.
\end{proof}

\begin{lem}[\textup{\cite[Lemma 1.7]{GLT15}}]\label{lem:equivalent condition of BN general}
Let $(S,L)$ be a primitively polarized ADE K3 surface of degree $22$. Then $(S,L)$ is not Brill--Noether general if and only if it is contained in one of the following eleven NL divisors:
    \[\cD_{1,0}^{22}, \ \  \cD_{2,0}^{22}, \ \ \cD_{3,0}^{22}, \ \  \cD_{4,0}^{22}, \ \ \cD_{5,0}^{22},  \ \ \cD_{6,0}^{22}, \ \
    \cD_{7,2}^{22}, \ \ \cD_{8,2}^{22}, \ \ \cD_{9,2}^{22}, \  \ \cD_{10,4}^{22}, \  \ \cD_{11,4}^{22}.
    \]
\end{lem}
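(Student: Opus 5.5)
The plan is to use Mukai's definition of Brill--Noether generality for genus $g=12$. A primitively polarized $(S,L)$ with $L^2=22$ is Brill--Noether general if there is no decomposition $L\sim M+N$ with $h^0(M),h^0(N)\ge 2$ and
\[
h^0(M)\,h^0(N)\ \ge\ h^0(L)=g+1=13 .
\]
I would then translate the existence of such a decomposition into a lattice condition on $\Pic(S)$, enumerate the numerical possibilities, and prove the converse inclusion separately. Throughout I would work on the minimal resolution $\tS\to S$, pulling back $L$ to a big and nef class. Since ADE singularities are rational, cohomology of line bundles pulled back from $S$ is unchanged. The Noether--Lefschetz loci are defined via the Picard lattice of the minimal resolution (\Cref{defn:NL divisor}), so nothing is lost.

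\textbf{Step 1: reduce to a nef witness.} Suppose $L\sim M+N$ violates Brill--Noether generality. I would show that $M$ and $N$ can be replaced by nef classes without decreasing $h^0$. Move fixed components from $M$ into $N$; this only increases $h^0(N)$ and keeps $h^0(M)$. Then apply Saint-Donat's description (\Cref{Mayer}) to the moving parts. If a moving part is a multiple $kE$ of an elliptic pencil, replacing it by $E$ and enlarging the complement should only make the inequality easier to satisfy. After this reduction $M$ is nef with $M^2=m\ge 0$, and similarly for $N$. Then $h^1=0$ (or the elliptic pencil case), and Riemann--Roch gives
\[
h^0(M)=\tfrac m2+2,\qquad h^0(N)=\tfrac{N^2}{2}+2,\qquad N^2=22-2h+m,\qquad h:=L\cdot M .
\]
By symmetry we may assume $h\le L\cdot N$, i.e.\ $h\le 11$. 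The Hodge index theorem on $\langle L,M\rangle$ gives $22m\le h^2$, with equality excluded since $L$ is primitive and $M\ne 0,L$.

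\textbf{Step 2: enumerate the numerical cases.} The condition to test is $(\tfrac m2+2)(14-h+\tfrac m2)\ge 13$ together with the constraints above.
\begin{itemize}
\item $m=0$: we need $14-h\ge 7$, so $h=1,\dots,6$ (and $h\ge1$ by nefness and bigness).
\item $m=2$: we need $3(15-h)\ge 13$ and $44\le h^2$, so $h=7,8,9$.
\item $m=4$: we need $88\le h^2$ and $h\le 11$, so $h=10,11$.
\item $m\ge 6$: Hodge index forces $h\ge 12$, contradicting $h\le 11$.
\end{itemize}
The Gram matrix of $\langle L,M\rangle$ is then one of the eleven $\begin{pmatrix}22&h\\h&m\end{pmatrix}$ listed. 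Its saturation in $\Pic(\tS)$ must give the same divisor; overlattices of index $>1$ would contain a class with smaller invariants, which again appears in the list. Hence $(S,L)$ lies in one of $\cD^{22}_{h,m}$.

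\textbf{Step 3: the converse.} For a very general point of $\cD^{22}_{h,m}$, the Picard lattice is exactly the rank-two lattice. I would check that $M$ (with $m\ge0$, $h>0$) and $L-M$ are effective by Riemann--Roch. One also checks that no $(-2)$-class in this lattice obstructs nefness, so that $h^0$ equals the Riemann--Roch value and the product inequality holds. Brill--Noether speciality is a closed condition by upper semicontinuity of $h^0$ in families where $M$ extends (\Cref{lem:line-bundle-extension}-type arguments). Hence it holds on the whole closure $\cD^{22}_{h,m}$.

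I expect the main obstacle to be Step 1: the reduction to nef $M,N$ with vanishing $h^1$, including the elliptic-pencil and unigonal-type subtleties. This is precisely the content of \cite[Lemma 1.7]{GLT15}, to which I would ultimately defer for the careful case analysis.
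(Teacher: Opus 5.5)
Your proposal ultimately rests on \cite[Lemma 1.7]{GLT15}, and that citation is all the paper offers here: it gives no argument of its own. So in terms of logical dependence you agree with the paper. The argument you sketch around the citation, however, contains a computational error and a genuine gap, so it does not stand on its own.

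\textbf{The Riemann--Roch slip.} From $N^2=22-2h+m$ you get $h^0(N)=\tfrac{N^2}{2}+2=13-h+\tfrac m2$, not $14-h+\tfrac m2$. With your formula the test inequality would admit $h=7$ for $m=0$ (since $2\cdot 7\ge 13$) and $h=10$ for $m=2$ (since $3\cdot 5\ge 13$). That adds the spurious divisors $\cD^{22}_{7,0}$ and $\cD^{22}_{10,2}$. The ranges you actually state ($h\le 6$; $h=7,8,9$; $h=10,11$) are correct, but they follow only from $(\tfrac m2+2)(13-h+\tfrac m2)\ge 13$.

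\textbf{The gap in Step 1.} This step is the whole direction ``not Brill--Noether general $\Rightarrow$ lies in one of the eleven divisors'', and your sketch of it does not work as written.
\begin{itemize}
\item Moving the fixed part of $M$ into $N$ typically gives $N$ fixed components. Moving those back into $M$ can destroy the nefness of $M$, and you give no termination argument; ``similarly for $N$'' is not justified.
\item What Step 2 really uses is the upper bound $h^0(L-M)\le\chi(L-M)$ for nef $M$, i.e.\ $h^1(L-M)=0$. This can fail. When it does, $L-M$ is not $1$-connected, and you must manufacture a different witness from that. This is the part that requires work, and it is what the citation supplies.
\item The replacement $kE\rightsquigarrow E$ also needs proof. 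One way: the base-point-free pencil trick shows $j\mapsto h^0(L-jE)$ is concave, which gives $2h^0(L-E)\ge 13$ when $k\ge 2$.
\end{itemize}

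\textbf{Step 3 is easier than you make it.} Suppose $\Pic(\tS)$ contains a class $M$ with $(L\cdot M,M^2)=(h,m)$ from the list. Then $h^2(M)=h^2(L-M)=0$, because $L\cdot M>0$ and $L\cdot(L-M)>0$. Hence $h^0\ge\chi$ gives $h^0(M)\,h^0(L-M)\ge(\tfrac m2+2)(13-h+\tfrac m2)\ge 13$, with both factors at least $2$. This holds at every point of $\cD^{22}_{h,m}$, so you need no nefness check, no very general point, and no semicontinuity.

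\textbf{The saturation claim is a finite check} on discriminants $22m-h^2$.
\begin{itemize}
\item The non-squarefree values $-4,-9,-16,-25,-36,-20$ saturate only to lattices of types $\cD^{22}_{1,0}$, $\cD^{22}_{2,0}$, $\cD^{22}_{3,0}$, $\cD^{22}_{7,2}$, all of which are on the list.
\item The value $-12$ (the case $\cD^{22}_{10,4}$) admits no even overlattice, since an index-$2$ overlattice would need $h'^2\equiv 3 \pmod 4$.
\end{itemize}
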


\begin{lemma}\label{lem:line bundle}
Let $\pi: \sS \to T$ be a flat family of ADE surfaces over a smooth pointed curve $0\in T$. Let $\sD$ be a $\bQ$-Cartier Weil divisor on $\sS$ which is Cartier on a general fiber. Then $\sD$ is Cartier. If, moreover, each fiber $\sS_t$ has irregularity $0$, then $\sD_t \sim 0$ for some $t\in T$ if and only if $\sD \sim_T 0$.
\end{lemma}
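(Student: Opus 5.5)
The first assertion is local, so I will prove it by working locally along the singular locus of $\sS$. Since $\sS\to T$ is flat over a smooth curve and the fibers are normal surfaces with ADE (hence rational double point) singularities, $\sS$ is a normal threefold. Moreover, $\sS$ has rational, indeed canonical Gorenstein, singularities: a deformation of a rational double point is again Gorenstein, and by inversion of adjunction $(\sS,\sS_0)$ is plt near $\sS_0$, so $\sS$ is canonical near $\sS_0$.

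The strategy is to reduce Cartierness of $\sD$ at a point $p\in\sS_0$ to the local class group of $\sS$ at $p$. I would use the standard fact that for a flat family over a smooth curve whose central fiber is a rational surface singularity, restriction of local class groups is injective (Koll\'ar--Mori, deformation of $\bQ$-Cartier divisors; or \cite[Lemma 12.1.8]{KM98}-type arguments). Concretely, let $r$ be the Cartier index of $\sD$ at $p$. Then $r\sD$ is Cartier, and the index-one cover $\sS'\to\sS$ associated to $\sD$ is \'etale in codimension one and cyclic of order $r$.

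Next I use the hypothesis that $\sD$ is Cartier on a general fiber $\sS_t$. It follows that the cover $\sS'$ is \'etale over the general fiber near $p$; in particular, its branch locus does not dominate $T$. But the non-\'etale locus of a quasi-\'etale cyclic cover of a normal variety is the locus where $\sD$ is not Cartier. This locus is closed, and by Nakayama-type semicontinuity together with the fact that the fibers are isolated hypersurface-like singularities, it is a union of points in $\sS_0$. A quasi-\'etale cover of a threefold that is \'etale away from finitely many points must be \'etale near isolated canonical points whose local fundamental group of the punctured neighborhood is trivial. The key input here is that $\sS$ near $p$ is a threefold hypersurface singularity (a deformation of an ADE hypersurface is a hypersurface), and by Milnor/Grothendieck its link is simply connected, so the punctured neighborhood has trivial $\pi_1$ (Grothendieck's purity for complete intersections of dimension $\ge 3$). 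Hence $r=1$ and $\sD$ is Cartier. This purity step is the main obstacle; an alternative I would keep in reserve is to invoke Grothendieck's theorem that a hypersurface of dimension $\ge 3$ is factorial-in-codimension, i.e.\ its local rings are parafactorial, which directly gives that any $\bQ$-Cartier Weil divisor that is Cartier off a codimension-$3$ set is Cartier.

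For the second assertion, assume $h^1(\cO_{\sS_t})=0$ for all $t$, and let $L=\cO_{\sS}(\sD)$, which is now a line bundle. If $\sD\sim_T0$, then trivially $\sD_t\sim0$. Conversely, suppose $L_t\simeq\cO$ for some $t$. Consider $t\mapsto h^0(L_t)$ and $t\mapsto h^0(L_t^{-1})$. Since $\Pic_{\sS/T}$ is unramified and locally of finite presentation, the locus in $T$ where the section $[L]$ meets the zero section is open and closed (as in Lemma \ref{lem:line-bundle-extension}, formally \'etale since $h^1=0$). Because $T$ is connected, $L_s$ is trivial for all $s$. Then $\pi_*L$ is a line bundle by cohomology and base change ($h^0=1$ constant), and $\pi^*\pi_*L\to L$ is an isomorphism on every fiber, hence an isomorphism. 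Thus $\sD\sim_T 0$.
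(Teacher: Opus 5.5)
Your proposal is correct, but for the Cartier assertion you take a genuinely different route from the paper.

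\textbf{The paper's route.} It uses \cite[Theorem A.1]{HLS24} to reduce to showing that $\sD_0$ is Cartier. After a finite base change it takes a simultaneous, fiberwise minimal resolution $\sigma\colon\widetilde{\sS}\to\sS$. Cartierness on general fibers makes $\sigma^*\sD$ integral, hence Cartier on the smooth $\widetilde{\sS}$. Finally it descends from $\sigma_0^*\sD_0$ to $\sD_0$ using the base-point-free theorem on the minimal resolution of a klt surface.

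\textbf{Your route.} You stay on the threefold. The non-Cartier locus of $\sD$ is finite, and $\sS$ is locally a $3$-dimensional hypersurface. The index-one cover is therefore \'etale off a point, hence \'etale by purity of the branch locus for complete intersections of dimension $\ge 3$ \cite[Exp.~X, Th.~3.4]{SGA2}. So $\cO_\sS(\sD)$ is a direct summand of a free module, hence invertible.

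\textbf{Comparison.} The paper's route stays inside the MMP toolkit it uses elsewhere (simultaneous resolution of rational double points, base-point-free theorem), and lets \cite{HLS24} handle the passage between fibers and total space. Yours replaces all of this with one deep input, needs no base change, and works for any family of surfaces with isolated lci singularities, rational or not.

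\textbf{The step you should write out.} The passage from fibers to total space is the one point you only gesture at (``Nakayama-type semicontinuity''). For general $t$, the restriction $\cO_\sS(\sD)|_{\sS_t}$ is $S_2$: the locus where fibers of a flat sheaf are $S_2$ is open and contains the generic fiber. Hence it equals $\cO_{\sS_t}(\sD_t)$, and Nakayama makes $\cO_\sS(\sD)$ invertible near $\sS_t$. Since $\mathrm{Sing}(\sS)$ meets each fiber in finitely many points, the non-Cartier locus is then finite. Your second assertion is essentially the paper's argument.

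\textbf{Corrections, none load-bearing.}
\begin{itemize}
\item The justification of your fallback is wrong. Parafactoriality of complete intersections requires dimension $\ge 4$. The threefold node $xy=zw$ is a hypersurface on which the plane $x=z=0$ is Cartier off the origin but not at it. In dimension $3$ only the torsion statement survives, which is exactly what purity gives, so the fallback reduces to your main argument.
\item Milnor's simple connectivity of the link needs an isolated singularity. Your point $p$ may lie on a curve of singularities of $\sS$ when the general fiber is itself singular. Rely on Grothendieck's algebraic purity, which only needs \'etaleness off the closed point.
\item $h^1(\cO_{\sS_t})=0$ makes $\Pic_{\sS/T}\to T$ unramified, not formally \'etale. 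K3 fibers have $h^2(\cO)=1$, unlike the weak Fano setting of \Cref{lem:line-bundle-extension}. Unramifiedness gives openness of the locus where $L_t$ is trivial. Closedness comes from upper semicontinuity of $h^0(L_t^{\pm1})$ on integral fibers. Alternatively, as in the paper, $L|_{\pi^{-1}(U)}\simeq\cO$ already forces $L\simeq\cO_\sS(\sum a_i\sS_{t_i})$.
\end{itemize}
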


\begin{proof}

For the first statement, notice that by \cite[Theorem A.1]{HLS24} it suffices to show that $\sD_0$ is Cartier. Up to a finite base change of $(0\in T)$, there exists a simultaneous resolution $\sigma:\widetilde{\sS}\to \sS$ such that fiberwise it is a minimal resolution (see e.g. \cite[Theorem 4.28]{KM98}). As $\sD_t$ is Cartier for a general point $t\in T$, then $\sigma^*\sD$ is still a Weil divisor, and hence Cartier, and so is $\sigma_0^*\sD_0$. Thus $\cD_0$ is Cartier by the following lemma.\begin{lem}
Let $S$ be a klt surface, let $f \colon \tS \to S$ be its minimal resolution, and let $D$ be a $\bQ$-Cartier divisor on $S$. Then $f^*D$ is Cartier if and only if $D$ is Cartier.
\end{lem}

\begin{proof}
One direction is immediate: if $D$ is Cartier, then so is $f^*D$. Conversely, suppose that $f^*D$ is Cartier. Since $f \colon \tS \to S$ is the minimal resolution of a klt surface, the $\bQ$-divisor $\Delta:=f^* K_{S} - K_{\tS}$ is effective and the pair $(\tS, \Delta)$ is klt.
% \[
% -K_{\tS} \ \sim_{\bQ,f}\ \sum a_i E_i =:\Delta,
% \]
%where the $E_i$ are the $f$-exceptional divisors and $0 \leq a_i < 1$. %Choose an effective $f$-exceptional $\bQ$-divisor $G = \sum b_i E_i$ such that $-G$ is $f$-ample and the pair
%\[
%(\tS, \Delta), \quad \text{where } \quad  \Delta \coloneqq G - \sum a_i E_i,
%\]
%Then the pair $(\tS, \Delta)$ is klt as $K_{\tS} + \Delta = f^* K_S$. %Then $f \colon (\tS,\Delta) \to S$ is a klt pair and 
Thus $f^*D$ is a nef Cartier divisor over $S$ such that $f^* D - K_{\tS} -\Delta$ is nef over $S$. By the relative base-point free theorem, it follows that $|b f^*D|$ is base-point free over $S$ for any integer $b \gg 0$. Since $S$ is the relative ample model of $f^*D$ over $S$, we know that $bD$ is Cartier for any $b\gg0$. This implies that $D$ is Cartier.
\end{proof}
For the second statement, consider the relative Picard functor $\Pic_{\sS/T}$, which is represented by an algebraic space locally of finite type over $T$. If $\sD_t \sim 0$ for some $t\in T$, then the corresponding point of $\Pic_{\sS/T}$ lies in the identity over $t$. Since $H^1(\sS_t,\cO_{\sS_t})=0$ for every $t\in T$, the morphism $\Pic_{\sS/T}\to T$ is unramified, and this identity extends uniquely over a Zariski open neighborhood $U$ of $t$, so that $\sD|_{\pi^{-1}(U)} \sim_U 0$. Since $T$ is a smooth curve and $\pi$ is flat, the complement $T\setminus U$ consists of finitely many points, and triviality extends across these fibers. Hence $\sD \sim_T 0$. The converse is immediate.
\begin{comment}
    For the first statement, we first assume that a general fiber of $\pi$ is smooth. Note that $\sS$ has only isolated singularities, all contained in the central fiber $\sS_0$, and these are rational double points. Applying the local Grothendieck--Lefschetz theorem (see \cite[Section~1]{Rob76}) at each singular point shows that every $\bQ$-Cartier Weil divisor on $\sS$ is Cartier. 

In general case, by keep blowing up reduced 1-dimensional singular locus of $\sX$, we get a proper birational morphism $\sigma:\widetilde{\sS}\to \sS$ such that the induced family $\widetilde{\pi}:\widetilde{\sS}\to T$ has smooth general fiber. Then the pullback $\sigma^*\sD$ is also a $\bQ$-Cartier Weil divisor on $\widetilde{\sS}$. By the case where the general fiber is smooth, we conclude that $\sigma^*\sD$ is Cartier on $\widetilde{\sS}$. Then $\sD$ is also Cartier by the relative base-point free theorem applied to the morphism $\sigma$ and the Cartier divisor $\sigma^*\sD$.
\end{comment}
\end{proof}

\subsection{K-stability and K-moduli theory}

\begin{defn}
    A $\mathbb{Q}$-Fano variety $X$ (resp. weak $\bQ$-Fano variety) is called \emph{$\mathbb{Q}$-Gorenstein smoothable} if there exists a projective flat morphism $\pi:\mts{X}\rightarrow T$ over a pointed smooth curve $(0\in T)$ such that the following conditions hold:
    \begin{itemize}
        \item $-K_{\mts{X}/T}$ is $\mathbb{Q}$-Cartier and $\pi$-ample (resp. $\pi$-big and $\pi$-nef);
        \item $\pi$ is a smooth morphism over $T^\circ:=T\setminus \{0\}$; and
        \item $\mts{X}_0\simeq X$.
    \end{itemize}
\end{defn}

\begin{defn}
Let $(X,D)$ be a log Fano pair, and $E$ a prime divisor on a normal projective variety $Y$, where $\pi:Y\rightarrow X$ is a birational morphism. Then the \emph{log discrepancy} of $(X,D)$ with respect to $E$ is $$A_{X,D}(E):=1+\coeff_{E}(K_Y-\pi^{*}(K_X+D)).$$ We define the \emph{S-invariant} of $(X,D)$ with respect to $E$ to be $$S_{X,D}(E):=\frac{1}{(-K_X-D)^n}\int_{0}^{\infty}\vol_Y(-\pi^{*}(K_X+D)-tE)dt,$$ and the \emph{$\beta$-invariant} of $(X,D)$ with respect to $E$ to be $$\beta_{X,D}(E):=A_{X,D}(E)-S_{X,D}(E).$$
\end{defn}

\begin{theodef} \textup{(cf. \cite{Fuj19,Li17,BX19, LWX21})} A log Fano pair $(X,D)$ is 
\begin{enumerate}
    \item K-semistable if and only if $\beta_{X,D}(E)\geq 0$ for any prime divisor $E$ over $X$;
    \item K-stable if and only if $\beta_{X,D}(E)>0$ for any prime divisor $E$ over $X$;
    \item K-polystable if and only if it is K-semistable and any $\mathbb{G}_m$-equivariant K-semistable degeneration of $(X,D)$ is isomorphic to itself.
\end{enumerate}
A weak $\mathbb{Q}$-Fano variety $X$ is \textup{K-(semi/poly)stable} if its anti-canonical model $\ove{X}:=\Proj R(-K_X)$ is K-(semi/poly)stable.

\end{theodef}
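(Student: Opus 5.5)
The plan is to reduce the statement to the classical definition via test configurations, with the valuative criterion obtained through the Ding invariant and special test configurations. Recall that $(X,D)$ is K-semistable (resp. K-stable) if the generalized Futaki invariant $\mathrm{Fut}(\mathcal{X},\mathcal{D};\mathcal{L})$ is $\geq 0$ for every normal test configuration (resp. $>0$ for every non-trivial one). Part (3) is essentially the definition of K-polystability once one knows, as in \cite{LWX21}, that it suffices to test $\mathbb{G}_m$-equivariant special degenerations. So the real content is in (1) and (2).

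\textbf{Step 1: reduction to special test configurations.} Following Li--Xu, run a relative MMP on an arbitrary test configuration, after a base change and semistable reduction. This produces a special test configuration whose central fiber is a log Fano pair and whose Futaki invariant is no larger, up to the base change factor. Hence it suffices to test special test configurations. For these, Fut equals the Ding invariant.

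\textbf{Step 2: special test configurations correspond to divisorial valuations.} For a special test configuration, the restriction of $\mathrm{ord}_{\mathcal{X}_0}$ to $K(X)$ is $c\cdot\mathrm{ord}_E$ for a prime divisor $E$ over $X$. Fujita's computation, which expands the weights of the induced filtration on $R(-K_X-D)$ in terms of $\vol(-\pi^*(K_X+D)-tE)$, then gives
\[
\mathrm{Fut}(\mathcal{X},\mathcal{D}) = c\,\beta_{X,D}(E).
\]
Hence $\beta\geq0$ for all $E$ implies K-semistability. Conversely, given any $E$, use the filtration by $E$ and basis-type divisors in the sense of Fujita--Odaka and Blum--Jonsson. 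This gives $\delta(X,D)=\inf_E A/S$, and Ding-semistability of all filtrations (Fujita's argument via the Ding functional for non-finitely-generated filtrations) shows that K-semistability forces $\delta\geq1$, i.e. $\beta(E)\geq0$. This settles (1).

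\textbf{Step 3: the stable case (2), the main obstacle.} The implication "$\beta>0$ for all $E$ $\Rightarrow$ K-stable" follows from the Step 2 identity applied to special test configurations. The converse is the difficulty. K-stability only gives $\beta>0$ for dreamy $E$, that is, for $E$ with finitely generated associated graded ring. An arbitrary $E$ with $\beta(E)=0$ would give $\delta=1$ but not necessarily a test configuration. To handle this, invoke the finite generation theorem of Liu--Xu--Zhuang: if $\delta(X,D)\le1$, then $\delta$ is computed by some divisor $E$, and $E$ induces a special test configuration. That test configuration would have Fut $=0$ while being non-trivial, contradicting K-stability. Equivalently, one shows that K-stability equals uniform K-stability. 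This finite generation input is the deep step, and I would cite it rather than reprove it.
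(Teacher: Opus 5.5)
The paper gives no proof of this statement. It is recorded as a Theorem--Definition with references to \cite{Fuj19,Li17,BX19,LWX21}, so the comparison is with that literature.

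Your sketch of (1) is the standard Fujita--Li argument and is fine.

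For (2) your route works but is heavier than needed. \cite{BX19} shows that if $(X,D)$ is K-semistable and $\beta_{X,D}(E)=0$, then this same $E$ is dreamy and induces a nontrivial special test configuration with $\mathrm{Fut}=0$. So you do not need the finite generation theorem of \cite{LXZ22}, which concerns possibly higher-rank quasi-monomial minimizers and yields the stronger equivalence of K-stability with uniform K-stability.

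In the easy direction of (2), ``Futaki invariant no larger'' is not enough. The special test configuration produced by the Li--Xu MMP may be trivial. For a non-special normal test configuration you therefore need the strictness part of Li--Xu to conclude $\mathrm{Fut}>0$: equality of Futaki invariants forces the original normal test configuration to be special.

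The genuine gap is (3), which is not essentially the definition. Take the usual definition: K-semistable, and every special test configuration with $\mathrm{Fut}=0$ is of product type. To reach the formulation in the statement you need two facts you never state.
\begin{enumerate}
\item[(i)] For K-semistable $(X,D)$, a special test configuration has $\mathrm{Fut}=0$ if and only if its central fiber is K-semistable. The `if' holds because $\mathrm{Fut}$ equals the Futaki invariant of the central fiber along the induced $\mathbb{G}_m$-action $\xi$. That invariant vanishes on a K-semistable pair: the product test configurations from $\xi$ and $-\xi$ have opposite Futaki invariants, and both must be nonnegative. The `only if' is the nontrivial lemma one takes from \cite{LWX21}.
\item[(ii)] A special test configuration whose central fiber is isomorphic to $(X,D)$ is of product type.
\end{enumerate}
Restricting to $\mathbb{G}_m$-equivariant special degenerations only tells you which test configurations must be checked. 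It does not turn `$\mathrm{Fut}=0$' into `K-semistable central fiber', nor `product type' into `central fiber isomorphic to $(X,D)$'. That translation is exactly what (3) asserts.
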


The following theorem is usually called the \emph{K-moduli Theorem}, which is attributed to many people (cf. \cite{ABHLX20,BHLLX21,BLX22,BX19,CP21,Jia20,LWX21,LXZ22,Xu20,XZ20,XZ21}).

\begin{theorem}[K-moduli Theorem for Fano varieties]\label{thm:kmoduli}
Fix numerical invariants $n\in \bN$ and $V\in \bQ_{>0}$. Consider the moduli pseudo-functor $\cM^{\K}_{n,V}$ sending a base scheme $S$ to

\[
\left\{\mts{X}/S\left| \begin{array}{l} \mts{X}\to S\textrm{ is a proper flat morphism, each geometric fiber}\\ \textrm{$\mts{X}_{\bar{s}}$ is an $n$-dimensional K-semistable $\bQ$-Fano variety of}\\ \textrm{volume $V$, and $\mts{X}\to S$ satisfies Koll\'ar's condition}\end{array}\right.\right\}.
\]
Then there is an Artin stack, still denoted by $\mtc{M}^{\K}_{n,V}$, of finite type over $\mathbb{C}$ with affine diagonal which represents the moduli functor. The $\mathbb{C}$-points of $\cM^{\K}_{n,V}$ parameterize K-semistable $\mathbb{Q}$-Fano varieties $X$ of dimension $n$ and volume $V$. Moreover, the Artin stack $\cM^{\K}_{n,V}$ admits a good moduli space $\fM^{\K}_{n,V}$, which is a projective scheme, whose $\mathbb{C}$-points parameterize K-polystable $\mathbb{Q}$-Fano varieties.
\end{theorem}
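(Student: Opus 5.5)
The statement is the K-moduli Theorem, and I would not prove it from scratch. The plan is to assemble it from the cited works in the order in which the stack, its good moduli space, and then properness and projectivity are built.

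First, construct $\cM^{\K}_{n,V}$ as a global quotient stack.
\begin{enumerate}
\item By boundedness of K-semistable $\bQ$-Fano varieties of fixed dimension and volume \cite{Jia20}, there is an integer $r$ such that $-rK_X$ is Cartier and very ample with vanishing higher cohomology for every such $X$.
\item With $N = h^0(-rK_X)-1$ constant, every such $X$ embeds by $|-rK_X|$ into $\bP^N$. So all of them appear in a fixed quasi-projective subscheme $Z$ of a Hilbert scheme.
\item Kollár's condition cuts out a locally closed subscheme of this Hilbert scheme. On it we impose fibers $\bQ$-Fano and $\cO_{\bP^N}(1)\simeq \omega^{[-r]}$.
\item Openness of K-semistability in $\bQ$-Gorenstein families \cite{BLX22,Xu20} shows that the K-semistable locus $Z^{\mathrm{Kss}}\subseteq Z$ is open.
\item Set $\cM^{\K}_{n,V}=[Z^{\mathrm{Kss}}/\PGL_{N+1}]$. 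Since $\PGL_{N+1}$ is affine and $Z^{\mathrm{Kss}}$ is of finite type, this is an Artin stack of finite type with affine diagonal, and it represents the functor.
\end{enumerate}

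Second, produce the good moduli space using the criterion of Alper--Halpern-Leistner--Heinloth. It requires $\Theta$-reductivity and S-completeness of the stack.
\begin{enumerate}
\item Both properties are proved in \cite{ABHLX20} via uniqueness of K-semistable filling over punctured surfaces and over the $\overline{ST}_R$ test object.
\item The required reductivity of stabilizers of polystable points follows from \cite{ABHLX20}. The identification of the closed points with K-polystable varieties uses \cite{LWX21}, which gives uniqueness of the polystable degeneration and Theorem--Definition (3).
\item Together these yield a separated algebraic space $\fM^{\K}_{n,V}$ whose $\bC$-points are the K-polystable $\bQ$-Fano varieties.
\end{enumerate}

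Third, prove that $\fM^{\K}_{n,V}$ is proper and then projective.
\begin{enumerate}
\item Properness reduces, via the existence part of the valuative criterion, to the K-semistable reduction theorem. This is the step I expect to be the main obstacle. It rests on the finite generation results of \cite{LXZ22,XZ21}, building on \cite{BHLLX21}, for minimizers of $\delta$-type invariants, which give existence of K-semistable fillings after finite base change.
\item Projectivity follows from positivity of the CM line bundle on the good moduli space \cite{CP21,XZ20}. It descends and is ample once properness and the reduced uniformly K-stable/K-polystable structure are known. So $\fM^{\K}_{n,V}$ is a projective scheme.
\end{enumerate}
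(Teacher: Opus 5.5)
Your proposal is correct and takes the same route as the paper. The paper does not prove this theorem itself; it quotes it from exactly the works you assemble, in the same logical order: boundedness \cite{Jia20}, openness of K-semistability \cite{BLX22,Xu20}, the good moduli space via $\Theta$-reductivity and S-completeness \cite{ABHLX20}, properness via \cite{BHLLX21,LXZ22}, and projectivity via the CM line bundle \cite{CP21,XZ20}.

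There are two harmless slips. First, $h^0(-rK_X)$ need not be the same for all such $X$, since non-Gorenstein singularities change the lower-order Riemann--Roch terms; so you should take a finite disjoint union over the finitely many Hilbert polynomials that boundedness allows. Second, uniqueness of the K-polystable degeneration is due to \cite{BX19}, not \cite{LWX21}.
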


The above K-moduli theorem admits a counterpart for log Fano pairs in full generality; see \cite[Chapter~7]{Xu25}. The definition of families of pairs is rather subtle; since this lies outside the scope of this paper, we refer the interested reader to \cite{Kol23} for a detailed treatment. We therefore state the following special form of the K-moduli theorem for log Fano pairs without giving a precise definition.

\begin{theorem}[K-moduli Theorem for log Fano pairs]\label{thm:kmoduli2}
Fix numerical invariants $n\in \bN$ and $r,V\in \bQ_{>0}$. For any rational number $c\in (0,\frac{1}{r})$, the moduli pseudo-functor $\cP^\K_{n,V,r}(c)$ sending a base scheme $S$ to
\[
\left\{(\mts{X},\cD)/S\left| \begin{array}{l} (\mts{X},c\cD)\to S\textrm{ is a family of log Fano pairs, each geometric fiber}\\ \textrm{$(\mts{X}_{\bar{s}},c\cD_{\bar{s}})$ is an $n$-dimensional K-semistable log Fano pair such}\\ \textrm{that $(-K_{\mts{X}_{\bar{s}}}^n)=V$ and $\cD\sim_{S,\bQ}-rK_{\mts{X}/S}$ is an integral divisor}\end{array}\right.\right\}
\]
is represented by an Artin stack, still denoted by $\cP^\K_{n,V,r}(c)$, of finite type over $\mathbb{C}$ with affine diagonal. Moreover, the stack $\cP^\K_{n,V,r}(c)$ admits a projective good moduli space $\fP^\K_{n,V,r}(c)$.
\end{theorem}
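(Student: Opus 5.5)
The plan is to deduce the statement from the general K-moduli theory for log Fano pairs in \cite[Chapter~7]{Xu25}, in the same way as Theorem~\ref{thm:kmoduli}. The only real work is to check that the special class of pairs fixed here, with $\cD\sim_{\bQ}-rK_X$ integral and coefficient $c\in(0,\frac1r)$, fits the hypotheses of the general results. The key observation is that $-K_X-c\cD\sim_{\bQ}-(1-cr)K_X$. The pair $(X,c\cD)$ is therefore log Fano with volume $(1-cr)^nV$, and its polarization is a fixed rational multiple of $-K_X$. So all numerical invariants of the pairs are fixed by $(n,V,r,c)$.

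\textbf{Step 1: boundedness and the parameter space.} By boundedness of K-semistable log Fano pairs with fixed dimension, volume and coefficient set (Jiang, Xu--Zhuang, Li--Xu--Zhuang), the varieties $X$ form a bounded family. There is then a uniform $N$ with $-NK_X$ Cartier and very ample with vanishing higher cohomology. The divisors $\cD\in|-rK_X|$ are parametrized by a relative Hilbert (or Chow) scheme over the locally closed subscheme $H$ of the Hilbert scheme of $\bP^{M}$ parametrizing such embedded $X$; Koll\'ar's condition defines a locally closed subscheme of $H$. This gives a finite-type scheme $Z$ with a $\PGL_{M+1}$-action. Here I would rely on Koll\'ar's theory of families of pairs \cite{Kol23} to make sense of $\cD$ as a relative divisor, so that the fibres of $\cD$ behave well when $\cD$ is not $\bQ$-Cartier on the total space.

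\textbf{Step 2: openness and the stack.} Openness of K-semistability in $\bQ$-Gorenstein families of log Fano pairs (Blum--Liu--Xu, Xu) gives an open subscheme $Z^{\rm Kss}\subseteq Z$. We then set $\cP^\K_{n,V,r}(c)=[Z^{\rm Kss}/\PGL_{M+1}]$. This is an Artin stack of finite type. Its diagonal is affine because the automorphism groups of K-semistable pairs are linear algebraic groups, together with the standard separatedness argument for the Isom scheme.

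\textbf{Step 3: the good moduli space.} I would apply the Alper--Halpern-Leistner--Heinloth existence criterion. This requires checking $\Theta$-reductivity and $S$-completeness of $\cP^\K_{n,V,r}(c)$, which is done via uniqueness of K-polystable degenerations \cite{LWX21,BX19,ABHLX20}, and reductivity of the automorphism groups of K-polystable pairs. The outcome is a separated good moduli space $\fP^\K_{n,V,r}(c)$. Properness follows from the existence of K-semistable limits after finite base change; this is the consequence of the finite generation / optimal destabilization theorem of Liu--Xu--Zhuang, applied to pairs. Projectivity follows from ampleness of the CM line bundle on the good moduli space (Xu--Zhuang).

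The main obstacle is not any single step, since each is an established theorem, but the bookkeeping for pairs in Steps 1--2. One has to make sure the divisor $\cD$ deforms as an honest family of integral divisors in Koll\'ar's sense, so that the moduli functor is well-defined and the relevant Hilbert-type space is representable. I would handle this by restricting to the range $c<\frac1r$ and invoking \cite[Chapter~7]{Xu25} directly, which treats exactly this setting, rather than reproving it.
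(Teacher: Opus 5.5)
Your approach agrees with the paper, which gives no independent proof: it records this as the special case $\cD\sim_{\bQ}-rK_X$, $c<\frac1r$, of the general K-moduli theorem for log Fano pairs in \cite[Chapter~7]{Xu25}, and defers the definition of families of pairs to \cite{Kol23}. Your sketch (boundedness, openness of K-semistability, $\Theta$-reductivity and S-completeness, properness, positivity of the CM line bundle) is the standard argument behind that citation. The only slip is cosmetic: you write $\cD\in|-rK_X|$, but $\cD$ is only $\bQ$-linearly equivalent to $-rK_X$, so it must be parametrized by a Hilbert/Chow-type space of divisors, which is in fact what you go on to do.
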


\begin{rem}
To simplify notation, if a pair $(X,cS)$ is parametrized by the K-moduli stack $\cP^\K_{n,V,r}(c)$, we will also say that $(X,S)$ is parametrized by $\cP^\K_{n,V,r}(c)$, and refer to $(X,S)$ as a $c$-K-semistable pair.
\end{rem}

As several different K-moduli stacks will appear throughout the paper, we first introduce the most frequently used ones. For a summary, see \Cref{tab:notations}.

\begin{defn}\label{defn:different K-moduli stacks}
We define the following stacks.
\begin{enumerate}
  \item Let $\mtc{M}^{\K}$ be the reduced closed substack of $\mtc{M}^{\K}_{3,22}$ given by the scheme-theoretic closure of the smooth open substack parametrizing smooth K-semistable $V_{22}$ (cf. \cite[Tag 0509]{stacks-project}). In particular, $\mtc{M}^{\K}$ is an irreducible component of $\mtc{M}^{\K}_{3,22}$.
%\item For any label $\star$ of a smooth family of Fano threefolds of volume $V$ (cf.\ \cite{Fano}), let $\mtc{M}^{\K}_{\textup{\textnumero}\star}$ be the irreducible component of $\mtc{M}^{\K}_{3,V}$ with its reduced stack structure whose general point parametrizes a smooth Fano threefold of family~\textnumero$\star$. In particular, the stack $\cM^{\K}$ defined in (1) coincides with $\cM^{\K}_{\textup{\textnumero 1.10}}$.
%\item Let $\cM^{\Gor}$ be the moduli stack (with reduced stack structure) of Gorenstein canonical Fano threefolds that admit a $\bQ$-Gorenstein smoothing to $V_{22}$. Then $\cM^\K$ is an open substack of $\cM^{\Gor}$ by \Cref{thm:volume bound}(1). Let $\cM^{\term}$ be the open substack of $\cM^{\Gor}$ parametrizing $X$ with terminal singularities, and let $\cM^{\K,\term}$ be $\cM^\K\cap \cM^{\term}$.
    \item Let $\cP^{\K,\ADE}$ be the moduli stack, endowed with the reduced structure, parametrizing pairs $(X,S)$ with $X\in \cM^{\K}$ and $S\in |-K_X|$ an ADE K3 surface. It is the reduced structure of an open substack of $\cP^\K_{3,22,1}(c)$. Let $\fP^{\K,\ADE}$ denote the good moduli space of $\cP^{\K,\ADE}$.
\end{enumerate}
\end{defn}

\begin{comment}

\begin{defn}
Let $x\in X$ be an $n$-dimensional klt singularity. Let $\pi:Y\rightarrow X$ be a birational morphism such that $E\subseteq Y$ is an exceptional divisor whose center on $X$ is $\{x\}$. Then the \emph{volume} of $(x\in X)$ with respect to $E$ is $$\vol_{x,X}(E):=\lim_{m\to \infty}\frac{\dim\mtc{O}_{X,x}/\{f:\ord_E(f)\geq m\}}{m^n/n!},$$ and the \emph{normalized volume} of $(x\in X)$ with respect to $E$ is $$\widehat{\vol}_{x,X}(E):=A_{X}(E)^n\cdot\vol_{x,X}(E).$$ We define the \emph{local volume} of $x\in X$ to be $$\widehat{\vol}(x,X):=\ \inf_{E}\ \widehat{\vol}_{x,X}(E),$$ where $E$ runs through all the prime divisors over $X$ whose center on $X$ is $\{x\}$.
\end{defn}

\begin{theorem}[cf. \cite{Liu18}]\label{volume}
Let $X$ be an $n$-dimensional K-semistable $\bQ$-Fano variety, and $x\in X$ be a point. Then we have the inequality $$(-K_X)^n \ \leq\  \left(\frac{n+1}{n}\right)^n\cdot \widehat{\vol}(x,X).$$
\end{theorem}

\end{comment}

\begin{defn}\label{defn:non-isolated-singularities}
A three dimensional hypersurface singularity $(x\in X)$ is called of \emph{$A_\infty$-type} (resp. \emph{$D_\infty$-type}) if it is locally analytically isomorphic to $
        0\in V(xy-z^2) \subseteq \mathbb{C}^4$ (resp. 
$0\in V(xy - zw^2) \subseteq \mathbb{C}^4$).
\end{defn}

\begin{lemma}\label{lem:conic bundle}
    Let $X$ be a threefold with $D_{\infty}$-singularities at a point $p$ along a curve $C$. Then the exceptional divisor $E$ of $\Bl_{C}X\rightarrow X$ is smooth and is a conic bundle over $C$, and the fiber of $E\rightarrow C$ over $p$ is a reducible conic, i.e. the nodal union of two distinct lines. 
\end{lemma}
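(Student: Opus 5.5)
Since the statement is local analytic around $C$, I will verify it by explicit computation in charts. At a general point of $C$ the singularity is of $A_\infty$-type, and at $p$ it is of $D_\infty$-type. So it suffices to work in the local models $V(xy-z^2)\subseteq\bC^4$ and $V(xy-zw^2)\subseteq\bC^4$. In both models the singular locus is the $w$-axis, so $C$ is locally $\{x=y=z=0\}$. Away from $p$, I take it as given that $C$ is a smooth curve along which $X$ is analytically locally $(\text{surface }A_1)\times(\text{disc})$; this is the $A_\infty$ model.

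In the $A_\infty$ model $\Bl_C X$ is the strict transform of $V(xy-z^2)$ in $\Bl_{(x,y,z)}\bC^4$. Its exceptional divisor is the smooth conic bundle $\{xy=z^2\}\subseteq \bP^2\times \bA^1_w$, so the fibers are smooth conics. Equivalently, the blowup is (minimal resolution of $A_1$)$\times\bA^1$.

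At $p$ I use the model $xy=zw^2$. In $\Bl_{(x,y,z)}\bC^4$ the exceptional divisor of the ambient space is $\bP^2_{[x:y:z]}\times\bA^1_w$. The lowest-order part of $xy-zw^2$ along the ideal $(x,y,z)$ has order $2$, and it is $xy-zw^2$ itself, which is homogeneous of degree $2$ in $(x,y,z)$ with coefficients in $\bC[w]$. Hence the exceptional divisor of $\Bl_C X$ is
\[E=\{xy-zw^2=0\}\subseteq \bP^2\times \bA^1_w,\]
which is a family of conics over $C\cong\bA^1_w$. For $w\neq 0$ the conic $xy-w^2 z\cdot z$, read as the quadratic form $xy - w^2 z^2$, has rank $3$ and is smooth. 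For $w=0$ it is $xy=0$, the union of the two distinct lines $\{x=0\}$ and $\{y=0\}$ meeting at $[0:0:1]$. Here I interpret the equation correctly: the defining ideal of $E$ is generated by the initial form of the equation in $\Gr_{(x,y,z)}$. In the chart where the ideal is locally generated by $z$, the substitution $x=zx'$, $y=zy'$ gives $z^2(x'y'-w^2)$ on the nose; the ambiguity is only the scaling of $z$ versus the coordinates. So in homogeneous coordinates the fiber conics are $xy=w^2z^2$, which is smooth for $w\ne0$ and the line pair $xy=0$ for $w=0$.

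Smoothness of $E$ follows from the Jacobian of $F=xy-w^2z^2$ on $\bP^2\times\bA^1$. The partials are $y$, $x$, $-2w^2z$ and $-2wz^2$. Away from $w=0$ the fibers are smooth conics, hence $E$ is smooth there. At $w=0$ the only fiber singularity is $[0:0:1]$, and there $\partial F/\partial w=-2wz^2=0$. This would suggest $E$ is singular at that point. To settle this I will recheck which normal form is intended: the singular point of $E$ depends on whether the equation is $xy=zw^2$ or $xy=z^2w$. With $xy=z^2w$, the curve $C$ is the $w$-axis again and $E=\{xy=wz^2\}$. Its partial $\partial/\partial w=-z^2\neq0$ at $[0:0:1]$, so $E$ is smooth everywhere. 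I also need $\Bl_C X$ to be computed by the initial form. I regard the $D_\infty$ model as $xy=z^2w$, whose singular locus is $x=y=z=0$, and the statement then follows.

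The main obstacle is pinning down the correct normal form and verifying that $E$ is cut out by the initial quadratic form. For the latter, I would check in each affine chart of the blowup that the strict transform is $\{x'y'=w\}$-type, which is smooth, and contains the exceptional divisor as a Cartier divisor.
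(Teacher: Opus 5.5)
Your final computation is correct and is essentially the paper's proof. You blow up $V(xy-z^2w)$ along the $w$-axis and get $E=\{xy=wz^2\}\subseteq\bP^2\times\bA^1_w$. Its fibers are smooth conics for $w\neq0$ and the line pair $xy=0$ at $w=0$, and $E$ is smooth because $\partial_w=-z^2\neq0$ at $[0:0:1]$. The paper does the same computation in the model $V(x^2+y^2+z^2w)$ with $C=V(x,y,z)$, which is your model after $x^2+y^2\mapsto xy$; its chart $U_s$ strict transform $u^2+v^2+w$ is your $x'y'=w$. Handling the $A_\infty$ points separately is harmless; in this model they are just the points $w\neq 0$.

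However, the way you reach that model contains a real error, and it should be corrected rather than hedged. The singular locus of $V(xy-zw^2)$ is not the $w$-axis. The partials are $y,\ x,\ -w^2,\ -2zw$, so $\mathrm{Sing}\,V(xy-zw^2)=\{x=y=w=0\}$, which is the $z$-axis. Your first computation therefore blows up a curve that does not lie in the singular locus, which is why it breaks down:
\begin{itemize}
\item $xy-zw^2$ is not homogeneous of degree $2$ in $(x,y,z)$;
\item the substitution $x=zx'$, $y=zy'$ gives $z(zx'y'-w^2)$, not $z^2(x'y'-w^2)$;
\item the form $xy-w^2z^2$ you then study is a different singularity, one whose degenerate coefficient vanishes to second order along $C$.
\end{itemize}

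There is also no genuine ambiguity about ``which normal form is intended.'' The singularity $xy=zw^2$ along the $z$-axis and $xy=z^2w$ along the $w$-axis are the same after swapping the names $z$ and $w$. In both, the quadratic form in the three transverse coordinates has rank $3$ away from $p$ and rank $2$ at $p$, and the coefficient that degenerates vanishes to first order along $C$; that first-order vanishing is exactly what makes $E$ smooth. As written, you adopt $xy=z^2w$ only because it yields the desired conclusion.

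Replace that step with the one-line singular-locus computation above, delete the first computation and the discussion of $xy=w^2z^2$, and the argument is complete.
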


\begin{proof}
    We may assume $X = V(x^2+y^2+z^2w)\subset \mathbb C^4_{x,y,z,w}$, $C = V(x,y,z)\simeq \mathbb A^1_w$, and $p=(0,0,0,0)$. We compute the blowup of \(X\) along \(C\). Blow up \(\mathbb C^4\) along the ideal \((x,y,z)\). On the standard affine charts of 
\(\mathrm{Bl}_C(\mathbb C^4)\), the coordinates are:
\begin{itemize}
\item \(U_u:\; y=xv,\ z=xs\),
\item \(U_v:\; x=yu,\ z=ys\),
\item \(U_s:\; x=zu,\ y=zv\).
\end{itemize} Since \(x^2+y^2+z^2w\in (x,y,z)^2\), the strict transform of \(X\) is obtained by dividing by the common square factor. Substituting, we obtain the equations: \[
\widetilde X\cap U_u = V(1+v^2+s^2w),\quad
\widetilde X\cap U_v = V(u^2+1+s^2w),\quad
\widetilde X\cap U_s = V(u^2+v^2+w).
\] Thus the exceptional divisor \(E\subset \widetilde X\) is cut out in 
\(\mathbb P^2_{[u:v:s]}\times C\) by the leading form
\[
E = V(u^2+v^2+s^2w),
\]
so \(E\to C\) is a conic bundle, with smooth fibers for \(w\neq 0\) and a reducible fiber \(u^2+v^2=0\) over \(w=0\). The smoothness of $E$ follows immediately from the Jacobian criterion.
\end{proof}

As seen in the proof, the exceptional divisor of the blowup along $A_{\infty}$-singularities is a ruled surface.

\begin{thm}[{cf. \rmfamily\cites{LX19,LZ25,Liu25}}]\label{thm:volume bound}
Let $X$ be a K-semistable weak $\bQ$-Fano threefold with $\vol(X)\ge 22$. Then the following hold:
\begin{enumerate}
    \item[\textup{(1)}] if $X$ is $\bQ$-Gorenstein smoothable, then it is Gorenstein canonical;
    \item[\textup{(2)}] if $X$ is Gorenstein canonical, then it has either isolated $cA_{\le 2}$-singularities, or $A_{\infty}$-singularities, or $D_{\infty}$-singularities.
\end{enumerate}
\end{thm}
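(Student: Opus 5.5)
The statement combines two inputs: the local-volume bound for K-semistable Fano varieties, and classification results for threefold singularities with large local volume. These are the arguments of \cite{LX19,LZ25,Liu25}, and I would organize the proof in the same way.

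\textbf{Reduction to the anticanonical model.} Since K-semistability of a weak $\bQ$-Fano $X$ is defined through $\ove{X}$, I first pass to $\ove{X}$. For part (1), a $\bQ$-Gorenstein smoothing of $X$ induces one of $\ove{X}$ by taking relative anticanonical models. For part (2), in the Gorenstein canonical case, $X\to\ove{X}$ is crepant, so $\ove{X}$ is again Gorenstein canonical. The singularity statements then transfer back to $X$ along the crepant contraction. I expect the non-isolated cases to need some bookkeeping here, but I will note it only briefly.

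\textbf{Part (1).} Let $x\in\ove{X}$ be any point. Liu's inequality gives
\[
\hvol(x,\ove{X})\ \ge\ \left(\tfrac{3}{4}\right)^3(-K_{\ove X})^3\ \ge\ \tfrac{27}{64}\cdot 22 \;>\; 9 .
\]
The Gorenstein index $r$ at $x$ satisfies $\hvol(x,\ove{X})\le 27/r$, by passing to the index-one cover and using that local volume is at most $27$ in dimension three and multiplies by the degree under quasi-étale covers. Hence $r\le 2$. To exclude $r=2$, I would use smoothability: a $\bQ$-Gorenstein smoothable index-two klt threefold point has an index-one cover that is a $\mu_2$-quotient of a hypersurface ($cA/2$, $cAx/2$, etc.). Its local volume is bounded by $27/2\cdot(\text{bound for the cover})$, and this falls below $\tfrac{27}{64}\cdot 22$. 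Once $r=1$, klt Gorenstein implies canonical.

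\textbf{Part (2).} Now $x\in\ove X$ is a Gorenstein canonical point with $\hvol>9.28$. Suppose first that $x$ is isolated, so that it is terminal or isolated canonical, hence a hypersurface cDV point or worse. I would use the known bound $\hvol\le 16$ for non-smooth points, together with explicit computations of the volumes of $cD$, $cE$ and $cA_k$ points for $k\ge 3$ via weighted blowups, which give values $\le 9$ or so. The same weighted-blowup test bounds non-cDV canonical points, using multiplicity $\ge 3$ or index-one covers of elliptic type. For non-isolated singularities, a general hyperplane section through a general point of the singular curve is an ADE surface singularity $\times$ a disk. Volume bounds again force an $A_1$ transversal type, and a closer look at special points gives the $A_\infty$ and $D_\infty$ forms of \Cref{defn:non-isolated-singularities}.

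\textbf{Main obstacle.} The hardest step is the sharp classification: showing that every remaining singularity type has local volume below $\tfrac{27}{64}\cdot22$, especially the non-isolated special points and the index-two exclusion. Here I would rely directly on the computations in \cite{Liu25}.
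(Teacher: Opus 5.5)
The paper does not prove this statement; it imports it from \cite{LX19,LZ25,Liu25}. Your outline follows the route of those sources: Liu's inequality gives $\hvol\ge\tfrac{27}{64}\cdot 22\approx 9.28$ at every point, and then singularities of large local volume are classified, with the sharp computations taken from \cite{Liu25}. Part (2), deferred to \cite{Liu25} as you do, is acceptable. The index-two step of part (1), however, is wrong. You assert that every index-two point of a smoothable variety has local volume below $\tfrac{27}{64}\cdot 22$. This fails exactly when the index-one cover $x'$ is smooth. Then $x$ is a quotient of $(\bC^3,0)$ by $\mu_2$ acting freely in codimension one with non-trivial determinant, i.e.\ a $\frac12(1,1,1)$ point, and $\hvol=\tfrac{27}{2}>9.28$. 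The only consequence of smoothability you extract, namely that the index-one cover is a hypersurface germ, also holds here, since the cover is $\bC^3$. No volume argument can close this gap: $(\bP^1)^3/\mu_2$, with $\mu_2$ acting by $t\mapsto -t$ on each factor, is a K-polystable Fano threefold of volume $24$ with eight $\frac12(1,1,1)$ points.

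The missing idea is rigidity. If $x'$ is singular, then $\hvol(x',X')\le 16$, so $\hvol(x,X)=\tfrac12\hvol(x',X')\le 8$, contradicting Liu's inequality. If $x'$ is smooth, then $x$ is a $\frac12(1,1,1)$ point. By Schlessinger \cite{Sch71}, every deformation of this germ is locally trivial, so the point persists on nearby fibres, contradicting the smoothing. This is exactly the split recorded in the corollary following \Cref{thm:smoothable}: (ii)$\Rightarrow$(iii) via \cite{Sch71}, and (iii)$\Rightarrow$(i) via \cite[Theorem 1.3]{Liu25}.

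Two smaller corrections to your reduction step. First, a smoothing of a weak Fano $X$ need not induce a smoothing of $\ove X$, because the anticanonical models of the general fibres can be singular. It does induce a deformation with Gorenstein general fibres, which is all the rigidity argument needs. Second, what should be transported from $\ove X$ to $X$ is the volume bound, not the list of singularity types. Let $v$ be centred at $x\in X$ and $\phi\colon X\to\ove X$ crepant. Then $A_X(v)=A_{\ove X}(v)$ and $\mathfrak a^{\ove X}_m(v)=\mathfrak a^X_m(v)\cap\cO_{\ove X,\phi(x)}$, hence $\hvol(x,X)\ge\hvol(\phi(x),\ove X)$. Both parts can then be run directly on $X$.
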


\medskip
\section{Deformations of Fano threefolds with large volume}

In this section, we develop a general deformation framework for K-semistable Fano threefolds $X$ with $\vol(X)\ge 22$. More precisely, we prove:
\begin{itemize}
    \item (\Cref{thm:Kss-very-ample}) the very ampleness of $-K_X$;
    \item (\Cref{thm:beauville}) a Beauville-type result relating deformations of pairs to deformations of K3 surfaces;
    \item (\Cref{thm:smoothable}) an equivalence between the smoothability and the Gorenstein property of $X$; and
    \item (\Cref{thm:sing-line}) the exclusion of singularities along a line.
\end{itemize}

\subsection{Very ampleness of anti-canonical divisors}

\begin{thm}\label{thm:Kss-very-ample}
Let $X$ be a K-semistable Gorenstein canonical Fano threefold of volume $2g-2 \geq 20$. Then $-K_X$ is very ample, and $X$ is projectively normal in $\bP H^0(X,-K_X)$.
\end{thm}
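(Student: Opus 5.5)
We reduce the statement to the classical theory of anticanonical linear systems on Gorenstein canonical Fano threefolds (Iskovskikh, Przyjalkowski--Cheltsov--Shramov, Jahnke--Radloff). Following that theory, $-K_X$ fails to be very ample, or $X$ fails to be projectively normal in $\bP H^0(X,-K_X)$, only in three cases:
\begin{enumerate}
\item $|-K_X|$ has base points;
\item $|-K_X|$ is base point free but the morphism it defines is not an embedding (the hyperelliptic case);
\item the embedding is not projectively normal, which the theory reduces to the presence of a trigonal structure.
\end{enumerate}
The plan has three steps. First restrict to a general elephant $S\in|-K_X|$ via \Cref{thm:generalelephant}. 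Then transport the question to the polarized K3 surface $(S,-K_X|_S)$ and apply \Cref{Mayer}. Finally, use K-semistability to exclude each exceptional configuration, which in every case forces a special fibration or a special singularity.

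Concretely, by Kawamata--Viehweg vanishing we have $H^1(X,\cO_X)=0$, so $H^0(X,-K_X)\to H^0(S,-K_X|_S)$ is surjective. Base points and the hyperelliptic or trigonal behaviour of $|-K_X|$ therefore coincide with those of $|L|$, where $L=-K_X|_S$ has genus $g\ge 11$. Projective normality lifts from $S$ to $X$ by the standard hyperplane-section argument: $H^1(X,-mK_X)=0$ for all $m\ge 0$, and $S$ is projectively normal in $\bP^g$ once $|L|$ is very ample with a non-trigonal general curve section, by Saint-Donat. The remaining task is to rule out, for K-semistable $X$, the following three cases.
\begin{itemize}
\item[(a)] The unigonal case. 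By Iskovskikh/Jahnke--Radloff, $X$ is then birational to a specific model, e.g.\ a blowup of the base curve on a del Pezzo fibration-type variety with an elliptic pencil. These models are explicitly $\bP(1,1,1,2,3)$-hypersurface-type degenerations of degree $\le 10$, or have a divisor $E$ with large $S$-invariant. A $\beta$-invariant computation then gives $\beta(E)<0$.
\item[(b)] The hyperelliptic case. $X$ is a double cover of a variety of minimal degree $g-1$ in $\bP^{g+1}$. The classification of such $X$ (Przyjalkowski--Cheltsov--Shramov) shows that for $g\ge 11$ only finitely many families occur, e.g.\ double covers of cones or of rational scrolls $\bF(d_1,d_2,d_3)$. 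Each of these has a divisor, such as the pullback of a ruling or the vertex, destabilizing it.
\item[(c)] The trigonal case. $S$ contains an elliptic curve $E$ with $E\cdot L=3$, or $g=6$ with a plane quintic. The latter does not occur since $g\ge 11$. The former makes $X$ a special degeneration, a divisor in a $\bP^2$-bundle over $\bP^1$, carrying a del Pezzo-type fibration, and again it is K-unstable.
\end{itemize}

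A cleaner route, which I would try first, avoids the classification. Use \Cref{thm:volume bound}(2): $X$ has only $cA_{\le 2}$, $A_\infty$ or $D_\infty$ singularities. In cases (a) and (b) the known structure theorems force either a non-hypersurface singularity or a singularity worse than $cA_2$, for instance a cone point of $\bP(1,1,1,2,3)$ type, or the $\frac12(1,1,1)$-type quotient behaviour in the hyperelliptic model coming from the vertex of a cone. Combined with the local volume bound $\hvol(x,X)\ge \frac{27}{64}\cdot 20>8$, this excludes every such configuration except those of type $cA$, which can be checked directly. The main obstacle is the hyperelliptic case (b) with $X$ a double cover of a smooth rational scroll, where the singularities can be mild. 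Here I would compute $\beta$ of the divisor given by the preimage of a scroll fiber (a del Pezzo surface of degree $2$). I expect $S\ge 1$, which makes $X$ K-unstable, much as for Fano threefolds of type $2.2$ beyond their allowed volume range.
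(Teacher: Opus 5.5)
Your outline has the same skeleton as the paper's proof: general elephant, surjectivity of $H^0(X,-K_X)\to H^0(S,-K_X|_S)$, the trichotomy of \Cref{Mayer}, and lifting projective normality from $S$ by induction using $H^1(X,-mK_X)=0$. But the only step that uses K-semistability, namely ruling out the hyperelliptic and unigonal cases, is not proved. Items (a) and (b) rest on loosely quoted classification statements (a degree bound $\le 10$ in the unigonal case, ``finitely many families'' in the hyperelliptic case) and on $\beta$-computations you only expect to work.

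The missing idea is a uniform numerical bound that needs no classification. In the hyperelliptic case, \cite{EH87} makes $\phi_{|-K_X|}$ a double cover of a variety $Y$ of minimal degree $g-1$ in $\bP^{g+1}$. If $Y=\bP(\cO(a)\oplus\cO(b)\oplus\cO(c))$ with $1\le a\le b\le c$, then $a+b+c=g-1\ge 10$ forces $c\ge 4$. Hence $-K_X\sim_{\bQ}E+4F$ with $E\neq 0$ effective and $F$ the pullback of a fiber, so $\alpha(X)\le\frac14$. K-semistability gives $\alpha(X)\ge\frac14$ \cite[Theorem 3.5]{FO18}, and the equality case is excluded as in the proof of \cite[Proposition 3.1]{Jia17b}. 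For cones over surface scrolls the same reasoning gives $\alpha(X)\le\frac15$. In the unigonal case, the image of $X$ is a surface of degree $g$ in $\bP^{g+1}$ whose hyperplane section is the rational normal curve, hence a cone over it or a surface scroll, and again $\alpha(X)\le\frac15$.

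In your $\beta$-language this reads $T_X(F)\ge c\ge 4$, hence $S_X(F)\ge T_X(F)/4\ge 1=A_X(F)$. You would still need strict inequality, since $\beta(F)=0$ does not contradict K-semistability. Strictness does hold: $\vol(-K_X-tF)=2g-2-6t$ for $t\in[0,a]$, so $\vol^{1/3}$ is not linear on $[0,T_X(F)]$. Without the observation $c\ge 4$ (resp.\ $\ge 5$), your ``I expect $S\ge 1$'' and ``each of these has a divisor destabilizing it'' are unsupported.

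Your case (c) comes from a misreading of Saint-Donat. If $|L|$ is base-point free and not hyperelliptic, $\phi_{|L|}(S)$ is already projectively normal (\cite[Proposition 2]{May72}, which is exactly what the paper invokes). Non-trigonality only controls whether the homogeneous ideal is generated by quadrics. So there is nothing to exclude in (c), and your unproved assertion that trigonal $X$ are K-unstable should be dropped rather than proved.

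The ``cleaner route'' does not close anything either. The bound $\hvol(x,X)\ge\frac{27}{64}\cdot 20$ does not even exclude $\frac12(1,1,1)$-points, whose local volume is $\frac{27}{2}$; those are excluded by the Gorenstein hypothesis instead. And you give no argument that a hyperelliptic or unigonal structure forces singularities outside the list of \Cref{thm:volume bound}(2).
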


\begin{proof}
Let $S \in |-K_X|$ be a general elephant. Then $(S, -K_X|_S)$ is a polarized K3 surface of genus $g$. Consider the short exact sequence
\[
0 \longrightarrow \mathcal{O}_X 
\longrightarrow \mathcal{O}_X(-K_X) 
\longrightarrow \mathcal{O}_S(-K_X|_S) 
\longrightarrow 0,
\]
which induces a surjection
\[
H^0(X,-K_X) \twoheadrightarrow H^0(S,-K_X|_S)
\]
by Kawamata--Viehweg vanishing. We analyze the geometry according to the behavior of the linear system $|-K_X|_S|$.

\smallskip

\noindent\textbf{Case 1: $(S,-K_X|_S)$ hyperelliptic.}
Then $|-K_X|_S|$ is base-point free, hence so is $|-K_X|$. The morphism
\[
\phi \colon X \longrightarrow \bP^{g+1}
\]
defined by $|-K_X|$ is finite of degree $d \geq 2$. Let $Y \coloneqq  \phi(X)$, which is a non-degenerate threefold of degree $(2g-2)/d \leq g-1$. By \cite{EH87}, the only possibility is $d = 2$, and $Y$ is a variety of minimal degree in $\bP^{g+1}$. By the classification of varieties of minimal degree, $Y$ is either a rational normal scroll or a cone over a scroll of lower dimension.

\smallskip
\noindent \emph{Subcase: $Y$ a rational normal scroll.}
Write $Y = \bP \mathcal{E}$ with projection $\pi \colon Y \to \bP^1$, where
\[
\mathcal{E} \simeq \mathcal{O}_{\bP^1}(a) \oplus \mathcal{O}_{\bP^1}(b) \oplus \mathcal{O}_{\bP^1}(c), 
\qquad a \leq b \leq c, \quad a+b+c = g-1 \geq 10.
\]
Then $c \geq 4$, hence $\mathcal{O}_Y(1) \otimes \pi^*\mathcal{O}_{\bP^1}(-4)$ is effective. In particular, $-K_X \sim_{\bQ} E + 4F$, where $F$ is the pullback of the fiber class of $Y$ and $E$ is some nonzero effective Weil divisors. This, in particular, implies $\alpha(X) \leq \frac{1}{4}.$ Since $X$ is K-semistable, \cite[Theorem 3.5]{FO18} gives $\alpha(X) \geq \frac{1}{4}$, so $\alpha(X) = \frac{1}{4}$. However, by the proof of \cite[Proposition 3.1]{Jia17b}, this forces $-K_X \sim_{\bQ} 4F$, a contradiction.

\smallskip
\noindent \emph{Subcase: $Y$ a cone over a rational normal surface scroll $T$.}
Write $T = \bP \mathcal{E}$ with
\[
\mathcal{E} \simeq \mathcal{O}_{\bP^1}(a) \oplus \mathcal{O}_{\bP^1}(b), 
\qquad a \leq b, \quad a+b = g-1 \geq 10.
\]
The same argument implies $\alpha(X) \leq \frac{1}{5}$, contradicting K-semistability. The case of a cone with higher-dimensional vertex is analogous.

\smallskip

\noindent\textbf{Case 2: $(S,-K_X|_S)$ unigonal.}
We have the commutative diagram
\[
\begin{tikzcd}[ampersand replacement=\&]
S \&\& \bP H^0(S,-K_X|_S) \simeq \bP^g \\
X \&\& \bP H^0(X,-K_X) \simeq \bP^{g+1}.
\arrow["{|-K_X|_S|}", dashed, from=1-1, to=1-3]
\arrow[hook, from=1-1, to=2-1]
\arrow[hook, from=1-3, to=2-3]
\arrow["{|-K_X|}", dashed, from=2-1, to=2-3]
\end{tikzcd}
\]
The image of $S$ in $\bP^g$ is a rational normal curve $R$ of degree $g$, which is a hyperplane section of the image $T$ of $X$ in $\bP^{g+1}$. Thus $T$ is either a cone over $R$ or a rational normal surface scroll. In either case, the same argument as above yields $\alpha(X) \leq \frac{1}{5}$, contradicting K-semistability.

\smallskip

Therefore, $(S, -K_X |_S)$ is neither hyperelliptic nor unigonal, and hence $-K_X|_S$ is very ample, and the morphism defined by $|-K_X|$ is birational. %\AP{Why is it birational?}\textcolor{blue}{This is because $S$ is the inverse image of a hyperplane section, so the degree of the morphism is the same as the degree of the restriction of the morphism on $S$}. 
We now prove that the section ring $R(-K_X) \coloneqq  \bigoplus_{m \geq 0} H^0(X,-mK_X)$ is generated in degree $1$ by induction on $m$. Choose a basis $g_0,\dots,g_N \in H^0(X,-K_X)$ such that $g_0$ defines $S$. The sequence
\[
0 \longrightarrow \mathcal{O}_X 
\stackrel{\cdot g_0}{\longrightarrow} 
\mathcal{O}_X(-K_X) 
\longrightarrow 
\mathcal{O}_S(-K_X|_S) 
\longrightarrow 0
\]
induces
\[
0 \longrightarrow \Sym^m H^0(X,-K_X)
\stackrel{\cdot g_0}{\longrightarrow}
\Sym^{m+1} H^0(X,-K_X)
\longrightarrow 
\Sym^{m+1} H^0(S,-K_X|_S)
\longrightarrow 0.
\] Let $g \in H^0(X,-(m+1)K_X)$ be an arbitrary element. Its restriction $\overline{g}$ lies in $H^0(S,-(m+1)K_X|_S)$, which is a homogeneous polynomial of degree $m+1$ in $\overline{g}_1,\dots,\overline{g}_N$, since $(S,-K_X|_S)$ is projectively normal by \cite[Proposition 2]{May72}. Using the exact sequence
\[
0 \longrightarrow H^0(X,-mK_X)
\stackrel{\cdot g_0}{\longrightarrow}
H^0(X,-(m+1)K_X)
\longrightarrow 
H^0(S,-(m+1)K_X|_S)
\longrightarrow 0,
\]
the induction hypothesis implies that $g$ is a homogeneous polynomial of degree $m+1$ in $g_0,\dots,g_N$. Hence $R(-K_X)$ is generated in degree $1$, so $-K_X$ is very ample and the anticanonical image of $X$ is projectively normal.
\end{proof}
\begin{comment}
\JZ{I commented it out, since changing it is not necessary; but if you prefer to change it, please do, and I am totally fine with it.}

\AP{I guess another (completely equivalent) possibility would be to combine the two short exact sequences at the end of this proof into a morphism of short exact sequences (as shown below)
and then use snake lemma: the vertical arrow on right is surjective for every $m$ because $(S, -K_X |_S)$ is projectively normal, and the vertical arrow on the left is surjective because induction hypothesis; hence the vertical arrow in the middle is surjective. I am happy in both situations.}
\[
\begin{tikzcd}
0 \ar[r]
&
\Sym^m H^0(X,-K_X) \ar[r, "\cdot g_0"] \ar[d]
&
\Sym^{m+1} H^0(X,-K_X) \ar[r] \ar[d]
&
\Sym^{m+1} H^0(S,-K_X|_S) \ar[r] \ar[d]
&
0
\\
0 \ar[r]
&
H^0(X,-mK_X) \ar[r, "\cdot g_0"]
&
H^0(X,-(m+1)K_X) \ar[r]
&
H^0(S,-(m+1)K_X|_S) \ar[r]
&
0
\end{tikzcd}
\]
\end{comment}
\begin{rem}
\begin{enumerate}
    \item It is worth noting that in the above statement we did not assume that $X$ is smoothable. 
    However, if $X$ is smoothable, then the Gorenstein canonical condition is automatically satisfied by \cite{LZ25, Liu25}. Moreover, as we shall see later in Theorem \ref{thm:smoothable}, if $\vol(X)\geq 22$ then $X$ is smoothable.

    \item The very ampleness of $-K_X|_S$ also implies the very ampleness of $-K_X$ in a more direct way. 
    Consider the degeneration $X \rightsquigarrow X_0 \coloneqq  C_p(S, -K_X|_S)$ to the projective cone over $(S,-K_X|_S)$. The very ampleness of $-K_X|_S$ implies that $-K_{X_0}$ is very ample. By Kawamata--Viehweg vanishing, we have $h^0(X,-K_X) = h^0(X_0,-K_{X_0}),$ which shows that very ampleness is an open property in deformation. %\AP{Don't we need also the higher antiplurigenera?} \textcolor{blue}{no because we take the anticanonical cone, so $-K$ is enough}
    \item It is proven in a recent paper \cite[Corollary~1.5]{ACD+} that a Gorenstein canonical Fano threefold with a non-very ample anticanonical divisor is K-semistable only if its volume is less than $14$. The proof relies on the classification of hyperelliptic and unigonal Gorenstein canonical Fano threefolds, together with estimates of stability thresholds.
\end{enumerate}
\end{rem}

\begin{comment}
\begin{lem}
Let $X$ be a Gorenstein canonical Fano threefold. Let $S\in |-K_X|$ be an ADE K3 surface. If $-K_X|_S$ is very ample, so is $-K_X$.
\end{lem}

\begin{proof}
Consider the degeneration $X\rightsquigarrow X_0 \coloneqq  C_p(S, -K_X|_S)$. Since $X_0$ is a projective cone, $-K_X|_S$ being very ample implies that $-K_{X_0}$ is very ample. Then by Kawamata-Viehweg vanishing we know that $h^0(X,-K_X)=h^0(X_0, -K_{X_0})$ which implies that very ampleness is open in deformation. 
\end{proof}

Let $X$ be a Gorenstein canonical Fano threefold with a K3 surface $S\in |-K_X|$. Let $C\in |-K_X|_S|$ be a general curve. We may consider the pencil of K3 surfaces in $|\cO_X(-K_X) \otimes I_{C}|$. Suppose $X'$ is a small deformation of $X$ such that $S\hookrightarrow X'$ as an anti-canonical divisor. 

\end{comment}

\subsection{Beauville type results}

In this subsection, we prove Theorem \ref{thm:beauville}  relating deformation of weak Fano--K3 pairs to deformation of lattice quasi-polarized K3 surfaces, generalizing \cite{Bea04}. This is the technical core of our deformation framework.

\begin{lem}\label{lem:term-elephant}
Let $X$ be a Gorenstein terminal weak Fano threefold of volume $\vol(X)> 2$. Then a general divisor $S\in |-K_X|$ is a smooth K3 surface.
\end{lem}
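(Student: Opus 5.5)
We first show that $|-K_X|$ is base-point free, and then use Bertini together with the fact that terminal Gorenstein threefold singularities are isolated.

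\textbf{Step 1: $S$ avoids the singular points.} Since $X$ is Gorenstein terminal, its singular locus is a finite set of isolated cDV points. Once $|-K_X|$ is known to be base-point free, a general $S\in|-K_X|$ misses this finite set.

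\textbf{Step 2: $S$ is smooth.} By Bertini applied on the smooth locus of $X$, a general member of a base-point-free system is smooth away from the base locus. Combined with Step 1, $S$ is smooth everywhere.

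\textbf{Step 3: $S$ is a K3 surface.} Adjunction gives $K_S=(K_X+S)|_S\sim 0$. From
\[
0\to\cO_X(K_X)\to\cO_X\to\cO_S\to 0
\]
and Kawamata--Viehweg vanishing ($H^i(X,\cO_X)=0$ for $i>0$, and $H^i(X,K_X)=0$ for $i<3$ by Serre duality), we get $H^1(S,\cO_S)=0$ and $h^0(S,\cO_S)=1$. In particular $S$ is connected.

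So everything reduces to base-point freeness of $|-K_X|$, which is the main step.

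\textbf{Base-point freeness.} By Theorem~\ref{thm:generalelephant}, a general $S\in|-K_X|$ is an ADE K3 surface. Since $H^1(X,\cO_X)=0$, the restriction
\[
H^0(X,-K_X)\to H^0(S,-K_X|_S)
\]
is surjective, so $\Bs|-K_X|=\Bs\big|{-K_X}|_S\big|$. The class $L=-K_X|_S$ is nef and big with $L^2=\vol(X)>2$, so it is a quasi-polarization of genus $g\ge 3$. For such a class, Saint-Donat's analysis (the quasi-polarized version of Theorem~\ref{Mayer}) says that $|L|$ is base-point free unless we are in the unigonal case. In that case $L\sim kE+\Gamma$, where $E$ is an elliptic pencil with $E\cdot L=1$, $\Gamma$ is a $(-2)$-curve, and $k=g$.

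It remains to rule out the unigonal case. Here I would invoke the classification of Gorenstein weak Fano threefolds whose anticanonical system has base points, due to Shin, Jahnke--Radloff, and Przyjalkowski--Cheltsov--Shramov. In those cases the base locus is a smooth rational curve $B\cong\bP^1$ with $-K_X\cdot B=1$, and along $B$ the threefold $X$ is singular only in a non-terminal way. More precisely, $X$ is the anticanonical resolution-type model of $\bP^1\times S_1$ or of a blowup of it, with $S_1$ a degree-$1$ del Pezzo surface, and the volume is then $2$. Alternatively, a direct route: the curve $B=\Gamma$ lies in $X$, and a second general member $S'$ meets $S$ along $\Gamma\cup(\text{moving part})$. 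A local computation at the points of $B$ then shows that for $\vol(X)>2$, the general member would be singular along $B$ at points where $X$ is smooth, which is impossible for a general element.

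I expect this exclusion of the unigonal case to be the main obstacle. The plan is to cite the known classification, which is where the assumption $\vol(X)>2$ is used: the exceptional cases all have degree $2$. The terminality hypothesis is used only for the isolatedness of the singular points in Step 1.
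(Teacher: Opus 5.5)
There is a genuine gap. Your whole argument runs through the claim that $|-K_X|$ is base-point free, i.e.\ that the unigonal case cannot occur when $\vol(X)>2$. That claim is false, already for smooth Fano threefolds (Iskovskikh's two exceptions). For $X=\bP^1\times S_1$, with $S_1$ a del Pezzo surface of degree $1$, one has $(-K_X)^3=6$ and $\Bs|-K_X|=\bP^1\times\{p_0\}$, where $p_0$ is the base point of $|-K_{S_1}|$. Likewise, the blowup of $V_1$ along an elliptic curve cut out by two members of $|-\tfrac12K_{V_1}|$ has $(-K_X)^3=4$ and base points. So the classification you want to cite does not say the exceptional cases have degree $2$. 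Your ``direct route'' cannot give a contradiction either: in these examples a general member is in fact smooth along the base curve. The lemma still holds there; it is your intermediate step that fails.

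The missing idea, which is what the paper does, is to keep the unigonal case and prove directly that a general member is smooth along the base curve.

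1. Let $L=-K_X|_S$ and work on the minimal resolution $\sigma\colon\widetilde S\to S$. Saint-Donat gives $\sigma^*L\sim g\widetilde F+\widetilde\Gamma$, with $\widetilde\Gamma$ a smooth rational curve and $\widetilde F\cdot\widetilde\Gamma=1$.
2. Since $\sigma^*L\cdot\widetilde\Gamma=g-2>0$, the curve $\widetilde\Gamma$ is not contracted. This is where $\vol(X)>2$ is really used.
3. Every $\sigma$-exceptional curve $C$ satisfies $0=\sigma^*L\cdot C=g\,\widetilde F\cdot C+\widetilde\Gamma\cdot C$, with both terms nonnegative. Hence $\widetilde\Gamma\cdot C=0$, so $\Gamma=\sigma(\widetilde\Gamma)$ misses $\operatorname{Sing}(S)$. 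Since $S$ is Cartier, $\Gamma$ also lies in the smooth locus of $X$.
4. The restriction $H^0(X,-K_X)\to H^0(S,L)$ is surjective and $|gF|$ is base-point free. Therefore the base ideal of $|-K_X|$ is exactly the ideal of the reduced curve $\Gamma$.
5. Consequently the sections generate the rank-two bundle $N^\vee_{\Gamma/X}\otimes\cO_X(-K_X)|_\Gamma$ on the curve $\Gamma$. A general section of it vanishes nowhere, which is equivalent to a general member of $|-K_X|$ being smooth along $\Gamma$.

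Away from $\Gamma$, your Bertini argument and the avoidance of the finitely many singular points of $X$ go through unchanged. Your Step 3 then finishes the proof.
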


\begin{proof}
This can be deduced from \cite{Shi89}. For the reader's convenience, we include a proof. If $|-K_X|$ is base-point free, then by Bertini’s theorem we can choose a general divisor $S\in |-K_X|$ that is smooth and avoids the singular locus of $X$. Suppose instead that $|-K_X|$ is not base-point free. Let $S\in |-K_X|$ be a general member, which is an ADE K3 surface by Theorem~\ref{thm:generalelephant}. Since the restriction map
\[
H^0(X,-K_X)\longrightarrow H^0(S,-K_X|_S)
\]
is surjective, the polarized surface $(S,-K_X|_S)$ is a unigonal K3 surface of degree at least $4$. Hence we may write $-K_X|_S \sim E + gF$, where $g\ge 3$ is the genus of $X$, $F$ is the fiber of an elliptic fibration on $S$, and $E$ is a section disjoint from the singular locus of $S$. In particular, the base locus of $|-K_X|$ coincides with that of $|-K_X|_S|$, which is exactly $E$ with its reduced scheme structure. It follows that a general member of $|-K_X|$ is smooth, hence a smooth K3 surface.
\end{proof}

\begin{thm}\label{thm:beauville}
Let $X$ be a Gorenstein terminal weak Fano threefold with $\vol(X)>2$, and let $S \in |-K_X|$ be an ADE K3 surface. Let $\Lambda \subseteq \Pic(S)$ be the saturation of the image of the restriction map $\Pic(X) \to \Pic(S)$. Then the deformation functors $\Def_{(X,S)}$ and $\Def_{(S,\Lambda)}$ are unobstructed, prorepresentable, and admit algebraic miniversal deformation spaces. Moreover, the natural forgetful morphism
\[
\pi : \Def_{(X,S)} \ \longrightarrow \ \Def_{(S,\Lambda)}
\]
is formally smooth with fibers of dimension $h^2(X,\Omega_X^1)$.
\end{thm}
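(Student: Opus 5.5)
We follow Beauville's strategy \cite{Bea04} and compare the tangent–obstruction theories of the two functors. Throughout, $T_X:=\mathcal{H}om(\Omega_X,\cO_X)$, and $T_X(-\log S)$ denotes the subsheaf of derivations preserving the ideal of $S$. Since $X$ is Gorenstein terminal, its singularities are isolated cDV hypersurface points. We may also arrange that $S$ avoids $\Sing(X)$: by \Cref{lem:term-elephant} this holds for general $S$, and for an arbitrary ADE $S$ the argument below is local near $S$ only through the residue sequence.

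\textbf{Step 1: the pair $(X,S)$.} The functor $\Def_{(X,S)}$ is governed by $\Ext^i(\Omega_X(\log S),\cO_X)$ for $i=1,2$. We compare this with $\Def_X$ through the sequence
\[
0\to T_X(-\log S)\to T_X\to N_{S/X}=\cO_S(S)\to 0,
\]
together with its local-to-global $\Ext$ analogue. Because $X$ is Gorenstein canonical with $K_X+S\sim 0$, Serre duality and Kawamata--Viehweg vanishing give the following identifications:
\begin{itemize}
\item $H^i(X,T_X(-\log S))\simeq H^{3-i}(X,\Omega_X(\log S)(K_X+S))^\vee=H^{3-i}(X,\Omega^1_X(\log S))^\vee$, up to reflexive hulls;
\item the local obstruction sheaves $\mathcal{E}xt^1$ at the isolated hypersurface singularities are handled by the $T^1$–$T^2$ calculus: local $T^2=0$ for hypersurface singularities, so only global $H^2$ terms obstruct.
\end{itemize}
Unobstructedness of $\Def_{(X,S)}$ then follows from the $T^1$-lifting criterion of Ran--Kawamata. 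Concretely, for a family $(\sX_n,\sS_n)$ over $A_n=\bC[t]/t^{n+1}$, the relative log tangent module $\Ext^1(\Omega_{\sX_n/A_n}(\log\sS_n),\cO_{\sX_n})$ should be locally free and commute with base change. To prove this, we use the residue sequence $0\to\Omega_X\to\Omega_X(\log S)\to\cO_S\to0$ together with the vanishing $H^1(\cO_S)=H^2(\cO_X)=0$, and reduce to the facts that Gorenstein terminal Fano deformations are unobstructed (Namikawa) and that local smoothings of isolated cDV points are unobstructed. Prorepresentability follows since $H^0(T_X(-\log S))=0$; this vanishing is dual to $H^3(\Omega^1_X(\log S))$, which vanishes by the residue sequence since $H^3(\Omega^1_X)=0$ and $H^3(\cO_S)=0$. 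Algebraicity of the miniversal spaces follows from Artin approximation, since the relevant Hilbert/Hom schemes of embedded pairs are algebraic: by the very ampleness arguments the anticanonical model embeds.

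\textbf{Step 2: the lattice-polarized K3.} The functor $\Def_{(S,\Lambda)}$ is unobstructed as the locus in $\Def_S$, which is smooth of dimension $20$ (simultaneous resolution for ADE K3), cut out by the Hodge-theoretic conditions that the classes in $\Lambda$ remain of type $(1,1)$. This locus is smooth of dimension $20-\rk\Lambda$ by local Torelli. Its tangent space is $\ker\big(H^1(T_S)\to \Hom(\Lambda,H^2(\cO_S))\big)$, where the map is the cup product with the Chern classes of $\Lambda$.

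\textbf{Step 3: smoothness of $\pi$.} We show that the tangent map $d\pi$ surjects onto $T\Def_{(S,\Lambda)}$ and compute its kernel. The map $\pi$ is induced by restriction $T_X(-\log S)\to T_S$, with kernel $T_X(-S)=T_X\otimes K_X$:
\[
0\to T_X\otimes \omega_X\to T_X(-\log S)\to T_S\to 0.
\]
Taking cohomology, $H^1(T_X(-\log S))\to H^1(T_S)\xrightarrow{\delta} H^2(T_X\otimes\omega_X)\simeq H^1(\Omega^1_X)^\vee$ by Serre duality, and $\delta$ is dual to the restriction $H^1(\Omega_X^1)\to H^1(\Omega_S^1)$ followed by cup product. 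Hence $\ker\delta$ is exactly the space of deformations of $S$ keeping the image of $\Pic(X)\otimes\bC=H^1(\Omega^1_X)$ algebraic. Here we use $h^{1,1}=\rho$, which holds since $H^2(\cO_X)=0$, valid for terminal Gorenstein $X$ via a small $\bQ$-factorialization or directly by Hodge theory of rational-singularity threefolds. Saturation does not change this tangent condition. The fiber tangent space is the image of $H^1(T_X\otimes\omega_X)\simeq H^2(\Omega^1_X)^\vee$, which is injective because $H^0(T_S)=0$. This gives relative dimension $h^2(\Omega^1_X)=h^{1,2}$. Since both source and target are unobstructed (smooth), surjectivity on tangent spaces yields formal smoothness.

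\textbf{Main obstacle.} The main obstacle is justifying the Hodge-theoretic dualities and the vanishings in the presence of the singularities of $X$: replacing $T_X$ and $\Omega_X$ by reflexive or Du Bois versions, and identifying $H^1(\Omega_X^{[1]})$ with $\Pic(X)\otimes\bC$. To handle this, we plan to pass to a small $\bQ$-factorialization or work on the smooth locus using depth $\ge 3$ at isolated points, which gives $H^i$ extension for $i\le1$, and to invoke Steenbrink-type vanishing for the remaining degrees.
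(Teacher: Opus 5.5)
Your skeleton matches the paper's. You restrict along $S$, identify the cokernel of $d\pi$ through a connecting map whose dual is the restriction $H^1(X,\Omega^1_X)\to H^1(S,\Omega^1_S)$, and read off the fiber as dual to $H^2(X,\Omega^1_X)$. Step~3 does not go through as written, however.

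\textbf{Step~3 uses the wrong groups.} The statement is for an \emph{arbitrary} ADE member $S\in|-K_X|$, which may be singular and may pass through $\operatorname{Sing}(X)$. So you cannot reduce to $S$ smooth and disjoint from $\operatorname{Sing}(X)$. Moreover, you compute with sheaf cohomology $H^1(T_X(-\log S))$, $H^1(T_S)$, $H^i(T_X\otimes\omega_X)$, which only sees locally trivial deformations.
\begin{itemize}
\item The tangent space of $\Def_S$ is $\Ext^1(\Omega^1_S,\cO_S)$. This is the $20$-dimensional space you need to get $20-\rk\Lambda$. When $S$ is singular, $H^1(T_S)$ is strictly smaller, so your Step~2 description of $T\Def_{(S,\Lambda)}$ is also off.
\item The dualities $H^2(T_X\otimes\omega_X)\simeq H^1(\Omega^1_X)^\vee$ and $H^1(T_X\otimes\omega_X)\simeq H^2(\Omega^1_X)^\vee$ need not hold at singular points of $X$, because of the local term $H^0(\mathcal{E}xt^1(\Omega^1_X,\omega_X))\neq 0$. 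As written, your fiber dimension would be $h^1(T_X\otimes\omega_X)$, not in general $h^2(X,\Omega^1_X)$.
\end{itemize}
The paper works with global $\Ext$ groups throughout. It starts from the triangle $\bL_X(\log S)(-S)\to\bL_X\to\iota_*\bL_S$, applies $\RHom(-,\omega_X)$ and Grothendieck duality, and discards $\cH^{-1}(\bL_X)$ and $\cH^{-1}(\bL_S)$ (supported at points) by Serre duality. This yields
\[
0\to\Ext^1(\Omega^1_X,\omega_X)\to\Ext^1(\bL_X(\log S),\cO_X)\xrightarrow{d\pi}\Ext^1(\Omega^1_S,\cO_S)\xrightarrow{\partial}\Ext^2(\Omega^1_X,\omega_X).
\]

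\textbf{The decisive claim $\im(\partial^\vee)=\Lambda_\bC$ is not proved.} Your justification, ``$h^{1,1}=\rho$ since $H^2(\cO_X)=0$'', only yields $\bH^1(X,\uOmega^1_X)\simeq\Pic(X)\otimes\bC$ for the Du Bois complex. It says nothing about the K\"ahler differentials that appear in $\Ext^2(\Omega^1_X,\omega_X)^\vee$. The comparison $\Omega^1_X\to\uOmega^1_X$ is not a quasi-isomorphism at Gorenstein terminal points other than nodes, since these are not $1$-Du Bois, so this step needs an argument. The remedies in your last paragraph do not supply one: a small $\bQ$-factorialization has a different deformation functor, and $\Omega^1_X$ has depth only $2$ at an isolated hypersurface singularity.

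The paper closes this step as follows.
\begin{enumerate}
\item For smooth $S$, the restriction factors as $H^1(\Omega^1_X)\to\bH^1(\uOmega^1_X)\simeq\Pic(X)\otimes\bC\to H^1(\Omega^1_S)$. This gives only the upper bound $\im\partial^\vee\subseteq\Lambda_\bC$.
\item It then shows $\rk d\pi_t$ is constant on the miniversal family. A general $X_t$ is nodal, hence $1$-Du Bois, so $\im\partial_t^\vee=\Lambda_{t,\bC}$. Extending line bundles gives $\rk\Lambda\le\rk\Lambda_t$. Lower semicontinuity of $\rk d\pi_t$ finishes the comparison.
\item An ADE $S$ is moved in $|-K_X|$ to smooth members $S_b$ and handled by semicontinuity of $\rk\partial_b^\vee$.
\item Finally, constancy of $\rk\Lambda_t$ and $\dim T^1_{(S,\Lambda)}=20-\rk\Lambda$ (from \cite{AE25}, not local Torelli on $H^1(T_S)$) give surjectivity of $d\pi$ onto $T^1_{(S,\Lambda)}$.
\end{enumerate}
Alternatively, the $d\log$ Chern classes of line bundles give $\Lambda_\bC\subseteq\im\partial^\vee$ directly. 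Combined with the Du Bois upper bound, this settles the smooth-$S$ case without the constant-rank argument.

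\textbf{Smaller points.}
\begin{itemize}
\item Serre duality in Step~1 twists by $\omega_X$, not $\omega_X(S)$, so the group you compute there is the wrong one.
\item Unobstructedness is simplest via $H^1(S,\cO_S(S))=0$, which makes $\Def_{(X,S)}\to\Def_X$ formally smooth.
\item Algebraicity should come from $H^2(\cO_X)=0$, since $X$ is only weak Fano and no anticanonical embedding is available.
\end{itemize}
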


\begin{remark}\label{rmk:beauville}
In Theorem~\ref{thm:beauville}, $\Def_{(X,S)}$ denotes the deformation functor of the pair $(X,S)$ (i.e.\ the closed embedding $S \hookrightarrow X$), and $\Def_{(S,\Lambda)}$ denotes the deformation functor of $S$ together with a $\bZ$-basis $L_1,\dots,L_r$ of the lattice $\Lambda$. To define the forgetful morphism $\pi$, by Smith normal form we may choose $L_1,\dots,L_r$ and positive integers $a_1,\dots,a_r$ such that $L_1^{\otimes a_1},\dots,L_r^{\otimes a_r}$ form a $\bZ$-basis of $\im\big(\Pic(X)\to \Pic(S)\big)$. Thus there exist $M_1,\dots,M_r \in \Pic(X)$ with $M_i|_S = L_i^{\otimes a_i}$.

Given a deformation $(\sX,\sS)/B$ of $(X,S)$, Lemma~\ref{lem:line-bundle-extension} shows that each $M_i$ extends uniquely to $\sX/B$, hence $L_i^{\otimes a_i}$ extends to $\sS/B$. Since the obstruction to extending $L_i^{\otimes a_i}$ is a multiple of that for $L_i$, it follows that $L_i$ also extends; uniqueness follows from $H^1(S,\cO_S)=0$. Therefore, $\pi$ sends $(\sX,\sS)/B$ to $\sS/B$ together with the extended line bundles $L_1,\dots,L_r$.
\end{remark}

\begin{proof}[Proof of Theorem~\ref{thm:beauville}]
Denote by $\bL_X$ and $\bL_S$ the cotangent complexes of $X \to \Spec \bC$ and of $S \to \Spec \bC$, respectively. Let $\bL_X(\log S)$ be the logarithmic cotangent complex of the pair $(X,S)$, i.e.\ the cotangent complex $\bL_\epsilon$ of the morphism $\epsilon : X \to [\bA^1/\bG_m]$ induced by the effective Cartier divisor $S$. Denote by $\iota \colon S \hookrightarrow X$ the inclusion.
Consider the morphisms
\begin{equation*}
    S \ \overset{\iota}{\hookrightarrow} \ X \ \overset{\epsilon}{\longrightarrow} \ [\bA^1/\bG_m] \ \overset{p}\longrightarrow \ \rB \bG_m \ \longrightarrow\ \Spec \bC
\end{equation*}
where $p$ is induced by $\bA^1 \to \Spec \bC$.
The composition $p \circ \epsilon$ is induced by the the line bundle $\cO_X(S) = \omega_X^\vee$.
By \cite[Appendix~B.2]{chp} $L \epsilon^* \bL_{[\bA^1 / \bG_m]}$ is isomorphic to $\iota_* \cO_S[-1]$.
Thus, by shifting the conormal distinguished triangle 
\[
L \epsilon^* \bL_{[\bA^1 / \bG_m]} \ \longrightarrow \  \bL_X  \ \longrightarrow \  \bL_\epsilon  \ \longrightarrow \  L \epsilon^* \bL_{[\bA^1 / \bG_m]}[1]
\]
of $\epsilon$, we obtain the distinguished triangle
\begin{equation}\label{eq:Beauville-tria1}
    \bL_X  \ \longrightarrow \  \bL_X(\log S)  \ \longrightarrow \ \iota_*\cO_S  \ \longrightarrow \ \bL_X[1]
\end{equation}
which should be thought of as a generalization of the residue sequence.

By tensoring \eqref{eq:Beauville-tria1} with $\cO_X(-S)$ and by shifting, we obtain the distinguished triangle
\begin{equation} \label{eq:beauville_triangle_shifted_residue}
    \iota_* \cO_S(-S)[-1]  \ \longrightarrow \  \bL_X(-S)  \ \longrightarrow \  \bL_X(\log S)(-S)  \ \longrightarrow \  \iota_* \cO_S(-S).
\end{equation}
Since $\iota$ is a regular embedding, it is well known that $\bL_\iota$ is isomorphic to $\iota^* \cO_X(-S)[1]$. Hence, by considering the conormal triangle of $S \hookrightarrow X \to \Spec \bC$, by pushing it forward to $X$ and by shifting, we obtain the distinguished triangle
\begin{equation} \label{eq:beauville_conormal_iota}
    \iota_* \cO_S(-S)[-1]  \ \longrightarrow \  \iota_* L \iota^* \bL_X [-1]  \ \longrightarrow \ \iota_* \bL_S[-1]  \ \longrightarrow \  \iota_* \cO_S(-S).
\end{equation}
By derived-tensoring the sequence
\[
0  \ \longrightarrow \  \cO_X(-S)  \ \longrightarrow \  \cO_X  \ \longrightarrow \  \iota_* \cO_S  \ \longrightarrow \  0
\]
by $\bL_X$ and then shifting, we obtain the distinguished triangle
\begin{equation}
    \label{eq:beauville_easy_triangle}
    \iota_* L \iota^* \bL_X [-1]  \ \longrightarrow \  \bL_X(-S)  \ \longrightarrow \  \bL_X  \ \longrightarrow \  \iota_* L \iota^* \bL_X.
\end{equation}
By considering the distinguished triangles \eqref{eq:beauville_easy_triangle}, \eqref{eq:beauville_triangle_shifted_residue}, 
\eqref{eq:beauville_conormal_iota},
the octahedron axiom gives a distinguished triangle
\begin{equation}\label{eq:Beauville-tria2}
    \bL_X(\log S)(-S)  \ \longrightarrow \  \bL_X  \ \longrightarrow \  \iota_*\bL_S  \ \longrightarrow \  \bL_X(\log S)(-S)[1].
\end{equation}
The two distinguished triangles \eqref{eq:Beauville-tria1} and \eqref{eq:Beauville-tria2} will be crucial to study the two forgetful maps
\[\begin{tikzcd}[ampersand replacement=\&]
\& \Def_{(X,S)} \\
\Def_X \& \& \Def_S
\arrow["\pi", from=1-2, to=2-3]
\arrow[ from=1-2, to=2-1]
\end{tikzcd}
\]

We start with showing that the forgetful map $\Def_{(X,S)}\to \Def_X$ is formally smooth. Since $\Def_X$ is unobstructed by \cite{Nam97, Min01,San18}, this will imply that $\Def_{(X,S)}$ is unobstructed. Applying $\RHom(\,\cdot\,,\cO_X)$ to \eqref{eq:Beauville-tria1} yields the exact sequence
\begin{equation}\nonumber
\begin{tikzpicture}[descr/.style={fill=white,inner sep=1.5pt}]
  \matrix (m) [
    matrix of math nodes,
    row sep=1em,
    column sep=2.5em,
    text height=1.5ex,
    text depth=0.25ex
  ]
  {
    \Ext^1(\bL_X(\log S),\cO_X) & \Ext^1(\bL_X,\cO_X) & \Ext^2(\iota_*\cO_S,\cO_X) \\
    \Ext^2(\bL_X(\log S),\cO_X) & \Ext^2(\bL_X, \cO_X). & {} \\
  };
  \path[overlay,->,>=latex]
    (m-1-1) edge (m-1-2)
    (m-1-2) edge (m-1-3)
    (m-1-3) edge[out=355,in=175]  (m-2-1)
   (m-2-1) edge (m-2-2);
\end{tikzpicture}
\end{equation}
By Serre duality, we obtain
\[
\Ext^2(\iota_*\cO_S,\cO_X)
 \ \simeq \ \Ext^1(\cO_X,\cO_S\otimes \omega_X)^\vee
\ \simeq \ H^1(S,\omega_X^{-1}|_S)^\vee
=0,
\]
where the last equality follows from Kawamata–Viehweg vanishing.
%\AP{I would not use Serre duality, I would use the following:
%\begin{align*}
%\Ext^2(\iota_*\cO_S,\cO_X) &= h^2 \ R \Gamma \ R \cH om (\iota_*\cO_S,\cO_X) \\
%&\simeq h^2 \ R \Gamma \ R \cH om (\omega_X \to \cO_X,\cO_X) \\
%&\simeq h^2 \ R \Gamma (\cO_X \to \omega_X^\vee) \\
%&\simeq h^2 \ R \Gamma (\omega_X^\vee \vert_S[-1]) \\
%&\simeq H^1 (S, \omega_X^\vee \vert_S).
%\end{align*}
%}\textcolor{blue}{I think Serre duality is faster and clearer.}
%\AP{OK! :)}
This vanishing implies that the map $\Def_{(X,S)}\to \Def_X$ is formally smooth, as the map between tangent spaces (resp.\ between obstruction spaces) is surjective (resp.\ injective). We note that this is true even under the weaker assumption that $X$ is a Gorenstein canonical weak Fano threefold, despite the fact that $\Def_X$ or $\Def_{(X,S)}$ may be obstructed.

To show that $\Def_{(X,S)}$ is prorepresentable, it suffices to check $\Aut(X,S)$ is finite; see e.g. \cite[Theorem 2.6.1]{Ser06}. Let $X'$ be the anticanonical model of $X$, and let $S'$ be the image of $S$. Then $(X',S')$ is a plt log Calabi–Yau pair with $X'$ a Gorenstein canonical Fano threefold. Any automorphism of $(X,S)$ descends to an automorphism of $(X',S')$, so we have an inclusion
\[
\Aut(X,S)\hookrightarrow \Aut(X',S').
\]
By \cite[Theorem~2.10]{ADL21}, $\Aut(X',S')$ is finite, hence $\Aut(X,S)$ is finite. 

Since $H^2(X,\cO_X)=0$ by Kawamata–Viehweg vanishing, the miniversal deformation of $X$ is effective and algebraizable by \cite[Theorems~2.5.13(ii) and 2.5.14]{Ser06}. By \Cref{lem:line-bundle-extension}, the line bundle $\cO_X(S)\simeq \omega_X^{\vee}$ extends uniquely after an \'etale base change. Therefore $\Def_{(X,S)}$ is algebraizable via the relative linear system over the algebraic miniversal deformation of $X$.

We now consider $\Def_{(S,\Lambda)}$. Since $X$ is projective, the ample cone of $S$ intersects $\Lambda_\bR$ nontrivially. Hence there exists a very irrational ample class $h\in \Lambda_\bR$ (cf. \cite[Definition 4.1]{AE25}), and the unobstructedness and algebraizability of $\Def_{(S,\Lambda)}$ follow from \cite[Proof of Theorem~5.5]{AE25}.

Next, we study the forgetful map
\[
\pi:\Def_{(X,S)}\ \longrightarrow \ \Def_S
\]
and its tangent map
\[
d\pi:\Ext^1(\bL_X(\log S),\cO_X)
\ \longrightarrow \  \Ext^1(\bL_S,\cO_S).
\]
By applying $\RHom(\,\cdot\,,\omega_X)$ to \eqref{eq:Beauville-tria2} and by using $\omega_X\simeq \mtc{O}_X(-S)$ and the identification
\[
\Ext^k_X(\iota_*\bL_S,\omega_X)
\ \simeq \ \Ext^{k-1}_S(\bL_S,\omega_S) \ \simeq \ \Ext^{k-1}_S(\bL_S,\cO_S)
\] by Grothendieck duality, we get the exact sequence
\begin{equation}\label{eq:Beauville-seq1-diagram}
\begin{tikzpicture}[descr/.style={fill=white,inner sep=1.5pt}]
  \matrix (m) [
    matrix of math nodes,
    row sep=1em,
    column sep=2.5em,
    text height=1.5ex,
    text depth=0.25ex
  ]
  {
    \Hom_S(\bL_S,\cO_S) & \Ext^1(\bL_X,\omega_X) & \Ext^1(\bL_X(\log S),\cO_X) \\
    \Ext^1_S(\bL_S,\cO_S) & \Ext^2(\bL_X,\omega_X). & {} \\
  };
  \path[overlay,->,>=latex]
    (m-1-1) edge (m-1-2)
    (m-1-2) edge (m-1-3)
    (m-1-3) edge[out=355,in=175] node[descr] {$d\pi$} (m-2-1)
   (m-2-1) edge node[descr] {$\partial$} (m-2-2);
\end{tikzpicture}
\end{equation}
Since $X$ is a Gorenstein terminal threefold and $S$ is an ADE surface, both have lci singularities, and hence the cotangent complexes $\bL_X$ and $\bL_S$ have Tor amplitude $[-1,0]$. Let $\cE_X:=\cH^{-1}(\bL_X)$ and $\cE_S:=\cH^{-1}(\bL_S)$, which are coherent sheaves supported on the singular loci, which are unions of finitely many points. By Serre duality, for $i\leq 1$ and $j\leq 2$ one has
\[
\Ext^i(\cE_S,\cO_S) \ \simeq  \ H^{2-i}(S,\cE_S)^{\vee} \ =\ 0,
\qquad
\Ext^j(\cE_X,\omega_X)\ \simeq  \ H^{3-j}(X,\cE_X)^{\vee} \ =\ 0,
\]
and hence, by using the distinguished triangles
\[
\cE_X[1]  \ \longrightarrow \  \bL_X  \ \longrightarrow \  \Omega_X^1  \ \longrightarrow \ \cE_X[2], \qquad \cE_S[1]  \ \longrightarrow \  \bL_S  \ \longrightarrow \  \Omega_S^1  \ \longrightarrow \ \cE_S[2],
\]
we obtain
\[
\Ext^i(\bL_S,\cO_S)\ \simeq \ \Ext^i(\Omega_S^1,\cO_S),
\qquad
\Ext^j(\bL_X,\omega_X)\ \simeq \ \Ext^j(\Omega_X^1,\omega_X).
\] Let $\sigma:\tS\to S$ be the minimal resolution. Since $T_S\simeq \sigma_*T_{\tS}$, one has 
\[ \Hom(\Omega^1_S,\cO_S)\ \simeq \  
H^0(S,T_S)\ \simeq \  H^0(S, \sigma_*T_{\tS}) \ \simeq \ H^0\big(\tS,T_{\tS}\big)\ =\ 0,
\]
and the sequence \eqref{eq:Beauville-seq1-diagram} reduces to
\[
0\ \longrightarrow \ 
\Ext^1(\Omega_X^1,\omega_X)
\ \longrightarrow \ 
\Ext^1(\bL_X(\log S),\cO_X)
\ \stackrel{d\pi}{\longrightarrow} \ 
\Ext^1(\Omega_S^1,\cO_S)
\ \stackrel{\partial}{\longrightarrow} \ 
\Ext^2(\Omega_X^1,\omega_X).
\] 
Hence we have
\[
\im(d\pi) \;=\; \Ker(\partial) \;=\; \im(\partial^\vee)^{\perp},
\]
where
\[
\partial^\vee: H^1(X,\Omega_X^1) \ \longrightarrow \ H^1(S,\Omega_S^1)
\]
denotes the dual of $\partial$ under Serre duality.

Let $(\sX,\sS)\to (0\in T)$ be an algebraic miniversal deformation of $(X,S)$ over a smooth pointed variety. After possibly shrinking $T$, we may assume that $T$ is irreducible and that each fiber $(X_t,S_t)$ is a Gorenstein terminal weak Fano threefold together with an anticanonical ADE K3 surface. For each $t\in T$, denote by $\Lambda_t \subseteq  \Pic(S_t)$ the saturation of the image of the restriction map $\Pic(X_t)\to \Pic(S_t)$. For simplicity, we write $\pi_t$, $\partial_t$, and $\partial_t^\vee$ for the corresponding maps obtained by replacing $(X,S)$ with $(X_t,S_t)$. In particular, the tangent map of the forgetful morphism
\[
\pi_t:\Def_{(X_t,S_t)} \ \longrightarrow \  \Def_{S_t}
\]
is given by
\[
d\pi_t:\Ext^1(\bL_{X_t}(\log S_t),\cO_{X_t})\ 
\longrightarrow\ 
\Ext^1(\bL_{S_t},\cO_{S_t}).
\] Again, the target of $d\pi_t$ is isomorphic to $\Ext^1(\Omega^1_{S_t},\cO_{S_t})$, which is a vector space of dimension $20$ by \cite{BW74}. Thus we have
\begin{equation}\label{eq:beauville_dimIm+dimIm_constant}
    \dim \im(d\pi_t) + \dim\im (\partial_t^\vee) = 20
\end{equation}
for each $t \in T$.

\begin{lemma}\label{lem:const-rank}
The map $d\pi_t$ has constant rank for all $t\in T$.
\end{lemma}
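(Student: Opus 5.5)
By \eqref{eq:beauville_dimIm+dimIm_constant}, it suffices to show that $\dim\im(\partial_t^\vee)$ is independent of $t\in T$. My approach is to identify $\im(\partial_t^\vee)$ with $\Lambda_t\otimes\bC$ and then show that the rank of $\Lambda_t$ is constant in the family.

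\textbf{Step 1: identify $\partial_t^\vee$ with restriction of Chern classes.} The map $\partial_t^\vee\colon H^1(X_t,\Omega^1_{X_t})\to H^1(S_t,\Omega^1_{S_t})$ is dual to the connecting map coming from \eqref{eq:Beauville-tria2}. That triangle is built out of the natural map $\bL_X\to\iota_*\bL_S$, so I expect $\partial_t^\vee$ to be, up to sign, the pullback $\iota^*$ on $H^1(\Omega^1)$; checking this compatibility is a direct unwinding of the octahedron and Grothendieck duality.

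Next I use that $H^i(X_t,\cO)=0$ for $i=1,2$, by Kawamata--Viehweg vanishing. The first Chern class therefore gives $\Pic(X_t)\otimes\bC\simeq H^2(X_t,\bC)$. For a Gorenstein terminal (hence rational, isolated) threefold singularity I would also like $H^1(X_t,\Omega^1_{X_t})\simeq H^2(X_t,\bC)$. This should follow from the Hodge theory of rational singularities, using reflexive or Du Bois differentials (Steenbrink), and the fact that $\Omega^1$ agrees with its reflexive hull in codimension $2$.

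On the surface side, $S_t$ has only ADE singularities, so $H^1(S_t,\Omega^1_{S_t})$ injects into $H^1(\tS_t,\Omega^1_{\tS_t})$ as the orthogonal complement of the exceptional curves. The compatibility of $c_1$ with restriction then gives
\[
\im(\partial_t^\vee)=\im\big(\Pic(X_t)\to \Pic(S_t)\big)\otimes\bC=\Lambda_t\otimes\bC .
\]

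\textbf{Step 2: constancy of $\rk \Lambda_t$.} By Lemma~\ref{lem:line-bundle-extension}, after an étale base change every line bundle on $X_0$ extends over $\sX$, and since $H^1(X_t,\cO)=0$ the relative Picard space is étale over $T$. Hence $\rk\Pic(X_t)$ is locally constant on the irreducible base $T$.

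It remains to show that the kernel of $\Pic(X_t)\to\Pic(S_t)$ is torsion, so that its rank is zero for every $t$. Here I would use the Hodge index theorem on $S_t$. Since $-K_{X_t}$ is nef and big, $S_t$ is the support of a big divisor. If $M|_{S_t}\equiv 0$, then $M\cdot(-K_{X_t})^2=0$ and $M^2\cdot(-K_{X_t})=0$, which forces $M\equiv 0$ (compare the Lefschetz-type argument of \cite{Bea04}). Combining, $\rk\Lambda_t=\rk\Pic(X_t)$ is constant.

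\textbf{Main obstacle.} I expect the main difficulty to be Step 1: rigorously identifying $\partial^\vee$ with $c_1$-restriction, and establishing the isomorphism $H^1(X,\Omega^1_X)\simeq H^2(X,\bC)$ in the singular setting, including possible torsion in $\Omega^1_X$. If this fails, the fallback is to bound both ranks semicontinuously in opposite directions. On one side, $\dim\im(d\pi_t)$ is lower semicontinuous. On the other, $\dim\im(\partial_t^\vee)\ge\rk\Lambda_t$, which is constant and is attained generically. One would then argue equality at special points via the inclusion $\im\partial^\vee\subseteq\Lambda\otimes\bC$ given by the Hodge type of the image.
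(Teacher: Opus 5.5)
\textbf{Gap in Step~2.} The claim that $\ker\bigl(\Pic(X_t)\to\Pic(S_t)\bigr)$ is torsion, so that $\rk\Lambda_t=\rk\Pic(X_t)$, is false for weak Fano threefolds. Hodge index needs $-K_{X_t}$ ample. For a class that is only nef and big, $M\cdot(-K)^2=M^2\cdot(-K)=0$ does not force $M\equiv 0$.

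For instance, let $Z$ be a del Pezzo surface and $X=\bP_Z(\cO_Z\oplus\cO_Z(-K_Z))$. This is a smooth weak Fano threefold of volume $8K_Z^2$. Its anticanonical morphism contracts the negative section $E\simeq Z$ to the vertex of the cone over $(Z,-K_Z)$. Every irreducible $S\in|-K_X|$ misses the vertex, so $E|_S\simeq\cO_S$, yet $E\cdot f=1$ for a fibre $f$ of $X\to Z$.

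What is true, and what the paper uses via \Cref{lem:line bundle}, is that the kernel is locally constant in the family: a line bundle on $\sS$ that is trivial on one fibre is trivial on all fibres, since $H^1(\cO_{S_t})=0$. Separately, \'etaleness of $\Pic_{\sX/T}$ together with \Cref{lem:line-bundle-extension} only gives $\Pic(X_0)\hookrightarrow\Pic(X_t)$, hence $\rk\Lambda_0\le\rk\Lambda_t$. The reverse inequality needs either specialization of line bundles from $X_t$ to $X_0$, or the paper's device: lower semicontinuity of $\dim\im(d\pi_t)$, which via \eqref{eq:beauville_dimIm+dimIm_constant} gives $\dim\im(\partial_0^\vee)\ge\dim\im(\partial_t^\vee)$ for free.

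\textbf{Step~1 also needs repair}, though its conclusion survives.

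On the surface side, $h^1(S,\Omega^1_S)=20$ for every ADE K3, because it is Serre dual to the $20$-dimensional $\Ext^1(\Omega^1_S,\cO_S)$ underlying \eqref{eq:beauville_dimIm+dimIm_constant}. So for singular $S$ it cannot inject into $H^1(\tS,\Omega^1_{\tS})$ as the $(20-\mu)$-dimensional orthogonal complement of the exceptional curves. What you actually need is that the $d\log$ classes of restricted line bundles stay independent, and that does follow by pulling back to $\tS$.

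On the threefold side, agreement with the reflexive hull away from finitely many points does not control $H^1$. Since $H^0(X,\Omega^{[1]}_X)=0$, the group $H^0(\Omega^{[1]}_X/\Omega^1_X)$ injects into $H^1(X,\Omega^1_X)$ and is exactly the kernel of the map to $H^1(X,\Omega^{[1]}_X)$. You need $\Omega^1_X$ to be genuinely reflexive. This holds at isolated three-dimensional hypersurface singularities, by Greuel's depth estimate for isolated complete intersection singularities. Then $\mathcal{H}^0(\uOmega^1_X)=\Omega^{[1]}_X$ for klt $X$ gives $H^1(X,\Omega^1_X)\hookrightarrow\bH^1(X,\uOmega^1_X)\simeq\Pic(X)_\bC$, and $d\log$ Chern classes give surjectivity.

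\textbf{Comparison with the paper.} Once repaired, your route proves the pointwise identity $\dim\im(\partial_t^\vee)=\rk\Lambda_t$ for every $t$, ADE $S_t$ included, and needs no reduction to smooth $S$. The paper never proves this. It uses the Du Bois comparison only at general $t$, where $X_t$ is nodal and hence $1$-Du Bois. At the special point it uses only the inclusion $\im(\partial^\vee)\subseteq\Lambda_\bC$, for smooth $S$. It handles ADE $S$ by deforming $S$ to a smooth member inside the fixed $X$ and using semicontinuity of the rank of $R^1f_*\Omega^1_{X\times B/B}\to R^1f_*\Omega^1_{\sS_B/B}$. Your fallback is essentially the paper's sandwich, but it omits this ADE reduction and still relies on the faulty Step~2.
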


\begin{proof}
Since both $\Def_{(X,S)}$ and $\Def_S$ are unobstructed smooth germs, by taking minors of Jacobians it follows that the function
\[
t \;\longmapsto\; \dim \im(d\pi_t)
\]
is lower semicontinuous on $T$. Therefore, by \eqref{eq:beauville_dimIm+dimIm_constant} it suffices to show that
\[
\dim \im(\partial_t^\vee) \;\ge\; \dim \im(\partial^\vee)
\]
for general $t\in T$. 

We first consider the case where $S$ is smooth. Consider the commutative diagram
\[
\begin{tikzcd}
\Omega_X^1 \arrow[r] \arrow[d] &
\iota_*\Omega_S^1 \arrow[d] \\
\underline{\Omega}_X^1 \arrow[r] &
\iota_*\underline{\Omega}_S^1,
\end{tikzcd}
\]
where $\underline{\Omega}_X^p$ and $\underline{\Omega}_S^p$ denote the $p$-th Du Bois complex of $X$ and $S$ respectively; see \cite{DB81}. Since $\iota$ is a closed immersion, the pushforward functor $\iota_*$ is exact on coherent sheaves, and hence
\[
\rm R\iota_* \underline{\Omega}_S^1 \ \simeq \  \iota_* \underline{\Omega}_S^1.
\]
As $S$ is smooth, we have $\underline{\Omega}_S^1  \simeq \Omega_S^1$, and therefore
\[
\iota_* \underline{\Omega}_S^1 \ \simeq \ \iota_* \Omega_S^1.
\]
\begin{lem}\label{lem:h11-Pic}
Let $Y$ be a weak $\bQ$-Fano variety. Then $\bH^1(Y, \uOmega_Y^1) \simeq H^2(Y, \bC) \simeq \Pic(Y)\otimes_\bZ \bC$.
\end{lem}

\begin{proof}
Since $Y$ has rational singularities and $H^2(Y,\cO_Y)=0$ by Kawamata--Viehweg vanishing, we know that $\bH^0(Y, \uOmega_Y^2) = \bH^2(Y, \uOmega_Y^0)=0$. Thus the result follows from the Hodge--Du Bois decomposition \[
H^2(Y,\bC) \ \simeq\  \bH^0(Y,\uOmega_Y^2) \oplus \bH^1(Y,\uOmega_Y^1) \oplus \bH^2(Y,\uOmega_Y^0),
\] the exponential sequence, and the fact that $H^i(Y,\cO_Y) = 0$ for $i>0$ by Kawamata--Viehweg vanishing.
\end{proof}
By Lemma \ref{lem:h11-Pic} we have that
\[
\im(\partial^\vee)
\;\subseteq\;
\im\!\bigl(\bH^1(X,\underline{\Omega}_X^1)\to H^1(S,\Omega_S^1)\bigr)
\;\cong\;
\Lambda_{\bC}.
\] On the other hand, for general $t\in T$, it follows from \cite[Main Theorem (2)]{Min01} that $X_t$ is nodal. In particular, $X_t$ is $1$-Du Bois by \cite[Theorem 1.1]{MOPW23}, i.e. $\Omega_{X_t}^1 \simeq \underline{\Omega}_{X_t}^1$. Therefore, by Lemma \ref{lem:h11-Pic}
\[
\im(\partial_t^\vee)
\;=\;
\im\!\bigl(\bH^1(X_t,\underline{\Omega}_{X_t}^1)
\to
H^1(S_t,\Omega_{S_t}^1)\bigr)
\;\cong\;
\Lambda_{t,\bC}.
\]
Since line bundles on $X$ extend to line bundles on $X_t$ after an \'etale base change by \Cref{lem:line-bundle-extension},  we obtain
\begin{equation}\label{eq:beauville-sm}
\dim \im(\partial^\vee)
\;\le\;
\dim \Lambda_{\bC}
\;\le\;
\dim \Lambda_{t,\bC}
\;=\;
\dim \im(\partial_t^\vee),
\end{equation} where to get the inequality in the middle we use \Cref{lem:line bundle}.
This concludes the proof of Lemma~\ref{lem:const-rank} when $S$ is smooth.

Now consider the case where $S$ has ADE singularities. By \Cref{lem:term-elephant}, there exists a family
\[
f:(X\times B,\sS_B)\to B
\]
over a smooth pointed curve $(0\in B)$, obtained from a general deformation of $(X,S)=(X,S_0)$, such that for each $b\in B\setminus\{0\}$, the fiber $S_b\coloneqq (\sS_B)_b$ is a smooth anticanonical K3 surface in $X$. As $\Omega_{\sS_B/B}^1$ is flat over $B$ by \cite[Theorem 2.5]{FL24} and $h^1(S_b,\Omega_{S_b}^1)=20$ for all $b\in B$, by Grauert's theorem, the sheaf $R^1 f_* \Omega_{\sS_B/B}^1$ is locally free, and its fiber over $b$ is naturally identified with $H^1(S_b,\Omega_{S_b}^1)$. Since $R^1 f_* \Omega_{X\times B/B}^1$ is locally free, the maps $\partial_b^\vee$ assemble into a morphism of locally free sheaves
\[
R^1 f_* \Omega_{X\times B/B}^1
\ \longrightarrow\ 
R^1 f_* \Omega_{\sS_B/B}^1.
\]
In particular, the function
\[
b \ \longmapsto\ \dim \operatorname{Im}(\partial_b^\vee)
\]
is lower semicontinuous on $B$. Therefore, for a general point $b \in B$, one has
\[
\dim \operatorname{Im}(\partial^\vee)
\;\le\;
\dim \operatorname{Im}(\partial_b^\vee).
\] Combining this inequality with the smooth case yields
\[
\dim \operatorname{Im}(\partial^\vee)
\;\le\;
\dim \operatorname{Im}(\partial_t^\vee)
\]
for general $t \in T$. This completes the proof of Lemma~\ref{lem:const-rank}.
\end{proof}
We have now shown that $d\pi_t$ has constant rank for $t\in T$. Since the image of the forgetful map lies in $\Def_{(S,\Lambda)}$ by extension of line bundles on weak Fano varieties, as noted by Remark~\ref{rmk:beauville}, it follows that 
\[
\operatorname{Im}(d\pi) \ \subseteq\  T^1_{(S,\Lambda)} \ \coloneqq \ T_{[(S,\Lambda)]}\Def_{(S,\Lambda)}.
\]
It remains to prove that $\dim \operatorname{Im}(d\pi)
=
\dim T^1_{(S,\Lambda)}$, which would imply $\operatorname{Im}(d\pi)=T^1_{(S,\Lambda)}$. For $t\in T$ such that $S_t$ is smooth, the proof of \Cref{lem:const-rank} (in particular the equalities in \eqref{eq:beauville-sm}) gives
\[
\dim \operatorname{Im}(d\pi)
\ =\ 
\dim \operatorname{Im}(d\pi_t)
\ =\ 
20 - \dim \operatorname{Im}(\partial_t^\vee)
 \ =\ 
20 - \dim \Lambda_{t,\bC}
\ =\ 
\dim T^1_{(S_t,\Lambda_t)}.
\]
Moreover, by \cite[Proof of Theorem~5.5]{AE25}, we know that
\[
\dim T^1_{(S,\Lambda)} \ =\  20 - \dim \Lambda_{\bC}.
\]
Thus it suffices to show that under a one-parameter smoothing $(X\times B, \sS_B)\to B$ of $(X,S)$ %\AP{(which exists by Lemma~\ref{lem:term-elephant})}
, the lattices $\{\Lambda_b\}_{b\in B}$ form a local system over $B$. This follows from Lemma~\ref{lem:line bundle}, which shows that the kernels of the restriction maps
\[
\Pic(X)\to \Pic(S)
\qquad\text{and}\qquad
\Pic(X)\to \Pic(S_b)
\]
coincide. Hence $\Lambda_b$ has constant rank under deformation.

Finally, since $\pi$ is smooth, its fiber dimension equals $\dim \Ker(d\pi)$. By Serre duality, this dimension is
\[
\dim \Ext^1(\Omega_X^1,\omega_X)
=
h^2(X,\Omega_X^1).
\]
This concludes the proof of Theorem~\ref{thm:beauville}.
\end{proof}

Set $h^{1,2}(X)\coloneqq h^2(X,\Omega_X^1)$. As a direct consequence of \Cref{thm:beauville}, we prove the invariance of $h^{1,2}$ in families of Gorenstein terminal weak Fano threefolds.

%\AP{I believe that the expression $\bQ$-Gorenstein in the two corollaries below is useless because the fibres of $\sX \to T$ are Gorenstein. Am I right? Maybe, we can put parentheses around ``$\bQ$-Gorenstein''.}\textcolor{blue}{if you take base $T$ to be only Q-Gor not Gorenstein, then fiber Gor. ter. does not imply total space Gorenstein}
%\AP{I think it depends on the definition of $\bQ$-Gorenstein morphism: I think we should allow $T$ to be arbitrary, even non-reduced. Am I right?
%And I would call a morphism $\sX \to T$ $\bQ$-Gorenstein if it is flat, of finite presentation, with normal geometric fibers (this is enough for us) and \'etale-locally induced by a deformation of the index-1 cover of the fiber which is equivariant with respect to the action of the cyclic group of order equal to the Gorenstein index.
%I think this is written badly, but I hope it is clear what I mean.
%} \textcolor{blue}{It makes sense, I think I meant the following: by $\bQ$-Gorenstein family, we mean a locally stable family in the sense of Kollar, not only saying that fiberwise is Gorenstein. This is a standard(?) terminology I guess? maybe we should add it in section 2.}

%\AP{What I wanted to say is that any flat (finitely presented) morphism with Gorenstein geometric fibers is necessarily a $\bQ$-Gorenstein family, hence writing ``$\bQ$-Gorenstein'' is a bit useless.}

\begin{cor}\label{cor:h12}
Let $\pi \colon \sX \to T$ be a family of Gorenstein terminal weak Fano threefolds of volume $>2$. Then the function $t\mapsto h^{1,2}(\sX_t)$ is locally constant on $T$.
\end{cor}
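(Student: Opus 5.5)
We prove that $h^{1,2}$ is upper and lower semicontinuous at every point $0\in T$. Both directions come from comparing dimensions of miniversal deformation spaces through \Cref{thm:beauville}.

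Fix $0\in T$ and set $X=\sX_0$. By \Cref{lem:term-elephant}, choose a smooth K3 surface $S\in|-K_X|$.

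\textbf{Extending the anticanonical section.} After an étale base change around $0$, we extend $S$ to a relative anticanonical divisor $\sS\subset\sX$ over a neighbourhood of $0$. This works because $\omega_{\sX/T}^\vee$ extends by \Cref{lem:line-bundle-extension}. By Kawamata--Viehweg vanishing, $h^0(\sX_t,-K_{\sX_t})$ is constant, so $\pi_*\omega^\vee_{\sX/T}$ is locally free with base change, and the section defining $S$ lifts. After shrinking $T$, each fiber $S_t$ is a smooth K3 surface, and $(\sX,\sS)\to T$ is a family of pairs as in \Cref{thm:beauville}.

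\textbf{Dimension formula.} Let $\Lambda_t$ be the saturated image of $\Pic(X_t)\to\Pic(S_t)$. By \Cref{thm:beauville}, both $\Def_{(X_t,S_t)}$ and $\Def_{(S_t,\Lambda_t)}$ are smooth, and $\Def_{(S_t,\Lambda_t)}$ has dimension $20-\rk\Lambda_t$. Hence
\[
\dim \Def_{(X_t,S_t)} \;=\; 20-\rk\Lambda_t + h^{1,2}(X_t).
\]

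\textbf{Comparing the two sides.}
\begin{enumerate}
\item The family over $T$ is induced by pullback from the algebraic miniversal deformation $(\sX^u,\sS^u)\to U$ of $(X,S)$. Since $U$ is smooth, for $u$ near $0$ the germ of $U$ at $u$ is a smooth versal deformation of the fiber pair $(X_u,S_u)$. Therefore $\dim\Def_{(X_u,S_u)}\le\dim U=\dim \Def_{(X,S)}$, which gives
\[
20-\rk\Lambda_u+h^{1,2}(X_u)\;\le\; 20-\rk\Lambda+h^{1,2}(X).
\]
\item The lattice ranks are constant near $0$. By \Cref{lem:line-bundle-extension}, line bundles on $X$ extend uniquely and $\Pic(X_u)\cong\Pic(X)$ via these extensions. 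By \Cref{lem:line bundle}, restricted along curves through $0$, the kernel of restriction to $S_u$ is constant, so $\rk\Lambda_u=\rk\Lambda$. This is the argument at the end of the proof of \Cref{thm:beauville}.
\item Combining the two gives $h^{1,2}(X_u)\le h^{1,2}(X)$ near $0$.
\end{enumerate}

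\textbf{Reverse inequality.} The forgetful map $\pi\colon U\to\Def_{(S,\Lambda)}$ is smooth of relative dimension $h^{1,2}(X)$ by \Cref{thm:beauville}. Its fibers through nearby points $u$ are smooth of dimension $h^{1,2}(X)$, and each such fiber is contained in the fiber of the nearby map $\pi_u$. The latter is smooth of dimension $h^{1,2}(X_u)$, because the germ of $\Def_{(S,\Lambda)}$ at $\pi(u)$ is a versal deformation of $(S_u,\Lambda_u)$. This gives $h^{1,2}(X)\le h^{1,2}(X_u)$, so $h^{1,2}$ is constant near $0$. A cleaner route is upper semicontinuity of $h^2(X_t,\Omega^1_{X_t})$ combined with the lower semicontinuity just established. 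Local constancy then holds on connected components of $T$, and the reduction to reduced $T$ is harmless.

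\textbf{Main obstacle.} The delicate step is identifying the germ of the miniversal base at a nearby point $u$ with a versal deformation of the fiber, for both the pair and the lattice-polarized K3. Openness of versality is what makes the dimension comparison legitimate, and I would justify it using the algebraizations supplied in \Cref{thm:beauville} together with the unobstructedness proved there.
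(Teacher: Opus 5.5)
\textbf{There is a genuine gap.} The paper treats this as an immediate consequence of \Cref{thm:beauville}, and your ingredients are the right ones. However, the half of your argument that carries the content does not go through as written.

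\textbf{The reverse inequality.} You assert that the fiber of $\pi$ through $u$ is contained in the fiber of $\pi_u$. This presupposes that the germ $(U,u)$ embeds in $\Def_{(X_u,S_u)}$, i.e.\ that it is \emph{miniversal} at $u$. Openness of versality only gives a formally smooth surjection $(U,u)\to\Def_{(X_u,S_u)}$ of relative dimension $r_u=\dim U-\dim T^1_{(X_u,S_u)}\geq 0$. The directions in its kernel induce trivial deformations of $(S_u,\Lambda_u)$, so they are tangent to $\pi^{-1}(\pi(u))$. Chasing this through (with your Step 2 in hand) gives $h^{1,2}(X)=h^{1,2}(X_u)+r_u$.

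That is again the inequality $h^{1,2}(X_u)\le h^{1,2}(X)$ of your Step 3, and also of upper semicontinuity of $h^2(X_t,\Omega^1_{X_t})$ in your ``cleaner route''. So both halves prove the easy direction. The fact that $h^{1,2}$ does not drop under generization is exactly the statement $r_u=0$, i.e.\ that $\dim T^1_{(X_u,S_u)}$ does not drop, and this is never established. Your ``main obstacle'' paragraph names the right step but asks for the wrong property: versality at $u$ is not enough.

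\textbf{How to close it.} The missing input is the absence of infinitesimal automorphisms. $\Aut(X_u,S_u)$ is finite by the argument via \cite[Theorem~2.10]{ADL21} in the proof of \Cref{thm:beauville}, so $\Hom(\bL_{X_u}(\log S_u),\cO_{X_u})=0$. Hence the classifying map from $U$ to the stack of pairs is formally \'etale at $0$ (miniversality plus vanishing $T^0$), therefore \'etale on a neighborhood of $0$, and $(U,u)$ is universal for $(X_u,S_u)$.

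Once you have this, no lattice bookkeeping is needed. The map $\pi$ is smooth near $0$ of relative dimension $h^{1,2}(X)$. The Kuranishi family of $S$ is also universal at nearby points, since K3 surfaces have no vector fields. So $\ker(d\pi|_u)$ identifies with $\ker\bigl(T^1_{(X_u,S_u)}\to T^1_{S_u}\bigr)$. By the exact sequence in the proof of \Cref{thm:beauville} applied at $u$, this is $\Ext^1(\Omega^1_{X_u},\omega_{X_u})$, which has dimension $h^{1,2}(X_u)$.

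\textbf{A separate overclaim in Step 2.} \Cref{lem:line-bundle-extension} only extends line bundles from $X$ to nearby fibers; it does not give $\Pic(X_u)\cong\Pic(X)$. In general the Picard rank can go up under generization, cf.\ \Cref{thm:smoothable}(2). For Gorenstein terminal fibers the rank is in fact constant, as in \cite{JR11} or via the constant-rank argument of \Cref{lem:const-rank}. If you keep the dimension-count route, you must justify this rather than cite the extension lemma.
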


The following corollary is of significant interest in its own right, though we shall only invoke a weaker form of it later in the text.

\begin{cor}\label{cor:lattice-local-system}
Let $\pi:(\sX,\sS)\to T$ be a family of Gorenstein terminal weak Fano threefolds of volume $>2$, together with an anticanonical ADE K3 surface. Then there exists an \'etale locally constant subsheaf of $\Pic_{\sS/T}$ whose fiber over each closed point $t\in T$ is the saturation of the image of the restriction map
\[
\Pic(\sX_t)\longrightarrow \Pic(\sS_t).
\]
\end{cor}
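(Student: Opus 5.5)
We construct the subsheaf from line bundles on $\sX$ and then saturate inside $\Pic_{\sS/T}$; the point is that Theorem~\ref{thm:beauville} together with Lemmas~\ref{lem:line-bundle-extension} and~\ref{lem:line bundle} make all the relevant ranks constant. The question is étale local on $T$, so we fix a closed point $0\in T$ and work on an étale neighbourhood of it.

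\textbf{Step 1: extending the image of $\Pic$.} By Lemma~\ref{lem:line-bundle-extension}, after an étale base change every line bundle on $\sX_0$ extends uniquely to $\sX$, and the extension is unique fibrewise. Hence, for $t$ near $0$, the groups $\Pic(\sX_t)$ form a constant sheaf $P$ containing $\Pic(\sX_0)$. Restricting to $\sS$ gives a morphism of sheaves $P\to \Pic_{\sS/T}$, and we let $I\subseteq \Pic_{\sS/T}$ be its image.

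\textbf{Step 2: $I$ is locally constant.} We need the kernel of $\Pic(\sX_t)\to\Pic(\sS_t)$ to be independent of $t$. Let $M$ be a line bundle on $\sX$. By Lemma~\ref{lem:line bundle}, applied over curves through $0$, the restriction $M|_{\sS_t}$ is trivial for one $t$ if and only if it is trivial for all $t$ in the connected base; this uses that $\Pic_{\sS/T}$ is unramified because $H^1(\cO_{\sS_t})=0$. So the kernel is constant, and $I$ is a constant subsheaf.

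\textbf{Step 3: saturation.} Let $\Lambda_t$ denote the saturation of $I_t$ in $\Pic(\sS_t)$. Since $\Pic_{\sS/T}\to T$ is unramified, a section $L$ with $L^{\otimes a}\in I$ extends uniquely from $t=0$ to a neighbourhood, as in Remark~\ref{rmk:beauville}: the obstruction to extending $L$ is killed by $a$, and by unobstructedness it in fact vanishes. This gives $\Lambda_0\hookrightarrow\Lambda_t$ for $t$ near $0$. The reverse inclusion, ruling out extra saturation at special points, should follow because the rank is constant (Step 2) and the extended classes stay primitive: divisibility of a class is preserved under specialization along the unramified Picard space, since specialization is injective there.

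The closed subsheaf $\Lambda\subseteq\Pic_{\sS/T}$ is then an étale locally constant sheaf of lattices whose fibre at each $t$ is the saturation of the image of $\Pic(\sX_t)\to\Pic(\sS_t)$, as required.

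\textbf{Main obstacle.} The hardest point is the constancy of the saturation in Step 3: ruling out the possibility that at a special fibre a class divides an element of the image without that class extending over $T$. The first approach is the extension argument of Remark~\ref{rmk:beauville} combined with the formal smoothness of $\Def_{(X,S)}\to\Def_{(S,\Lambda)}$ from Theorem~\ref{thm:beauville}. That smoothness shows the pair deformations realize all deformations preserving $\Lambda$, so $\Lambda$-classes extend along the whole miniversal family, and hence along $T$ by pulling back.
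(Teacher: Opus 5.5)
\textbf{Verdict: there is a genuine gap.} The reverse inclusion in Step 3, which you yourself flag as the main obstacle, is not proved, and the reason you give does not work.

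The extension half of Step 3 agrees with the paper's first step: a class whose multiple extends has an obstruction killed by $a$ in a $\bC$-vector space, hence zero, as in Remark~\ref{rmk:beauville}. The problem is the other direction. Take a nearby point $t'$ and $w\in\Pic(\sS_{t'})$ with $mw=v_{t'}$, where $v\in\Lambda_0$ has been extended over $U$. To get $w\in(\Lambda_U)_{t'}$ you must produce a line bundle $w_0$ on $\sS_0$ with $mw_0=v_0$. That is, you need the \emph{existence} of a specialization of $w$ from $t'$ back to $0$.
\begin{itemize}
\item Unramifiedness of $\Pic_{\sS/T}\to T$ gives uniqueness of extensions, and injectivity of any specialization that exists. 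It does not give existence, and injectivity alone never prevents an image from failing to be saturated.
\item The formal smoothness of $\Def_{(X,S)}\to\Def_{(S,\Lambda)}$ in your last paragraph only moves classes \emph{away} from a point. At best it extends $w$ over a neighbourhood of $t'$, which need not reach $0$.
\item The real difficulty is that $\sS_0$ may be singular. The limit of $w$ is a priori only a Weil divisor on $\sS_0$, and Weil divisors through ADE points need not be Cartier.
\end{itemize}
The paper handles this as follows. Take the closure $\cL$ in $\sS|_U$ of the extension of $w$ near $t'$. It is $\bQ$-Cartier, because $m\cL$ matches the Cartier extension of $v$ and $\Pic_{\sS/T}\to T$ is unramified. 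Lemma~\ref{lem:line bundle}, which rests on simultaneous resolution and the klt minimal-resolution lemma, then shows $\cL$ is Cartier. Restricting to $t'=0$ gives $w_0\in\Pic(\sS_0)$ with $mw_0=v_0$. So $w_0\in\Lambda_0$ by saturation, and uniqueness of extensions gives $w\in(\Lambda_U)_{t'}$. A topological substitute would also work: the image of $H^2(\sS_0,\bZ)$ in $H^2(\sS_{t'},\bZ)$ is an orthogonal complement of vanishing cycles, hence saturated, and you combine this with Lefschetz $(1,1)$. But some such input is indispensable.

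There is also a secondary gap in Step 1. Lemma~\ref{lem:line-bundle-extension} only gives the map $\Pic(\sX_0)\to\Pic(\sX_t)$. It does not show that every line bundle on a nearby $\sX_t$ comes from $\sX_0$, so it does not make $t\mapsto\Pic(\sX_t)$ constant. Your rank constancy in Step 2 rests on that claim. The paper takes local constancy of $\rk\,\im\bigl(\Pic(\sX_t)\to\Pic(\sS_t)\bigr)$ from the proof of Theorem~\ref{thm:beauville}. You could instead use $\Pic=H^2(\,\cdot\,,\bZ)$ for weak Fano varieties together with invariance of $H^2$ under degenerations with isolated cDV singularities, in the spirit of \cite{JR11}.

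You need this rank equality together with saturation at every fibre to identify each local subsheaf's fibres with the saturated images. Only then do the locally defined subsheaves agree on overlaps and glue; your remark that the question is \'etale local does not address this.
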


\begin{proof}
We view $\Pic_{\sS/T}$ as an \'etale sheaf on the small \'etale site $T_{\textup{\'et}}$. 
For any closed point $t\in T$, there exists an \'etale neighborhood $U_t \to T$ such that every line bundle in the saturation $\Lambda_t \subseteq \Pic(\sS_t)$ of the image of $\Pic(\sX_t)\to \Pic(\sS_t)$ extends to $\sX|_{U_t}$ by \Cref{rmk:beauville}. 
This defines a locally constant subsheaf $\Lambda_{U_t} \subseteq \Pic_{\sS|_{U_t}/U_t}$.

We claim that for every $t'\in U_t$, the fiber $(\Lambda_{U_t})_{t'} \subseteq \Pic(\sS_{t'})$ is saturated. 
Suppose not. Then there exist $t'\in U_t$, a line bundle $L_{\sS_{t'}}\in \Pic(\sS_{t'})$, and an integer $m\ge 2$ such that $L_{\sS_{t'}}^{\otimes m}$ lies in $(\Lambda_{U_t})_{t'}$, while $L_{\sS_{t'}}$ does not. 
By \Cref{rmk:beauville}, the line bundle $L_{\sS_{t'}}$ extends to an \'etale neighborhood of $t'$, and hence defines a Weil divisor $\cL$ on $\sS|_{U_t}$ by taking closure. 
Since $L_{\sS_{t'}}^{\otimes m}$ extends to a Cartier divisor on $\sS|_{U_t}$ and $\Pic_{\sS/T}\to T$ is unramified, it follows that $\cL$ is $\bQ$-Cartier. 
By \Cref{lem:line bundle}, $\cL$ is in fact Cartier, hence defines a line bundle on $\sS|_{U_t}$, contradicting the saturation of $\Lambda_t$.

Now let $t_1,t_2\in T$ and set $U:=U_{t_1}\cap U_{t_2}$. 
For any closed point $t_0\in U$, the fibers of $\Lambda_{U_{t_1}}$ and $\Lambda_{U_{t_2}}$ at $t_0$ coincide. 
Indeed, both are saturated subgroups of $\Pic(\sS_{t_0})$, and by the proof of  \Cref{thm:beauville}, the rank of the image of $\Pic(\sX_t)\to \Pic(\sS_t)$ is locally constant on $T$. 
Therefore the two subsheaves $\Lambda_{U_{t_1}}|_U$ and $\Lambda_{U_{t_2}}|_U$ agree inside $\Pic_{\sS|_U/U}$.

It follows that the family $\{\Lambda_{U_t}\}_{t\in T}$ glues to a globally defined \'etale subsheaf $\Lambda_T \subset \Pic_{\sS/T}$. 
By construction, $\Lambda_T$ is locally constant, and its fiber over each $t\in T$ is precisely the saturation of the image of $\Pic(\sX_t)\to \Pic(\sS_t)$.
\end{proof}

\subsection{Non-isolated singularities}
In this subsection, we study the deformation theory of K-semistable Fano threefolds with non-isolated singularities. 
Although the deformation theory of Fano threefolds with non-terminal singularities is generally subtle (see e.g.\ \cite{Pet20, KP21, Pet22, chp}), we show that such varieties are smoothable when the volume is large.

\begin{thm}\label{thm:smoothable}
Let $X$ be a K-semistable Gorenstein canonical Fano threefold with $\vol(X) \geq 22$. Then $X$ is $\bQ$-Gorenstein smoothable. If, in addition, $X$ is not terminal, then one of the following holds:
\begin{enumerate}
    \item[\textup{(1)}] $X$ admits a small deformation to a singular Gorenstein terminal Fano threefold; or
    \item[\textup{(2)}] $X$ admits a smoothing whose general fiber has strictly higher Picard rank.%\footnote{\YL{In this case, one can further show that the corresponding smoothing component of the moduli stack is smooth at $X$ by Beauville.}}.
\end{enumerate}
\end{thm}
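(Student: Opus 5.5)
By \Cref{thm:volume bound}(2), $X$ has either only isolated $cA_{\le 2}$-singularities or non-isolated $A_\infty$/$D_\infty$-singularities along a curve. In the isolated case $X$ is Gorenstein terminal. By Namikawa's theorem \cite{Nam97} (see also \cite{Min01}), $X$ is then smoothable, since $\Def_X$ is unobstructed and a general fiber of the miniversal family is smooth. The smoothing is automatically $\bQ$-Gorenstein because the fibers are Gorenstein. So the real content is the non-terminal case, where I will resolve the singular curve, deform the resolution using \Cref{thm:beauville}, and descend.

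Let $C=\mathrm{Sing}(X)$ be the one-dimensional singular locus, and let $\tX\to X$ be the blowup of $C$ with its reduced structure, iterated if necessary. From the local equations $xy=z^2$ and $xy=zw^2$, together with the computation in \Cref{lem:conic bundle}, the blowup is crepant and $\tX$ has at worst isolated cDV (Gorenstein terminal) singularities. Its exceptional divisor $E$ is a ruled surface or a conic bundle over $C$. Hence $\tX$ is a Gorenstein terminal weak Fano threefold with $\vol(\tX)=\vol(X)\ge 22>2$, and $X$ is its anticanonical model. Since $-K_X$ is very ample by \Cref{thm:Kss-very-ample}, a general $S\in|-K_X|$ meets $C$ transversally in $\deg C$ points, where it acquires $A_1$ (resp.\ $A_1$ or $A_2$-type) singularities. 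The strict transform $\tS\in|-K_{\tX}|$ is then an ADE K3 surface containing the exceptional $(-2)$-curves $\Gamma_i=E\cap\tS$.

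Now apply \Cref{thm:beauville} to $(\tX,\tS)$. Let $\Lambda$ be the saturation of the image of $\Pic(\tX)\to\Pic(\tS)$, and consider the miniversal deformation of $(\tS,\Lambda)$. Because $\Def_{(\tX,\tS)}\to\Def_{(\tS,\Lambda)}$ is smooth and surjective, a general deformation $(\tX_t,\tS_t)$ of the pair has $\tS_t$ a general $\Lambda$-polarized K3 surface with $\Pic(\tS_t)=\Lambda$ (up to finite index). The key dichotomy is whether the classes $[\Gamma_i]$ lie in $\Lambda$. They do not in general, because the restriction of $E$ gives only the sum $\sum\Gamma_i$. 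This is where I need to split into two cases.

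\textbf{Case (1): some $-2$-class disappears.} If some $\Gamma_i$ is not in $\Lambda$, it does not survive the deformation. Then no curve in $\tS_t$ is contracted by $-K_{\tX_t}$ along a one-dimensional family, so the exceptional divisor $E$ fails to deform as a $-K$-trivial divisor. The anticanonical model $X_t$ therefore has only isolated singularities, hence is terminal. Anticanonical models vary well in the family since $-K$ is relatively semiample and $H^1(\cO)=0$. If $X_t$ is singular we are in (1). If it is smooth, then $X$ is smoothable and the same family can be fed into Namikawa's theorem.

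\textbf{Case (2): $E|_{\tS}$ stays $K$-trivial.} If instead $\Lambda$ contains the classes of $E$ and of the $\Gamma_i$ and $E$ deforms, then the anticanonical contraction persists. In this case I will argue that a general deformation of $\tX$ itself is terminal with $-K$ ample, i.e.\ $E$ becomes a divisor with positive degree on $-K$. Then, after Namikawa smoothing, the general fiber is a smooth Fano threefold whose Picard group contains the extension of $\Pic(\tX)\ni E$ (via \Cref{lem:line-bundle-extension}). Its Picard rank is at least $\rho(X)+1$, giving (2). In both cases $X$ is the limit of a $\bQ$-Gorenstein family of smoothable varieties, hence smoothable. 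The Gorenstein terminal total family descends to a Gorenstein (hence $\bQ$-Gorenstein) family of anticanonical models by relative base-point freeness.

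The main obstacle is making the dichotomy precise. I need to decide when the class of $E$, or the individual $\Gamma_i$ in $\tS$, extends and still spans a $K$-trivial locus, and to show that the anticanonical model of the deformed $\tX_t$ is again a deformation of $X$ rather than of a different threefold. I would first handle this by computing $E|_{\tS}=\sum\Gamma_i$ and using the Hodge-theoretic identification $\im(\partial^\vee)=\Lambda_\bC$ from the proof of \Cref{thm:beauville} to see which $\Gamma_i$ leave the Picard lattice. Possibly I would also need \Cref{thm:sing-line} to rule out configurations where $C$ is a line and every $\Gamma_i$ stays algebraic.
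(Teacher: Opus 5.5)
\textbf{Gap in the non-terminal case.} The decisive claim is that the anticanonical model $X_t$ of a very general deformation $(\tX_t,\tS_t)$ is terminal. You assert this (``no curve in $\tS_t$ is contracted \dots along a one-dimensional family'') but do not prove it, and your case split is organised around the wrong question. Knowing that some of the original curves $\Gamma_i$ leave the Picard lattice does not rule out that $\tX_t$ still carries a $-K_{\tX_t}$-trivial prime divisor $E_t$, either a surviving component or a new one. Note that splitting on whether \emph{some} $\Gamma_i$ lies outside $\Lambda$ would not suffice even formally, since other components could persist. The argument has to be run on $\tX_t$ itself.

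The missing idea is an intersection-number argument whose main input is \Cref{thm:sing-line}; it is not an optional tool for a corner case. Since $X_t$ is a small deformation of $X$, it is K-semistable by openness. So by \Cref{thm:volume bound}(2) and \Cref{thm:sing-line}, the image of any such $E_t$ is a smooth curve of anticanonical degree $d\ge 2$. Then $\tS_t\cap E_t$ consists of $d$ disjoint $(-2)$-curves $e_{t,1},\dots,e_{t,d}$. These are fibres of the conic bundle structure of $E_t$ over its image, so they have identical intersection numbers with every line bundle on $\tX_t$. For very general $t$, $\Pic(\tS_t)_{\bQ}$ is spanned by the image of $\Pic(\tX_t)_{\bQ}$ (\Cref{thm:beauville} and \Cref{cor:lattice-local-system}). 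Hence $e_{t,1}\in\Pic(\tS_t)_{\bQ}$ would give $-2=(e_{t,1}^2)=(e_{t,1}\cdot e_{t,2})=0$, a contradiction. Therefore $\tX_t\to X_t$ is small, $X_t$ is Gorenstein terminal, and Namikawa's theorem gives smoothability.

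The same computation on $\tX$, using $d_i=(-K_X\cdot C_i)\ge 2$, shows that no $\Gamma_i$ ever lies in $\Lambda_{\bQ}$. So your Case (2) is vacuous. As written it is also internally inconsistent: the anticanonical contraction ``persists'' while $-K$ becomes ample.

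\textbf{Obtaining alternative (2).} In your Case (1) with $X_t$ smooth, you only conclude that $X$ is smoothable. But that is exactly the situation where the theorem requires a smoothing with strictly larger Picard rank. The correct dichotomy is simply this: the terminal $X_t$ is singular, which gives (1), or it is smooth. In the smooth case, the small birational morphism $\tX_t\to X_t$ onto a smooth variety is an isomorphism. Then the mechanism you placed in Case (2) does work. $\Pic(\tX)$ contains $\mu^*\Pic(X)$ together with the classes $E_i$, which are independent modulo it since they have degree $-2$ on the conic fibres. By \Cref{lem:line-bundle-extension} these extend to $\tX_t$, giving $\rho(X_t)=\rho(\tX_t)\ge\rho(\tX)>\rho(X)$.

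The paper reaches the same inequality differently: it uses \Cref{thm:NL-Cl} to identify $\Pic(X_t)\simeq\Pic(\tS_t)\simeq\Lambda$ and then compares $\rk\Lambda$ with $\rk\Pic(X)$. Your extension argument is a reasonable and slightly more direct substitute, once it is attached to the right case.
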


%\AP{In (2) can we put parentheses around $\bQ$-smoothing?}

We begin with an exclusion of singularities along a \emph{line}, i.e. a smooth rational curve of degree 1 with respect to the anticanonical divisor.

\begin{thm}\label{thm:sing-line}
Let $X$ be a Gorenstein canonical Fano threefold with $\vol(X)\geq 22$ that is singular along a line $\ell$. Then $X$ is K-unstable.
\end{thm}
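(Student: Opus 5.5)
We argue by contradiction. Assume $X$ is K-semistable and singular along a line $\ell$, so $-K_X\cdot\ell=1$. The plan is to exhibit a divisor $F$ over $X$ with $\beta_X(F)<0$. The natural choice is the exceptional divisor of the blowup along $\ell$, or equivalently the quasi-monomial valuation $v$ computing $\ord_\ell$ at the generic point of $\ell$.

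\textbf{Step 1: local structure along $\ell$.} By \Cref{thm:volume bound}(2), $X$ is Gorenstein canonical with only $A_\infty$ or $D_\infty$ singularities along the curve $\ell$, so $X$ has multiplicity $2$ generically along $\ell$. Let $\pi\colon Y=\Bl_\ell X\to X$ with exceptional divisor $E$. For $A_\infty$ or $D_\infty$ singularities, $X$ is generically $(\text{surface } A_1)\times\bC$ along $\ell$, which gives $A_X(E)=1$: the discrepancy of the exceptional curve of an $A_1$ point is zero. By \Cref{lem:conic bundle} and the remark after it, $E\to\ell$ is a ruled surface or a conic bundle. The goal is therefore to show $S_X(E)>1$.

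\textbf{Step 2: estimate $S_X(E)$.} We have
\[
S_X(E)=\frac{1}{22}\int_0^\infty\vol(\pi^*(-K_X)-tE)\,dt,
\]
with $\vol(-K_X)=(-K_X)^3\ge 22$. We compute $\vol(\pi^*(-K_X)-tE)$ for small $t$ from the intersection numbers on $Y$. Using $-K_X\cdot\ell=1$, $E\cong$ a conic bundle over $\ell\cong\bP^1$, and $\mathcal O_E(-E)$ of degree $2$ on the fibres, we get
\[
(\pi^*H)^2\cdot E=0,\qquad \pi^*H\cdot E^2=-2(H\cdot\ell)=-2,
\]
and $E^3$ is determined by $\deg N$ together with the singularity type. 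This yields the lower bound
\[
\vol(\pi^*H-tE)\ \ge\ (\pi^*H-tE)^3=V-6t^2-E^3t^3
\]
as long as $\pi^*H-tE$ stays nef. Alternatively, restrict to a general member $S\in|-K_X|$ through $\ell$. Then $S$ is a K3 surface with an $A_1$ point at each point of $\ell\cap S$, which is too crude. Instead I use the anticanonical embedding from \Cref{thm:Kss-very-ample}: $X\subset\bP^{g+1}$ is cut out by quadrics, and projection from the line $\ell$ is defined by $|\pi^*H-E|$. Since $X$ is double along $\ell$, this projection has image of degree at least $V-6+\cdots$, and $\pi^*H-E$ is big and nef up to a small modification. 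Concretely, I would show that $\pi^*H-tE$ is movable for $t\le 1$ with $\vol\ge V-6t^2-ct^3$, and that the effective threshold satisfies $T_X(E)\ge 2$ because of the quadric cone structure along $\ell$. Integrating gives $S_X(E)>1=A_X(E)$ once $V\ge22$. For example, $\int_0^1(22-6t^2-2t^3)\,dt/22\approx0.886$, and the tail on $[1,T]$ must contribute the remaining amount.

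\textbf{Main obstacle.} The hard part is the control of $\vol(\pi^*H-tE)$ for $t\in[1,2]$, i.e.\ producing enough sections vanishing to order $t$ along $\ell$. My first attempt is to use the double point along $\ell$: every hyperplane containing $\ell$ cuts $X$ in a surface singular along $\ell$, so $H^0(-K_X\otimes I_\ell^2)$ has codimension at most $2+2\cdot2=6$ in $H^0(-K_X)$. Counting sections of $mH$ vanishing to order $mt$ along $\ell$ then gives $\vol(\pi^*H-tE)\ge V-c\,t^2$ up to $t\approx2$, and the integral then exceeds $22$. If this bound is too weak, I would use Abban--Zhuang refinement on the flag $E\supset$ a fibre, or compare with the degeneration to the normal cone of $\ell$.
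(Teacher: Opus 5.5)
\paragraph{Verdict.} There is a genuine gap. Your overall strategy is the paper's: blow up $\ell$, use that $\pi$ is crepant so $A_X(E)=1$, and try to show $S_X(E)>1$. But the two steps that actually carry the proof are missing.

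\paragraph{The tail estimate on $[1,2]$.} Your proposed lower bound for $\vol(\pi^*H-tE)$ there does not work.
\begin{itemize}
\item A hyperplane section through $\ell$ is singular along $\ell$, but that does not make it vanish to order $2$ along $E$. In the local model $xy=z^2$ the form $x$ has $\ord_E(x)=1$. So your count of ``$6$ conditions'' for $\pi_*\cO(-2E)$ is unfounded.
\item Asymptotic section counting cannot give $\vol\ge V-ct^2$ up to $t\approx 2$. The Riemann--Roch count only reproduces $V-6t^2-E^3t^3$, and its positive part integrates to less than $V$.
\end{itemize}
The paper's mechanism is as follows.
\begin{itemize}
\item Since $-K_X$ is very ample and $\ell$ is a line, $\pi^*H-E$ is base-point free, being the restriction of the projection-from-$\ell$ divisor on $\Bl_\ell\bP^{g+1}$. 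This gives exact nefness on $[0,1]$, not just ``movable up to a small modification''.
\item It follows that $-K_E=(\pi^*H-E)|_E$ is nef, so $h^0(E,-K_E)\le 9$.
\item The restriction sequence then gives $h^0(\pi^*H-2E)\ge g-9>0$.
\item Writing $\pi^*H-tE=(2-t)(\pi^*H-E)+(t-1)(\pi^*H-2E)$ yields $\vol(\pi^*H-tE)\ge (2-t)^3(2g-12+k)$ on $[1,2]$.
\end{itemize}
Here $k$ is the number of $D_\infty$ points on $\ell$. The missing intersection number is $E^3=K_E^2-4=4-k$, which follows from $E|_E\sim K_E+\mathfrak f$. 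So in the pure $A_\infty$ case $E^3=4$, not $2$.

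\paragraph{The borderline case.} More seriously, even the correct estimates give only
\[
S_X(E)\ \ge\ 1+\frac{g-12+k}{4(g-1)}.
\]
This equals exactly $1$ when $\vol(X)=22$ and there are no $D_\infty$ points: the $[0,1]$ part is exactly $19/22$, and the tail bound gives exactly $3/22$. So your claim that ``the integral then exceeds $22$'' is false for the natural bounds, and $\beta_X(E)\ge 0$ by itself does not contradict K-semistability.

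The paper closes this case as follows. Equality forces the volume function to be exactly $22-6t^2-4t^3$ on $[0,1]$ and exactly $12(2-t)^3$ on $[1,2]$. Its one-sided derivatives at $t=1$ are then $-24\neq -36$, which contradicts the $C^1$-differentiability of the volume function on the big cone (Boucksom--Favre--Jonsson). Without this step, or some other source of strictness, your argument handles $g\ge 13$ or $k\ge 1$ but cannot reach K-instability in the critical case.
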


\begin{proof}
Suppose for contradiction that $X$ is K-semistable. Then by \Cref{thm:volume bound}, $X$ has either $A_{\infty}$- or $D_{\infty}$-singularities along $\ell$, and by \Cref{thm:Kss-very-ample}, the divisor $-K_X$ is very ample.

Let $\mu \colon \wt{X} \to X$ be the blowup along $\ell$, with exceptional divisor $E$. Then $\mu$ is crepant, and by \Cref{lem:conic bundle}, $E$ is smooth and the morphism $E \to \ell$ is a conic bundle. Since $(-K_X \cdot \ell) = 1$, the curve $\ell$ is a line in $\bP H^0(X,-K_X) \simeq \bP^{g+1}$, where $g\coloneqq g(X)\geq 12$. Consequently, $-K_{\wt{X}} - E$ is base-point free, as it is the restriction of a base-point free divisor on $\Bl_{\ell} \bP^{g+1}$. In particular, $-K_E = (-K_{\wt{X}} - E)|_E$ is nef. Thus $E$ is isomorphic to $\bF_n$ for some $n \leq 2$, or to a blowup thereof, and hence $h^0(E,-K_E) \leq 9$. We have the commutative diagram
\[
\begin{tikzcd}[ampersand replacement=\&]
\wt{X} \&\& \bP^{g-1} \\
X \&\& \bP^{g+1}
\arrow["{|-K_{\wt{X}}-E|}", from=1-1, to=1-3]
\arrow["\mu"', from=1-1, to=2-1]
\arrow["{|-K_X|}"', hook, from=2-1, to=2-3]
\arrow["{\pi_{\ell}}"', dashed, from=2-3, to=1-3]
\end{tikzcd}
\]
where $\pi_{\ell}$ is projection from the line $\ell$. Consider the short exact sequence
\[
0 \ \longrightarrow \ \mtc{O}_{\wt{X}}(-K_{\wt{X}} - 2E)
\ \longrightarrow\  \mtc{O}_{\wt{X}}(-K_{\wt{X}} - E)
 \ \longrightarrow\  \mtc{O}_E(-K_E)
\ \longrightarrow\ 0,
\]
which induces the left-exact sequence
\[
0 \ \longrightarrow\ H^0(\wt{X}, -K_{\wt{X}} - 2E)
\ \longrightarrow \ H^0(\wt{X}, -K_{\wt{X}} - E)
\ \longrightarrow  \ H^0(E, -K_E).
\] Since
\[
h^0(E,-K_E)\ \leq \ 9, 
\qquad
h^0(\wt{X}, -K_{\wt{X}} - E)
\ \geq \  h^0(\bP^{g+1}, \mtc{I}_{\ell}(1)) \ =\ g,
\]
we deduce that $h^0(\wt{X}, -K_{\wt{X}} - 2E) \geq g-9\geq 3$, and hence the pseudo-effective threshold satisfies $\tau(-K_X;E) \geq 2$. Note that $-K_{\wt{X}} - tE$ is nef for $0 \leq t \leq 1$, and for $1 \leq t \leq 2$ we have
\[
-K_{\wt{X}} - tE 
\ =\  (2-t)(-K_{\wt{X}} - E) + (t-1)(-K_{\wt{X}} - 2E).
\] Let $\mtf{f}$ denote the fiber class of the conic bundle $E \to \ell$. Using $(-K_X \cdot \ell)=1$, one computes
\[
(-K_{\wt{X}} - E)|_E \sim -K_E, 
\qquad
E|_E \sim K_E + \mtf{f}, 
\qquad
-K_{\wt{X}}|_E \sim \mtf{f}.
\] Therefore the intersection numbers are
\[
(-K_{\wt{X}}^2 \cdot E) \ =\  0, 
\ \ \ 
(-K_{\wt{X}} \cdot E^2) \ =\  (K_E + \mtf{f}) \cdot \mtf{f} \ =\  -2,
\ \ \ 
(E^3) \ =\ (K_E+\mtf{f})^2\ =\ K_E^2 - 4 \ =\  4 - k,
\]
where $k$ is the number of points on $\ell$ at which $X$ has a $D_{\infty}$-singularity. Hence
\[
(-K_{\wt{X}} - tE)^3 \ =\  (2g-2) - 6t^2 - (4-k)t^3,
\]
and therefore
\[
\int_0^1 \vol(-K_X - tE)\, dt
\ =\  \int_0^1 \left(2g-2 - 6t^2 - (4-k)t^3\right)\, dt
\ =\  2g-5 + \frac{k}{4}.
\] For $1 \leq t \leq 2$, we have
$\vol(-K_X - tE) \geq (2-t)^3 (-K_{\wt{X}} - E)^3$, so
\[
\int_1^2 \vol(-K_X - tE)\, dt 
\ \geq\  (2g-12+k)\int_1^2 (2-t)^3\, dt
\ =\    \frac{2g-12+k}{4}.
\] Thus
\[
S_X(E) \ \geq\  1+ \frac{g-12+k}{4(g-1)} \ \geq\  1 \ =\  A_X(E).
\]
Since $X$ is assumed K-semistable, equality must hold, hence $g=12$ and $k=0$, and $X$ has only $A_{\infty}$-singularities along $\ell$. Moreover, all the inequalities above become equalities, and hence one has
\[
\vol(-K_{\tX} - tE) \ =\ \begin{cases}
    22 - 6t^2 -4t^3 & \textrm{when }t\in [0,1];\\
    12(2-t)^3 & \textrm{when }t\in [1,2].
\end{cases}  
\]
Thus we have 
\[
\left.\frac{d}{dt}\right|_{t=1^-} \vol(-K_{\tX} - tE)\ =\ -24\ \neq\  -36\  =\   \left.\frac{d}{dt}\right|_{t=1^+} \vol(-K_{\tX} - tE).
\]
This shows that $\vol(-K_{\tX}-tE)$ is not $C^1$ at $t=1$, a contradiction to \cite{BFJ09}. 
% for every $t\in [1,2]$ by the continuity of the volume function. Denote by $A\coloneqq -K_{\tX}-E$ and $B\coloneqq  -K_{\tX}-2E$. Then we have $\vol(A+tB) = \vol(A)$ for any $t\geq 0$. Note that $A$ is big as $$\big((-K_{\wt{X}}-E)^3\big) \ =\  2g-2-6-(4-k) \ =\  2g-12 + k \ >\ 0$$ Thus by \cite[Theorem A]{BFJ09} we have 
% \[
% 0 \ =\  \left.\frac{d}{dt}\right|_{t=0}\vol(A+tB)  \ =\ 3(\langle A^2\rangle. B) \ =\   3  (A^2 . B)\footnote{Can contradict by computing intersection numbers.},
% \]
% where the last equality follows from the nefness of $A$ and \cite[Proposition 2.12]{BFJ09}. Let $\phi: \tX \to X'$ be the ample model of $A$, which is a birational morphism. Then $(A^2 . B) = 0$ implies that any effective divisor in $|B|$ must be $\phi$-exceptional. In particular, we have $h^0(\tX, B) \leq 1$, contradicting to our previous inequality $h^0(\tX, B) \geq 3$.
\end{proof}

%\footnote{An $A_{\infty}$-singularity along a line is not smoothable based on Fantechi and Petracci; need to decide whether to add it or not}

\begin{proof}[Proof of \Cref{thm:smoothable}]
By \Cref{thm:volume bound}(2), $X$ has only isolated $cA_{\leq 2}$-singularities, $A_\infty$-singularities, or $D_\infty$-singularities. If $X$ is terminal, then it is $\bQ$-Gorenstein smoothable by \cite{Nam97}. Hence we assume that $X$ is not terminal. Then $\dim X_{\sing} = 1$. Let $C = \bigsqcup_i C_i $ be the one-dimensional singular locus of $X$, where each $C_i$ is a smooth curve. Let $\mu\colon \tX \to X$ be the blow-up of $C$, which is a terminalization, and let $E_i$ denote the exceptional divisor over $C_i$. By Theorem~\ref{thm:Kss-very-ample}, the linear system $|-K_X|$ is very ample, hence $|-K_{\tX}|$ is base-point free. Let $S\in |-K_X|$ be a general K3 surface. Then the strict transform $\tS\subseteq \tX$ is a smooth K3 surface by Bertini's theorem.

Applying Theorem~\ref{thm:beauville} to the pair $(\tX,\tS)$, we see that a very general deformation $(\tX_t,\tS_t)$ satisfies that $\tS_t$ is a very general deformation of $\tS$ in the moduli stack of $\Lambda$-quasi-polarized K3 surfaces (cf. \cite[Definition 4.2]{AE25}), where $\Lambda\subseteq \Lambda_{\rm K3}$ is the saturation of 
\[
\mathrm{Im}\bigl(\Pic(\tX) \to \Pic(\tS)\bigr).
\] Let $d_i \coloneqq  (-K_X\cdot C_i)$; by Theorem~\ref{thm:sing-line}, we have $d_i \geq 2$. Since $S$ is a general element of the base-point free linear system $|-K_{X}|$, we may assume that $S$ intersects each $C_i$ transversally at $d_i$ points $(p_{i,j})_{j=1}^{d_i}$ and $X$ has only $A_\infty$-singularities at each $p_{i,j}$.
Then $\tS$ contains exactly $d_i$ exceptional curves $(e_{i,j})_{j=1}^{d_i}$ lying over $C_i\cap S$, whose images $(p_{i,j})_{j=1}^{d_i}$ in $S$ are $A_1$-singularities. Since $E_i \to C_i$ is a conic bundle with a smooth fiber $e_{i,j}$ over each $p_{i,j}$ by \Cref{lem:conic bundle}, we conclude that the curve class $[e_{i,j}]\in N_1(\tX)$ is independent of the choice of $j$. 
% Denote by $X^{\circ}\subset X$ the open $\bQ$-factorial locus. Since $X$ is a klt threefold, we know that $X\setminus X^{\circ}$ is a finite set. In particular, we have $S\subset X^{\circ}$. Let $\tX^{\circ}:= \mu^{-1}(X^{\circ})\subset \tX$ so that $\tS\subset \tX^{\circ}$. Since every fiber of $\mu$  has dimension $\leq 1$, we know that $\tX^{\circ}$ is a big open subset of $\tX$. Then it is clear that 
% \[
% \Pic(\tX^{\circ})_{\bQ} = \mu^* \Pic(X^{\circ})_{\bQ}  \oplus \bigoplus_i \bQ[E_i|_{\tX^{\circ}}].
% \]
% Since
% \[
% \Cl(\tX) \ =\  \mu^*\Cl(X) \oplus \bigoplus_i \bZ[E_i],
% \]
Thus for any $i$ and any $L\in \Pic(\tX)$ (hence any $\beta\in \Lambda$), the intersection number $(L\cdot e_{i,j})$ (hence $(\beta\cdot e_{i,j})$) is independent of the choice of $j$. %In particular, for any $\beta\in \Lambda$ and any $i$, the intersection number $(\beta\cdot e_{i,j})$ is independent of $j$. 
%Therefore, for each $i$, the curves $e_{i,j}$ do not deform individually into a very general small deformation $\widetilde{S}_t$ whenever $d_i \ge 2$; rather, the sum of the curve classes $\sum_{j} [e_{i,j}]$ deforms as a whole. 

Let $X_t$ be the anticanonical model of $\widetilde{X}_t$. We will show that $X_t$ is terminal. If not, there exists a prime divisor $E_t\subset \tX_t$ that is contracted by the morphism $\widetilde{X}_t \to X_t$. Then, by \Cref{thm:volume bound}(2) and \Cref{thm:sing-line}, the image of $E_t$ is a smooth curve of degree $d \ge 2$. Consequently, $\widetilde{S}_t \cap E_t$ is a disjoint union of $d$ rational curves $e_{t,1}, \dots, e_{t,d}$. By the very generality of $\widetilde{S}_t$, one has $\Pic(\widetilde{S}_t) \simeq \Lambda$, and moreover $\Pic(\widetilde{S}_t)_{\bQ}$ is generated by the image of $\Pic(\widetilde{X}_t)_{\bQ}$ by \Cref{cor:lattice-local-system}. However, this is impossible, since by the same argument as above, the curves $e_{t,1}, \dots, e_{t,d}$ have identical intersection numbers with any line bundle $L_t \in \Pic(\widetilde{X}_t)$. Therefore, the morphism $\widetilde{X}_t \to X_t$ is small. Since $\widetilde{X}_t$ is Gorenstein terminal, the same holds for $X_t$. It follows from \cite{Nam97} that $X_t$ is $\bQ$-Gorenstein smoothable, and hence so is $X$.

For the final statement, we may assume that $X_t$ is smooth. Then $\tX_t \simeq X_t$, and $|-K_{X_t}|$ is very ample by Theorem~\ref{thm:Kss-very-ample}. By \Cref{thm:NL-Cl}, for a very general $\tS_t\in |-K_{\tX_t}|$ we have
\[ \Lambda  \ \simeq\ 
\Pic(\tS_t) \ \simeq\ \Pic(\tX_t) \ \simeq\  \Pic(X_t).
\]
 On the other hand, for a very general $S$, it follows again from \Cref{thm:NL-Cl} that
\[
\rk \Pic(X_t)
\ =\ 
\rk(\Lambda)
\ >\ 
\rk\!\bigl(\mathrm{Im}(\Pic(X)\to \Pic(S))\bigr)
\ =\ 
\rk \Pic(X).
\]
This completes the proof.
\end{proof}

\begin{rem}\label{cor:small deformation small anti-can morphism}
In the proof of \Cref{thm:smoothable}, it follows from \Cref{cor:h12} that $h^{1,2}(\widetilde{X}_t)=h^{1,2}(\widetilde{X})$. Since $\widetilde{X}_t \to X_t$ is small and $X_t$ is Gorenstein terminal, the induced morphism from $\widetilde{S}_t$ to its image in $X_t$ is an isomorphism. Let $\widetilde{\Lambda}_t$ (resp.~$\Lambda_t$) denote the saturation in $\Lambda_{\mathrm{K3}}$ of 
\[
\mathrm{Im}\bigl(\Pic(\widetilde{X}_t)\to \Pic(\widetilde{S}_t)\bigr)
\quad
\bigl(\text{resp.~}\mathrm{Im}\bigl(\Pic(X_t)\to \Pic(\widetilde{S}_t)\bigr)\bigr).
\]
Then we obtain the following commutative diagram:
\[
\begin{tikzcd}[ampersand replacement=\&]
	\Def_{(\widetilde{X}_t,\widetilde{S}_t)} \&\& \Def_{(\widetilde{S}_t,\widetilde{\Lambda}_t)} \\
	\Def_{(X_t,\widetilde{S}_t)} \&\& \Def_{(\widetilde{S}_t,\Lambda_t)}
	\arrow["{\widetilde{\phi}_t}", from=1-1, to=1-3]
	\arrow["{\psi_t}", from=1-1, to=2-1]
	\arrow["{\widetilde{\psi}_t}", from=1-3, to=2-3]
	\arrow["{\phi_t}", from=2-1, to=2-3]
\end{tikzcd},
\] where $\psi_t$ is defined by taking anticanonical ample models; see e.g.\ \cite[Corollary~2.16]{San17}. 
Since Mori dream spaces admit only finitely many minimal models (with respect to any divisor), it follows that, after passing to algebraic miniversal deformation spaces, $\psi_t$ is quasi-finite. 
Moreover, $\widetilde{\psi}_t$ is injective, and by \Cref{thm:beauville} both $\phi_t$ and $\widetilde{\phi}_t$ are smooth, of relative dimensions $h^{1,2}(X_t)$ and $h^{1,2}(\widetilde{X}_t)$ respectively. In particular,
\[
h^{1,2}(X_t)\ \ge\  h^{1,2}(\widetilde{X}_t) \ = \ h^{1,2}(\tX).
\]
\end{rem}

As a direct consequence of Theorem \ref{thm:smoothable} we have the following result.

\begin{cor}
Let $X$ be a K-semistable $\bQ$-Fano threefold with $V:=\vol(X) \geq 22$. Then the following conditions are equivalent.
\begin{enumerate}[label=(\roman*)]
    \item $X$ is Gorenstein canonical;
    \item $X$ is $\bQ$-Gorenstein smoothable;
    \item $X$ does not admit quotient singularities of type $\frac{1}{2}(1,1,1)$.
\end{enumerate}
Moreover, the locus of $X$ satisfying one (and hence all) of the above conditions is both open and closed in $\cM_{3,V}^{\K}$.
\end{cor}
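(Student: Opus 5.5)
We prove (i)$\Rightarrow$(ii)$\Rightarrow$(iii)$\Rightarrow$(i) and then deduce the openness and closedness.

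\textbf{The cycle of implications.} The implication (i)$\Rightarrow$(ii) is exactly the first assertion of \Cref{thm:smoothable}. The implication (ii)$\Rightarrow$(i) is \Cref{thm:volume bound}(1). Since a Gorenstein variety has no quotient singularity of type $\frac{1}{2}(1,1,1)$ (the latter has index $2$), (i)$\Rightarrow$(iii) is immediate. The real content is (iii)$\Rightarrow$(i). For this we use the local volume estimates behind \Cref{thm:volume bound}, namely the inequality $\vol(X)\le \frac{64}{27}\,\widehat{\vol}(x,X)$ for every $x\in X$. With $\vol(X)\ge 22$ this gives $\widehat{\vol}(x,X)\ge \frac{27\cdot 22}{64}>9$ at every point.

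\textbf{The key step (iii)$\Rightarrow$(i).} Let $x\in X$ be a non-Gorenstein point with index $r\ge 2$. By finite degree formula, the index-one cover $\tilde x\in \tilde X$ satisfies $\widehat{\vol}(\tilde x,\tilde X)=r\,\widehat{\vol}(x,X)$. Since $\widehat{\vol}(\tilde x,\tilde X)\le 27$, we get $\widehat{\vol}(x,X)\le 27/r$. Combined with $\widehat{\vol}(x,X)>9$, this forces $r=2$ and $\widehat{\vol}(\tilde x,\tilde X)>18$. By the classification of three-dimensional klt singularities with local volume above $16$ (as in \cite{LX19,Liu25}), the cover $\tilde x$ is a smooth point. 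Hence $x$ is a quotient $\bC^3/\bZ_2$ with the group acting freely in codimension one, and the only such action is $\frac{1}{2}(1,1,1)$. This contradicts (iii). I expect this to be the main obstacle: one must make sure the cited volume-gap result (index-one cover with volume $>18$ must be smooth, i.e.\ excluding $cA_1$-type covers of volume $16$) is stated in a form applicable here. Alternatively, one can simply quote the analysis in \cite{LZ25,Liu25}, which shows that for volume $\ge 22$ the only non-Gorenstein K-semistable possibility is $\frac{1}{2}(1,1,1)$.

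\textbf{Openness and closedness.} Condition (iii) is open in $\cM^{\K}_{3,V}$, because being Gorenstein is an open condition in flat families; this also follows from the constancy of the index-one cover structure in $\bQ$-Gorenstein families. Condition (ii) is closed: if the general fibre of a family over a curve is $\bQ$-Gorenstein smoothable, the special fibre is a limit of smoothable varieties, hence lies in the closure of the smooth locus, and is therefore smoothable by definition via a two-parameter family restricted to a suitable curve. More cleanly, we argue as follows. The locus of (i) is open. Its complement is the locus where some point has a $\frac{1}{2}(1,1,1)$ singularity. This singularity is $\bQ$-Gorenstein rigid (its index-one cover is smooth), so it persists in small deformations, and the complement is open as well. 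Both loci are therefore open, hence each is open and closed.
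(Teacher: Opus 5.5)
Your proof is correct and uses the same ingredients as the paper, arranged differently.

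\textbf{How the paper argues.} It closes the cycle as follows:
\begin{itemize}
\item (i)$\Rightarrow$(ii) by \Cref{thm:smoothable};
\item (ii)$\Rightarrow$(iii) by the rigidity of isolated quotient singularities \cite{Sch71};
\item (iii)$\Rightarrow$(i) by simply citing \cite[Theorem 1.3]{Liu25}.
\end{itemize}
Clopenness then follows from ``smoothability is closed, Gorenstein is open.''

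\textbf{How your route differs.} You get (ii)$\Rightarrow$(i) directly from \Cref{thm:volume bound}(1), and (i)$\Rightarrow$(iii) trivially. You reprove (iii)$\Rightarrow$(i) by hand, using three inputs: the inequality $\vol(X)\le\frac{64}{27}\widehat{\vol}(x,X)$, the finite degree formula for the index-one cover, and the Liu--Xu bound $\widehat{\vol}\le 16$ at singular three-dimensional klt points. The rigidity of $\frac12(1,1,1)$ then reappears in your proof that the non-Gorenstein locus is open.

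\textbf{What each approach buys.} Your version shows exactly where the volume hypothesis enters. To exclude index $3$ one needs $\widehat{\vol}(x,X)>9$, i.e.\ $\vol(X)>64/3$; compare the $\frac13(1,1,1)$ points in \Cref{ex:nonsmoothable_Kps_volume_18}. The paper's version is shorter but treats this step as a black box.

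\textbf{One point to tighten.} ``$\mu_2$ acting freely in codimension one'' does not by itself force $\frac12(1,1,1)$: the action $\frac12(1,1,0)$ also has codimension-two fixed locus, and its quotient is Gorenstein. You need to add that the covering group acts on a local generator of $\omega_{\tilde X}$ by $-1$, because $x$ has index exactly $2$. So the linearized action has determinant $-1$, and with no reflections this forces all three eigenvalues to be $-1$.

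\textbf{A minor remark.} Your first sketch of the closedness of (ii) is only heuristic. It is superseded by your second argument, which shows both loci are open, so nothing is missing there.
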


\begin{proof}
By Theorem \ref{thm:smoothable} we have (i) implies (ii). The direction that (ii) implies (iii) follows from the rigidity of isolated quotient singularities \cite{Sch71}. By \cite[Theorem 1.3]{Liu25} we have (iii) implies (i).  Finally, the last statement holds as $\bQ$-Gorenstein smoothability is a closed condition, while being Gorenstein canonical is an open condition.
\end{proof}

The following example shows that the condition $\vol(X) \geq 22$ in \Cref{thm:smoothable} is nearly optimal.

\begin{exam}\label{ex:nonsmoothable_Kps_volume_18}

There exists a K-polystable toric Gorenstein canonical Fano threefold $X_0$ of volume $18$, corresponding to the spanning fan of the triangular prism, or equivalently, the normal fan of the dual bipyramid polytope (see \Cref{fig:polytopes}). 
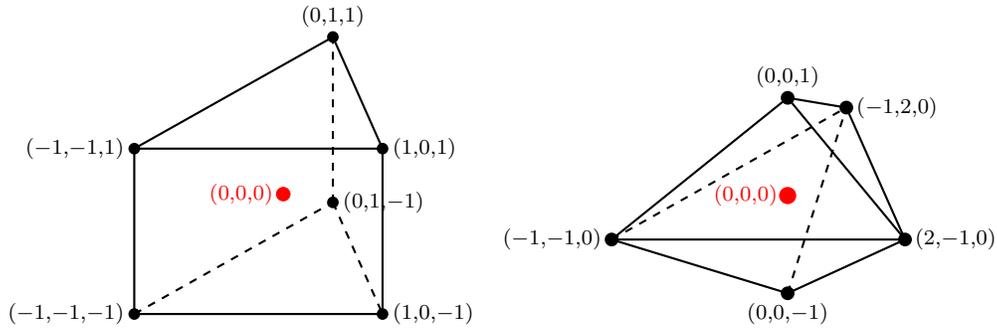
\begin{figure}[H]
\centering
\begin{tikzpicture}[
  scale=1.1,
  x={(1cm,0cm)}, y={(0.8cm,0.45cm)}, z={(0cm,1cm)}
]

% vertices (triangle x [-1,1])
\coordinate (A) at (0,0,-1);
\coordinate (B) at (3,0,-1);
\coordinate (C) at (0,3,-1);

\coordinate (D) at (0,0, 1);
\coordinate (E) at (3,0, 1);
\coordinate (F) at (0,3, 1);

% centroid
\coordinate (G) at (1,1,0);

% edges
\draw[thick] (A)--(B);
\draw[thick, dashed] (C)--(B);
\draw[thick, dashed] (A)--(C);

\draw[thick] (D)--(E);
\draw[thick] (D)--(F);
\draw[thick] (E)--(F);

\draw[thick] (A)--(D);
\draw[thick] (B)--(E);
\draw[thick, dashed] (C)--(F);

% points
\fill (A) circle (2pt);
\fill (B) circle (2pt);
\fill (C) circle (2pt);
\fill (D) circle (2pt);
\fill (E) circle (2pt);
\fill (F) circle (2pt);

% centroid (red)
\fill[red] (G) circle (2.5pt);
\node[red, left] at (G) {$\scriptstyle (0,0,0)$};

% labels
\node[left]  at (A) {$\scriptstyle (-1,-1,-1)$};
\node[right] at (B) {$\scriptstyle (1,0,-1)$};
\node[right] at (C) {$\scriptstyle (0,1,-1)$};

\node[left]  at (D) {$\scriptstyle (-1,-1,1)$};
\node[right] at (E) {$\scriptstyle (1,0,1)$};
\node[above] at (F) {$\scriptstyle (0,1,1)$};
\end{tikzpicture}
\centering
\begin{tikzpicture}[
  scale=1.3,
  x={(1cm,0cm)}, y={(0.8cm,0.45cm)}, z={(0cm,1cm)}
]

% vertices
\coordinate (A) at (2,-1,0);
\coordinate (B) at (-1,2,0);
\coordinate (C) at (-1,-1,0);

\coordinate (D) at (0,0,1);
\coordinate (E) at (0,0,-1);
\coordinate (G) at (0,0,0);

% edges of base triangle
\draw[thick] (A)--(B);
\draw[thick, dashed] (C)--(B);
\draw[thick] (A)--(C);

% edges to top apex
\draw[thick] (D)--(A);
\draw[thick] (D)--(B);
\draw[thick] (D)--(C);

% edges to bottom apex
\draw[thick] (E)--(A);
\draw[thick, dashed] (E)--(B);
\draw[thick] (E)--(C);

% vertices
\fill (A) circle (2pt);
\fill (B) circle (2pt);
\fill (C) circle (2pt);
\fill (D) circle (2pt);
\fill (E) circle (2pt);
\fill[red] (G) circle (2.5pt);
\node[red, left] at (G) {$\scriptstyle (0,0,0)$};
% labels
\node[right] at (A) {$\scriptstyle (2,-1,0)$};
\node[right]  at (B) {$\scriptstyle (-1,2,0)$};
\node[left]  at (C) {$\scriptstyle (-1,-1,0)$};

\node[above] at (D) {$\scriptstyle (0,0,1)$};
\node[below] at (E) {$\scriptstyle (0,0,-1)$};
\end{tikzpicture}
\caption{The triangular prism (left) and its dual bipyramid (right).}
\label{fig:polytopes}
\end{figure}
\noindent The toric threefold $X_0$ can be realized as %the complete intersection in $\bP^5$ defined by $xyz = w^3$ and $uv = w^2$ \AP{this is false, I think}, or equivalently as 
the weighted hypersurface
\[
V\big(y_0y_1 - (x_0x_1x_2)^2\big) \ \subseteq \   \bP(1,1,1,3,3).
\]
It has two isolated quotient singularities of type $\frac{1}{3}(1,1,1)$ and generically $A_{\infty}$-singularities along a cycle of three $\bP^1$'s. In particular, $X_0$ is not $\bQ$-Gorenstein smoothable. On the other hand, $X_0$ is K-polystable, since the barycenter of the weight polytope (the red point in \Cref{fig:polytopes}) is the origin. This example shows that the volume bound in \Cref{thm:smoothable} is close to optimal.

Notice that $X_0$ deforms to a general weighted hypersurface $X_t \subset \bP(1,1,1,3,3)$ of class $\cO(6)$, which can be realized as a double cover of $\bP \coloneqq \bP(1,1,1,3)$ branched along an anticanonical divisor $S_t \in |-K_{\bP}|$. Such a threefold $X_t$ has two isolated quotient singularities of type $\frac{1}{3}(1,1,1)$. By \cite[Theorem 1.2(2)]{LZ22} and \cite{Zhu21}, the threefold $X_t$ is K-polystable if and only if the pair $\big(\bP,\frac{1}{2}S_t\big)$ is K-polystable. Viewing $\bP$ as the projective cone $C_p(\bP^2,\cO_{\bP^2}(3))$, the divisor $S_t$ is a double cover of $\bP^2$ branched along a plane sextic curve $C_t$. By \cite[Theorem 5.2]{ADL21}, the pair $\big(\bP,\frac{1}{2}S_t\big)$ is K-polystable if and only if the pair $\big(\bP^2,\frac{1}{8}C_t\big)$ is K-polystable. 

Furthermore, by \cite[Theorem 1.5]{ADL19}, the K-moduli space of pairs $\big(\bP^2,\frac{1}{8}C_t\big)$ is isomorphic to the GIT moduli space of plane sextic curves $|\cO_{\bP^2}(6)|^{\rm ss}\sslash \PGL(3)$,
which is also the GIT moduli space of degree $2$ K3 surfaces. Consequently, the K-moduli space of weighted hypersurfaces in $\bP(1,1,1,3,3)$ of class $\cO(6)$ is isomorphic to this GIT quotient.

\end{exam}

\subsection{K-moduli of pairs and forgetful maps}

In this subsection, we establish a general framework for studying the forgetful morphism from the K-moduli of pairs to the moduli of K3 surfaces.

Fix a deformation family of smooth Fano threefolds (not necessarily containing a K-semistable member), and denote its number by \textnumero$\star$ and its anticanonical volume by $V$. Let $\cM^\K_{\textup{\textnumero}\star}$ denote the K-moduli stack of Fano threefolds of type \textnumero$\star$ (which may be empty), and let $\cM^{\Gor}_{\textup{\textnumero}\star}$ denote the moduli stack of Gorenstein canonical degenerations of smooth Fano threefolds in the family \textnumero$\star$. Since these Fano threefolds form a bounded family and the Gorenstein canonical condition is open in $\bQ$-Gorenstein families, it follows that $\cM^{\Gor}_{\textup{\textnumero}\star}$ is an Artin stack of finite type over $\bC$; see e.g. \cite[Theorem 7.36]{Xu25}. For any rational number $c\in (0,1)$, let $\cP_{\textup{\textnumero}\star}^{\K}(c)$ be the irreducible component of $\cP^{\K}_{3,V,1}(c)$, endowed with its reduced stack structure, whose general point parametrizes a smooth Fano threefold of family \textnumero$\star$ together with a smooth anticanonical K3 surface. Let $\fP_{\textup{\textnumero}\star}^{\K}(c)$ be the corresponding irreducible component of the K-moduli space $\fP^{\K}_{3,V,1}(c)$. Let $(X,cS)$ be a log smooth pair parametrized by $\cP_{\textup{\textnumero}\star}^{\K}(c)$. Denote by $h^{1,2}\coloneqq h^{1,2}(X)$ the third Betti number of $X$, by $r$ the Fano index of $X$, and fix a Cartier divisor $H$ such that $-K_X\sim rH$. For any line bundle $L\in \Pic(X)$, set $L_S\coloneqq L|_S$.

\begin{lem}\label{lem:primitivity}
    The class $c_1(L_S)$ is primitive in $H^2(S,\bZ)$.
\end{lem}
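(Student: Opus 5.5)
The plan is to use the Lefschetz hyperplane theorem with integral coefficients. I read the statement with $L$ primitive in $\Pic(X)$, e.g.\ $L=H$, since a non-primitive $L$ obviously cannot restrict to a primitive class. The argument has three steps: identify $\Pic(X)$ with $H^2(X,\bZ)$, show that restriction $H^2(X,\bZ)\to H^2(S,\bZ)$ has saturated image, and conclude.

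\emph{Step 1.} Since $X$ is a smooth Fano threefold, Kawamata--Viehweg vanishing gives $H^1(X,\cO_X)=H^2(X,\cO_X)=0$. The exponential sequence then shows $c_1\colon \Pic(X)\to H^2(X,\bZ)$ is an isomorphism. In particular $c_1(L)$ is primitive in $H^2(X,\bZ)$. The same argument applies to $S$: $\Pic(S)\subseteq H^2(S,\bZ)$ is the $(1,1)$-part, and $c_1(L_S)=\iota^*c_1(L)$, where $\iota\colon S\hookrightarrow X$ is the inclusion.

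\emph{Step 2.} Consider the long exact sequence of the pair $(X,S)$:
\[
H^2(X,S;\bZ)\longrightarrow H^2(X,\bZ)\xrightarrow{\ \iota^*\ } H^2(S,\bZ)\longrightarrow H^3(X,S;\bZ).
\]
By Lefschetz duality, $H^k(X,S;\bZ)\simeq H_{6-k}(X\setminus S;\bZ)$. Since $S$ is an ample smooth divisor, $U:=X\setminus S$ is a smooth affine threefold. By the Andreotti--Frankel theorem, $U$ has the homotopy type of a CW complex of real dimension at most $3$. Hence $H_4(U;\bZ)=0$, which gives injectivity of $\iota^*$. Moreover, $H_3(U;\bZ)$ is torsion-free, being the top possibly nonzero homology of a $3$-dimensional CW complex. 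Therefore $\coker(\iota^*)$ embeds into the torsion-free group $H^3(X,S;\bZ)$, so the image of $\iota^*$ is a saturated sublattice of $H^2(S,\bZ)$.

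\emph{Step 3.} Suppose $c_1(L_S)=m\beta$ with $\beta\in H^2(S,\bZ)$ and $m\ge 2$. Then $\beta$ maps to an $m$-torsion element of $\coker(\iota^*)$, which must be zero by Step 2. So $\beta=\iota^*\gamma$ for some $\gamma$. Injectivity of $\iota^*$ gives $c_1(L)=m\gamma$, contradicting primitivity of $c_1(L)$ from Step 1. Hence $c_1(L_S)$ is primitive.

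The main point needing care is Step 2, specifically the torsion-freeness of $H_3(X\setminus S;\bZ)$ and hence of the relative cohomology $H^3(X,S;\bZ)$. This rests on the CW-structure form of the Andreotti--Frankel theorem and must be invoked integrally, not just rationally. I would cite it directly, or equivalently cite the integral Lefschetz hyperplane theorem, which states that $H^2(X,\bZ)\to H^2(S,\bZ)$ is injective with torsion-free cokernel.
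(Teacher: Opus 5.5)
Your proof is correct and takes the same route as the paper. The paper simply cites the integral Lefschetz hyperplane theorem for the torsion-freeness of $\coker(i^*)$ and concludes in one line, whereas you derive that theorem from Lefschetz duality and Andreotti--Frankel. You also make explicit two points the paper uses tacitly and that are needed to pass from saturation of the image to primitivity of $c_1(L_S)$: that $L$ must be primitive in $\Pic(X)$ (e.g.\ $L=H$), and that $\iota^*$ is injective.
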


\begin{proof}
If $c_1(L_S)$ were not primitive in $H^2(S,\bZ)$, then the image of $        i^*:\Pic(X)\simeq H^2(X,\bZ)\to H^2(S,\bZ)$ would not be saturated, so $\coker(i^*)$ would have torsion. 
    However, as $S\in |-K_X|$ is a smooth ample divisor, the integral Lefschetz hyperplane theorem implies that $\coker(i^*)$ is torsion free; see \cite[Example 3.1.18]{Laz04a}. 
    Hence $c_1(L_S)$ is primitive in $H^2(S,\bZ)$.
\end{proof}

Let $\Lambda$ be $\im(H^2(X,\bZ)\rightarrow H^2(S,\bZ))$, which is a primitive sublattice of $H^2(S,\bZ)\simeq \Lambda_{\rm K3}$, $h\in \Lambda$ be $c_1(H_S)$, and $d$ be $(h^2)$. Let $\cF_{d,\Lambda}$ be the Noether--Lefschetz locus of $\cF_{d}$ associated to $\Lambda$, $\fF_{d,\Lambda}$ be its coarse moduli space, and $\ove{\fF}^{\BB}_{d,\Lambda}$ be its Baily--Borel compactification, i.e. the closure of $\fF_{d,\Lambda}$ in $\ove{\fF}^{\BB}_d$.

Let $\cP^{\Kst}_{\textup{\textnumero}\star}(1-\epsilon)$ be the open substack of 
$\cP^{\K}_{\textup{\textnumero}\star}(1-\epsilon)$ parametrizing K-stable pairs 
$\big(X,(1-\epsilon)S\big)$, and let $\cP^{\plt}_{\textup{\textnumero}\star}$ be the open substack of 
$\cP^{\K}_{\textup{\textnumero}\star}(1-\epsilon)$ parametrizing pairs 
$\big(X,(1-\epsilon)S\big)$ such that $(X,S)$ is plt. Then $\cP^{\plt}_{\textup{\textnumero}\star}$ is contained in  $\cP^{\Kst}_{\textup{\textnumero}\star}(1-\epsilon)$ by \cite[Theorem 2.10]{ADL21}. Let 
$\cP^{\ter}_{\textup{\textnumero}\star}$ (resp. $\cP^{\can}_{\textup{\textnumero}\star}$) be the open substack, by the inversion of adjunction, of 
$\cP^{\plt}_{\textup{\textnumero}\star}$ parametrizing pairs 
$\big(X,(1-\epsilon)S\big)$ for which $X$ is Gorenstein terminal (resp. Gorenstein canonical) and $S$ is an ADE K3 surface. 
Then $\cP^{\ter}_{\textup{\textnumero}\star}$ is a smooth Deligne--Mumford stack. 

\renewcommand{\arraystretch}{1.5}
\begin{longtable}{| >{\centering\arraybackslash}p{.18\textwidth} 
                  | >{\centering\arraybackslash}p{.20\textwidth} 
                  | >{\centering\arraybackslash}p{.16\textwidth} 
                  | >{\centering\arraybackslash}p{.16\textwidth} 
                  | >{\centering\arraybackslash}p{.16\textwidth} |}
    \hline 
    $\cP^{\ter}_{\textup{\textnumero}\star}$ & $\cP^{\can}_{\textup{\textnumero}\star}$ & $\cP^{\plt}_{\textup{\textnumero}\star}$ & $\cP^{\Kst}_{\textup{\textnumero}\star}(1-\epsilon)$ & $\cP^{\K}_{\textup{\textnumero}\star}(1-\epsilon)$ \\ \hline 
    $X$ Gorenstein terminal, $S$ ADE & $X$ Gorenstein canonical, $S$ ADE & $(X,S)$ plt & $(X,(1-\epsilon)S)$ K-stable & $(X,(1-\epsilon)S)$ K-semistable \\ \hline 
  
    \caption{Inclusions of open substacks}
    \label{tab:Inclusion of stacks}
\end{longtable}

\begin{prop}\label{prop:plt=Gor can}
For any $(X,S)\in \cP^{\plt}_{\textup{\textnumero}\star}$, the variety $X$ is Gorenstein canonical. In other words, one has $\cP^{\can}_{\textup{\textnumero}\star}=\cP^{\plt}_{\textup{\textnumero}\star}$.%\footnote{\YL{we should add one thing: every Gorenstein canonical degeneration together with an ADE K3 lies in $\cP^{\can}$; in other words, the forgetful map $\cP^{\can} \to \cM^{\Gor}$ is surjective. This is a quick consequence of \cite[Theorem 2.10]{ADL21}}}
\end{prop}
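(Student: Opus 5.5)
Let $(X,S)\in \cP^{\plt}_{\textup{\textnumero}\star}$, so $(X,(1-\epsilon)S)$ is K-semistable, $S$ is an integral divisor with $K_X+S\sim_{\bQ}0$, and $(X,S)$ is plt. The plan is to first get the surface $S$ right and then transfer information to $X$ by inversion of adjunction. Because $(X,S)$ is plt and $S$ is a reduced divisor, $S$ is normal. The different $\mathrm{Diff}_S(0)$ then satisfies $K_S+\mathrm{Diff}_S(0)=(K_X+S)|_S\sim_{\bQ}0$, and $(S,\mathrm{Diff}_S(0))$ is klt by adjunction.

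\textbf{Step 1: $X$ is Gorenstein along $S$, and $S$ is an ADE K3 surface.} Near $S$ the variety $X$ has only isolated non-Gorenstein points, since a klt threefold is Gorenstein in codimension two up to finitely many quotient-type points. Consider the $\bQ$-Gorenstein family $\sX\to T$ degenerating a smooth member of \textnumero$\star$ to $X$, together with the family of divisors $\sS$ degenerating a smooth anticanonical K3 surface to $S$; this family exists because $(X,S)$ lies in the component $\cP^{\K}_{\textup{\textnumero}\star}$. On the general fiber, $S_t$ is Cartier and $S_t\sim -K_{X_t}$. To conclude that $S$ is Cartier, I would use either Lemma~\ref{lem:line bundle} adapted to the threefold family or the line-bundle extension Lemma~\ref{lem:line-bundle-extension}. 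The more robust route is local: at a point $x\in S$ with local index $r>1$, take the index-one cover of the plt pair $(X,S)$ at $x$. The cover of $S$ is a cyclic cover of a klt surface germ that is étale in codimension one. The flat limit $S$ of smooth K3 surfaces satisfies $\chi(\cO_S)=2$ and $\omega_S$ is a flat limit of trivial sheaves. I expect to show $\mathrm{Diff}_S(0)=0$ by this deformation argument: $\omega_{\sS/T}$ is $\bQ$-Cartier with trivial general fiber, so after Lemma~\ref{lem:line bundle} it is trivial on $S$ as well. It follows that $K_S\sim 0$, $S$ is klt, and hence $S$ is an ADE K3 surface. Since $\mathrm{Diff}_S(0)=0$, the pair $(X,S)$ is Gorenstein near $S$ with $K_X+S$ Cartier there, because a nonzero different appears exactly at points where $K_X+S$ fails to be Cartier.

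\textbf{Step 2: $X$ is Gorenstein canonical near $S$.} Once $S$ is Cartier with ADE singularities, inversion of adjunction makes $(X,S)$ plt with $S$ Cartier and canonical, so $X$ is canonical (indeed Gorenstein canonical) in a neighborhood of $S$.

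\textbf{Step 3: globalize.} The anticanonical divisor $S$ is ample, because $-K_X$ is ample, so it meets every curve. The locus of points where $X$ is not Gorenstein canonical is closed. I expect it to consist of finitely many points: a klt threefold is canonical and Gorenstein outside a set of codimension two, and that set cannot contain a curve, since such a curve would meet the ample $S$. To remove these isolated points I would use a deformation argument. Moving $S$ in the family of anticanonical divisors, for instance in the pencil $|-K_X|$ near $S$ (which is plt for members near $S$ by openness), gives plt members passing through any given point. Through Step 1 this shows $X$ is Gorenstein canonical at that point as well. Alternatively, I could apply Theorem~\ref{thm:volume bound}-type local volume bounds, or the known fact that K-semistable $\bQ$-Gorenstein degenerations with a plt anticanonical divisor have isolated non-canonical points avoided by the divisor, and then contradict connectedness of $-K_X$-sections.

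\textbf{Expected obstacle.} The main obstacle is Step 1, namely proving $\mathrm{Diff}_S(0)=0$, equivalently that $X$ is Gorenstein along $S$, and then extending the conclusion to points off $S$. I would first attempt the family argument with $\omega_{\sS/T}$ combined with Lemma~\ref{lem:line bundle}.
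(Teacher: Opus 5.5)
Your outline never establishes the two facts that carry the proof. \emph{First}, you only use $K_X+S\sim_{\bQ}0$. The paper instead starts from the \emph{integral} equivalence $K_X+S\sim 0$ \cite[Lemma~2.11]{ABB+}. Concretely, on the smoothing $(\sX,\sS)\to T$ the Weil divisor $K_{\sX/T}+\sS$ is linearly trivial over $T\setminus\{0\}$. It is therefore linearly equivalent to a multiple of the irreducible Cartier fiber $X=\sX_0$, hence $\sim_T 0$, and restricting gives $K_X+S\sim 0$.

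This single fact makes $K_X+S$ Cartier, so $\mathrm{Diff}_S(0)=0$ and $S$ is a Gorenstein klt surface, i.e.\ ADE. Your route to ADE-ness, applying \Cref{lem:line bundle} to $\omega_{\sS/T}$, is circular, because that lemma assumes the central fiber is already ADE. More importantly, $K_X\sim -S$ is trivial on $X\setminus S$, so $X$ is Gorenstein away from $S$ for free.

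Your Step~3 cannot replace this. Openness of the plt condition only controls members of the linear system close to $S$, and these do not pass through a given point far from $S$. Moreover, saying that $|-K_X|$ contains $S$ already presupposes the integral equivalence. Your claim that a klt threefold is Gorenstein in codimension two up to finitely many points is also false: transversal non-Gorenstein quotient singularities along curves are allowed.

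\emph{Second}, Step~2 assumes that $K_X$ (equivalently $S$) is Cartier along $S$, but Step~1 never proves it. Your justification, that a nonzero different appears exactly where $K_X+S$ fails to be Cartier, is false in dimension three, since the different only records curves in $S$. Even $K_X+S$ Cartier does not make $X$ Gorenstein. Take the terminal germ $X=\frac1r(1,-1,a)$ with $S$ the image of $(x_3=0)$. Then $(X,S)$ is plt, $S$ is an $A_{r-1}$ point, $\mathrm{Diff}_S(0)=0$ and $K_X+S$ is Cartier, yet $K_X$ has index $r$.

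Such points are excluded only by the smoothing, which your local index-one-cover argument never uses. \Cref{lem:line-bundle-extension} does not help either: it extends bundles from the central fiber to the family, not Cartierness from the general fiber down to the central one. The paper's input here has two parts:
\begin{enumerate}
\item \cite[Theorem~A.1]{HLS24} reduces Cartierness of $-K_X$ near $S$ to Cartierness of $-K_X|_S$.
\item Since $\sX$ is smooth in codimension two, $-K_{\sX}|_{\sS}$ is a $\bQ$-Cartier Weil divisor on the family of ADE surfaces $\sS\to T$ that is Cartier on general fibers, hence Cartier by \Cref{lem:line bundle}.
\end{enumerate}
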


\begin{proof}
Let $(\sX,\sS)\rightarrow (0\in T)$ be a $\bQ$-Gorenstein smoothing of $(X,S)$ over a smooth pointed curve. By \cite[Lemma~2.11]{ABB+}, one has $
K_X+S\sim 0$, and hence $K_X$ is Cartier away from $S$. By \cite[Theorem~A.1]{HLS24}, it therefore suffices to show that $-K_X|_S=-K_{\sX}|_S$ is Cartier. As $\sX$ is smooth in codimension $2$, then $-K_{\sX}|_{\sS}$ is a $\bQ$-Cartier Weil divisor, and hence $-K_{\sX}|_{\sS}$ is Cartier by \Cref{lem:line bundle}.
\end{proof}

\begin{prop}\label{prop:extension to BB compactification}
The forgetful map $\cP^{\K}_{\textup{\textnumero}\star}(1-\epsilon)\dashrightarrow \cF_{d,\Lambda}$
extends to a surjective proper morphism
\[
\fP^{\K}_{\textup{\textnumero}\star}(1-\epsilon)^{\nu} \longrightarrow \overline{\fF}^{\BB}_{d,\Lambda}
\]
from the normalization of the K-moduli space to the Baily--Borel compactification of the Noether--Lefschetz locus.
\end{prop}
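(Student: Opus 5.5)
The plan is to combine the Borel extension theorem with the fact that the K-moduli space is proper. First, I would check that the forgetful map is defined on a dense open set and lands in $\fF_{d,\Lambda}$. On the open substack $\cP^{\plt}_{\textup{\textnumero}\star}$, \Cref{prop:plt=Gor can} shows that $X$ is Gorenstein canonical, and the surface $S$ is an ADE K3 surface by inversion of adjunction. Hence $(S,H_S)$ is a quasi-polarized K3 surface of degree $d$; after passing to the ample model with respect to $H_S$, it is a polarized ADE K3 surface. By \Cref{rmk:beauville} and \Cref{cor:lattice-local-system}, the lattice $\Lambda$ spreads out in families, and by \Cref{lem:primitivity} it is a primitive sublattice containing $h$. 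So the period point lies in the Noether--Lefschetz locus $\cF_{d,\Lambda}$. The map descends to good moduli spaces because $\cF_d$ is a separated DM stack with coarse space $\fF_d$, and automorphisms of $(X,S)$ act compatibly on $S$. This gives a morphism $\fP^{\plt}_{\textup{\textnumero}\star}\to \fF_{d,\Lambda}\subseteq \overline{\fF}^{\BB}_{d,\Lambda}$ on an open dense subset of the irreducible space $\fP^{\K}_{\textup{\textnumero}\star}(1-\epsilon)$; it contains the log smooth locus.

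Next, I would extend the map over the normalization $\fP^{\nu}:=\fP^{\K}_{\textup{\textnumero}\star}(1-\epsilon)^{\nu}$. Take a resolution $W\to \fP^{\nu}$ on which the complement of the preimage of the plt locus is a simple normal crossings divisor. The period map on the plt locus is locally liftable to the period domain, so it is a locally liftable holomorphic map into an arithmetic quotient. Borel's extension theorem then extends $W\to \overline{\fF}^{\BB}_{d,\Lambda}$ to a morphism. To descend to $\fP^{\nu}$, I would show that the extended map is constant on the fibers of $W\to\fP^{\nu}$. Since $\fP^{\nu}$ is normal, Zariski's main theorem reduces this to constancy on connected fibers.

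This descent is the main obstacle, and I would handle it modularly. A point of $\fP^{\nu}$ outside the plt locus corresponds to a K-polystable pair $(X,(1-\epsilon)S)$ with $(X,S)$ log canonical but not plt; this uses the $1-\epsilon$ wall-crossing, as in \cite{ADL21}. For any curve degenerating to that point, the limit of $S_t$ is controlled by the log Calabi--Yau degeneration $(\sX,\sS)$. By adjunction, $(S,0)$ is a semi-log-canonical K3 surface, which is a Type II or Type III degeneration. Its image in $\overline{\fF}^{\BB}_d$ is the cusp determined by the dual complex of its minimal dlt model (for Type II, the $j$-invariant of the double curve), together with the weight/Hodge structure of the mixed Hodge structure on $H^2$. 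This limit depends only on the central pair and not on the chosen arc. That makes the extension well defined on points, and continuity plus normality yields a morphism. If this intrinsic description proves delicate, I would instead argue with graphs: the closure of the graph in $\fP^{\nu}\times\overline{\fF}^{\BB}_{d,\Lambda}$ is finite over $\fP^{\nu}$ by the above, and birational, hence an isomorphism by normality.

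Finally, properness and surjectivity follow formally. $\fP^{\nu}$ is projective by \Cref{thm:kmoduli2}, so the morphism is proper and its image is closed. The image contains the image of the log smooth locus. By Beauville's theorem (\Cref{thm:beauville}), the forgetful map on the log smooth locus is smooth onto an open subset of the $\Lambda$-polarized moduli, hence dominant onto $\fF_{d,\Lambda}$. Since $\overline{\fF}^{\BB}_{d,\Lambda}$ is the closure of $\fF_{d,\Lambda}$, the map is surjective.
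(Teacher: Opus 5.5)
Your architecture matches the paper's. Where you use a graph-closure and Zariski-main-theorem argument over the normal space $\fP^{\K}_{\textup{\textnumero}\star}(1-\epsilon)^{\nu}$, the paper invokes \cite[Lemma~3.18]{AET23}. Both reduce the extension to one claim: for every one-parameter family $(\sX,\sS)\to(0\in C)$ of $(1-\epsilon)$-K-semistable pairs whose central fiber is a fixed strictly log canonical $(X,S)$, the pair $(X,S)$ determines the Type (II or III) of the Baily--Borel limit and, in Type~II, its $j$-invariant.

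The gap is that you only assert this claim, which is exactly the step you call the main obstacle. The family $\sS\to C$ along an arc is not a Kulikov model. The Baily--Borel limit is read off from a Kulikov model $\sS^*\to C$ of the punctured family (after finite base change), and its central fiber a priori has no relation to $S$. Saying the limit ``is controlled by'' $(\sX,\sS)$, or ``is the cusp determined by the dual complex of the minimal dlt model'' of $S$, restates what has to be proved. Calling $S$ itself ``a Type II or Type III degeneration'' conflates the slc surface with the type of the degeneration.

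The missing idea is a comparison theorem for crepant birational log Calabi--Yau degenerations, which is exactly the paper's proof. By inversion of adjunction $(\sS,\sS_0)$ is lc with $K_{\sS}+\sS_0\sim_{\bQ,C}0$, and the same holds for $(\sS^*,\sS^*_0)$. These are two birational crepant log structures over $C$ in Koll\'ar's sense, with $0$ the unique lc center on the base. So by \cite[Theorem~1]{Kol16} their minimal lc centers are crepant birational. If a minimal lc center of $S$ is a point, the Kulikov model is of Type~III. If it is a curve $Z$, the model is of Type~II with elliptic double curve birational to $Z$, which fixes the $j$-invariant.

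This determines the limit only up to the finitely many cusps, not the point itself. Your primary claim that the limit is well defined on points, via an unexplained mixed Hodge structure on $H^2(S)$, therefore overreaches. What makes the argument work is your fallback: quasi-finiteness of the graph closure, hence finite and birational over a normal base, hence an isomorphism. On that route the Borel extension on a resolution is unnecessary, since $\overline{\fF}^{\BB}_{d,\Lambda}$ is projective and the graph closure is already proper over the normalization. Your final argument for properness and surjectivity is fine.
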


\begin{proof}
By \Cref{prop:plt=Gor can}, the rational map $\cP^{\K}_{\textup{\textnumero}\star}(1-\epsilon)\dashrightarrow \cF_{d,\Lambda}$ is regular on $\cP^{\plt}_{\textup{\textnumero}\star}$. Hence it suffices to consider pairs $(X,S)$ that are strictly log canonical, i.e.\ such that $S$ is strictly (semi-)log canonical. By \cite[Lemma~3.18]{AET23}, it is enough to prove the following statement: fix a pair $(X,S)$; for any one-parameter family of $(1-\epsilon)$-K-semistable pairs $(\sX,\sS)\to (0\in C)$ over a smooth pointed curve whose central fiber is $(X,S)$, the associated Baily--Borel limit depends only on $(X,S)$. More precisely, the central fiber determines whether the limit is of Type~II or Type~III, and in the Type~II case the $j$-invariant of the associated elliptic curve is uniquely determined.

Let $(\sX,\sS)\to (0\in C)$ be such a family, and denote by $(\sX^\circ,\sS^\circ)\to C^\circ$ its restriction over the punctured curve $C^\circ:=C\setminus\{0\}$. For every $c\in C^\circ$, the fiber $\sX_c$ is Gorenstein canonical and $\sS_c$ is an ADE K3 surface. After a possible finite base change, we can take a Kulikov model $\sS^*\to C$ of $\sS^\circ\to C^\circ$. Then $(\sS,\sS_0)\to C$ and $(\sS^*,\sS^*_0)\to C$ are two birational \emph{crepant log structures} in the sense of \cite[Definition~2]{Kol16}, and in both cases the closed point $\{0\}\subseteq C$ is the unique \emph{lc center} of $C$. By \cite[Theorem~1]{Kol16}, the crepant birational equivalence classes of minimal lc centers of $(\sS,\sS_0)$ and $(\sS^*,\sS^*_0)$ coincide.

In particular, if a (hence every) minimal lc center of $S=\sS_0$ is a point, then the Kulikov model $\sS^*\to C$ is of Type~III. If instead a (hence every) minimal lc center $Z$ of $S=\sS_0$ is a curve, then $Z$ is birational to the minimal lc center of $\sS^*_0$, which is an elliptic curve. Consequently, $\sS^*\to C$ is of Type~II, and its $j$-invariant is determined by the birational class of $Z$.
\end{proof}

\begin{cor}\label{cor:surjective forgetful maps}
There exist natural forgetful morphisms
\[
\begin{tikzcd}[ampersand replacement=\&]
\cM^{\Gor}_{\textup{\textnumero}\star} \& \cP^{\plt}_{\textup{\textnumero}\star} \& \cF_{d,\Lambda}
\arrow[from=1-2, to=1-1]
\arrow[from=1-2, to=1-3]
\end{tikzcd}
\]
both of which are surjective. Moreover, $\cP^{\plt}_{\textup{\textnumero}\star} \rightarrow\cF_{d,\Lambda}$ is proper.
\end{cor}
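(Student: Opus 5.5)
The plan is to construct the two forgetful morphisms first, then prove surjectivity of each, and finally deduce properness of $\cP^{\plt}_{\textup{\textnumero}\star}\to\cF_{d,\Lambda}$ from \Cref{prop:extension to BB compactification}.

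\textbf{Construction.} By \Cref{prop:plt=Gor can}, every $(X,S)\in\cP^{\plt}_{\textup{\textnumero}\star}$ has $X$ Gorenstein canonical. It is a degeneration of family \textnumero$\star$, so $X\in\cM^{\Gor}_{\textup{\textnumero}\star}$, and $(X,S)\mapsto X$ is a morphism of stacks. For the other arrow, plt-ness and adjunction make $S$ an ADE K3 surface. In a family, the lattice $\Lambda$ is carried along: $\Pic(X)$ extends over the family by \Cref{lem:line-bundle-extension}, and saturatedness in the fibres is handled by \Cref{lem:line bundle} as in \Cref{cor:lattice-local-system}. The class $H|_S$ is ample and primitive by \Cref{lem:primitivity}, propagated by the same argument. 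This gives a morphism to $\cF_{d,\Lambda}$.

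\textbf{Surjectivity onto $\cM^{\Gor}$.} Given a Gorenstein canonical $X$ in $\cM^{\Gor}_{\textup{\textnumero}\star}$, take a general $S\in|-K_X|$. By \Cref{thm:generalelephant} it is an ADE K3 surface, so $(X,S)$ is plt by inversion of adjunction. By \cite[Theorem 2.10]{ADL21}, $(X,(1-\epsilon)S)$ is K-stable. It still has to be shown that $(X,S)$ lies in the component $\cP^{\K}_{\textup{\textnumero}\star}(1-\epsilon)$. For this, take a smoothing $\sX\to T$ of $X$ to family \textnumero$\star$. The divisor $S$ extends over $T$ because $H^1(X,\cO_X(-K_X))=0$, so the relative anticanonical linear system surjects onto $H^0(X,-K_X)$. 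The general fibre is then a smooth Fano with a smooth anticanonical K3, so $(X,S)$ is a limit of general points of the component.

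\textbf{Surjectivity onto $\cF_{d,\Lambda}$ and properness.} I would first prove properness. By \Cref{prop:extension to BB compactification}, the forgetful map extends to a proper surjection $\fP^{\K}_{\textup{\textnumero}\star}(1-\epsilon)^\nu\to\overline{\fF}^{\BB}_{d,\Lambda}$. Use the valuative criterion: given a one-parameter family over a punctured curve in $\cP^{\plt}$ whose K3 images have a limit in $\fF_{d,\Lambda}$, i.e.\ an ADE K3 surface, properness of the K-moduli space gives a $(1-\epsilon)$-K-semistable limit $(X_0,S_0)$.

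The key step is to show $(X_0,S_0)$ is plt. Apply Kollár's crepant log structure comparison as in the proof of \Cref{prop:extension to BB compactification}, with the K3 limit playing the role of a Type I Kulikov model. If $S_0$ were strictly lc, it would have a nontrivial lc center, and the limit would then be a Type II or Type III point of the Baily--Borel boundary. That contradicts the limit lying in the interior $\fF_{d,\Lambda}$. Hence $S_0$ is slc without lc centers. Being normal and irreducible as a Cartier divisor in the anticanonical class, $S_0$ is klt, and inversion of adjunction gives that $(X_0,S_0)$ is plt.

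Separatedness uses that $\cP^{\plt}$ consists of K-stable pairs by \cite[Theorem 2.10]{ADL21}, so limits are unique. It follows that $\cP^{\plt}\to\cF_{d,\Lambda}$ is the preimage of the interior under the proper map, hence proper. Since the map is proper and its image is dense, containing the smooth K3s coming from the general point of the family, it is surjective.

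The main obstacle is the plt-ness of the limit pair. Care is also needed for $S_0$ normal but non-klt versus non-normal; in the non-normal case the lc centers are curves, which again force a boundary limit.
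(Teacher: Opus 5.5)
Your proposal is correct and is essentially the paper's argument: the forgetful maps come from \Cref{prop:plt=Gor can}; surjectivity onto $\cM^{\Gor}_{\textup{\textnumero}\star}$ follows from a general elephant, inversion of adjunction and \cite[Theorem~2.10]{ADL21}; and properness and surjectivity onto $\cF_{d,\Lambda}$ follow because non-plt pairs are sent to the Baily--Borel boundary (shown via Koll\'ar's lc-center comparison in the proof of \Cref{prop:extension to BB compactification}), so $\cP^{\plt}_{\textup{\textnumero}\star}$ lies over the preimage of $\fF_{d,\Lambda}$ under the proper surjection $\beta$. Your valuative-criterion rerun of that comparison only re-derives this boundary statement, and surjectivity can be read off from the surjectivity of $\beta$ rather than from a density claim; your check that $(X,S)$ lies on the correct component is a detail the paper leaves implicit.
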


\begin{proof}
The existence of both morphisms follows immediately from \Cref{prop:plt=Gor can}. For any $X \in \cM^{\Gor}_{\textup{\textnumero}\star}$, there exists an ADE K3 surface $S \in |-K_X|$ by \Cref{thm:generalelephant}. Then the pair $(X,S)$ is plt by inversion of adjunction, and hence $\big(X,(1-\epsilon)S\big)$ is K-stable by \cite[Theorem 2.10]{ADL21}. This proves the surjectivity of the morphism $\cP^{\plt}_{\textup{\textnumero}\star} \longrightarrow \cM^{\Gor}_{\textup{\textnumero}\star}$. For the second morphism, by \Cref{prop:extension to BB compactification}, for any $\big(X,(1-\epsilon)S\big)\in \cP^{\K}_{\textup{\textnumero}\star}(1-\epsilon)$ such that $(X,S)$ is not plt (equivalently, $S$ is not ADE), the image of points representing $(X,S)$ under the extended morphism $
\beta:\fP^{\K}_{\textup{\textnumero}\star}(1-\epsilon)^{\nu} \longrightarrow \overline{\fF}^{\BB}_{d,\Lambda}$ lies in the boundary $\overline{\fF}^{\BB}_{d,\Lambda}\setminus \fF_{d,\Lambda}$. Since $\beta$ is surjective and proper, it follows that every point of $\fF_{d,\Lambda}$ arises from a pair $(X,S)$ with $(X,S)$ plt. Therefore, the morphism
$\cP^{\plt}_{\textup{\textnumero}\star} \longrightarrow \cF_{d,\Lambda}$ is also surjective and proper.
\end{proof}

\smallskip

Let $(\Lambda,h)$ be as above. We define the \emph{moduli stack of marked Fano threefold pairs} of family $\textup{\textnumero}\star$, denoted by $\cN_{\textup{\textnumero}\star}$. %\footnote{\AS{Is that the pertinent object for families of Picard rank $1$? E.g. for $V_22$ we'd almost want to have markings for class groups rather than Picard groups. But maybe not relevant here?}}. 
An object of $\cN_{\textup{\textnumero}\star}$ over a scheme $B$ consists of a triple $(\sX,\sS;\rho)$ where

\[
\left\{
(\sX,\sS;\rho)/B \ \middle| \
\begin{aligned}
    \bullet\;& f:\sX \to B \text{ is a family of Gorenstein terminal Fano threefolds} \\
         & \text{deforming to the family \textnumero}\star;\\
     \bullet\;& \sS \sim_B -K_{\sX/B} \text{ is a relative anticanonical divisor whose fibers} \\
         & \text{are ADE K3 surfaces;}\\
     \bullet\;& \rho:(\underline{\Lambda}_B, h_B) \rightarrow(\Pic_{\sX/B}, -K_{\sX/B})
    \text{ is fiberwise an isometry.}
\end{aligned}
\right\}
\] Here $\Pic_{\sX/B}$ denotes the relative Picard functor, which is a sheaf on $B$ in the fppf topology and is representable by a group scheme over $B$, and the intersection pairing is defined fiberwise by
\[
(L_1,L_2) \ \mapsto \ \big(L_1 \cdot L_2 \cdot (-K_{\sX/B})\big) \in \underline{\bZ}_B,
\]
which equips $\Pic_{\sX/B}$ with a bilinear form. The marking $\rho$ is required to be an isometry sending $h_B$ to $-K_{\sX/B}$. In particular, over a geometric point, $\cN_{\textup{\textnumero}\star}$ parametrizes triples $(X,S;\rho)$ where $X$ is a Gorenstein terminal degeneration of the family $\textup{\textnumero}\star$, $S \in |-K_X|$ is an ADE K3 surface, and $\rho:(\Lambda,h) \xrightarrow{\sim} (\Pic(X), -K_X)$ is a lattice isometry; two triples $(X,S;\rho)$ and $(X',S';\rho')$ are isomorphic if and only if there exists an isomorphism $f:X\rightarrow X'$ which sends $S$ to $S'$ and the pull-back $f^*:\Pic(X')\rightarrow \Pic(X)$ satisfies $f^*\circ \rho' =\rho$. 

Let $G\coloneqq \Aut(\Lambda,h)$ be the finite group of isometries of $\Lambda$ preserving the class $h$. Then $G$ acts freely on $\cN_{\textup{\textnumero}\star}$ by
\[
g:(X,S;\rho)\ \mapsto \ (X,S;\rho\circ g),
\]
and the forgetful morphism $\cN_{\textup{\textnumero}\star}\rightarrow \cP^{\ter}_{\textup{\textnumero}\star}$ is $G$-equivariant. Therefore, it descends to a morphism $
\Psi_{\textup{\textnumero}\star}:[\cN_{\textup{\textnumero}\star}/G]\rightarrow \cP^{\ter}_{\textup{\textnumero}\star}$.

\begin{prop}\label{prop:isomorphism of stacks 1}
The morphism $\Psi_{\textup{\textnumero}\star}$ is an isomorphism.
\end{prop}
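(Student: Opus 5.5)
To prove that $\Psi_{\textup{\textnumero}\star}\colon[\cN_{\textup{\textnumero}\star}/G]\to\cP^{\ter}_{\textup{\textnumero}\star}$ is an isomorphism, I will show that the forgetful map $\cN_{\textup{\textnumero}\star}\to\cP^{\ter}_{\textup{\textnumero}\star}$ is a $G$-torsor. Since $G$ is finite and acts freely, this is equivalent to two facts. First, the map is representable and étale-locally on the base $B$ every object $(\sX,\sS)/B$ of $\cP^{\ter}_{\textup{\textnumero}\star}$ admits a marking $\rho$. Second, any two markings over a connected base differ by a unique element of $G$. The key input is that $\Pic_{\sX/B}$ is a locally constant étale sheaf of lattices for families of Gorenstein terminal Fano threefolds, equipped with the fiberwise pairing $(L_1\cdot L_2\cdot(-K_{\sX/B}))$.

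\textbf{Local constancy of $\Pic_{\sX/B}$.} By \Cref{lem:line-bundle-extension}, $\Pic_{\sX/B}\to B$ is étale, since $H^1=H^2=0$ for $\cO$ on each fiber. Hence every line bundle on a fiber extends uniquely after étale base change, and the extension restricts to a unique class on nearby fibers. To see that the sheaf is locally constant (not merely étale), I need the restriction $\Pic(\sX)\to\Pic(\sX_{b'})$ to be surjective over a strict henselization. I would argue via the rank.
\begin{itemize}
\item For Gorenstein terminal Fano threefolds, Namikawa's smoothing result \cite{Nam97}, combined with $\Pic(X)\simeq H^2(X,\bZ)$ by \Cref{lem:h11-Pic}, shows that $\rk\Pic$ is constant along small deformations in a $\bQ$-Gorenstein family.
\item Alternatively, apply \Cref{cor:lattice-local-system} to $(\sX,\sS)$: it gives a locally constant subsheaf of $\Pic_{\sS/B}$. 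The restriction $\Pic(\sX_b)\to\Pic(\sS_b)$ is injective with saturated image, by the torsion-freeness argument of \Cref{lem:primitivity} and its extension to the terminal case via \Cref{lem:line bundle}. Transporting back identifies $\Pic_{\sX/B}$ with a locally constant sheaf.
\end{itemize}
The pairing is locally constant because intersection numbers are constant in flat families, and $-K_{\sX/B}$ is a global section.

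\textbf{Existence of markings.} Over a geometric point $b$, the fiber $\sX_b$ is a Gorenstein terminal degeneration of a smooth member of family \textnumero$\star$. Choose a one-parameter smoothing. By local constancy along the smoothing, $(\Pic(\sX_b),-K)\simeq(\Pic(X_{\rm gen}),-K)$. The latter is isometric to $(\Lambda,h)$, since $\Pic(X)\to\Pic(S)$ is an isometric primitive embedding onto $\Lambda$ for the fixed reference pair (\Cref{lem:primitivity}). So a marking exists at $b$, and by local constancy it spreads over an étale neighborhood.

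\textbf{Torsor property and conclusion.} Two markings $\rho,\rho'$ over a connected $B$ differ by the locally constant isometry $\rho^{-1}\rho'\in\Aut(\Lambda,h)=G$, and this element is unique. This gives the torsor structure and shows that the induced map on automorphism groups is an isomorphism. Indeed, an automorphism of $(X,S;\rho)$ is an automorphism $f$ of $(X,S)$ with $f^*\rho=\rho$, and in $[\cN/G]$ one also allows $f^*\rho=\rho\circ g$. So every automorphism of $(X,S)$ lifts uniquely. Since $\Psi$ is thus fully faithful and essentially surjective étale-locally, it is an isomorphism of stacks.

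\textbf{Main obstacle.} The hardest step is local constancy of $\Pic_{\sX/B}$ near singular Gorenstein terminal fibers, i.e.\ ruling out jumps of Picard rank at nodal-type degenerations where $\Cl\neq\Pic$. If neither route in the first step works cleanly, I would first try to deduce it from the constant rank statement inside the proof of \Cref{thm:beauville}, namely \Cref{lem:const-rank}, together with the injectivity of $\Pic(X)\to\Pic(S)$.
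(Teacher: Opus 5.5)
Your reduction to showing that the forgetful map $\cN_{\textup{\textnumero}\star}\to\cP^{\ter}_{\textup{\textnumero}\star}$ is a $G$-torsor is sound, and it finishes differently from the paper. The paper only observes that $\Psi_{\textup{\textnumero}\star}$ is bijective on geometric points and preserves stabilizers, then applies Zariski's main theorem using that $\cP^{\ter}_{\textup{\textnumero}\star}$ is a smooth Deligne--Mumford stack. A torsor argument would instead give the isomorphism functorially over arbitrary bases, with no normality or reducedness input. However, both endgames rest on the local constancy of $\Pic_{\sX/B}$, with its pairing, in families of Gorenstein terminal Fano threefolds. The paper imports this from \cite[Theorem~1.4]{JR11}. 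This is exactly the step your proposal does not establish, and you use it again implicitly when you transport $(\Pic(\sX_b),-K)$ along a smoothing to produce markings.

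None of your three routes closes this gap.
\begin{enumerate}
\item Namikawa's theorem gives smoothability and unobstructedness, and \Cref{lem:h11-Pic} identifies $\Pic$ with $H^2$. Neither says that $H^2$ is unchanged under smoothing. That needs a dimension-specific topological input: the singularities are isolated $cDV$ hypersurface points whose Milnor fibres are bouquets of $3$-spheres, so the specialization sequence gives $H^2(X_0,\bZ)\cong H^2(X_t,\bZ)$. The statement genuinely fails in general, e.g.\ for the quadric cone smoothing to $\bP^1\times\bP^1$.
\item Constant rank is not enough anyway. You could extract it from \Cref{lem:const-rank}, as the paper does in \Cref{cor:lattice-local-system}. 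But together with \'etaleness of $\Pic_{\sX/B}$ it only yields injective generization maps $\Pic(\sX_b)\hookrightarrow\Pic(\sX_{b'})$ of finite index. You still need the cokernel to be torsion-free: a line bundle on $\sX_{b'}$ some multiple of which extends across $b$ must itself extend.
\item Your second route shifts this onto the saturation of $\Pic(\sX_b)$ in $\Pic(\sS_b)$. But \Cref{lem:primitivity} uses the Lefschetz theorem for smooth $X$ and smooth $S$, and \Cref{lem:line bundle} only concerns $\bQ$-Cartier divisors in families of surfaces. Passing to a smoothing in order to use them requires that line bundles on $X_t$ specialize to line bundles on $X$, which is the statement you are trying to prove.
\end{enumerate}
A correct argument needs an extra input. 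One option is the torsion-freeness of local class groups at isolated $cDV$ points (their links are simply connected), applied to the closure of such a line bundle in the total space. The other is to cite \cite[Theorem~1.4]{JR11} as the paper does, after which your torsor argument goes through.
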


\begin{proof}
By \cite[Theorem~1.4]{JR11}, the relative Picard sheaf is locally constant in families of Gorenstein terminal Fano varieties. It follows that $\Psi_{\textup{\textnumero}\star}$ is bijective on geometric points and preserves stabilizer groups. Since $\cP^{\ter}_{\textup{\textnumero}\star}$ is a smooth Deligne--Mumford stack, Zariski's main theorem for stacks implies that $\Psi_{\textup{\textnumero}\star}$ is an isomorphism.
\end{proof}

\smallskip

The moduli theory of lattice-polarized K3 surfaces is more subtle; we follow the construction in \cite{AE25}. Let $(\Lambda,h)$ be a primitive sublattice of $\Lambda_{\K3}$ together with a positive vector as above. Since $h$ is in general not very irrational (cf.\ \cite[Definition~4.1]{AE25}), one first fixes a \emph{small cone} $\tau\subset \Lambda_{\bR}$ (cf.\ \cite[Definition~4.9]{AE25}) whose closure contains $h$, and chooses a very irrational vector $h'\in \tau$. The cone $\tau$ is an open cone contained in the positive part of the positive cone $\{v\in \Lambda_{\bR}\mid (v^2)>0\}$. Its role is to ensure that the moduli stack $\cF_{(\Lambda,h')}$ and its universal family are independent of the choice of very irrational vector $h'\in \tau$. If $h$ is contained in $\tau$, even if $h$ is not very irrational, \cite[Theorem 5.5]{AE25} shows that $\cF_{(\Lambda,h)}$ and $\cF_{(\Lambda,h')}$ are isomorphic smooth separated DM stacks with isomorphic universal families. If $h$ lies on the boundary of $\tau$, then for any K3 surface $(X,j)$ parametrized by $\cF_{(\Lambda,h')}$, the class $j(h)$ is nef and big. However, the Picard group of the ample model $\overline{X}\coloneqq \Proj R(X,j(h))$ may fail to contain $\Lambda$ as a primitive sublattice. This is precisely the subtlety that necessitates fixing the small cone $\tau$.

One can then define the moduli functor $\cF_{(\Lambda,h)}$ of $(\Lambda,h)$-polarized K3 surfaces as follows: to each scheme $B$, it assigns the groupoid 
\[ \left\{ (\wt{\sS}\rightarrow\sS,j)/B \ \middle| \ \begin{aligned}
\bullet\;& f:\sS\to B \text{ is a family of ADE K3 surfaces};\\
\bullet\;& \pi:\wt{\sS}\to \sS \text{ is a simultaneous partial resolution};\\
\bullet\;& j:\underline{\Lambda}_B\hookrightarrow \Pic_{\wt{\sS}/B}
\text{ is a primitive embedding with } j(h'_B) \text{ ample over } B;\\
\bullet\;& \pi:\wt{\sS}\to \sS \text{ is the ample model morphism associated to } j(h_B).
\end{aligned} \right\} \]
In particular, each $\bC$-point of $\cF_{(\Lambda,h)}$ corresponds to a triple $(\pi:\wt{S}\to S,j)$, where $S$ is an ADE K3 surface, $\pi$ is a partial resolution, and $j:\Lambda\hookrightarrow \Pic(\wt{S})$ is a primitive isometric embedding such that $j(h')$ is ample and $\pi$ is the ample model morphism associated to the nef and big line bundle $j(h)$. If $(S,j)$ is an ADE K3 surface together with a primitive isometric embedding
$j:\Lambda\hookrightarrow \Pic(S)$ such that $j(h)$ is ample, then by openness of ampleness,
$j(h')$ is also ample for any $h'\in \tau$ sufficiently close to $h$, and hence for all
$h'\in \tau$.

\begin{thm}[\textup{cf.\ \cite[Theorem~5.9 and Corollary~5.10]{AE25}}]
The stack $\cF_{(\Lambda,h)}$ of $(\Lambda,h)$-polarized K3 surfaces is a smooth separated Deligne--Mumford stack. Both $\cF_{(\Lambda,h)}$ and the universal family $\sS\to \cF_{(\Lambda,h)}$ are independent of the choice of small cone $\tau$ containing $h$ in its closure. Furthermore, $\sS$ admits a simultaneous crepant resolution to a family of $(\Lambda,h')$-polarized K3 surfaces for any $h'\in \tau$.
\end{thm}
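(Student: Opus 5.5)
The statement is essentially a repackaging of \cite[Theorem~5.9 and Corollary~5.10]{AE25}, so the plan is to reduce every assertion to the very irrational case already treated there. First fix a small cone $\tau\subset\Lambda_{\bR}$ whose closure contains $h$, and choose a very irrational vector $h'\in\tau$. By \cite[Theorem~5.5]{AE25}, the stack $\cF_{(\Lambda,h')}$ of $(\Lambda,h')$-polarized ADE K3 surfaces is a smooth separated Deligne--Mumford stack. Smoothness comes from the unobstructedness of $\Def_{(S,\Lambda)}$, with tangent space of dimension $20-\rk\Lambda$, exactly as used in the proof of \Cref{thm:beauville}. Separatedness comes from the Torelli theorem combined with the very irrationality of $h'$, which rules out flops across walls.

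The next step is to construct a morphism $\cF_{(\Lambda,h')}\to\cF_{(\Lambda,h)}$. Given a family $(\wt{\sS}/B,j)$ with $j(h'_B)$ relatively ample, the class $j(h_B)$ is relatively nef and big, because $h$ lies in $\overline{\tau}$ and the nef cone varies compatibly along $\tau$. By the relative base-point-free theorem, its relative ample model $\sS$ exists. Its formation commutes with base change, since $R^1f_*$ of multiples vanishes by Kawamata--Viehweg on K3 surfaces, and its fibers are ADE K3 surfaces. This produces an object of $\cF_{(\Lambda,h)}$.

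Conversely, one must show that every object $(\wt{\sS}\to\sS,j)$ of $\cF_{(\Lambda,h)}$ arises uniquely in this way. This is immediate from the definition, since $j(h'_B)$ is required to be ample on $\wt{\sS}$. Hence the two functors are equivalent. This gives smoothness and separatedness, and the universal family is the contracted universal family. The last assertion follows from the construction: the universal $\wt{\sS}$ is a family of $(\Lambda,h')$-polarized K3 surfaces mapping crepantly to $\sS$. Applying simultaneous resolution of ADE families \cite[Theorem 4.28]{KM98} in the \'etale-local form of \cite{AE25} then produces the simultaneous crepant resolution to any $h'\in\tau$.

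The main obstacle is independence of the small cone $\tau$. Different small cones $\tau_1,\tau_2$ with $h\in\overline{\tau_1}\cap\overline{\tau_2}$ give partial resolutions related by flops of $(-2)$-curves orthogonal to $h$. I would argue that both $\cF_{(\Lambda,h'_1)}$ and $\cF_{(\Lambda,h'_2)}$ map isomorphically onto the stack of pairs (ADE family $\sS$, primitive $\Lambda\hookrightarrow$ the relative Picard group of a partial resolution with $h\mapsto$ the pullback of the polarization). The identification is via the Weyl group generated by the reflections in the $(-2)$-classes of $h^\perp\cap\Lambda$, which acts simply transitively on the chambers adjacent to $h$. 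Checking that this identification is compatible with base change, and that the Weyl action is free on the relevant markings so that no extra automorphisms arise, is exactly the content of \cite[Corollary~5.10]{AE25}, and I would cite it there.
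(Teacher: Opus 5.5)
The paper itself gives no argument beyond citing \cite[Theorem~5.9 and Corollary~5.10]{AE25}. The first half of your reduction is sound. For a \emph{fixed} small cone $\tau$, the definition already contains $(\wt{\sS},j)$ with $j(h'_B)$ relatively ample. So $(\wt{\sS}\to\sS,j)\mapsto(\wt{\sS},j)$ is an equivalence with $\cF_{(\Lambda,h')}$, whose inverse takes the relative ample model of $j(h_B)$. The universal $\wt{\sS}$ is then itself the required simultaneous crepant resolution. Your appeal to \cite[Theorem~4.28]{KM98} is therefore unnecessary, and it would in any case need a finite base change.

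The gap is in the step you single out, independence of $\tau$. Your flop picture is right, but the identification is not given by $W(h^\perp\cap\Lambda)$, and that group is not transitive on the small cones adjacent to $h$.

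\emph{Why the walls are not the ones you list.} Small cones must avoid every hyperplane $\delta_\Lambda^\perp\subset\Lambda_{\bR}$ across which the ample cone of some $\Lambda$-polarized K3 surface can change. Here $\delta$ is a $(-2)$-class on such a surface and $\delta_\Lambda$ is its orthogonal projection to $\Lambda_{\bQ}$. The walls through $h$ come from classes $\delta\perp h$, and typically $\delta\notin\Lambda$.

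\emph{A counterexample from the paper.} Take $\Lambda_{\mathrm I}$ of \Cref{lem:K3 lattice of Type1-4}. Then $h^\perp\cap\Lambda=\bZ(e_1-2e_2)$ with $(e_1-2e_2)^2=-6$, so your Weyl group is trivial. Now let $\delta$ satisfy $\delta^2=-2$, $\delta\cdot e_1=0$, $\delta\cdot e_2=1$. The even hyperbolic lattice spanned by $e_1,e_2,\delta$ has determinant $44$. It is realized as a Picard lattice, since every even hyperbolic lattice of rank at most $10$ is. Its projection $\delta_\Lambda=\tfrac13(e_1-2e_2)$ puts $h$ on a wall between two small cones.

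\emph{Why no relabeling of $\Lambda$ works.} Over such a surface, the marking compatible with $\tau_2$ is $s_\delta\circ j$. This fixes $j(h)$ but moves $j(\Lambda)$, since $s_\delta j(e_2)=j(e_2)+\delta$. Away from the Noether--Lefschetz sublocus where $\delta$ is algebraic, $j(\tau_1)$ and $j(\tau_2)$ are ample on the same surface. So the identification must be the identity generically and $s_\delta$ on the sublocus. It is not a relabeling $j\mapsto j\circ g$ by any isometry of $\Lambda$. Relabelings by roots of $h^\perp\cap\Lambda$ only account for walls coming from $\Lambda$ itself.

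\emph{What the correct construction is.} Replace $\wt{\sS}_1$ by its relative ample model over $\sS$ for $j_1$ of a class in $\tau_2$. This is a flop of the total space along $j(h)$-trivial curves. Then transport $j_1$ through the resulting isomorphism in codimension one. The flop acts on the Picard groups of the special fibres exactly by such reflections. Checking that this is well defined over arbitrary bases and compatible with $\sS$ is what \cite{AE25} supplies. The Weyl-group relabeling you propose would not produce the isomorphism.
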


\smallskip

There is a natural forgetful morphism
\[
\Phi_{\textup{\textnumero}\star}:\cN_{\textup{\textnumero}\star}\longrightarrow \cF_{(\Lambda,h)},
\]
which sends a family $(\sX,\sS;\rho)\to B$ to $(\sS,j)$, where
\[
j \coloneqq i^*\circ \rho\ :\ \Lambda\ \hookrightarrow\  \Pic(\sS/B)
\]
is a primitive isometric embedding and $i:\sS\hookrightarrow \sX$ denotes the inclusion. This is well-defined since the restriction map $\Pic(X)\to \Pic(S)$ is injective for any $(X,S;\rho)\in \cN_{\textup{\textnumero}\star}(\bC)$ by \cite[XII, Corollary~3.6]{SGA2}.

\begin{cor}\label{thm:smooth dominant forgetful map}
The forgetful morphism $\Phi_{\textup{\textnumero}\star}$ is smooth and dominant, of relative dimension $h^{1,2}$.
\end{cor}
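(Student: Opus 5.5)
The plan is to deduce the corollary from \Cref{thm:beauville} by comparing deformation functors at every $\bC$-point $(X,S;\rho)$ of $\cN_{\textup{\textnumero}\star}$.

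First, note that $\cN_{\textup{\textnumero}\star}\to \cP^{\ter}_{\textup{\textnumero}\star}$ is étale. Indeed, by \Cref{prop:isomorphism of stacks 1} it is a $G$-torsor onto its image, and markings extend uniquely along deformations by \Cref{lem:line-bundle-extension}. Hence the complete local ring of $\cN_{\textup{\textnumero}\star}$ at $(X,S;\rho)$ prorepresents $\Def_{(X,S)}$, which is unobstructed by \Cref{thm:beauville}.

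Second, I would identify the complete local ring of $\cF_{(\Lambda,h)}$ at $(S,j)$ with $\Def_{(S,\Lambda)}$. For this, observe that $\Pic(X)\to\Pic(S)$ is injective by \cite[XII, Corollary~3.6]{SGA2}, and that its image is saturated. Saturation holds because $j$ is required to be a primitive embedding. At points where $S$ is smooth this is just \Cref{lem:primitivity}. In general I would argue by openness: primitivity is preserved under specialization using \Cref{lem:line bundle}, as in \Cref{cor:lattice-local-system}.

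Since $S$ is ADE and $h=-K_X|_S$ is ample, the partial resolution $\wt S\to S$ in the $\cF_{(\Lambda,h)}$-datum is the identity. Deformations of $(S,j)$ in the sense of \cite{AE25} are then deformations of $S$ together with extensions of the line bundles $j(\Lambda)$. This is exactly $\Def_{(S,\Lambda)}$, and it is unobstructed of dimension $20-\rk\Lambda$ by \cite[Proof of Theorem~5.5]{AE25}.

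Under these identifications, the map induced by $\Phi_{\textup{\textnumero}\star}$ on complete local rings is the forgetful map $\pi$ of \Cref{rmk:beauville}. This uses the compatibility $j=i^*\circ\rho$ and the uniqueness of extensions of line bundles.

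Third, by \Cref{thm:beauville}, $\pi$ is formally smooth with fibers of dimension $h^2(X,\Omega^1_X)$. Both stacks are locally of finite type over $\bC$ ($\cN$ via $\cP^{\ter}$, and $\cF_{(\Lambda,h)}$ by \cite{AE25}). A morphism between such stacks that is formally smooth on complete local rings at all closed points is smooth. Its relative dimension is $h^{1,2}(X)$, which is locally constant by \Cref{cor:h12}; at the smooth general member it equals the fixed number $h^{1,2}$.

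Finally, dominance. Smooth morphisms are open, so the image is open and nonempty. I expect $\cF_{(\Lambda,h)}$ to be irreducible, or at least the relevant component to be; its coarse space is an arithmetic quotient of a connected period domain component. Hence the image is dense. If irreducibility is unclear, I would instead define the target as the component containing the image, or invoke \Cref{cor:surjective forgetful maps} to obtain surjectivity onto $\cF_{d,\Lambda}$.

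The main obstacle is the second step. One must check carefully that the deformation functor of $\cF_{(\Lambda,h)}$ defined via the small cone $\tau$ agrees with $\Def_{(S,\Lambda)}$ at points where $h$ itself is ample. One must also check that the image lattice is saturated at singular (terminal) $X$.
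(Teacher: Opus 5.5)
Your proposal is correct and follows the paper's route. The paper's proof is the single line ``follows immediately from \Cref{thm:beauville}'', and your argument spells out what that line leaves implicit: you identify the complete local rings of $\cN_{\textup{\textnumero}\star}$ and $\cF_{(\Lambda,h)}$ with $\Def_{(X,S)}$ and $\Def_{(S,\Lambda)}$, then check smoothness on complete local rings. The two extra points you flag, saturation of $\im(\Pic(X)\to\Pic(S))$ at singular $X$ and irreducibility of the target for dominance, are also taken for granted in the paper. Your specialization sketch for saturation is sound, with one correction: \Cref{lem:primitivity} needs $X$ smooth, not only $S$, so the specialization step, using local constancy of $\Pic$, is what actually does the work.
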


\begin{proof}
This follows immediately from \Cref{thm:beauville}.
\end{proof}

\smallskip
The finite group $G\coloneqq \Aut(\Lambda,h)$ does not act naturally on $\cF_{(\Lambda,h)}$, since its elements do not necessarily preserve the chosen small cone $\tau$. However, in our situation the K3 surfaces of interest arise as anticanonical divisors of Fano threefolds, and thus the primitive embedding of $\Lambda$ into the Picard lattice is defined directly on the surface, without passing to a partial resolution. We therefore consider the open substack $\cF^{\circ}_{(\Lambda,h)}\subseteq \cF_{(\Lambda,h)}$ consisting of triples $(\pi:\wt{S}\to S,j)$ such that $\pi$ is the identity morphism. Equivalently, $\cF^{\circ}_{(\Lambda,h)}$ parametrizes ADE K3 surfaces $S$ equipped with a primitive isometric embedding $j:\Lambda\hookrightarrow \Pic(S)$. We simply denote an object in $\cF^{\circ}_{(\Lambda,h)}$ by $(S,j)$. The stack $\cF^{\circ}_{(\Lambda,h)}$ admits a natural free $G$-action $g:(S,j)\mapsto (S,j\circ g)$.

The forgetful morphism
\[
\cF^\circ_{(\Lambda,h)}\ \longrightarrow  \ \cF_d,
\qquad
(S,j)\ \mapsto\  (S,j(h)),
\]
which sends a lattice-polarized K3 surface to its underlying polarized K3 surface, has image contained in the open substack $\cF^\circ_{d,\Lambda}\subseteq \cF_{d,\Lambda}$ parametrizing polarized K3 surfaces whose Picard lattices contain $(\Lambda,h)$ as a primitive sublattice. This morphism is $G$-equivariant, and therefore descends to a morphism $[\cF^\circ_{(\Lambda,h)}/G]\rightarrow \cF^\circ_{d,\Lambda}$. In particular, it induces a morphism
\[
\Psi_{(\Lambda,h)}:[\cF^\circ_{(\Lambda,h)}/G]\ \longrightarrow \ (\cF^\circ_{d,\Lambda})^\nu\ (\subseteq\  \cF^\nu_{d,\Lambda}),
\]
where $\cF^\nu_{d,\Lambda}$ (resp.\ $\cF^{\circ \nu}_{d,\Lambda}$) denotes the normalization of $\cF_{d,\Lambda}$ (resp.\ $\cF^\circ_{d,\Lambda}$). Moreover, $\Psi_{(\Lambda,h)}$ is surjective by definition, and since a general polarized K3 surface parametrized by $\cF^\circ_{d,\Lambda}$ has Picard lattice isometric to $\Lambda$, the morphism $\Psi_{(\Lambda,h)}$ is birational.

\begin{prop}\label{prop:isomorphism of stacks 2}
    The morphism $\Psi_{(\Lambda,h)}$ is an isomorphism.
\end{prop}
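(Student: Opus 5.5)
We will prove that $\Psi_{(\Lambda,h)}$ is an isomorphism by the same Zariski-main-theorem strategy used for \Cref{prop:isomorphism of stacks 1}. The source $[\cF^\circ_{(\Lambda,h)}/G]$ is a smooth separated Deligne--Mumford stack. Indeed, $\cF^\circ_{(\Lambda,h)}$ is open in the smooth separated DM stack $\cF_{(\Lambda,h)}$ by \cite[Theorem~5.9]{AE25}, and $G$ is finite and acts freely. The target $(\cF^\circ_{d,\Lambda})^\nu$ is normal. We already know that $\Psi_{(\Lambda,h)}$ is surjective and birational. It therefore suffices to show three things: that $\Psi_{(\Lambda,h)}$ is representable, that it is quasi-finite and in fact injective on geometric points, and that it is proper, or at least finite over its image. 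A representable, birational, finite, surjective morphism to a normal stack is then an isomorphism by Zariski's main theorem for stacks.

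\textbf{Step 1 (stabilizers and representability).} A $\bC$-point of the quotient stack is a $G$-orbit of pairs $(S,j)$. Its automorphism group consists of pairs $(\phi,g)$ with $\phi\in\Aut(S)$ and $\phi^*\circ j=j\circ g$. The image point in $\cF_{d,\Lambda}$ has stabilizer $\Aut(S,j(h))$. The map $(\phi,g)\mapsto\phi$ is injective because $j$ is injective, so $g$ is determined by $\phi$. It is also surjective onto the subgroup of $\Aut(S,j(h))$ preserving $j(\Lambda)$. Hence stabilizers inject, and $\Psi_{(\Lambda,h)}$ is representable.

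\textbf{Step 2 (fibers over the normalization).} A geometric point of the normalization $(\cF^\circ_{d,\Lambda})^\nu$ over $(S,L)$ corresponds to a local analytic branch of $\cF_{d,\Lambda}$ at $(S,L)$. Via the period map, such branches are indexed by the primitive sublattices $M\subseteq\Pic(S)$ with $(M,L)\simeq(\Lambda,h)$, modulo $\Aut(S,L)$. The Noether--Lefschetz locus is locally a union of the smooth subspaces $\mathbb{D}\cap M^\perp$, one for each such $M$. Distinct $M$ give distinct branches, since each $M$ is recovered as the generic Picard lattice along its branch. We will show that the $G$-orbits of embeddings $j$ with $j(h)=L$ correspond bijectively to such $M$, via $j\mapsto j(\Lambda)$. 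This identifies the fiber of $\Psi_{(\Lambda,h)}$ over each point of the normalization with a single point, and also shows that $\Psi_{(\Lambda,h)}$ is étale-locally an isomorphism onto the corresponding branch. The latter follows because $\Def_{(S,j)}$ is exactly the local period subdomain $\mathbb{D}\cap j(\Lambda)^\perp$, by the local Torelli theorem for lattice-polarized ADE K3 surfaces \cite[Proof of Theorem~5.5]{AE25}.

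\textbf{Step 3 (properness).} We verify the valuative criterion. Take a family over a punctured DVR, given by $(S_\eta,j_\eta)$, whose image extends to $(S_0,L_0)\in\cF^\circ_{d,\Lambda}$. The lattice embedding extends because $\Pic$ of ADE K3 families is specialization-injective. Primitivity is preserved in the limit because the branch condition in the normalization forces it. The embedding lands in $\Pic(S_0)$ itself, rather than in the Picard group of a partial resolution, because $L_0$ is ample and $\Lambda$ is contained in $\Pic$ of the polarized fiber. Separatedness of $\cF_{d}$ together with finiteness of $G$ then give uniqueness of the extension.

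The main obstacle is Step 2: the precise identification of points of the normalization with pairs $(S,M)$, especially at ADE K3 surfaces where several embeddings of $\Lambda$ meet. We will first try to do this analytically on the period domain $\mathbb{D}_d$ with the arithmetic group $\Gamma_d$, working with the (quasi-)projective coarse spaces and then passing back to the stacks.
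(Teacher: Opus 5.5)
\textbf{Gap: Step~3.} Your skeleton is the paper's. Zariski's main theorem, with surjectivity and birationality taken from the construction, reduces the statement to representability and quasi-finiteness. Your Step~1 handles representability correctly, and a bit more carefully than the paper because it tracks the $G$-quotient. The problem is Step~3, which your plan relies on to get finiteness. Take a family $(S_\eta,j_\eta)$ whose image specializes to $(S_0,L_0)$. After base change and simultaneous resolution, the limit of $j_\eta(\Lambda)$ is a primitive sublattice of $\Pic(\widetilde{S}_0)$, where $\widetilde{S}_0$ is the minimal resolution. Nothing forces this sublattice to be orthogonal to the exceptional $(-2)$-curves, i.e.\ to lie in $\Pic(S_0)$. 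Neither the ampleness of $L_0$ nor the fact that $\Pic(S_0)$ contains \emph{some} primitive copy of $(\Lambda,h)$ helps, because the limiting copy may be a different one. In that case the limit exists only in the full stack $\cF_{(\Lambda,h)}$, with a nontrivial partial resolution $\pi$. This is exactly why, as the paper explains, the construction of $\cF_{(\Lambda,h)}$ needs partial resolutions and a small cone $\tau$.

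The same issue makes the correspondence in Step~2 inaccurate at ADE points. There the étale chart of $\cF_d$ is the period germ modulo the Weyl group $W$ of the exceptional curves. Accordingly, $\Def_{(S,j)}$ is locally $(\bD\cap j(\Lambda)^\perp)/W$, not $\bD\cap j(\Lambda)^\perp$. Moreover, branches of $\cF_{d,\Lambda}$ are indexed by copies of $(\Lambda,h)$ in $\Pic(\widetilde{S})$, not only in $\Pic(S)$. For injectivity this is harmless, since $W$ fixes $\Pic(S)$ pointwise. But it means your claim that each point of the normalization has exactly one preimage is not justified.

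\textbf{Fix.} None of this is needed. A representable, quasi-finite, birational morphism between separated Deligne--Mumford stacks, with reduced source and normal target, is an open immersion by Zariski's main theorem. The surjectivity you already accept then makes it an isomorphism, so drop Step~3. For quasi-finiteness, the paper replaces your branch analysis with a short lattice argument. Since $j(h)=L$ and $\Lambda_{\bQ}=\bQ h\oplus h^\perp_{\bQ}$, the embedding $j$ is determined by its restriction $j|_{h^\perp}\colon h^\perp\hookrightarrow L^\perp$. This is an embedding of negative definite lattices, and the images of a basis have bounded norm, so only finitely many such $j$ exist. Your Step~2 would give the stronger statement that $\Psi_{(\Lambda,h)}$ is injective on points, but it costs a local Torelli and branch analysis that the statement does not require.
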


\begin{proof}
    By the surjectivity of $\Psi_{(\Lambda,h)}$ and the Zariski's main theorem, it suffices to show that $\Psi_{(\Lambda,h)}$ is representable and quasi-finite. For any polarized K3 surface $(S,H)$ such that there exists a primitive embedding $(\Lambda,h)\hookrightarrow (\Pic(S),H)$, there are only finitely many different embeddings $j:(\Lambda,h)\hookrightarrow (\Pic(S),H)$: since $j(h)=H$, then $j$ is uniquely determined by its restriction $j|_{h^\perp}:\ h^\perp \hookrightarrow H^\perp$ between two negative definite lattices. However, primitive embedding between negative definite lattices admits only finitely many possibilities because vectors of bounded norm are finite. Therefore, there are only finitely many such embeddings $j$ and hence $\Psi_{(\Lambda,h)}$ is quasi-finite. For any $(S,j)\in \cF^\circ_{(\Lambda,h)}$, the automorphism group $\Aut(S,j)$ is naturally a subgroup of $\Aut(S,j(h))$, and hence $\Psi_{(\Lambda,h)}$ is representable.  
\end{proof}

\begin{cor}
    There exists a natural forgetful morphism \[
\ove{\Phi}_{\textup{\textnumero}\star}:\cP^{\ter}_{\textup{\textnumero}\star}\ \longrightarrow \ \cF^\nu_{d,\Lambda},
\] which is smooth and dominant of relative dimension $h^{1,2}$.
\end{cor}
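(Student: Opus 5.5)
The plan is to obtain $\ove{\Phi}_{\textup{\textnumero}\star}$ by descending $\Phi_{\textup{\textnumero}\star}$ along the free $G$-actions on both sides, and then composing with $\Psi_{(\Lambda,h)}$.

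First I would check that the image of $\Phi_{\textup{\textnumero}\star}$ lands in the open substack $\cF^{\circ}_{(\Lambda,h)}$. For a triple $(X,S;\rho)\in\cN_{\textup{\textnumero}\star}$, the embedding $j=i^*\circ\rho$ is already defined on $S$ itself, with no partial resolution, and $j(h)=-K_X|_S$ is ample. So the ample model morphism is the identity, and the object $(S,j)$ lies in $\cF^\circ_{(\Lambda,h)}$. Since $\cF^\circ_{(\Lambda,h)}$ is open in $\cF_{(\Lambda,h)}$, \Cref{thm:smooth dominant forgetful map} shows that $\Phi_{\textup{\textnumero}\star}\colon\cN_{\textup{\textnumero}\star}\to\cF^\circ_{(\Lambda,h)}$ is still smooth of relative dimension $h^{1,2}$.

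Next I would verify $G$-equivariance. Under $g\in G$, the triple $(X,S;\rho\circ g)$ maps to $(S,i^*\circ\rho\circ g)=(S,j\circ g)$, which is exactly the $G$-action on $\cF^\circ_{(\Lambda,h)}$. Both actions are free, so $\Phi_{\textup{\textnumero}\star}$ descends to a morphism of quotient stacks
\[
[\cN_{\textup{\textnumero}\star}/G]\longrightarrow[\cF^\circ_{(\Lambda,h)}/G].
\]
Smoothness and relative dimension are étale-local on source and target. Since $\cN_{\textup{\textnumero}\star}\to[\cN_{\textup{\textnumero}\star}/G]$ and $\cF^\circ_{(\Lambda,h)}\to[\cF^\circ_{(\Lambda,h)}/G]$ are finite étale $G$-torsors, and the descended map pulls back to $\Phi_{\textup{\textnumero}\star}$ (the square is cartesian, since equivariant maps between torsors are torsor-compatible), the descended map is smooth of relative dimension $h^{1,2}$.

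Then I would set
\[
\ove{\Phi}_{\textup{\textnumero}\star}\coloneqq\Psi_{(\Lambda,h)}\circ(\text{descended map})\circ\Psi_{\textup{\textnumero}\star}^{-1},
\]
where $\Psi_{\textup{\textnumero}\star}$ is an isomorphism by \Cref{prop:isomorphism of stacks 1}. By \Cref{prop:isomorphism of stacks 2}, $\Psi_{(\Lambda,h)}$ is an isomorphism onto the open substack $(\cF^\circ_{d,\Lambda})^\nu\subseteq\cF^\nu_{d,\Lambda}$. The composite is therefore smooth of relative dimension $h^{1,2}$. On points it sends $(X,S)$ to $(S,-K_X|_S)$, which confirms that it is the natural forgetful map.

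For dominance, \Cref{thm:smooth dominant forgetful map} gives that $\Phi_{\textup{\textnumero}\star}$ is dominant. A smooth morphism is open, so the image is a dense open subset, and isomorphisms and quotient maps preserve this. Irreducibility of $\cF^\nu_{d,\Lambda}$ is not needed; it is enough that the image meets each component hit by $\cF^\circ_{(\Lambda,h)}$.

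The main obstacle is the descent step: making sure that the smoothness of the $G$-quotient map really follows from that of $\Phi_{\textup{\textnumero}\star}$. This reduces to the cartesian-square claim above, which I expect to be routine given that both actions are free.
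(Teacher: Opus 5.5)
Your proposal is correct and is essentially the paper's argument. The $G$-equivariant map $\Phi_{\textup{\textnumero}\star}\colon\cN_{\textup{\textnumero}\star}\to\cF^\circ_{(\Lambda,h)}$ descends through the identifications $\cP^{\ter}_{\textup{\textnumero}\star}\simeq[\cN_{\textup{\textnumero}\star}/G]$ and $[\cF^\circ_{(\Lambda,h)}/G]\simeq(\cF^\circ_{d,\Lambda})^\nu$ of \Cref{prop:isomorphism of stacks 1} and \Cref{prop:isomorphism of stacks 2}. Smoothness, relative dimension $h^{1,2}$ and dominance are then read off from \Cref{thm:smooth dominant forgetful map}, and your cartesian-square argument just spells out the descent of smoothness that the paper leaves implicit.

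The one loose spot is your dominance remark. What is actually needed is that $(\cF^\circ_{d,\Lambda})^\nu$ is dense in $\cF^\nu_{d,\Lambda}$. This holds because $\Lambda$ sits in $\Pic(S)$ for a smooth $S$ with $h$ ample, so it contains no $(-2)$-class orthogonal to $h$; hence a very general point of every component of $\cF_{d,\Lambda}$ is a smooth K3 surface with Picard lattice $\Lambda$. The paper also takes this density for granted.
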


\begin{proof}
    The natural forgetful morphism $\Phi_{\textup{\textnumero}\star}:\cN_{\textup{\textnumero}\star}\rightarrow \cF^\circ_{(\Lambda,h)}$ is $G$-equivariant, and hence it descends to a morphism $$\ove{\Phi}\ :\ \cP^{\ter}_{\textup{\textnumero}\star} \ \simeq\  [\cN_{\textup{\textnumero}\star}/G] \ \longrightarrow \  [\cF^\circ_{(\Lambda,h)}/G] \ \simeq \ (\cF^\circ_{d,\Lambda})^\nu \ \subseteq \ \cF^{\nu}_{d,\Lambda}$$ by \Cref{prop:isomorphism of stacks 1} and \Cref{prop:isomorphism of stacks 2}, and it is smooth and dominant of relative dimension $h^{1,2}$ by \Cref{thm:smooth dominant forgetful map}.
\end{proof}

\medskip
\section{Boundary components of K-moduli of $V_{22}$}

In this section, we apply the deformation package developed in the previous section to study the K-moduli of $V_{22}$. Our goal is to prove Theorems \ref{thm:main theorem 1}, \ref{thm:Fano-K3}, and \ref{thm:rationality of F22}.

\subsection{Open immersion of the forgetful map}

In this subsection, we prove the open immersion statement of Theorem \ref{thm:Fano-K3}. 

\begin{thm}\label{thm:open immersion forgetful map}
    The forgetful morphism $\Phi:\cP^{\K,\ADE}\rightarrow \cF_{22}$ is an open immersion.
\end{thm}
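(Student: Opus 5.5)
The plan is to show that $\Phi$ is representable, quasi-finite, birational onto its image and étale, and then conclude by Zariski's main theorem, using that $\cF_{22}$ is a smooth separated Deligne--Mumford stack (Theorem~\ref{isommoduli}).

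\textbf{Step 1: the terminal locus.} Let $\cP^{\K,\ter}\subseteq\cP^{\K,\ADE}$ be the open substack where $X$ is Gorenstein terminal. For $(X,S)$ in this locus, Lemma~\ref{lem:Picard rank one} gives $\Pic(X)=\bZ\cdot[-K_X]$, so $\Lambda=\langle -K_X|_S\rangle$ is primitive of degree $22$. Moreover $h^{1,2}(X)=h^{1,2}(V_{22})=0$ by Corollary~\ref{cor:h12}. Theorem~\ref{thm:beauville} then shows that $\Def_{(X,S)}\to\Def_{(S,-K_X|_S)}$ is formally smooth of relative dimension $0$, hence formally étale. Thus $\Phi$ is étale on $\cP^{\K,\ter}$.

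Next I would check that $\Phi$ is injective on automorphism groups. An automorphism of $(X,S)$ that acts trivially on $S$ extends trivially, because $X$ is projectively normal in $\bP H^0(-K_X)$ and $H^0(X,-K_X)\to H^0(S,-K_X|_S)$ is surjective with one-dimensional kernel (Theorem~\ref{thm:Kss-very-ample}). Hence $\Phi$ is representable. Since $\Phi$ is birational by \cite[Theorem~1.3]{BKM25}, a representable étale birational morphism is an open immersion once it is injective on points. I would get injectivity on points from the valuative/separatedness argument below.

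\textbf{Step 2: the non-terminal locus.} By Theorem~\ref{thm:volume bound}, the remaining points have $A_\infty$ or $D_\infty$ singularities along curves. By Theorem~\ref{thm:smoothable} and Remark~\ref{cor:small deformation small anti-can morphism}, the terminalization $(\tX,\tS)$ deforms to some $(X_t,S_t)$ with $X_t$ terminal. Either $X_t$ is again a degeneration of $V_{22}$, or, via \cite{Nam97}, it deforms to a member of \textnumero2.15, \textnumero2.16 or \textnumero3.6. The first two families are excluded because their K-moduli form disjoint components (\cite{LZ25} and \Cref{appendix A}).

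For \textnumero3.6, I would use the smooth forgetful map $\ove{\Phi}_{\textup{\textnumero3.6}}$ to $\cF^\nu_{22,\Lambda}$ with $\rk\Lambda=3$, whose relative dimension is $h^{1,2}=1$. A dimension count then bounds the locus of such pairs in $\cP^{\K,\ADE}$ by $17+1=18$. Combined with the fibres of $\Psi$, this should show that the non-terminal locus $Z\subseteq\cP^{\K,\ADE}$ has codimension at least $2$. I expect this codimension estimate to be the main obstacle: it requires controlling both the dimension of the locus of non-terminal K-semistable $X$ and the dimension of the corresponding anticanonical systems.

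\textbf{Step 3: purity.} The stack $\cP^{\K,\ADE}$ is smooth: away from $Z$ by Step~1, and along $Z$ via Theorem~\ref{thm:beauville} applied to $\tX$ together with unobstructedness. Now $\Phi$ is a quasi-finite morphism between smooth stacks of the same dimension $19$, and it is étale off a closed subset of codimension at least $2$. By Zariski--Nagata purity of the branch locus, $\Phi$ is étale everywhere.

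Quasi-finiteness and injectivity on points I would obtain as follows. First, the fibres are finite since $\cP^{\K,\ADE}$ is finite type and $\Phi$ is étale. Second, two pairs with the same image must coincide by separatedness: take a one-parameter family in $\cF_{22}$ through the image point with Brill--Noether general generic point, where the reconstruction of \cite{BKM25} is unique. Both lifts are K-semistable families with generically isomorphic fibres. For generic fibres that are K-polystable and K-stable as $(1-\epsilon)$-pairs (by \cite[Theorem~2.10]{ADL21}), separatedness of the K-stable locus of $\cP^{\K}_{3,22,1}(1-\epsilon)$ forces the central fibres to agree.

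Finally, Zariski's main theorem applied to an étale, representable, universally injective morphism yields that $\Phi$ is an open immersion.
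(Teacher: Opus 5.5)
\textbf{The gap is at the non-terminal points, where all the work lies.} Your Step~1, étaleness on the terminal locus via Theorem~\ref{thm:beauville} and $h^{1,2}=0$, matches the paper. Step~3 does not go through.
\begin{itemize}
\item Zariski--Nagata purity of the branch locus assumes $\Phi$ is already quasi-finite. You then deduce finiteness of fibres from the étaleness that purity produced, which is circular.
\item Smoothness of $\cP^{\K,\ADE}$ along $Z$ is not available. Theorem~\ref{thm:beauville} for $(\tX,\tS)$ gives smoothness of $\Def_{(\tX,\tS)}$, not of $\Def_{(X,S)}$. Smoothings of $X$ to $V_{22}$ do not lift to $\tX$ since $\rho(\tX)\ge 2$, and $\Def_X$ may a priori be obstructed. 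In the paper, smoothness of $\cM^{\K}$ is a \emph{consequence} of this theorem.
\item The purity statement that needs no quasi-finiteness is van der Waerden purity of the exceptional locus of a birational morphism to a smooth target \cite{Deb01}. It requires the non-quasi-finite locus to have codimension $\ge 2$, and you have no bound on $\dim Z$. Your \textnumero3.6 count gives only $18$, i.e.\ codimension one. Nothing controls non-terminal $X$ whose terminal deformations are again $V_{22}$-degenerations.
\end{itemize}

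\textbf{The missing idea} is to prove quasi-finiteness on that part directly, as the paper does.
\begin{itemize}
\item Stratify the non-terminal, non-\textnumero3.6 locus so that the singular curves and the exceptional divisors of their blowups are flat. The simultaneous blowup is then a family of Gorenstein terminal weak Fano pairs $(\tX,\tS)$ over each stratum.
\item By Remark~\ref{cor:small deformation small anti-can morphism}, $h^{1,2}(\tX)\le h^{1,2}(X_t)=0$, where $X_t$ is a terminal $V_{22}$-degeneration. So $\Def_{(\tX,\tS)}\to\Def_{(\tS,\wt{\Lambda})}$ has relative dimension $0$, and $\Phi$ has finite fibres along each stratum.
\item Only common degenerations with \textnumero3.6 remain. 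They form a proper closed substack of the irreducible $(5+13)$-dimensional $\cP^{\K,\ADE}_{\textup{\textnumero3.6}}$, so have dimension $\le 17$. Purity of the exceptional locus then gives quasi-finiteness there too.
\item Zariski's main theorem (representable, birational, quasi-finite, smooth target) concludes. Your separatedness argument is then unnecessary.
\end{itemize}

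\textbf{Your representability argument is insufficient.} Projective normality and $0\to H^0(\cO_X)\to H^0(-K_X)\to H^0(S,-K_X|_S)\to 0$ only show this. The group $G$ of automorphisms fixing $S$ pointwise acts on $H^0(X,-K_X)$ as $\diag(\zeta_m,1,\dots,1)$, with the section defining $S$ as the $\zeta_m$-eigenvector. That is a nontrivial projective transformation of $\bP^{13}$ which can preserve $X$. Compare the quartic $x^4+f(y,z,u,v)=0$ in the remark after Lemma~\ref{lem:forgetful map representable}. Extra input is needed to force $m=1$. The paper shows $X/G$ is Gorenstein, since a $G$-invariant section of $K_X+S$ descends and the image of $S$ is Cartier. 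It then shows $(-K_{X/G})^3=22m^2$, which for $m\ge 2$ exceeds Prokhorov's bound $72$ for Gorenstein canonical Fano threefolds \cite{Pro05}.
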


\begin{comment}

\begin{prop}
Let $X$ be a K-semistable degeneration of $V_{22}$. Assume that $X$ is not terminal. Then there exists a small deformation $X_t$ of $X$ %which lifts to a deformation of a terminalization of $X$ 
such that $X_t$ is a singular Gorenstein terminal degeneration of $V_{22}$. 
\end{prop}

\begin{proof}
By Theorem \ref{thm:smoothable} we know that either such $X_t$ exists, or $X$ has a $\bQ$-Gorenstein smoothing with Picard rank at least $2$. In the later case, $X$ is a degeneration of the Fano threefolds family \textnumero 2.15, \textnumero 2.16 or \textnumero 3.6. It was shown in \cite{LZ25} that K-moduli stack $\cM^\K_{{\textnumero 2.15}}$ of Fano \textnumero 2.15 only contains Gorenstein terminal Fano threefolds. By \Cref{thm:K-ss limit of 2.16}, the moduli stack $\cM^\K_{\textup{\textnumero 2.16}}$ of Fano \textnumero 2.16 is smooth, so it does not intersect $\cM^\K$ as a substack of $\cM^{\Kss}_{3,22}$.
\end{proof}
\end{comment}

\begin{lemma}\label{lem:forgetful map representable}
    The forgetful morphism $\Phi:\cP^{\K,\ADE}\rightarrow \cF_{22}$ is representable.
\end{lemma}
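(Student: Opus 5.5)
Representability of a morphism of algebraic stacks amounts to injectivity on automorphism groups of geometric points: for every $(X,S)\in\cP^{\K,\ADE}(\bC)$ we need the induced homomorphism
\[
\Aut(X,S)\ \longrightarrow\ \Aut\big(S,-K_X|_S\big)
\]
to be injective. (Both stacks are of finite type with affine diagonal, and $\cF_{22}$ is Deligne--Mumford, so injectivity of stabilizers on geometric points suffices.) Here $\Aut(X,S)$ is already finite by \cite[Theorem~2.10]{ADL21}, since $(X,S)$ is a plt log Calabi--Yau pair. So the task is to show that an automorphism $f$ of $X$ preserving $S$ and restricting to the identity on $S$ is itself the identity.

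The argument I would use is linear-algebraic and relies on \Cref{thm:Kss-very-ample}. Since $X$ is a K-semistable Gorenstein canonical Fano threefold of volume $22$, $-K_X$ is very ample, and $X\subseteq \bP H^0(X,-K_X)\simeq\bP^{13}$ is projectively normal. Any automorphism $f$ of $(X,S)$ acts linearly on $V\coloneqq H^0(X,-K_X)$, since $\omega_X^\vee$ has a canonical $\Aut(X)$-linearization. It preserves the line $\bC\cdot s_0$, where $s_0$ defines $S$, and therefore acts on the quotient
\[
V/\bC s_0\ \simeq\ H^0(S,-K_X|_S),
\]
which is identified with this quotient by the Kawamata--Viehweg surjectivity used in the proof of \Cref{thm:Kss-very-ample}. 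If $f|_S=\id_S$, then $f$ acts on $H^0(S,-K_X|_S)$ by a scalar $\lambda$: the induced action on $\bP H^0(S,-K_X|_S)^\vee$ fixes the linearly nondegenerate embedded $S$ pointwise. After rescaling the linearization of $f$ on $V$, the matrix of $f$ in a basis $s_0,s_1,\dots,s_{12}$ is block upper-triangular, of the form $\mathrm{Id}+N$ with $N$ supported in the $s_0$-row, together with a scalar $\mu$ on $s_0$.

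Since $f$ has finite order, it is diagonalizable. So either $f$ is a homology, that is, the identity on the hyperplane $H=\bP H^0(S)^\vee$ scaled by $\mu$ along an eigenvector direction, or it is the identity. A nontrivial homology of $\bP^{13}$ fixing $H$ pointwise has a fixed center $p\notin H$, and it acts on every line through $p$. I would then rule this case out: $X$ must be stable under a nontrivial cyclic group of such homologies. That forces $X$ to be a cone with vertex $p$ over $X\cap H=S$, or forces the lines through $p$ to meet $X$ in orbits of size $\ge 2$ away from $H\cup\{p\}$. In the latter situation the projection from $p$ exhibits $X$ (or a birational modification of it) as a cyclic cover of $\bP^{12}$ branched along $S$.

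The obstacle is making this rigorous in a way that contradicts the hypotheses. In the cone case, $X$ would be the projective cone over a K3 surface, which is not canonical, or at least not a $V_{22}$ of Picard rank one with the singularities allowed by \Cref{thm:volume bound}. In the cover case, $X\to\bP^{12}$ would have image of degree $22/m$ in a linear space of dimension one less. That is impossible by the degree count $\deg\ge \mathrm{codim}+1$ applied to the nondegenerate image, just as in Case 1 of the proof of \Cref{thm:Kss-very-ample}. Alternatively, and more cleanly, I would show that $f$ acts trivially on $H^0(X,-mK_X)$ for all $m$ by induction using the exact sequences $0\to H^0(-mK_X)\xrightarrow{s_0}H^0(-(m+1)K_X)\to H^0(S,-(m+1)K_X|_S)\to 0$. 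The scalar on the quotients is $\lambda^{m+1}$, and the $s_0$-part carries $\mu\lambda^m$. Comparing characters of the finite-order action with the equality of the two graded dimensions then forces $\mu=\lambda$, hence $f=\id$ on $R(-K_X)$ and thus on $X$.
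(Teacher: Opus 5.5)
Your setup agrees with the paper's. $\Aut(X,S)$ is finite, and an automorphism $f$ that is trivial on $S$ acts on $H^0(X,-K_X)$, after rescaling, as $\diag(\mu,1,\dots,1)$. It is therefore a homology of $\bP^{13}$ with axis the span $H$ of $S$ and center $p$, and $\mu$ has some finite order $m$. The gap is in proving $m=1$.

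Your ``cleaner'' route cannot work. By the exact sequences you quote, $f$ acts on $H^0(X,-kK_X)$ with character $\sum_{j=0}^{k}\mu^j\,h^0(S,-(k-j)K_X|_S)$ for \emph{every} value of $\mu$. Comparing characters and dimensions therefore gives no constraint, and the argument never uses the volume. It would apply verbatim to the smooth quartic threefold $x^4+f(y,z,u,v)=0$ with $S=\{x=0\}$ and $x\mapsto\zeta_4x$, the example in the remark right after this lemma, where the conclusion is false. Any correct proof has to use the degree $22$.

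Your projection argument does use the degree, but the count does not close. Suppose first $p\notin X$, and let $m'$ be the degree of the projection $X\to Y'\subseteq\bP^{12}$ from $p$; here $Y'$ is a nondegenerate threefold, not $\bP^{12}$ itself.
\begin{itemize}
\item $G$ acts freely on each line through $p$ away from $p$ and $H$, so $m\mid m'$.
\item The bound $\deg Y'=22/m'\ge 10$ excludes $m\ge 3$.
\item It leaves $m=m'=2$ with $\deg Y'=11$.
\end{itemize}
If $p\in X$, a case you do not treat, one gets $22=2\deg Y'+\mathrm{mult}_pX$, which leaves $\deg Y'=10$. So an involution fixing $S$ pointwise is not excluded. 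This mirrors Case~1 in the proof of \Cref{thm:Kss-very-ample}, where degree counting likewise only reduces to the double-cover case and K-stability is needed to finish.

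The missing idea is the paper's quotient argument:
\begin{itemize}
\item Pass to $\pi\colon X\to Y=X/G\subseteq\bP(1^{13},m)$.
\item Observe that $B=\pi(S)$ is Cartier, being cut out by $x_0^m$.
\item Use the $G$-equivariant residue isomorphism $H^0(X,K_X+S)\simeq H^0(S,K_S)$, on which $G$ acts trivially. A $G$-invariant generator descends, giving $K_Y+B\sim 0$.
\item Hence $Y$ is a Gorenstein canonical Fano threefold with $\pi^*(-K_Y)\sim -mK_X$, so $(-K_Y)^3=22m^2\ge 88$. This contradicts Prokhorov's bound $(-K_Y)^3\le 72$.
\end{itemize}
In your $m=2$ picture, $Y$ would be a Gorenstein del Pezzo threefold with $-K_Y=2H_Y$ and $H_Y^3=11$. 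Excluding it requires exactly this kind of classification input, together with the Gorenstein property of the quotient, and your proposal supplies neither.
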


\begin{proof}
Let $[(X,S)]\in \cP^{\K,\ADE}(\bC)$ be a pair. First note that $\Aut(X,S)$ is finite. 
Indeed, for any $0<\epsilon\ll1$, the pair $(X,(1-\epsilon)S)$ is K-stable by 
\cite[Theorem 2.10]{ADL21}, hence its automorphism group is finite. 
Let $G\subseteq \Aut(X,S)$ be the subgroup consisting of automorphisms whose induced 
automorphism on $S$ is the identity. We will show that $m\coloneqq |G|=1$.

Since $S\in |-K_X|$ is an anticanonical K3 surface, there is an exact sequence
\[
0\ \longrightarrow \ H^0(X,\cO_X)\ \longrightarrow\  H^0(X,-K_X)
\ \longrightarrow\  H^0(S,-K_X|_S)\ \longrightarrow\  0,
\]
where the first map is multiplication by a section $s\in H^0(X,-K_X)$ whose zero locus is $S$.
As $G$ acts trivially on $H^0(S,-K_X|_S)$, the induced action of $G$ on $H^0(X,-K_X)$ is 
diagonalizable of the form $\diag(\zeta_m,1,\dots,1)$, where $\zeta_m$ is an $m$-th root of unity 
and the defining section $s$ of $S$ spans the $\zeta_m$-eigenspace. In particular, $G$ is a finite 
subgroup of $\bC^*$ and hence cyclic; write $G=\langle g\rangle$ with $g(s)=\zeta_m s$.

Let $\pi:X\to Y:=X/G$ be the quotient morphism, and denote by $B:=\pi(S)$ the image of $S$. 
Since $-K_X$ is very ample by \Cref{thm:Kss-very-ample}, the anticanonical linear system embeds
\[
X \ \hookrightarrow\  \mathbf P(H^0(X,-K_X)^\vee)\ =\ \mathbf P^{13}_{[x_0:\cdots:x_{13}]}.
\]
Under the above diagonal action, the quotient $\mathbf P^{13}/G$ is the weighted projective space 
$\bP(1^{13},m)$, and $Y$ is naturally a subvariety of $\bP(1^{13},m)$. Moreover, $B$ is the restriction 
to $Y$ of the divisor $(y=0)$, where $y=x_0^m$, and hence $B$ is Cartier.

We now show that $Y$ is Gorenstein. Since $S\in |-K_X|$, we have $K_X+S\sim0$. 
Consider the residue map
\[
\operatorname{res}:H^0(X,K_X+S)\longrightarrow H^0(S,K_S),
\]
which is a $G$-equivariant isomorphism. As $g|_S=\id_S$, the group $G$ acts trivially on 
$H^0(S,K_S)$ and hence also on $H^0(X,K_X+S)$. Therefore there exists a nonzero 
$G$-invariant section of $K_X+S$, which descends to a nonzero section of $K_Y+B$. 
Thus $K_Y+B\sim0$. Since $B$ is Cartier, it follows that $K_Y$ is Cartier, and hence 
$Y$ is Gorenstein.

Finally, since $\pi^*(-K_Y)=-mK_X$, we obtain
\[
(-K_Y)^3 \ =\ m^2(-K_X)^3\ =\ 22m^2.
\]
If $g\neq \id_X$, then $m\ge2$, so $(-K_Y)^3\ge88$, contradicting \cite[Theorem 1.5]{Pro05}. 
Therefore $m=1$, and hence $G$ is trivial.
\end{proof}

\begin{remark}
In general, the natural homomorphism $\Aut(X,S)\to \Aut(S)$ need not be injective for a smooth Fano threefold $X$ and an anticanonical ADE K3 surface $S\subseteq X$. For example, let $X\subseteq \mathbb{P}^4$ be the smooth quartic threefold defined by $x^4+f(y,z,u,v)=0$, where $f(y,z,u,v)$ is a general quartic form. Then $S:=X\cap (x=0)\subseteq \mathbb{P}^3$ is a smooth quartic K3 surface. The automorphism $[x:y:z:u:v]\mapsto [\zeta_4 x:y:z:u:v]$ preserves $X$ and acts trivially on $S$, but is nontrivial on $X$. Equivalently, $X$ is a cyclic cover of $\mathbb{P}^3$ of degree $4$ branched along the quartic surface $S$.
\end{remark}

\begin{lem}\label{lem:birationality of forgetful}
The forgetful morphism $\Phi:\cP^{\K,\ADE}\to \cF_{22}$ is birational.
\end{lem}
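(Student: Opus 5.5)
The plan is to show that $\Phi$ is dominant between irreducible stacks of the same dimension, and then that it is generically injective on geometric points with matching stabilizers. Since $\Phi$ is representable by \Cref{lem:forgetful map representable}, birationality will follow.

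\emph{Irreducibility and dimension.} First, $\cP^{\K,\ADE}$ is irreducible. By \Cref{cor:surjective forgetful maps} it maps onto $\cM^\K$, and $\cM^\K$ is irreducible, being the closure of the smooth locus. The fibres of this map are open subsets of $|-K_X|\simeq\bP^{13}$ (those members that are ADE). So $\cP^{\K,\ADE}$ is irreducible of dimension $6+13=19$, the same as $\dim\cF_{22}$. Irreducibility is clean on the smooth locus, and I would take the component through smooth pairs. Second, restrict to the dense open substack where $X$ is a smooth $V_{22}$ and $S$ is smooth. There $\Lambda=\bZ h$ with $h^2=22$. By \Cref{thm:beauville} together with $h^{1,2}(V_{22})=0$, the map $\Phi$ is étale onto its image, so it is dominant onto $\cF_{22}$ (equivalently, \Cref{thm:smooth dominant forgetful map} with $\cF_{d,\Lambda}=\cF_{22}$).

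\emph{Generic injectivity.} This is the main input. I would use the theorem of Bayer--Kuznetsov--Macrì \cite{BKM25}: a Brill--Noether general smooth polarized K3 surface of genus $12$ embeds as an anticanonical divisor in a unique smooth $V_{22}$, and the embedding is unique up to isomorphism. Given a very general $(S,L)\in\cF_{22}$, it is BN general by \Cref{lem:equivalent condition of BN general}. Any two preimages $(X,S)$ and $(X',S')$ in the smooth locus therefore differ by an isomorphism $X\simeq X'$ carrying $S$ to $S'$ compatibly with $L$. Moreover, such a K3 lies in the image of some smooth $V_{22}$ that is K-semistable, since the generic $V_{22}$ is K-stable. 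So a general fibre of $\Phi$ is a single point, and stabilizers agree because $\Aut(X,S)\hookrightarrow\Aut(S,L)$ is injective by the proof of \Cref{lem:forgetful map representable}. For surjectivity on stabilizers, I would again use the uniqueness of the embedding up to isomorphism: any automorphism of $(S,L)$ extends to $X$. This is the delicate point, and if needed I would reduce to the very general case, where $\Aut(S,L)=\{\pm1\}$ acts compatibly, or is trivial on the coarse level.

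\emph{Conclusion.} Combining these, $\Phi$ is a representable, dominant, generically injective morphism of irreducible $19$-dimensional DM stacks, étale over a dense open subset. It is therefore an isomorphism over a dense open substack of $\cF_{22}$, i.e.\ birational.

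The main obstacle is the precise invocation of \cite{BKM25}: making sure its uniqueness statement is strong enough, namely that the isomorphism of pairs is compatible with the polarizations and extends automorphisms. That is the step I would check most carefully.
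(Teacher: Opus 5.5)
Your proposal is correct and follows the paper's route. The paper's proof reduces birationality to the uniqueness, up to isomorphism, of the smooth $V_{22}$ containing a very general genus-$12$ K3 surface as an anticanonical divisor, which is \cite[Theorem~1.3(a)]{BKM25}; your dimension count, dominance via \Cref{thm:beauville}, and stabilizer bookkeeping simply make that reduction explicit. One small correction: for very general $(S,L)\in\cF_{22}$ one has $\Aut(S,L)=\{1\}$, not $\{\pm1\}$ (the paper uses this in the proof of \Cref{thm:rationality of F22}), so the stabilizer issue you flag as delicate is in fact vacuous.
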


\begin{proof}
It suffices to show that, for a very general polarized K3 surface $(S,L)$ of genus $12$, there exists, up to automorphisms, a unique smooth $V_{22}$ containing $S$ as an anticanonical divisor. This is precisely the uniqueness statement in \cite[Theorem~1.3(a)]{BKM25} for $g=12$.
\end{proof}

\begin{proof}[Proof of \Cref{thm:open immersion forgetful map}]
Since $\Phi$ is a representable (cf. \Cref{lem:forgetful map representable}) and birational (cf. \Cref{lem:birationality of forgetful}) morphism between separated Deligne--Mumford stacks of finite type over $\bC$, and $\cF_{22}$ is smooth and $\cP^{\K,\ADE}$ is reduced, Zariski's main theorem for Deligne--Mumford stacks (see e.g.\ \cite[Theorem 5.5.9]{AlperModuli}) reduces us to proving that $\Phi$ is quasi-finite.

To this end, we introduce a locally closed stratification of $\cP^{\K,\ADE}$ as follows. Let $\cP_0$ be the open substack parametrizing pairs $(X,S)$ such that $X$ is Gorenstein terminal, and let $\cP^{\mathrm{int}}$ be the closed substack parametrizing pairs $(X,S)$ for which $X$ is a degeneration of a family of smooth Fano threefolds distinct from $V_{22}$. By \cite[Proposition 3]{Nam97} (see also the proof of \Cref{lem:exclude the other families}), these two substacks are disjoint. We choose a locally closed stratification
\[
\cP^{\K,\ADE}\setminus (\cP_0\sqcup \cP^{\mathrm{int}})
\ =\ 
\bigsqcup_{i=1}^r \cP_i
\]
such that for each stratum $\cP_i$ the following hold:
\begin{enumerate}
    \item the one-dimensional singular locus $\sZ_i$ of the threefold part of the universal family $(\sX_i,\sS_i)\to \cP_i$ is flat over $\cP_i$;
    \item the exceptional divisor of the blowup $\Bl_{\sZ_i}\sX_i\to \sX_i$ is flat over $\cP_i$.
\end{enumerate}

\begin{lemma}
For any $t\in \cP_i$, one has
\[
\bigl(\Bl_{\sZ_i}\sX_i\bigr)_t \simeq \Bl_{(\sZ_i)_t}(\sX_i)_t.
\]
\end{lemma}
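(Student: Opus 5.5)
The plan is to compare the two sides through the base-change map for blowups and to show it is an isomorphism using flatness (conditions (1) and (2)) together with the local structure of the singularities. Write $\sI\subseteq \cO_{\sX_i}$ for the ideal sheaf of $\sZ_i$ and $t\in \cP_i$ for a point with fiber $X_t:=(\sX_i)_t$ and $Z_t:=(\sZ_i)_t$. There is a natural surjection of graded algebras
\[
\bigoplus_{n\ge 0}\sI^n\otimes \cO_{X_t}\ \twoheadrightarrow\ \bigoplus_{n\ge0} \sI_{Z_t}^n,
\]
where $\sI_{Z_t}$ is the ideal of $Z_t$ in $X_t$. This uses that $\sI\cdot\cO_{X_t}=\sI_{Z_t}$, which holds because $\sZ_i$ is flat over $\cP_i$ by (1), so $\cO_{\sZ_i}\otimes\cO_{X_t}=\cO_{Z_t}$. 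Applying $\Proj$ gives a closed immersion $\Bl_{Z_t}X_t\hookrightarrow (\Bl_{\sZ_i}\sX_i)_t$. The task is to show this closed immersion is an isomorphism.

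Away from the exceptional loci, both sides agree with $X_t\setminus Z_t$. So the only possible discrepancy is extra components or embedded structure supported on the fiber $E_t$ of the exceptional divisor $\sE_i$ of $\Bl_{\sZ_i}\sX_i$. By (2), $\sE_i=\Proj\bigoplus_n \sI^n/\sI^{n+1}$ is flat over $\cP_i$, so $E_t=\Proj\bigoplus_n(\sI^n/\sI^{n+1})\otimes \cO_{X_t}$. I would then compare with the exceptional divisor of $\Bl_{Z_t}X_t$. By \Cref{thm:volume bound}, $Z_t$ is a smooth curve along which $X_t$ has $A_\infty$ or $D_\infty$ singularities. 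The local computation in \Cref{lem:conic bundle} shows that this exceptional divisor is a conic bundle $E'_t\to Z_t$, which is a Cartier divisor of relative degree $2$ in the $\bP^2$-bundle $\bP(\sI_{Z_t}/\sI_{Z_t}^2)$. Hence $E'_t\subseteq E_t$, and both are divisors in the same projective bundle over $Z_t$.

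To conclude, I would argue that the two have the same Hilbert polynomial and so coincide. Over each point of the stratum the singularity type along $\sZ_i$ is constant generically, and the conormal sheaf of $\sZ_i$ is locally free of rank $2$ (the hypersurface codimension-$2$ locus has embedding codimension $3$ in the ambient space, giving the $\bP^2$-bundle). Flatness of $\sE_i$ makes the Hilbert polynomial of $E_t$ constant on the connected stratum. I would compute it at a point where the family is locally a product $(A_\infty\text{ or }D_\infty)\times$base, where the blowup commutes with base change trivially, or more directly via the tangent-cone description: $E_t$ is cut out by the leading form of the local equation, exactly as in the proof of \Cref{lem:conic bundle}. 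Hence $E_t=E'_t$, and in particular $E_t$ is reduced and the conic bundle.

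Finally, since $E_t=E'_t$ is a Cartier divisor on $\Bl_{Z_t}X_t$, and $(\Bl_{\sZ_i}\sX_i)_t$ and $\Bl_{Z_t}X_t$ agree outside $E_t$ and along $E_t$, the closed immersion is an isomorphism. Concretely, the quotient ideal is supported on $E_t$ and vanishes modulo the Cartier equation by Nakayama. The main obstacle will be this last Hilbert-polynomial/Nakayama step, namely ruling out embedded components in the fiber of the total blowup. I expect to handle it by working étale-locally, writing $\sX_i$ as a family of hypersurfaces $xy-z^2\cdot u$ with $\sZ_i=\{x=y=z=0\}$, and checking the Rees algebra commutes with base change because the generators $x,y,z$ restrict to a regular-sequence-like presentation fiberwise.
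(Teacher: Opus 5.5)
There is a genuine gap, and it is in the step that carries the whole content of the lemma: proving $E_t=E'_t$ as schemes. Your outer framework is fine. The surjection of Rees algebras gives a closed immersion $W':=\Bl_{Z_t}X_t\hookrightarrow W:=(\Bl_{\sZ_i}\sX_i)_t$, and the two agree off the exceptional loci. \emph{If} $E_t=E'_t$ as schemes, your Nakayama step is correct. Indeed, let $\mathcal{K}$ be the ideal of $W'$ in $W$ and $e$ a local equation of the exceptional divisor. Then $\mathcal{K}\subseteq e\cO_W$, and since $e$ is a nonzerodivisor on $\cO_{W'}$, this forces $\mathcal{K}=e\mathcal{K}$, hence $\mathcal{K}=0$.

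The problem is how you get $E_t=E'_t$. Constancy of the Hilbert polynomial of $E_t$ along the stratum, plus $E_{t_0}=E'_{t_0}$ at one good point $t_0$, tells you nothing about $E'_t$ for other $t$. You would also need the Hilbert polynomial of $E'_t$ to be constant, i.e.\ that the exceptional divisors of the fiberwise blowups form a flat family. That is exactly the base-change statement being proved, so the argument is circular.

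Likewise, ``$E_t$ is cut out by the leading form'' presupposes $(\mathscr{I}^k/\mathscr{I}^{k+1})\otimes\kappa(t)\simeq \mathscr{I}_{Z_t}^k/\mathscr{I}_{Z_t}^{k+1}$ for all $k$. For $k=1$ this follows from flatness of $\sZ_i$, but for $k\ge 2$ it is again the statement at issue. There is also a slip: the conormal sheaf has rank $3$, not $2$, which is what gives the $\bP^2$-bundle.

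Your fallback, an analytic normal form $xy-z^2u$ for the total space, could be made to work. The initial form $XY-Z^2\bar u$ has a unit coefficient, so it is a nonzerodivisor on $\cO_{\sZ_i}[X,Y,Z]$ and on every fiber. Hence $\operatorname{gr}_{\mathscr{I}}\cO_{\sX_i}$ is flat and commutes with base change. But this needs an equisingularity argument you have not supplied: smoothness of $\sZ_i\to\cP_i$, reducedness of the base, a splitting lemma with parameters, and control of the $D_\infty$ points. Once you have it, the Hilbert-polynomial detour is superfluous.

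The missing idea is to use the flatness built into the stratification to show that forming the Rees algebra commutes with base change, rather than merely to fix a Hilbert polynomial. This needs no information about the singularities, and it is the paper's argument:
\begin{itemize}
\item Work locally: $\sX_i=\Spec A$ over a local ring $(R,\fm)$, with $\sZ_i=V(I)$.
\item The stratification makes $A$, $A/I$ and all $I^k/I^{k+1}$ flat over $R$.
\item Induction on $0\to I^k/I^{k+1}\to A/I^{k+1}\to A/I^k\to 0$ then shows every $A/I^k$ is flat.
\item Hence $\operatorname{Tor}_1^R(A/I^k,R/\fm)=0$, so $I^k/\fm I^k\to A/\fm A$ is injective with image $(I\cdot A/\fm A)^k$.
\item Therefore $\bigl(\bigoplus_k I^k\bigr)\otimes_R R/\fm\simeq\bigoplus_k (I\cdot A/\fm A)^k$, and taking $\operatorname{Proj}$ gives the lemma directly.
\end{itemize}
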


\begin{proof}
Since the claim is local in the smooth topology, we may assume that $T:=\cP_i$ is an integral affine scheme $\Spec R$, where $(R,\fm)$ is a local ring, $\sX=\Spec A$, and $\sZ$ is defined by an ideal $I\subseteq A$. By the choice of the stratification, $A$, $A/I$, and $I^k/I^{k+1}$ are all flat over $R$ for all $k\geq 1$. Using the exact sequence
\[
0\longrightarrow I^k/I^{k+1}\longrightarrow A/I^{k+1}\longrightarrow A/I^k\longrightarrow 0,
\]
we deduce inductively that $A/I^k$ is flat over $R$ for all $k\ge 1$. The desired compatibility of blowups is equivalent to the equality
\[
(I/\fm I)^k \ =\  I^k/\fm I^k
\]
for all sufficiently large $k$. Since $(I/\fm I)^k$ is the image of the natural map
$\phi \colon I^k/\fm I^k \longrightarrow A/\fm A$, it suffices to show that $\phi$ is injective. This follows from the flatness of $A/I^k$ over $R$ and the exact sequence
\[
0\longrightarrow I^k \longrightarrow A \longrightarrow A/I^k \longrightarrow 0,
\]
which remains exact after tensoring with $R/\fm$.
\end{proof}
%$(R,\fm)$ is a local ring, that $\sX\subseteq \bA^4_R$ is a hypersurface defined by an irreducible polynomial $f\in R[x_1,x_2,x_3,x_4]$, and that $\sZ = V(x_1,x_2,x_3)\subset \sX$. As by construction $\sX$ has double points along $\sZ$, then every monomial of $f$ has degree at least $2$ with respect to $x_1,x_2,x_3$, and some monomial of $f$ has degree exactly $2$ with respect to $x_1,x_2,x_3$. It suffices to show that for $k\gg 0$ the natural map
%\[\frac{(x_1,x_2,x_3)^k}{\fm(x_1,x_2,x_3)^k+(f)} \ \longrightarrow\ \left(\frac{(x_1,x_2,x_3)+\fm}{\fm+(f)}\right)^k \] is an isomorphism. Let $g(\underline{x})\in R[x_1,x_2,x_3,x_4]$ be a polynomial such that every monomial has degree at least $k$ with respect to $x_1,x_2,x_3$. Write $\ove{(\cdot)}$ for reduction modulo $\fm$, so that $\ove{R}=R/\fm\simeq \bC$. If $\ove g$ lies in the ideal $(\ove f)\subset \bC[x_1,x_2,x_3,x_4]$, then we may write \[\ove g\ =\ \ove f\cdot h \] for some polynomial $h$ whose monomials have degree at least $k-2$ with respect to $x_1,x_2,x_3$ (since $\bar f$ has $(x_1,x_2,x_3)$-degree exactly $2$). Viewing $h$ as a polynomial in $R[x_1,x_2,x_3,x_4]$, still denoted by $h$, we obtain \[ g-f\cdot h \ =\  p,\] where $p$ has coefficients in $\fm$ and every monomial of $p$ has degree at least $k$ with respect to $x_1,x_2,x_3$. This shows that the above map is injective, hence an isomorphism, and the desired compatibility of blow-ups with the fiber over $t$ follows.

In the following lemmas, we show that the restriction of $\Phi$ to each stratum is quasi-finite.

\begin{lemma}\label{lem:restriction on Gor term locus q-finite}
The restriction $\Phi|_{\cP_0}$ is quasi-finite.
\end{lemma}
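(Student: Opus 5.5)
We will exploit the Beauville-type smoothness of \Cref{thm:beauville} over the Gorenstein terminal locus. By \Cref{lem:Picard rank one}, every $X\in\cP_0$ has $\Pic(X)=\bZ\cdot[-K_X]$. Hence for every $(X,S)\in\cP_0$ the lattice $\Lambda$ of \Cref{thm:beauville} is the rank-one lattice $\bZ\cdot(-K_X|_S)$, which is saturated by \Cref{lem:primitivity} or its ADE analogue. Thus $\Def_{(S,\Lambda)}$ is the deformation space of the polarized K3 surface $(S,-K_X|_S)$, i.e.\ the formal neighborhood of $\Phi(X,S)$ in the smooth stack $\cF_{22}$. \Cref{thm:beauville} then says that $\Phi|_{\cP_0}$ is formally smooth at every point, of relative dimension $h^{1,2}(X)$.

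The next step is to show $h^{1,2}(X)=0$ for every $X\in\cP_0$. By \Cref{cor:h12}, $h^{1,2}$ is locally constant in families of Gorenstein terminal weak Fano threefolds of volume $>2$. Each $X\in\cP_0$ is Gorenstein terminal and admits a smoothing to a smooth $V_{22}$ by \cite{Nam97}, with the smoothing family consisting of Gorenstein terminal fibers. Therefore
\[
h^{1,2}(X)\ =\ h^{1,2}(V_{22})\ =\ 0,
\]
since a smooth $V_{22}$ has $b_3=0$. Combined with the previous step, $\Phi|_{\cP_0}$ is formally smooth of relative dimension zero at every point, i.e.\ étale onto its image, and in particular quasi-finite. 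Representability of $\Phi$ (\Cref{lem:forgetful map representable}) ensures that the deformation-theoretic statement translates into étaleness of $\Phi|_{\cP_0}$ as a morphism of DM stacks. Quasi-finiteness then follows because étale morphisms locally of finite type have discrete fibers.

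The main subtlety is identifying $\Def_{(X,S)}$ with the formal completion of $\cP^{\K,\ADE}$ at $(X,S)$. This requires that K-semistability be open, so that small deformations of $X$ remain in $\cM^{\K}$. It also requires that $\cP^{\K,\ADE}$, which carries the reduced structure, agree with the unobstructed, hence smooth, deformation germ. Both points follow from the fact that $\Def_{(X,S)}$ is smooth: a reduced stack whose local deformation functor is smooth coincides with it locally, and openness of K-semistability \cite{Xu25} handles the first point.

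A second point is that the fibers of the smoothing family must be Gorenstein terminal, as \Cref{cor:h12} requires. We would check this by using a one-parameter $\bQ$-Gorenstein smoothing, whose general fibers are smooth and whose central fiber is terminal. The family is then Gorenstein terminal over a neighborhood of the central point, since terminality is open.
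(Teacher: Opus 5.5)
Your proposal is correct and follows essentially the same route as the paper. You identify $\Lambda$ as the rank-one lattice generated by $-K_X|_S$, using \Cref{lem:Picard rank one} where the paper invokes \cite{JR11}. You then apply \Cref{thm:beauville} to get smoothness of $\Phi$ at points of $\cP_0$ with relative dimension $h^{1,2}(X)$, and kill $h^{1,2}(X)$ via deformation invariance (\Cref{cor:h12}) together with $b_3(V_{22})=0$.

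One small correction: $\Lambda$ is saturated by definition, and what you need is that $-K_X|_S$ is primitive in $\Pic(S)$. This does not follow from \Cref{lem:primitivity}, which is stated only for smooth pairs. It follows directly because $\Pic(S)$ is even (it embeds isometrically into the Picard lattice of the minimal resolution), so $-K_X|_S = kM$ would give $22 = k^2 (M^2)$, which forces $k=1$.
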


\begin{proof}
By \cite[Theorem 1.4]{JR11}, for every $t\in \cP_0$ the image of the restriction map $\Pic(X_t)\longrightarrow \Pic(S_t)$ is a locally constant lattice, and in particular is isomorphic to the rank-one lattice $\langle \ell_{22}\rangle$ with $(\ell_{22}^2)=22$. By \Cref{thm:beauville}, the morphism $\Phi$ is smooth at any point $[(X,S)]\in \cP_0$ of relative dimension $h^{1,2}(X)$. By \Cref{thm:beauville}, this number is zero, since $h^{1,2}$ vanishes for a smooth $V_{22}$. Hence $\Phi$ has zero-dimensional fibers at $[(X,S)]$, and therefore is quasi-finite at such points. The lemma follows.
\end{proof}

In particular, every Gorenstein terminal $V_{22}$ has vanishing Hodge number $h^{1,2}$.

\begin{lemma}\label{lem:restriction on each strata}
The restriction $\Phi|_{\cP_i}$ is quasi-finite for every $i=1,\dots,r$.
\end{lemma}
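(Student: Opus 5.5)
Fix a stratum $\cP_i$ and a point $(X,S)$ in it; $X$ is non-terminal with one-dimensional singular locus $Z=\bigsqcup_j C_j$ of $A_\infty$/$D_\infty$ type, and $S$ is general enough on the stratum. The plan is to bound the fibre of $\Phi|_{\cP_i}$ through $(X,S)$ by lifting everything to the terminalization and applying the Beauville-type \Cref{thm:beauville} there.

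\textbf{Step 1 (lift to the terminalization).} By condition (2) of the stratification and the blowup-compatibility lemma just proved, the family $\widetilde{\sX}_i:=\Bl_{\sZ_i}\sX_i\to\cP_i$ is a flat family of Gorenstein terminal weak Fano threefolds. Its fibre over $t$ is $\tX_t=\Bl_{Z_t}X_t$, a crepant terminalization. The strict transforms $\tS_t$ form a family of anticanonical ADE K3 surfaces. Let $\tS_t\to S_t$ be the partial resolution contracting the exceptional curves $e$ over $S_t\cap Z_t$; these lie over $A_1$ points of $S_t$.

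Moreover $S_t$ determines $\tS_t$ up to finitely many choices: $\tS_t$ is a partial resolution of the ADE surface $S_t$ at some subset of its singular points. So a positive-dimensional family in a fibre of $\Phi|_{\cP_i}$ would yield a positive-dimensional family of pairs $(\tX_t,\tS_t)$ with fixed $\tS_t$. Each such pair also carries fixed lattice data $\Lambda_t\subseteq\Pic(\tS_t)$. This lattice is locally constant along the stratum by \Cref{cor:lattice-local-system}, after passing to an étale cover.

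\textbf{Step 2 (fibre dimension via Beauville).} By \Cref{thm:beauville}, the map $\Def_{(\tX,\tS)}\to\Def_{(\tS,\Lambda)}$ is smooth of relative dimension $h^{1,2}(\tX)$. So it suffices to show $h^{1,2}(\tX)=0$. Granting this, the map $(\tX,\tS)\mapsto(\tS,\Lambda)$ is étale locally, hence quasi-finite. Composing with the finite-to-one assignment $(\tS,\Lambda)\mapsto S$ then shows that $\Phi|_{\cP_i}$ is quasi-finite.

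\textbf{Step 3 (vanishing of $h^{1,2}(\tX)$).} By the proof of \Cref{thm:smoothable} and Remark~\ref{cor:small deformation small anti-can morphism}, $\tX$ deforms to $\tX_t$ whose anticanonical model $X_t$ is Gorenstein terminal, and $h^{1,2}(X_t)\ge h^{1,2}(\tX_t)=h^{1,2}(\tX)$. Here we use that $(X,S)\notin\cP^{\mathrm{int}}$. Since $\cP^{\mathrm{int}}$ is closed, a small deformation of $X$ remains outside it, so $X_t$ is a Gorenstein terminal degeneration of $V_{22}$ and not of another family. Hence $X_t\in\cP_0$, and by the remark after \Cref{lem:restriction on Gor term locus q-finite} we get $h^{1,2}(X_t)=0$. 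Therefore $h^{1,2}(\tX)=0$.

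\textbf{Main obstacle.} The hardest step is making Step 1 rigorous. One must check that positive-dimensional fibres of $\Phi|_{\cP_i}$ really give positive-dimensional families in $\Def_{(\tX,\tS)}$ lying over a single point of $\Def_{(\tS,\Lambda)}$. This requires the blowup to commute with base change, which the previous lemma supplies. It also requires finiteness of the partial resolutions and of the lattice markings, which I would handle as in \Cref{prop:isomorphism of stacks 2}. A secondary issue is ensuring $X_t$ stays in the $V_{22}$ component, which the exclusion of $\cP^{\mathrm{int}}$ is designed to give.
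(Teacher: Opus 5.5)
Your proposal is correct and follows essentially the same route as the paper. You lift to the blowup $(\tX,\tS)$ via the stratification, apply \Cref{thm:beauville} there, and obtain $h^{1,2}(\tX)\le h^{1,2}(X_t)=0$ from Remark~\ref{cor:small deformation small anti-can morphism}. Your extra bookkeeping (finitely many crepant partial resolutions $\tS\to S$, and the locally constant $\Lambda$) only makes explicit what the paper leaves implicit.

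One small wording fix. That $X_t$ is a $V_{22}$ degeneration should be justified by noting that $[X]$ lies on no other irreducible component of $\cM^{\K}_{3,22}$, so every small K-semistable deformation of $X$ lies in $\cM^{\K}$. Closedness of $\cP^{\mathrm{int}}$ inside $\cP^{\K,\ADE}$ does not by itself give this.
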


\begin{proof}
Fix $i\ge 1$. By construction of the stratification, there exists a partial resolution
\[
g_i:\wt{\sX}_i\to \sX_i
\]
obtained by blowing up the one-dimensional singular locus, such that
$(\wt{\sX}_i,\wt{\sS}_i)\to \cP_i$ is a family of Gorenstein terminal weak Fano threefolds with anticanonical ADE K3 surfaces $\wt{\sS}_i=g_i^*\sS_i$.

Let $(X,S)\in \cP_i$ and let $g:(\tX,\tS)\to (X,S)$ be the corresponding blowup. By
\Cref{cor:small deformation small anti-can morphism}, $\tX$ admits a small deformation $\tX_t$ whose anticanonical model $X_t$ is Gorenstein terminal, and the forgetful map $\Def(\tX,\tS)\to\Def(\tS)$ has relative dimension $0$, since $h^{1,2}(X_t)=0$. Because the deformation space of $(X,S)$ in $\cP_i$ maps to a subspace of $\Def(\tX,\tS)$, the fibers of $\Phi$ over $\cP_i$ are zero-dimensional at $(X,S)$. Hence $\Phi|_{\cP_i}$ is quasi-finite.
\end{proof}

\begin{lemma}\label{lem:exclude the other families}
The restriction of $\Phi|_{\cP^{\rm int}}$ is quasi-finite.
\end{lemma}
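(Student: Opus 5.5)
We must show $\Phi|_{\cP^{\rm int}}$ is quasi-finite, where $\cP^{\rm int}$ parametrizes $(X,S)$ with $X\in\cM^{\K}$ that also degenerates from a smooth Fano threefold of volume $22$ not of type $V_{22}$. The plan is first to list the candidates for the general fiber, and then, for each, to get zero-dimensional fibers of $\Phi$ from the Beauville-type results.

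\textbf{Step 1: the possible families.} By Theorem~\ref{thm:smoothable}, such an $X$ is non-terminal. It admits a smoothing $X\rightsquigarrow X'$ with $\rk\Pic(X')\ge 2$ and $(-K_{X'})^3=22$. By the Mori--Mukai classification, $X'$ then lies in family \textnumero 2.15, \textnumero 2.16, or \textnumero 3.6. By \cite{LZ25}, $\cM^{\K}_{\textup{\textnumero 2.15}}$ contains only Gorenstein terminal threefolds. Such threefolds cannot lie in $\cM^{\K}$, because by \cite[Proposition 3]{Nam97} the smoothing of a Gorenstein terminal Fano is unique up to deformation, so its deformation type is well defined. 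The appendix result on \textnumero 2.16 shows that $\cM^{\K}_{\textup{\textnumero 2.16}}$ is a smooth connected component of $\cM^{\K}_{3,22}$, so it is disjoint from the component $\cM^{\K}$. Hence every $(X,S)\in\cP^{\rm int}$ is a K-semistable common degeneration of $V_{22}$ and of \textnumero 3.6.

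\textbf{Step 2: quasi-finiteness via the blowup.} Take $(X,S)\in\cP^{\rm int}$ and refine $\cP^{\rm int}$ into strata satisfying the two flatness conditions used for the $\cP_i$. Blow up the one-dimensional singular locus to get $(\tX,\tS)$, a Gorenstein terminal weak Fano with a smooth anticanonical K3. By Theorem~\ref{thm:beauville}, $\Def_{(\tX,\tS)}\to\Def_{(\tS,\Lambda)}$ is smooth of relative dimension $h^{1,2}(\tX)$. Corollary~\ref{cor:h12} makes this equal to $h^{1,2}$ of a nearby smooth fiber. We may choose that fiber to be a smooth member of \textnumero 3.6, which is the blowup of $\bP^3$ along a disjoint line and elliptic quartic, so $h^{1,2}=1$. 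The fibers of $\Phi$ restricted to a stratum of $\cP^{\rm int}$ therefore have dimension at most $1$. To reduce this to $0$, I would use that the deformations of $(X,S)$ staying in the stratum map into the locus of $\Def_{(\tX,\tS)}$ where $\tX$ keeps the exceptional divisor $E_i$, i.e. the sublocus where $\Lambda$ persists as a lattice of $\tX$. On this sublocus the one-dimensional fiber direction is exactly the intermediate-Jacobian direction, which moves the elliptic curve. That deformation is smoothing out the $A_\infty$ curve, so it is not in the stratum.

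\textbf{Main obstacle.} The hard step is this last assertion: that the $h^{1,2}$-direction of $\Def_{(\tX,\tS)}$ is transverse to the equisingular locus. If this fails, the fallback is a dimension count. Since $\Phi$ is birational onto the $19$-dimensional $\cF_{22}$ and $\cP^{\rm int}$ has codimension at least $2$, a positive-dimensional fiber would produce an exceptional locus of $\Phi$ of codimension at least $2$. Because $\Phi$ is a representable birational morphism into the smooth stack $\cF_{22}$, van der Waerden purity of the exceptional locus excludes this. This is consistent with the outline in the introduction, and I would pursue that route if the direct transversality check proves too delicate.
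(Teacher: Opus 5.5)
Your Step~1 agrees with the paper, with one misattribution: $X$ is non-terminal because Gorenstein terminal Fano threefolds have unobstructed deformations \cite{Nam97,Min01}, so they lie on a single smoothing component. This does not follow from \Cref{thm:smoothable}.

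\textbf{The direct route in Step~2 does not prove quasi-finiteness.} You leave the transversality claim open yourself. The bound on fiber dimension is also argued incorrectly, because a deformation of $\tX$ is not a smoothing of $X$. By \Cref{cor:small deformation small anti-can morphism}, a general deformation $\tX_t$ is only a weak Fano threefold with a small anticanonical morphism onto a Gorenstein terminal $X_t$. \Cref{cor:h12} then gives only $h^{1,2}(\tX)=h^{1,2}(\tX_t)\le h^{1,2}(X_t)$. Nothing lets you choose which of the two smoothing components through $[X]$ contains $X_t$, let alone make $\tX_t$ a smooth member of \textnumero3.6. So this route yields at best fibers of dimension $\le 1$ on $\cP^{\rm int}$, and the passage from $1$ to $0$ is exactly what is missing.

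\textbf{Your fallback is the paper's proof, but it lacks its key input.} You assert $\operatorname{codim}\cP^{\rm int}\ge 2$ without argument. The paper obtains it by a dimension count. Write $\cP^{\rm int}=\cP^{\K,\ADE}\cap\cP^{\K,\ADE}_{\textup{\textnumero3.6}}$. Since $\dim\cM^{\K}_{\textup{\textnumero3.6}}=5$ and $h^0(Y,-K_Y)=14$, the pair stack $\cP^{\K,\ADE}_{\textup{\textnumero3.6}}$ has dimension $5+13=18$. A general smooth member of \textnumero3.6 has Picard rank $3$, so it is not in $\cM^{\K}$ by \Cref{lem:Picard rank one}. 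Hence $\cP^{\rm int}$ is a proper closed substack and $\dim\cP^{\rm int}\le 17$, which is codimension at least $2$ in the $19$-dimensional $\cP^{\K,\ADE}$.

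You should also say why the exceptional locus of $\Phi$ lies inside $\cP^{\rm int}$. This uses the quasi-finiteness already established on $\cP_0$ and on every $\cP_i$ (\Cref{lem:restriction on Gor term locus q-finite} and \Cref{lem:restriction on each strata}). Any positive-dimensional fiber component therefore has a dense open subset in $\cP^{\rm int}$, and since $\cP^{\rm int}$ is closed, the whole component lies in it. Purity of the exceptional locus for the birational morphism $\Phi$ to the smooth stack $\cF_{22}$ then forces this locus to be empty. With these two points supplied, the fallback is a complete proof and the transversality attempt can be dropped.
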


\begin{proof}
If this substack is non-empty, then for any pair $(X,S)$ in this locus, the threefold $X$ is a common degeneration of $V_{22}$ and another family of smooth Fano threefolds of volume $22$. In particular, $\cM^\K_{3,22}$ is singular at the point $[X]$. Hence, by \cite[Main Theorem]{Min01} and \Cref{thm:volume bound}(1), $X$ is Gorenstein canonical but not terminal.

By the classification of smooth Fano threefolds (cf.\ \cite{Fano}), there are exactly four families of volume $22$: \textnumero1.10 (i.e.\ $V_{22}$), \textnumero2.15, \textnumero2.16, and \textnumero3.6. By \cite[Theorem 1.2]{LZ25}, every K-semistable Fano degeneration of family \textnumero2.15 has only ADE singularities and is therefore terminal. In \Cref{appendix B} we show that the K-moduli stack $\cM^\K_{\textup{\textnumero2.16}}$ is smooth; see \Cref{thm:K-ss limit of 2.16}. Consequently, $X$ must be a degeneration of family \textnumero3.6.

Consider the stack $\cP^{\K,\ADE}_{\textup{\textnumero3.6}}$ parametrizing pairs $(Y,T)$ such that $Y$ is a K-semistable degeneration of the Fano family \textnumero3.6 and $T\in |-K_Y|$ is an ADE K3 surface. Since $\dim \mtc{M}^\K_{\textup{\textnumero3.6}}=5$ and $h^0(Y,-K_Y)=14$ for any $Y\in \mtc{M}^\K_{\textup{\textnumero3.6}}$, the intersection
\[\cP^{\mathrm{int}} \ = \
\cP^{\K,\ADE}\ \cap\  \cP^{\K,\ADE}_{\textup{\textnumero3.6}}
\]
is non-empty by assumption and has dimension at most $17$. Here, when taking the intersection, we can view both of them as substacks of the K-moduli stack of pairs $\cP^\K_{3,22}(c)$ for any $c\in (0,1)$.

Since the forgetful morphism $\Phi:\cP^{\K,\ADE}\to \cF_{22}$
is a birational morphism to a smooth Deligne--Mumford stack, by the purity of the exceptional locus for birational morphisms (see e.g.\ \cite[1.40]{Deb01}), the restriction $\Phi|_{\cP^{\mathrm{int}}}$ is quasi-finite. This proves the lemma.
\end{proof}

Therefore, $\Phi$ is quasi-finite and thus an open immersion.
\end{proof}
\begin{cor}\label{cor:reduced K-moduli of V22 is smooth}
    The moduli stack $\cM^\K$ is smooth.
\end{cor}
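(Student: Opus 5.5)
The idea is to transfer smoothness from $\cF_{22}$ to $\cP^{\K,\ADE}$ through the open immersion of \Cref{thm:open immersion forgetful map}, and then descend it to $\cM^\K$ along the surjective forgetful map $\Psi:\cP^{\K,\ADE}\to\cM^\K$.

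\emph{Step 1: $\cP^{\K,\ADE}$ is smooth.} By \Cref{isommoduli}, $\cF_{22}$ is a smooth Deligne--Mumford stack. By \Cref{thm:open immersion forgetful map}, $\Phi$ identifies $\cP^{\K,\ADE}$ with an open substack of $\cF_{22}$, so $\cP^{\K,\ADE}$ is smooth of dimension $19$.

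\emph{Step 2: description of the fibres of $\Psi$.} Fix $X\in\cM^\K$. By \Cref{thm:Kss-very-ample}, $-K_X$ is very ample with $h^0(X,-K_X)=14$. The fibre of $\Psi$ over $[X]$ is the open subset $U_X\subseteq |-K_X|\simeq\bP^{13}$ of ADE members, taken modulo the finite group $\Aut(X)$. It is nonempty by \Cref{thm:generalelephant}. Since $h^0(-K)$ is constant on the flat family (Kawamata--Viehweg vanishing), over any smooth chart $T\to\cM^\K$ with universal family $\sX\to T$ the corresponding chart of $\cP^{\K,\ADE}$ is an open subscheme of the projective bundle $\bP(\pi_*\omega_{\sX/T}^{\vee})\to T$. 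It therefore suffices to show that
\[
\cP^{\K,\ADE}\ \longrightarrow\ \cM^\K
\]
is the restriction to an open substack of the relative linear system $\bP(\pi_*\omega^\vee)$. This comes down to checking that the stack-theoretic definition of pairs, as an open substack of the pair moduli, agrees with the relative Hilbert scheme of divisors in $|-K|$. This holds because the divisor of a pair is Cartier and linearly equivalent to $-K_{\sX/T}$, and the relevant line bundle extends uniquely by \Cref{lem:line-bundle-extension}.

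\emph{Step 3: descent of smoothness.} Granting Step 2, $\Psi$ is smooth and surjective, being an open subset of a $\bP^{13}$-bundle over the reduced stack $\cM^\K$. Smoothness descends along smooth surjections: if $T\to\cM^\K$ is a smooth atlas, then $T\times_{\cM^\K}\cP^{\K,\ADE}$ is smooth (Step 1) and smooth and surjective over $T$, so $T$ is smooth. Hence $\cM^\K$ is smooth. The dimension check is consistent: $19-13=6$.

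\emph{Main obstacle.} The hard step is Step 2, specifically the scheme-theoretic identification. We must ensure that, because $\cM^\K$ is given the reduced structure and pairs are defined via Kollár families, the map $\Psi$ is genuinely the projectivized pushforward and not merely bijective on points. To handle this, I would work on smooth-local charts. Over a reduced base, a family of pairs with $S$ a relative Cartier divisor in $|-K_{\sX/T}|$ is exactly a section of $\bP(\pi_*\omega^\vee)$. Both $\cP^{\K,\ADE}$ and the projective bundle are reduced, so the bijective morphism between them is an isomorphism by Zariski's main theorem, since the target is normal (it is smooth over the reduced base, which is normal by Step 1 via a fibrewise argument).
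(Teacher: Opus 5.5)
Your proof is correct and has the same skeleton as the paper's: $\cP^{\K,\ADE}$ is smooth because it is open in $\cF_{22}$, $\Psi$ is smooth and surjective, and smoothness descends along $\Psi$. Where you differ is in how smoothness of $\Psi$ is obtained.

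The paper argues deformation-theoretically. The proof of \Cref{thm:beauville} shows that $\Def_{(X,S)}\to\Def_X$ is formally smooth, since $\Ext^2(\iota_*\cO_S,\cO_X)\simeq H^1(S,-K_X|_S)^\vee=0$; this only needs $X$ Gorenstein canonical. Hence the forgetful map from the open substack of ADE pairs in $\cP^{\K}_{3,22,1}(c)$ to the possibly non-reduced stack $\cM^\K_{3,22}$ is smooth. Its base change to $\cM^\K$ is smooth over a reduced stack, hence reduced, and has the same points as $\cP^{\K,\ADE}$, so the two agree.

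You instead identify $\Psi$ with an open piece of the relative linear system $\bP(\pi_*\omega^\vee)$, by comparing functors on reduced test schemes. Your route is more concrete: it exhibits the $\bP^{13}$-bundle structure that the paper uses informally in the rationality argument and in \Cref{rmk:picrank}. The paper's route avoids comparing Koll\'ar families of pairs with relative Cartier divisors at all, by doing the deformation theory on the non-reduced stacks and passing to reduced structures only at the end.

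Your comparison does go through. Over a reduced base the divisor is flat with Cartier fibres, hence a relative effective Cartier divisor, and $\omega_{\sX/T}$ is invertible. Then $\cO(\sS)\otimes\omega_{\sX/T}$ is fibrewise trivial, and therefore pulled back from $T$ because $H^0(\cO_{X_t})=\bC$ and $H^1(\cO_{X_t})=0$. This, not \Cref{lem:line-bundle-extension}, is the input you need.

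Two corrections.

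\emph{The Zariski main theorem sentence is circular as written.} Normality of the projective bundle is equivalent to normality of $\cM^\K$, and you cannot get that from Step 1 without already knowing $\Psi$ is flat. The sentence is also unnecessary. An equivalence of groupoids of points over all reduced test schemes between two reduced stacks is already an isomorphism (evaluate on a reduced smooth atlas). If you prefer Zariski's main theorem, apply it in the other direction: to the morphism from the irreducible reduced open subset of the projective bundle to $\cP^{\K,\ADE}$, whose target is smooth by Step 1.

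\emph{$\Aut(X)$ need not be finite.} $\cM^\K$ is an Artin stack containing, for example, the Mukai--Umemura threefold with $\Aut\simeq\PGL_2(\bC)$. This only affects your description of the fibres, not the argument: the fibre over a point $\Spec\bC\to\cM^\K$ is $U_X$ itself.
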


\begin{proof}
Let $\cP^{\K,\ADE}_{3,22}$ be the open substack of $\cP^\K_{3,22}(c)$, for $0<c<1$, consisting of pairs $(X,S)$ such that $X\in\cM^\K_{3,22}$ and $S\in |-K_X|$ is an ADE K3 surface. 
There is a natural forgetful morphism $(X,S)\mapsto X$ from $\cP^{\K,\ADE}_{3,22}$ to $\cM^\K_{3,22}$, which is smooth by the proof of \Cref{thm:beauville}. Let $\beta:\cP^{\K,\ADE}_{\diamondsuit}\to \cM^\K$ be the pullback of this morphism along $\cM^\K\to \cM^\K_{3,22}$. 
Then $\beta$ is also smooth, and we obtain the following cartesian diagram
\[
\begin{tikzcd}[ampersand replacement=\&]
	{\cP^{\K,\ADE}} \&\& {\cP^{\K,\ADE}_{\diamondsuit}} \&\& {\cP^{\K,\ADE}_{3,22}} \\
	\&\& {\cM^\K} \&\& {\cM^\K_{3,22}}
	\arrow["\alpha", from=1-1, to=1-3]
	\arrow["\Psi"{description}, from=1-1, to=2-3]
	\arrow[from=1-3, to=1-5]
	\arrow["\beta", from=1-3, to=2-3]
    \arrow[from=1-3, to=2-5, phantom, "\lrcorner", very near start]
	\arrow[from=1-5, to=2-5]
	\arrow[from=2-3, to=2-5]
\end{tikzcd}.
\] Since $\cM^\K$ is reduced, the stack $\cP^{\K,\ADE}_{\diamondsuit}$ is also reduced. 
It follows that $\alpha$ is an isomorphism, and hence $\Psi$ is smooth. Finally, as $\Phi:\cP^{\K,\ADE}\hookrightarrow \cF_{22}$ is an open immersion and $\cF_{22}$ is smooth by \Cref{isommoduli}, the stack $\cP^{\K,\ADE}$ is smooth. 
Therefore $\cM^\K$ is smooth as well.
\end{proof}

\smallskip

\begin{comment}

\begin{lem}\label{lem:Pic of sing quadric}
    The Picard group of a singular quadric surface in $\bP^3$ is isomorphic to $\bZ$. 
\end{lem}

\begin{proof}
  {\color{red} Decide whether we want to add this.}
\end{proof}

\end{comment}

\subsection{Nodality of terminal K-semistable $V_{22}$}

In this subsection, we prove that every terminal K-semistable $V_{22}$ has at worst nodal singularities. 

\begin{thm}\label{cor:deforms to a one-nodal}
    Every K-semistable singular terminal degeneration of $V_{22}$ has only $A_1$-singularities. In particular, it deforms to one-nodal Fano $V_{22}$.
\end{thm}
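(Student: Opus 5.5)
Let $X$ be a K-semistable singular terminal degeneration of $V_{22}$, and let $p\in X$ be a singular point. By \Cref{thm:volume bound}, $p$ is an isolated $cA_{\le 2}$-point. We argue by contradiction: suppose $p$ is not an ordinary double point. Then $p$ is either a $cA_1$ point of type $xy=z^2+w^k$ with $k\ge 3$, or a $cA_2$ point. The plan is to blow up $p$ and apply \Cref{thm:beauville} to the blowup.

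\emph{Step 1: the blowup.} Let $\mu\colon\tX=\Bl_pX\to X$ with exceptional divisor $E$. Since $p$ is non-nodal $cA$, $E$ is an irreducible (possibly non-normal) quadric or a plane pair, and $\tX$ is Gorenstein terminal. Because $A_X(E)=2$ and $\mathrm{mult}_p=2$, we have $K_{\tX}=\mu^*K_X+E$. By \Cref{thm:Kss-very-ample}, $-K_X$ is very ample, so $-K_{\tX}=\mu^*(-K_X)-E$ is base-point free (projection from $p$). It is also big, since its volume is $22-2=20>0$. Hence $\tX$ is a Gorenstein terminal weak Fano threefold. In the non-nodal case $E$ is not a smooth quadric, so $\Pic(\tX)=\bZ\mu^*(-K_X)\oplus\bZ E$ has rank two; the class group of $X$ at $p$ is trivial here, which is where the non-nodality enters. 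Choose a general $S\in|-K_X|$ through $p$; its strict transform $\tS\in|-K_{\tX}|$ is an ADE K3 surface. The lattice $\Lambda$ is spanned by $h=-K_X|_S$ and the exceptional curve class $e=E|_{\tS}$, where $e$ is a conic with $e^2=-2$ and $h\cdot e=0$ as a curve on $\tS$. Note that the polarization on $\tS$ is $h-e$, with $(h-e)^2=20$.

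\emph{Step 2: deform via \Cref{thm:beauville}.} The morphism $\Def_{(\tX,\tS)}\to\Def_{(\tS,\Lambda)}$ is smooth. So a general deformation $(\tX_t,\tS_t)$ has $\tS_t$ very general with $\Pic(\tS_t)=\Lambda$, by \Cref{cor:lattice-local-system}. In the contracted picture, this says that $S_t$ lies in the nodal NL divisor $\cD^{22}_{0,-2}$, i.e.\ $h$ together with a $(-2)$-class orthogonal to it. The anticanonical model $X_t$ of $\tX_t$ contracts the deformation $E_t$ of $E$ to a point $p_t$. Since $X_t$ is a deformation of $X$ (by taking anticanonical models, as in \Cref{cor:small deformation small anti-can morphism}), openness of K-semistability keeps it in $\cM^{\K}$ and Gorenstein terminal, so it lies in $\cP_0$. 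By \Cref{thm:open immersion forgetful map} and \Cref{lem:restriction on Gor term locus q-finite}, $\Phi$ is injective there. But $\Phi(\cP_0)$ meets $\cD^{22}_{0,-2}$ only at classes carrying an ADE singularity on the K3, and a general point of $\cD^{22}_{0,-2}$ corresponds to a K3 surface with a single $A_1$ point. This point must be $p_t$, giving $\tS_t\cap E_t$ a smooth conic for general $\tS_t$.

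\emph{Step 3: contradiction and conclusion.} Since $E_t$ is a flat deformation of $E$ with $E_t|_{E_t}=\mathcal O(-1)$, the point $p_t$ is a $cA$ point whose general hyperplane section is $A_1$. I then compare minimal log discrepancies, or equivalently the multiplicity/Milnor data, along the family. The aim is to show that the exceptional divisor type is locally constant, namely that $E_t$ degenerates to $E$ with the same normal bundle. This would force $p_t$ to have the same analytic type as $p$, i.e.\ to be non-nodal. Meanwhile the strict transform $\tX_t$ near a smooth quadric $E_t$ (required by $\tS_t$ having an $A_1$ with a smooth conic exceptional curve and $\rho=2$) is nodal. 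The main obstacle is making this rigorous. I would first try to show that smoothness of the quadric $E_t$ is forced by the lattice: a singular quadric cone has $\Cl$ of $X_t$ at $p_t$ equal to $\bZ$, which contradicts the uniqueness of the preimage point under the open immersion $\Phi$. Having shown that every singular point is nodal, the final statement follows from \cite{Nam97}: singularities can be smoothed independently, so $X$ deforms to a one-nodal $V_{22}$.
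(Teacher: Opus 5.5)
Your Steps 1 and 2 follow the paper's route: blow up $p$, apply \Cref{thm:beauville} to $(\tX,\tS)$, deform until the K3 surface is very general in $\cD^{22}_{0,-2}$, and invoke injectivity of $\Phi$ from \Cref{thm:open immersion forgetful map}. Step 3, however, contains no valid contradiction, and the mechanism you sketch cannot work.

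\emph{The type of $p_t$ is not controlled either way.} It is not locally constant in the family: a non-nodal $cA$ point deforms to a node. For example, $xy=z^2+w^3+tw^2$ is nodal for $t\neq 0$, and the quadric cone $E$ deforms to a smooth quadric. Conversely, nothing in your setup forces $p_t$ to be nodal. If $p_t$ is a non-nodal $cA_1$ point with $E_t$ a quadric cone, then a general anticanonical section through $p_t$ still has an $A_1$ point there, $\tS_t\cap E_t$ is still a smooth conic, and $\rho(\tX_t)=2$ still holds. So neither horn of your intended dichotomy is established. Injectivity of $\Phi$ gives nothing until you know \emph{which} pair lies over a very general point of $\cD^{22}_{0,-2}$, and your remark about the local class group at $p_t$ does not supply this.

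\emph{Two ingredients are missing.}

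First, the deformed threefold $\overline{X}_t$ is singular. Suppose it were smooth. Then it is a smooth $V_{22}$ with $\rho=1$, so $\tX_t\to\overline{X}_t$ contracts a prime divisor $E_t$ specializing into $E$. Its center is therefore a point and $A_{\overline{X}_t}(E_t)\le A_X(E)=2$. This contradicts the fact that the minimal log discrepancy of a smooth threefold point is $3$.

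Second, a very general point of $\cD^{22}_{0,-2}$ is $\Phi(X',S')$, where $X'$ is a \emph{smooth} K-semistable $V_{22}$ and $S'$ is the nodal member of a Lefschetz pencil in $|-K_{X'}|$. These pairs form an $18$-dimensional locus, which the open immersion maps into the nodal divisor. Injectivity of $\Phi$ then forces $\overline{X}_t\simeq X'$, contradicting the first point. Hence $\rho(\tX)\neq 2$, so $E$ is a smooth quadric and $p$ is a node.

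\emph{Smaller corrections.}

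The variety that contracts $E_t$ is not the anticanonical model of $\tX_t$. Since $\tX\to X$ is not crepant, $-K_{\tX}|_E=\cO_E(1)$ is ample on $E$. What you need is the ample model $\overline{X}_t$ of the deformation of $\mu^*(-K_X)$; this is the deformation of $X$.

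The equality $\rho(\tX)=2$ comes from the exact sequence $0\to\mu^*\Pic(X)\to\Pic(\tX)\to\Pic(E)$ together with $\rho(E)=1$ for a singular reduced quadric. It does not come from triviality of a local class group: $xy=z^2+w^4$, for instance, is not analytically factorial, and factoriality is not what separates nodes from other points.

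You should also check that the lattice $\diag(22,-2)$ has no proper even overlattice. This ensures the very general deformation lands in $\cD^{22}_{0,-2}$ and not in another Noether--Lefschetz divisor.

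The final appeal to \cite{Nam97} is fine.
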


\begin{prop}\label{thm:terminal-nodal}
Every $\bQ$-Gorenstein Fano degeneration of $V_{22}$ with only isolated $cA_{\leq 2}$-singularities has at worst $A_1$-singularities.
%\footnote{\YL{Perhaps this can be strengthened to Gorenstein terminal Fano threefolds of volume $22$ and Picard rank $1$, since by \cite{KS25} they admit smoothings to $V_{22}$.}}
\end{prop}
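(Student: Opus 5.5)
Our plan follows the strategy sketched in the introduction. Let $X$ be a $\bQ$-Gorenstein degeneration of $V_{22}$ with only isolated $cA_{\le 2}$-singularities. Then $X$ is Gorenstein terminal: these singularities are isolated compound Du Val hypersurface singularities, and $K_X$ is Cartier by \Cref{lem:Picard rank one}. Suppose for contradiction that some $p\in X$ is not an ordinary double point. Locally $p$ is $xy=f(z,w)$ with $\mathrm{mult}_0 f\ge 3$, or a $cA_2$ point.

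Let $\mu\colon\tX\to X$ be the blowup of $p$ with exceptional divisor $E$. For a $cA_{\le 2}$ double point $\mu$ is crepant, so $-K_{\tX}=\mu^*(-K_X)$. Since the singularities are isolated hypersurface singularities of multiplicity two, $\tX$ is Gorenstein terminal (after possibly iterating), and $\tX$ is a weak Fano threefold of volume $22$. Let $S\in|-K_X|$ be a general member through $p$, and let $\tS$ be its strict transform, which lies in $|-K_{\tX}|$ and is an ADE K3 surface. Since $p$ is not a node, $E$ is a non-normal or reducible quadric cone, namely a pair of planes or a double plane. In this case $E$ is not $\bQ$-Cartier-trivial, and the class of $E$ is independent of $\mu^*(-K_X)$ in $\Pic(\tX)$, which is therefore of rank two. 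For a node, by contrast, $E\simeq\bP^1\times\bP^1$ and the blowup is not the relevant small modification. So the saturated lattice $\Lambda\subseteq\Pic(\tS)$ has rank two. It is generated by $h=-K_X|_S$ and the class $e=E|_{\tS}$, which is a $(-2)$-curve configuration contracted to the $A_1$ point $p\in S$. Up to saturation, the Gram matrix is $\begin{pmatrix}22&0\\0&-2\end{pmatrix}$.

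Apply \Cref{thm:beauville} to $(\tX,\tS)$. The map $\Def_{(\tX,\tS)}\to\Def_{(\tS,\Lambda)}$ is smooth, so a general deformation $(\tX_t,\tS_t)$ has $\tS_t$ general in the Noether--Lefschetz locus $\cD^{22}_{0,-2}$, with $\Pic(\tS_t)=\Lambda$. Let $X_t$ be the anticanonical model of $\tX_t$. Since $e$ remains a $(-2)$-class orthogonal to $h$, the contraction $\tX_t\to X_t$ contracts the deformed divisor $E_t$, or at least the curve $e_t$, to a singular point of $X_t$. The threefold $X_t$ is again Gorenstein terminal and a degeneration of $V_{22}$, since it deforms to $X$ and $\cP_0$ is disjoint from $\cP^{\mathrm{int}}$. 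Its anticanonical K3 surface $S_t$ is a general member of $\cD^{22}_{0,-2}$, which is exactly the image of the one-nodal locus obtained by degenerating a smooth $V_{22}$ to acquire a node. The injectivity of $\Phi$ on $\cP_0$ (\Cref{lem:restriction on Gor term locus q-finite} together with birationality, or \cite[Theorem 1.3]{BKM25} applied in the form of the open-immersion argument) then identifies $X_t$ with the threefold attached to $S_t$. That threefold has a single ordinary node, or is even smooth, with $S_t$ passing through a non-Cartier point in the appropriate sense.

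The contradiction comes from comparing singularity types. The exceptional divisor $E_t$ of $\tX_t\to X_t$ is a flat deformation of $E$, a reducible or non-reduced quadric. Its anticanonical image therefore has a point over which the crepant blowup has a non-normal exceptional divisor, or equivalently minimal log discrepancy data of a non-node: the mld of the point blowup stays $1$ while the exceptional divisor keeps its degeneration type. On the other hand, over $\cD^{22}_{0,-2}$ the threefold is a nodal $V_{22}$, whose blowup has exceptional divisor a smooth quadric. Upper semicontinuity of the singularity type along the flat family $E_t$ then contradicts the genericity. Once this is established, all singularities are nodes, and \cite{Nam97} produces a deformation smoothing all but one node, which gives the final assertion of \Cref{cor:deforms to a one-nodal}.

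The main obstacle is this last step: making precise why $X_t$ cannot keep a worse $cA$ point while $\tS_t$ is general in $\cD^{22}_{0,-2}$. I would first try to prove that $\rk\Pic(\tX_t)=2$ forces $E_t$ to be irreducible and non-normal in a way incompatible with the node picture, using \Cref{lem:line bundle} to transport the classes, and to compare $h^0$ of $\cO_{E_t}(1)$ with the quadric-cone case.
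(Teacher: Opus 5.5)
Your skeleton (blow up $p$, apply \Cref{thm:beauville} to move $\tS$ to a very general point of $\cD^{22}_{0,-2}$, then use injectivity of $\Phi$) is the right one. However, the setup contains an error, and the closing step does not produce a contradiction.

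\textbf{The setup.} Blowing up a $cA_{\le 2}$ point is not crepant. One has $K_{\tX}=\mu^*K_X+E$; a crepant exceptional divisor would make $X$ non-terminal. So $-K_{\tX}=\mu^*(-K_X)-E$ has volume $22-2=20$, and this is exactly why $\tS=\mu^*S-E$ lies in $|-K_{\tX}|$. Consequently the anticanonical model of $\tX_t$ is a Fano threefold of volume $20$, not a deformation of $X$. Instead, extend $\mu^*(-K_X)$ to a line bundle $\widetilde{L}_t$ on $\tX_t$; it stays globally generated and big. Its ample model $\overline{X}_t$ is a small deformation of $X$, and it contains the image $\overline{S}_t$ of $\tS_t$, a very general nodal K3 surface of degree $22$.

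Two smaller points also need to be supplied:
\begin{itemize}
\item Rank two of $\Pic(\tX)$ needs an upper bound. For instance, use the exact sequence $0\to\mu^*\Pic(X)\to\Pic(\tX)\to\Pic(E)$ together with $\rho(E)=1$ for a singular reduced quadric. Note that for $cA_{\le 2}$ the divisor $E$ is a smooth quadric, an irreducible quadric cone, or two planes, never a double plane.
\item The lattice with Gram matrix $\mathrm{diag}(22,-2)$ must have no even overlattice. Otherwise $\Lambda$ need not equal $\langle e_1,e_2\rangle$, and $\tS_t$ need not be very general in $\cD^{22}_{0,-2}$ itself.
\end{itemize}

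\textbf{The closing step.} This is the real gap, and you flag it yourself. First, $\cD^{22}_{0,-2}$ is not the image of the one-nodal $V_{22}$ locus. Its very general point is a nodal hyperplane section of a \emph{smooth} K-semistable $V_{22}$: a Lefschetz pencil on a smooth $V_{22}$ has singular members, and these pairs form an $18$-dimensional family on which $\Phi$ is an open immersion. By contrast, anticanonical surfaces of one-nodal $V_{22}$ lie in $\cD^{22}_{11,4}$, $\cD^{22}_{9,2}$, $\cD^{22}_{6,0}$, $\cD^{22}_{5,0}$.

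Second, your semicontinuity argument runs the wrong way. Singularities can only improve under generization, and a flat family of quadrics with a singular special member may have smooth general members. So nothing forces $\overline{X}_t$ to keep a non-nodal point.

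What is actually needed is only that $\overline{X}_t$ is \emph{singular}:
\begin{itemize}
\item By local constancy of Picard groups of Gorenstein terminal weak Fanos, $\rho(\tX_t)=\rho(\tX)=2$.
\item If $\overline{X}_t$ were smooth, it would be a smooth $V_{22}$ with $\rho=1$. Then $\tX_t\to\overline{X}_t$ would contract a prime divisor $E_t$ to a point.
\item The specialization of $E_t$ is supported on $E$, whose components have log discrepancy $2$ over $X$. This forces $A_{\overline{X}_t}(E_t)\le 2<3$, which is impossible over a smooth point.
\end{itemize}
Injectivity of $\Phi$ at the very general point of $\cD^{22}_{0,-2}$ then identifies $\overline{X}_t$ with a smooth $V_{22}$, a contradiction. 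Hence $\rho(\tX)=3$, so $E\simeq\bP^1\times\bP^1$ and $p$ is a node.
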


\begin{proof}
Let $X$ be such a degeneration of $V_{22}$, and let $p\in X$ be a singular point. By \Cref{lem:Picard rank one} and \cite[Theorem~2.2]{Pro15}, the anticanonical divisor $-K_X$ is very ample. Let $\phi:\wt{X}\to X$ be the blow-up of $X$ at $p$, with exceptional divisor $E$. As $X$ has $cA_{\leq 2}$-singularity at $p$, then $\wt{X}$ is Gorenstein terminal, and $-K_{\wt{X}} = \phi^*(-K_X) - E$ is base-point free with positive top self-intersection, so $\wt{X}$ is weak Fano.

We claim there is a natural exact sequence
$$0 \ \longrightarrow\  \phi^*\Pic(X) \ \longrightarrow\   {\Pic(\wt{X})} \ \longrightarrow\   {\Pic(E)}.%\footnote{\YL{One may wish to replace $E$ by $E_{\red}$ to include the $cD$ and $cE$ cases.}}
$$ It suffices to show that if $L\in \Pic(\wt{X})$ satisfies $L|_E \simeq \mathcal{O}_E$, then $L \simeq \phi^*M$ for some $M\in \Pic(X)$. Since the Mori cone of $\wt{X}$ is finitely generated, we may choose $m\gg 0$ such that $L - m\phi^*K_X$ is big and nef. Moreover, since $L-E$ is $\phi$-relatively ample, we may assume $L - m\phi^*K_X - E$ is ample. Consider the short exact sequence
\[
0 \longrightarrow \mathcal{O}_{\wt{X}}(L - m\phi^*K_X - E)
\longrightarrow \mathcal{O}_{\wt{X}}(L - m\phi^*K_X)
\longrightarrow \mathcal{O}_E(L - m\phi^*K_X)\simeq \mathcal{O}_E
\longrightarrow 0.
\]
By Kawamata--Viehweg vanishing, this yields a surjection
\[
H^0(\wt{X}, L - m\phi^*K_X) \twoheadrightarrow H^0(E,\mathcal{O}_E).
\]
Thus we may choose a section of $\mathcal{O}_{\wt{X}}(L - m\phi^*K_X)$ not vanishing along $E$, and it follows that $L - m\phi^*K_X \in \phi^*\Pic(X)$, proving the claim.

Since $p\in X$ is a $cA_{\leq 2}$-singularity, the exceptional divisor $E$ is a quadric surface in $\mathbb{P}^3$, and $p$ is an $A_1$-singularity if and only if $E$ is smooth. Note that $\rho(\mathbb{P}^1\times \mathbb{P}^1)=2$, while the Picard ranks of the singular reduced quadric surfaces are equal to $1$. Therefore, by \Cref{lem:Picard rank one}, it suffices to show that $\rho(\wt{X})\neq 2$. Suppose instead that $\rho(\wt{X})=2$. In the rest of the proof, we will derive a contradiction. 

Let $\wt{S}\in |-K_{\wt{X}}|$ be a general elephant, which is smooth. Let $\Lambda$ be the saturation of $\im(\Pic(\wt{X})\to \Pic(\wt{S}))$ in $H^2(\wt{S},\mathbb{Z})$. Then $\rk \Lambda = 2$, and the classes
\[
e_1 \coloneqq \phi^*(-K_X)|_{\wt{S}}, 
\qquad 
e_2 \coloneqq E|_{\wt{S}}
\]
satisfy
\[
(e_1^2)=22, \qquad (e_1\cdot e_2)=0, \qquad (e_2^2)=-2.
\]

\begin{lemma}
Let $\Lambda=\langle e_1,e_2\rangle$ be the rank-$2$ lattice with Gram matrix
\[
\begin{pmatrix}
22 & 0 \\
0 & -2
\end{pmatrix}.
\]
Then any embedding $\Lambda \hookrightarrow \Lambda_{K3}$ is primitive.
\end{lemma}

\begin{proof}
Let $\Lambda^\vee$ be the dual lattice. Then
\[
\Lambda^\vee\ =\  \Big\langle \frac{e_1}{22},\ \frac{e_2}{2}\Big\rangle,
\qquad 
A_\Lambda\ \coloneqq\ \Lambda^\vee/\Lambda\ \simeq \ \mathbb{Z}/22 \oplus \mathbb{Z}/2.
\]
The discriminant quadratic form is
\[
q_\Lambda(a,b)\ =\ \frac{a^2}{22}-\frac{b^2}{2}
\in \mathbb{Q}/2\mathbb{Z},
\qquad a\in \mathbb{Z}/22,\ b\in \mathbb{Z}/2.
\]
By Nikulin's correspondence \cite[Proposition~1.4.1]{Nik79}, even overlattices of $\Lambda$ are in bijection with isotropic subgroups of $(A_\Lambda,q_\Lambda)$. It therefore suffices to show that $A_\Lambda$ has no nonzero isotropic element.
\begin{enumerate}
    \item If $b=0$, isotropy implies $a^2/22 \equiv 0 \pmod{2}$, hence $a^2 \equiv 0 \pmod{44}$, so $a\equiv 0$ in $\mathbb{Z}/22$.
    \item If $b=1$, isotropy would require $\frac{a^2}{22} \equiv \frac{1}{2} \pmod{2}$, i.e. $a^2 \equiv 11 \pmod{44}$, which is impossible since $11\equiv 3\pmod{4}$, whereas a square modulo $4$ is $0$ or $1$.
\end{enumerate} Thus $(A_\Lambda,q_\Lambda)$ has no nonzero isotropic element, so $\Lambda$ admits no proper even overlattice. Hence any embedding $\Lambda\hookrightarrow \Lambda_{K3}$ is primitive.
\end{proof}

By \Cref{thm:beauville}, the forgetful morphism
\[\Def_{(\wt{X},\wt{S})} \longrightarrow \Def_{(\wt{S},\Lambda)}\]
is smooth and surjective. Therefore we may choose a small deformation $(\wt{X}_t,\wt{S}_t)$ such that $\wt{S}_t$ is a very general element of $\Def_{(S,\Lambda)}$. Let $\wt{L}_t\in \Pic(\wt{X}_t)$ be the deformation of $\phi^*(-K_X)$; it remains globally generated and big. The $\wt{L}_t$-ample model $(\overline{X}_t,\overline{S}_t)$ is then a deformation of $(X,S)$, where $S$ is the image of $\wt{S}$ in $X$. In particular, $\overline{X}_t$ is a K-semistable degeneration of $V_{22}$ and $\overline{S}_t$ is a very general point in the nodal divisor $\cD^{22}_{0,-2}$ of $\cF_{22}$. 

We claim that $\overline{X}_t$ must be singular. Suppose otherwise that $\overline{X}_t$ is smooth. Then $\overline{X}_t$ is a smooth $V_{22}$ and $\rho(\overline{X}_t)=1$, since $\Def(X)$ is smooth by \cite[Main Theorem]{Min01}. In particular, there exists an exceptional prime divisor $E_t$ of $\wt{X}_t \to \overline{X}_t$. Let $E_0$ be the degeneration of $E_t$ on $\wt{X}$; its support is contained in $E$. Since the center satisfies 
$c_X(E_0) \subseteq c_X(E) = p$, it follows that the center $c_{\overline{X}_t}(E_t)$ is also a point on $\overline{X}_t$. As $E$ is reduced, we have $A_X(E_{0,\mathrm{red}}) = 2$,
and hence $
A_{\overline{X}_t}(E_t) \leq 2$.
However, the minimal log discrepancy of a smooth closed point on a threefold is $3$, yielding a contradiction.

\begin{lemma}\label{lem:nodal NL divisor}
A K3 surface parametrized by a very general point on the nodal Noether--Lefschetz divisor of $\cF_{22}$ is an anticanonical divisor of a smooth K-semistable $V_{22}$.
\end{lemma}

\begin{proof}
Let $X$ be a smooth K-semistable $V_{22}$. Since $X \subset \bP^{13}$ is not a scroll, a very general pencil of hypersurfaces in $|\cO_X(1)|$ is a Lefschetz pencil and therefore contains at least one singular member, denoted by $S$. The claim then follows immediately from \Cref{thm:open immersion forgetful map}.
\end{proof}
Finally, combining \Cref{lem:nodal NL divisor} with \Cref{thm:open immersion forgetful map}, we obtain a contradiction. Therefore $\rho(\wt{X}) = 3$, and $X$ has only $A_1$-singularities.
\end{proof}

\smallskip

\begin{proof}[Proof of \Cref{cor:deforms to a one-nodal}]
    By \Cref{thm:volume bound}(2), any K-semistable terminal degeneration $X$ of $V_{22}$ has only isolated $cA_{\leq 2}$-singularities. Hence, by \Cref{thm:terminal-nodal}, $X$ has only $A_1$-singularities. Then \cite[Proposition 4]{Nam97} implies that $H^2(X, T_X) = 0$, so there are no local-to-global obstructions to deforming $X$. Consequently, $X$ deforms to a one-nodal Fano $V_{22}$.
\end{proof}

\smallskip

\subsection{Proofs of main theorems}

We conclude this section by proving Theorems \ref{thm:main theorem 1}, \ref{thm:Fano-K3}, and \ref{thm:rationality of F22}.

\smallskip

\begin{proof}[Proof of \Cref{thm:main theorem 1}]

By \Cref{thm:smoothable}, \Cref{cor:deforms to a one-nodal}, and the proof of \Cref{lem:exclude the other families}, it suffices to show the following.

\begin{lem}
Let $X_0$ be a common K-semistable $\bQ$-Gorenstein degeneration of the Fano threefold families $V_{22}$ and {\rm\textnumero3.6}, assuming such a degeneration exists. Then $X_0$ deforms to a Type~\rm{I} one-nodal $V_{22}$.
\end{lem}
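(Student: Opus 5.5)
The plan is to identify the Noether--Lefschetz divisor containing a general anticanonical K3 surface of $X_0$, and then use the open immersion of \Cref{thm:open immersion forgetful map} to move $(X_0,S_0)$ into the Type~I locus. Let $Y$ be a smooth member of family \textnumero3.6, i.e.\ the blowup of $\bP^3$ along the disjoint union of a line $\ell$ and an elliptic quartic curve $C$. The first step is to compute the lattice $\Lambda_Y$ cut out on a very general anticanonical K3 surface $T\subset Y$. By \Cref{lem:primitivity} this is a primitive rank-$3$ sublattice of $\Lambda_{\K3}$, spanned by $H=\cO_{\bP^3}(1)|_T$, $\ell$ and $C$. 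The intersection numbers are
\[
H^2=4,\quad H\cdot\ell=1,\quad H\cdot C=4,\quad \ell^2=-2,\quad C^2=0,\quad \ell\cdot C=0,
\]
and the polarization is $h=4H-\ell-C$, which has $h^2=22$.

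Inside $\Lambda_Y$ I will locate the class $H$, which satisfies $h\cdot H=16-1-4=11$ and $H^2=4$. Thus $\langle h,H\rangle$ has exactly the Gram matrix $\Lambda_{\mathrm I}$ of \Cref{lem:K3 lattice of Type1-4}. I will check that this rank-two sublattice is primitive in $\Lambda_Y$, and hence in $\Lambda_{\K3}$. Consequently the image of $\cP^{\K,\ADE}_{\textup{\textnumero3.6}}$ in $\cF_{22}$ lies in the Type~I Noether--Lefschetz divisor $\cD^{22}_{11,4}$.

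Next, let $X_0$ be the given common degeneration and let $S_0\in|-K_{X_0}|$ be a general ADE elephant. Then $(X_0,S_0)\in\cP^{\mathrm{int}}$ lies in the closure of pairs of type \textnumero3.6. Since the Noether--Lefschetz locus is closed, the Picard lattice of $S_0$ (after minimal resolution) contains $\Lambda_{\mathrm I}$ primitively. Primitivity is preserved under specialization by \Cref{lem:line bundle}, since a divisibility relation would produce a Cartier root on nearby fibers. Therefore $\Phi(X_0,S_0)\in\cD^{22}_{11,4}$.

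Because $\Phi$ is an open immersion, a neighbourhood of $(X_0,S_0)$ in $\cP^{\K,\ADE}$ is identified with a neighbourhood of $\Phi(X_0,S_0)$ in $\cF_{22}$. The divisor $\cD^{22}_{11,4}$ is irreducible and $18$-dimensional, and its general point is $(S,-K_X|_S)$ for a Type~I one-nodal $V_{22}$ $X$, by \cite{DK25} and \Cref{lem:K3 lattice of Type1-4}. I will choose a curve in $\cD^{22}_{11,4}$ through $\Phi(X_0,S_0)$ whose general point is such a K3 surface, and lift it via $\Phi^{-1}$. This gives a family of pairs $(X_t,S_t)$ degenerating to $(X_0,S_0)$. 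By the uniqueness part of the open immersion, $X_t$ is a Type~I one-nodal $V_{22}$ for general $t$. Hence $X_0$ is a degeneration of Type~I, and dually $X_0$ deforms to Type~I.

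The main obstacle is the third step, namely making sure that $\Phi(X_0,S_0)$ lies in the closure of the Type~I component of $\cD^{22}_{11,4}$ and not merely in some other branch of the normalization. I would try to settle this with a local irreducibility argument. Near $\Phi(X_0,S_0)$ the divisor $\cD^{22}_{11,4}$ is a union of branches indexed by the primitive embeddings of $\Lambda_{\mathrm I}$. The branch containing the image of $\cP^{\mathrm{int}}$ is the one coming from the class $H$, and along that branch the general member is Type~I. If it proves necessary, I would use the dimension bound $\dim\cP^{\mathrm{int}}\le 17<18$ to guarantee that the Type~I locus genuinely passes through $\Phi(X_0,S_0)$.
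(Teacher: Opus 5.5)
Your proposal is correct and follows essentially the same route as the paper. You compute the same rank-three lattice: your basis $\{H,\ell,C\}$ and the paper's $\{h,H,\ell\}$ span the same lattice, since $C=4H-\ell-h$. You then extract the primitive copy of $\Lambda_{\mathrm I}$, place $\Phi(X_0,S_0)$ in the closed divisor $\cD^{22}_{11,4}$, and use the open immersion of \Cref{thm:open immersion forgetful map} to put $(X_0,S_0)$ in the closure of the Type~I locus. Like the paper, you take for granted that a general point of $\cD^{22}_{11,4}$ is the anticanonical K3 of a K-semistable Type~I $V_{22}$, and your branch discussion and curve-lifting are a more explicit version of the paper's closure argument.
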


\begin{proof}
A smooth Fano threefold of the family \textnumero3.6 is the blowup of $\bP^3$ along the disjoint union of a line and an elliptic normal curve. Consider the forgetful map
\[
\Phi_{\textup{\textnumero3.6}}:\cP^{\K,\ADE}_{\textup{\textnumero3.6}}\longrightarrow \cF_{22}.
\]
Its image is contained in the Noether--Lefschetz locus associated to the rank-three lattice
\[
\begin{pmatrix}
22 & 11 & 6 \\
11 & 4 & 1 \\
6  &  1 & -2
\end{pmatrix}.
\] 
Since this lattice contains
\[
\begin{pmatrix}
22 & 11 \\
11 & 4
\end{pmatrix}
\]
as a primitive sublattice, any ADE K3 surface $S_0\in |-K_{X_0}|$ is a degeneration of a family of anticanonical K3 surfaces of Type~I $V_{22}$; see \Cref{lem:K3 lattice of Type1-4}. In particular, the image of the restricted forgetful map
\[
\Phi|_{\cP^{\mathrm{int}}}:\cP^{\mathrm{int}}\longrightarrow \cF_{22},
\]
where $\cP^{\mathrm{int}}=\cP^{\K,\ADE}\cap \cP^{\K,\ADE}_{\textup{\textnumero3.6}}$, is contained in the closure of the image of $\Phi|_{\cP^{\K,\ADE}_{\mathrm{I}}}$, where $\cP^{\K,\ADE}_{\mathrm{I}}$ denotes the closed substack of $\cP^{\K,\ADE}$ parametrizing pairs $(X,S)$ such that $X$ is a degeneration of a family of Type~I $V_{22}$. Therefore, by \Cref{thm:open immersion forgetful map}, we conclude that $(X_0,S_0)\in \cP^{\K,\ADE}_{\mathrm{I}}$, and hence $X_0$ is a degeneration of Type~I $V_{22}$.
\end{proof}
\end{proof}

\begin{proof}[Proof of \Cref{thm:Fano-K3}]

We first record the following.

\begin{lem}\label{lem:BN general of anticanonical K3 of smooth V22}
Let $X$ be a smooth $V_{22}$. Then any ADE K3 surface $S\in |-K_X|$ is Brill--Noether general.
\end{lem}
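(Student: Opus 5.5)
By \Cref{lem:equivalent condition of BN general}, it suffices to show that for a smooth $V_{22}$ $X$ and an ADE K3 surface $S\in|-K_X|$, the polarized surface $(S,L)$, with $L:=-K_X|_S$, lies in none of the eleven Noether--Lefschetz divisors listed there. Each such divisor $\cD^{22}_{h,m}$ is witnessed by a class $D\in\Pic(\tS)$ on the minimal resolution $\tS$ with $(D\cdot L)=h$ and $(D^2)=m$, where $m\in\{0,2,4\}$ and $1\le h\le 11$. The classes are effective: Riemann--Roch gives $h^0(D)+h^0(L-D)\ge 2+m/2\ge 2$. Replacing $D$ by $L-D$ if needed (the pair $(h,m)$ then changes to $(22-h,\,22-2h+m)$, which by the list's symmetry still lands in the same family of obstructions), we may assume $D$ is an effective curve class of low $L$-degree $h\le 11$ and nonnegative self-intersection. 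So the plan reduces to the following: a smooth $V_{22}$ contains no surface whose anticanonical hyperplane section carries such a low-degree curve class moving in a pencil.

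The key step is to transport $D$ from $S$ to $X$. I would use the Lefschetz-type input already available. \Cref{thm:NL-Cl} applies since $-K_X$ is very ample, but it only treats very general $S$, whereas here $S$ is arbitrary ADE. So I would instead argue via Mukai's vector-bundle method. The pencil $|D|$ (with $m=0$, or a net/web for $m=2,4$) produces, by the Lazarsfeld--Mukai construction, a rank-two bundle $E$ on $S$ with $c_1(E)=L$ and few sections. On the other hand, the restriction to $S$ of the Mukai bundle $\mathcal U^\vee$ of rank three on $X$ (pulled back from $\Gr(3,7)$) is stable and rigid on $S$. The BN-special class $D$ would force a destabilizing subsheaf of $\mathcal U^\vee|_S$, or a second Mukai-type bundle, contradicting uniqueness and rigidity of the Mukai bundle on $S$.

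A more elementary alternative, which I expect to be cleaner to execute, runs through the embedding $S\subset X\subset\bP^{13}$. A class $D$ with $m=0$ and $h\le 6$ gives an elliptic pencil of degree $h$ on $S$. Its curves span linear spaces $\bP^{h-1}$, so $X$ contains a one-parameter family of degree-$h$ elliptic (or rational, for $m=2,4$ after adjunction) curves whose spans meet $X$ excessively. Because $X$ is cut out by quadrics and contains no planes or quadric surfaces (since $\Pic X=\bZ$ with $-K_X$ of degree $22$), one can then produce a divisor on $X$ of degree less than $22$, contradicting $\Pic(X)=\bZ\cdot[-K_X]$.

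The main obstacle is handling arbitrary ADE (non-general) $S$ and all eleven pairs $(h,m)$ uniformly, in particular the cases $m=2,4$ with $h$ near $11$. There I would first try the vector-bundle route: restricting the stable Mukai bundle to $S$ and invoking rigidity and simplicity of $\mathcal U^\vee|_S$ from \cite{BKM25}, whose genus-$12$ result already gives that anticanonical K3 surfaces of smooth $V_{22}$ carry a unique stable Mukai bundle. That property characterizes BN-generality, which would close the argument directly.
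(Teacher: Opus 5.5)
Your reduction to the eleven Noether--Lefschetz divisors of \Cref{lem:equivalent condition of BN general} is legitimate. Two small slips along the way: the swap $D\leftrightarrow L-D$ is unnecessary, since $\cD^{22}_{h,m}$ depends only on the lattice; and Riemann--Roch gives $h^0(D)+h^0(-D)\ge 2+m/2$, not $h^0(D)+h^0(L-D)$.

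The real gap is that the proof stops where the work begins. Nothing in the proposal actually excludes these divisors for an \emph{arbitrary} ADE section $S$, and neither of your two routes does so as stated.

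\emph{The elementary route.} It yields no contradiction. Every span $\langle E_t\rangle$ of a curve on $S$ lies in the hyperplane $\langle S\rangle$, and $\langle S\rangle\cap X=S$. So all the ``excess'' intersection stays inside $S$, and no surface of degree $<22$ appears in $X$. Moreover, $\Pic(X)=\bZ\cdot[-K_X]$ says nothing against $X$ containing low-degree curves.

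\emph{The Mukai-bundle route.} It needs two things you do not supply.
(i) Stability of $\mathcal U^\vee|_S$ on a special, possibly singular, hyperplane section. Restriction theorems only give this for general sections.
(ii) The implication ``$S$ carries a stable rigid bundle with Mukai vector $(3,L,4)$ $\Rightarrow$ $(S,L)$ is BN general.'' The known statements go the other way: BN general $\Rightarrow$ existence and uniqueness of the Mukai bundle. Bare existence cannot be the criterion, since $v=(3,L,4)$ has $v^2=-2$, and stable spherical bundles with such $v$ exist on any K3 surface for a $v$-generic polarization.

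So the decisive step is assumed rather than proved.

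The missing idea is to bridge the very general section and the arbitrary one through a common curve; this is what the paper does.

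Since $|-K_X|$ is very ample, choose a very general $S_g\in|-K_X|$ (so $\Pic(S_g)=\bZ L$) such that $C:=S\cap S_g$ is smooth and lies in the smooth locus of $S$. By \cite{Laz86}, $(S_g,L)$ is BN general, and then $C$ is a BN general curve by \cite[Theorem~1]{Hab24}.

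Now the easy direction on $S$ finishes the argument. Suppose $L\sim D+(L-D)$ on $\tS$ with $h^0(D),h^0(L-D)\ge 2$ and $h^0(D)\,h^0(L-D)\ge h^0(L)=13$. Set $A:=D|_C$. Then $h^0(C,A)\ge h^0(D)$ and $h^1(C,A)=h^0(C,(L-D)|_C)\ge h^0(L-D)$. Hence $\rho=12-h^0(A)\,h^1(A)<0$, contradicting BN generality of $C$; this is \cite[Theorem~2.10]{BKM25}.

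This treats all eleven divisors and all ADE singularities uniformly, because $C$ avoids the exceptional curves of $\tS\to S$. It also needs no statement about the Mukai bundle at all.
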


\begin{proof}
Choose a very general $S_g\in |-K_X|$ such that the curve $C\coloneqq S_g\cap S$ is smooth and contained in the smooth locus of $S$. By \cite{Laz86}, the polarized K3 surface $(S_g,\cO_{S_g}(C))$ is Brill--Noether general, hence $C$ is a Brill--Noether general curve by \cite[Theorem~1]{Hab24}. It then follows from \cite[Theorem~2.10]{BKM25} that $(S,\cO_S(C))=(S,-K_X|_S)$ is Brill--Noether general.
\end{proof}

By \Cref{thm:main theorem 1}, the open substack $\cP^{\K,\ADE,\circ}$ of $\cP^{\K,\ADE}$ parametrizing pairs $(X,S)$ with $X$ a smooth K-semistable $V_{22}$ is 
\[
\cP^{\K,\ADE}
\ \setminus \
\big(\cP^{\K,\ADE}_{\mathrm{I}}\cup \cP^{\K,\ADE}_{\mathrm{II}}
\cup \cP^{\K,\ADE}_{\mathrm{III}}\cup \cP^{\K,\ADE}_{\mathrm{IV}}\big),
\] where $\cP^{\K,\ADE}_{\mathrm{I}}, \ldots, \cP^{\K,\ADE}_{\mathrm{IV}}$
denote the closures of the loci parametrizing pairs $(X,S)$ such that 
$X$ is a one-nodal $V_{22}$ of Types I–IV, respectively. The images of these four divisors are contained respectively in the Noether--Lefschetz divisors
\[
\cF_{22,\Lambda_{\mathrm{I}}}=\cD^{22}_{11,4},\quad
\cF_{22,\Lambda_{\mathrm{II}}}=\cD^{22}_{9,2},\quad
\cF_{22,\Lambda_{\mathrm{III}}}=\cD^{22}_{6,0},\quad
\cF_{22,\Lambda_{\mathrm{IV}}}=\cD^{22}_{5,0}.
\] By \Cref{lem:BN general of anticanonical K3 of smooth V22}, the image of
$\cP^{\K,\ADE,\circ}$ consists of Brill--Noether general K3 surfaces; hence by
\Cref{lem:equivalent condition of BN general} it is disjoint from the seven Noether--Lefschetz divisors. Combining this with \Cref{thm:open immersion forgetful map}, we obtain the desired result.
\end{proof}

\smallskip

\begin{proof}[Proof of \Cref{thm:rationality of F22}]
For a prime K3 surface $(S,L)\in \cF_{22}$, the automorphism group is trivial (cf.~\cite[Corollary~15.2.12]{Huy16}). It follows that the stack $\cF_{22}$ is birational to its coarse moduli space $\fF_{22}$. Thus, it suffices to prove that $\cF_{22}$ is rational.

By \Cref{thm:open immersion forgetful map}, it further suffices to show that $\cP^{\K,\ADE}$ is rational. Since $\Aut(X)=0$ for a very general $X\in \cM^\K$ by \cite[Theorem~A.1]{DM22}, the universal family $\pi:\sX\to \cM^\K$ induces a birational morphism
\[
\cP^{\K,\ADE} \dashrightarrow \Proj_{\cM^\K}\!\big(\Sym^{\bullet}\pi_*\omega^{\vee}_{\sX/\cM^\K}\big),
\]
to a projective bundle over $\cM^\K$. Therefore, $\cP^{\K,\ADE}$ is rational provided that $\cM^\K$ is rational. Finally, by \cite{Muk92}, the stack $\cM^\K$ is birational to the moduli stack $\cM_3$ of smooth curves of genus three, which is known to be rational. This completes the proof.
\end{proof}

%\footnote{\YL{As another approach, somehow we should try to bound the genus of $C_i$. This controls the dimension of the fiber of $\Def_{(\tX,\tS)}\to \Def_{\tS}$. Phil said we should use the vanishing cycle sequence. Need to think about it more.}}

\medskip
\section{Reconstruction of Fano threefolds from K3 surfaces}\label{sec:Reconstruction}

By Theorem \ref{thm:Fano-K3}, the forgetful morphism
\[
\Phi:\cP^{\K,\ADE}\hookrightarrow \cF_{22}
\]
is an open immersion. In particular, by Lemma \ref{lem:equivalent condition of BN general} the image of $\Phi$ is contained in the complement of the following seven Noether--Lefschetz divisors:
\[
\cD_{1,0}^{22},\ \ 
\cD_{2,0}^{22},\ \ 
\cD_{3,0}^{22},\ \ 
\cD_{4,0}^{22},\ \ 
\cD_{7,2}^{22},\ \ 
\cD_{8,2}^{22},\ \ 
\cD_{10,4}^{22}.
\] 

This suggests a natural reconstruction problem: given a polarized K3 surface $(S,L)$ of degree $22$, can one recover a Fano threefold $X$ admitting $S$ as an anticanonical divisor? In this section, we investigate this question for K3 surfaces lying in the seven Noether--Lefschetz divisors above.

For a general K3 surface $S$ parametrized by each of these divisors, we construct a Gorenstein canonical Fano threefold $X$ of degree $22$ containing $S$ as an anticanonical divisor. Somewhat surprisingly, the resulting Fano threefold is essentially rigid: for a general $S$ in each divisor, the construction produces a unique such $X$, although the resulting threefold is K-unstable.

%In this section, we show that for a general K3 surface parametrized by each of these seven Noether--Lefschetz divisors, one can construct a K-unstable Gorenstein canonical Fano threefold $X$ of degree $22$ containing $S$ as an anticanonical divisor. 

On the other hand, there is also a global moduli-theoretic input for plt pairs.
Denote by $\cP^{\plt}$ the moduli stack of plt pairs $(X,S)$ such that $X$ is a Gorenstein canonical Fano degeneration of $V_{22}$ and $S\in |-K_X|$ is an ADE K3 surface. The following corollary follows immediately from \Cref{cor:surjective forgetful maps} and Zariski's Main Theorem.

\begin{cor}
The forgetful map
\[
\cP^{\plt} \longrightarrow \cF_{22}, \qquad (X,S) \longmapsto (S, -K_X|_S),
\]
is proper, surjective, birational, and has connected fibers.
\end{cor}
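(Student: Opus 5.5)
The plan is to deduce each property from \Cref{cor:surjective forgetful maps} applied to the family \textnumero1.10 ($V_{22}$), together with the birationality statement of \Cref{lem:birationality of forgetful}. In that setting $\Lambda=\langle \ell_{22}\rangle$ has rank one, so $\cF_{d,\Lambda}=\cF_{22}$. The stack $\cP^{\plt}$ is then exactly $\cP^{\plt}_{\textup{\textnumero1.10}}$, and \Cref{prop:plt=Gor can} identifies it with $\cP^{\can}_{\textup{\textnumero1.10}}$. Consequently \Cref{cor:surjective forgetful maps} immediately gives that the forgetful map $\cP^{\plt}\to\cF_{22}$ is proper and surjective.

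Birationality comes from the fact that $\cP^{\K,\ADE}$ is an open substack of $\cP^{\plt}$. Indeed, every pair in $\cP^{\K,\ADE}$ is plt by inversion of adjunction. Moreover, $\cP^{\K,\ADE}$ contains the dense open locus of pairs with $X$ a smooth K-stable $V_{22}$, which is dense in $\cP^{\plt}$ provided $\cP^{\plt}$ is irreducible. Irreducibility holds because its general point is a smooth $V_{22}$ with a smooth K3 surface, and its points are degenerations of such pairs by definition. On this open set the map coincides with $\Phi$, which is birational by \Cref{lem:birationality of forgetful}, and even an open immersion by \Cref{thm:open immersion forgetful map}. Hence $\cP^{\plt}\to\cF_{22}$ is birational.

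For connected fibers I will use Zariski's Main Theorem. Pass to coarse (or good) moduli spaces, or work étale-locally on $\cF_{22}$: there the map is proper and birational, and the target is normal since $\cF_{22}$ is smooth by \Cref{isommoduli}. Taking the Stein factorization, the finite part is a finite birational morphism onto a normal target and is therefore an isomorphism, so the fibers are connected. The step I expect to need the most care is the stack-theoretic bookkeeping. The map may fail to be representable on the strictly plt-but-not-K-semistable locus, so I would apply Stein factorization to the induced morphism of good moduli spaces $\fP^{\plt}\to\fF_{22}$, which exists by the K-moduli theory for $(1-\epsilon)$-pairs together with properness. I would then lift connectedness of fibers from the coarse spaces back to the stacks, using that the fibers of $\cP^{\plt}\to\fP^{\plt}$ are connected.
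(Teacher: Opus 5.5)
Your proposal is correct and follows the paper's route. The paper derives the statement in one line from \Cref{cor:surjective forgetful maps} (with $\Lambda$ of rank one, so $\cF_{22,\Lambda}=\cF_{22}$) and Zariski's Main Theorem, with birationality coming from $\Phi$ on the dense open substack $\cP^{\K,\ADE}$. Your passage to the coarse space $\fP^{\plt}$, which exists because $\cP^{\plt}$ lies in the K-stable locus, only fills in bookkeeping the paper leaves implicit.
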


Taken together, these results suggest that the geometry of $V_{22}$ may be largely governed by its anticanonical K3 surface. This leads to the following conjecture.

\begin{conj}\label{conj:Fano-K3-inj}
For each ADE K3 surface $(S,L)\in \cF_{22}$, there exists a unique Gorenstein canonical Fano threefold $X$ admitting a $\bQ$-Gorenstein smoothing to $V_{22}$ such that $S \in |-K_X|$ and $L = -K_X|_S$. Moreover, the forgetful map
\[
\cP^{\plt} \longrightarrow \cF_{22}, \qquad (X,S) \longmapsto (S, -K_X|_S),
\]
is an isomorphism. 
\end{conj}

We note that an analogous statement holds in degree $4$, where $V_{22}$ is replaced by $\bP^3$ and quartic K3 surfaces arise as anticanonical divisors; see \cite[Theorem 1.1(3)]{ADL21}.

We conclude this discussion with a heuristic observation about the Picard rank of the relevant moduli spaces.

%We show that assuming Conjecture \ref{conj:Fano-K3-inj}, the possible Noether--Lefschetz divisor for which $X$ is K-semistable are the singular Fano threefolds constructed by Prokhorov \cite{Pro16}.
\begin{comment}

If $(S,L)$ is a smooth prime K3 surface of genus 12, then by \cite{Muk89,BKM25} $S$ is embedded into a smooth $V_{22}$. For non-BN general K3 surfaces $S$, there are special divisors on $S$ as classified by Greer-Li-Tian \cite{GLT15}, and we will use results from Johnsen-Knutsen \cite{JK04} and Hana \cite{Hana} to embed $S$ into a threefold inside a rational normal scroll, then perform some birational transformation.

Recall that $D_{d,n}^g$ represents the Noether--Lefschetz divisor in $\sF_{2g-2}$ parametrizing quasi-polarized K3 surface $(S,L)$ whose Picard lattice contains a primitive rank $2$ sublattice $\bZ L +\bZ \beta$ where 
\[
(L^2)=2g-2, \quad (L\cdot \beta)=d, \quad (\beta^2)=n.
\]
When $n=0$, the Noether--Lefschetz divisor $D_{d,0}^g$ parametrizes quasi-polarized elliptic K3 surfaces of genus $g$ with multisection index $d$.
\end{comment}

\begin{remark}\label{rmk:picrank}
By \cite[Theorem 0.1]{GLT15}, the Picard group of $\cF_{22}$ is generated by twelve Noether--Lefschetz divisors subject to one relation. Among them, a general K3 surface in the nodal divisor arises as an anticanonical section of a smooth $V_{22}$, while four divisors correspond to one-nodal $V_{22}$. For a general K3 surface in each of the remaining seven divisors,
Section~5 constructs a K-unstable Fano threefold containing it as an
anticanonical divisor.

This suggests that the Picard rank of $\cP^{\K,\ADE}$ should be four. Since $\Psi: \cP^{\K,\ADE}\to \cM^{\K}$ is generically a $\bP^{13}$-bundle, the class group of the K-moduli space $\fM^{\rm K}$ of $V_{22}$ is expected to have rank three.
% \begin{conj}\label{conj:picard rank 3}
% The class group of $\fM^{\rm K}$ has rank $3$.
% \end{conj}
\end{remark}

\subsection{General reconstruction}\label{subsec:general reconstruction}
Let $(S,L)$ be a smooth polarized K3 surface embedded in a Gorenstein canonical threefold $Y$ (not necessarily Fano) such that
\[
S \ \sim\  -K_Y ,\ \ \ \textup{and} \ \  -K_Y|_S \sim\  L + B,
\]
where $B$ is an effective divisor. View $B$ as a subscheme of $S$, i.e.\ an algebraic curve (possibly reducible and non-reduced) with at worst planar singularities, and hence also as a subscheme of $Y$. Let $\phi:\wt{Y} \coloneqq \Bl_B Y \to Y$ be the blowup of $Y$ along $B$, with exceptional divisor $F$, and let $\wt{S}$ denote the proper transform of $S$. Then $\wt{S} \simeq S$ since $B$ is a Cartier divisor on $S$.

\begin{thm}
With the above notation, $\wt{Y}$ is a Gorenstein canonical weak Fano threefold satisfying $\wt{S}\sim -K_{\wt{Y}}$. The anticanonical morphism $\psi:\wt{Y}\to X$ contracts every prime divisor $\wt{E}$ on $\wt{Y}$ disjoint from $\wt{S}$ to a non-terminal point $x_0\in X$, and it maps $\wt{S}$ isomorphically onto its image $\overline{S}$.  Under this isomorphism, one has $-K_X|_{\overline{S}} \simeq L$.
\end{thm}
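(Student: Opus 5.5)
The plan is to work locally along $B$ first, then globally via adjunction and base-point freeness. Since $B\subset S$ is a Cartier divisor on the smooth surface $S$, and $S$ is a Cartier divisor on $Y$, the subscheme $B\subset Y$ is locally cut out by two equations $(s,b)$. Here $s$ is a local equation of $S$ and $b$ restricts to a local equation of $B$ on $S$. So $B$ is a local complete intersection of codimension two, and $\Bl_B Y$ is locally the hypersurface $\{s\,v = b\,u\}\subset Y\times \bP^1_{[u:v]}$.

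\textbf{Step 1: local structure and the anticanonical class.} From the local description, I will read off three things.
\begin{itemize}
\item $\wt Y$ is Gorenstein, and $K_{\wt Y}=\phi^*K_Y+F$, since the blow-up of a codimension-two lci has discrepancy one along $F$.
\item The proper transform satisfies $\wt S=\phi^*S-F$, and $\wt S\simeq S$ because $B$ is Cartier on $S$.
\item Combining these gives $-K_{\wt Y}=\phi^*(-K_Y)-F\sim\phi^*S-F=\wt S$.
\end{itemize}
For canonicity, I will argue that $\wt S$ is a smooth Cartier divisor in $\wt Y$. Then $\wt Y$ is smooth near $\wt S$. Away from $\wt S$, the only new points are the fibers of $F$ over the points where $B\subset Y$ meets $\mathrm{Sing}(Y)$, or where $B$ itself is singular. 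There I will use the explicit hypersurface equation to check that the singularities are Gorenstein canonical. Since the points where $b$ and $s$ both vanish to high order are finite, the check should reduce to a cA-type equation along a $\bP^1$, i.e. canonical.

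\textbf{Step 2: nefness and bigness of $-K_{\wt Y}$, and base-point freeness.} The restriction is $-K_{\wt Y}|_{\wt S}=(\phi^*S-F)|_{\wt S}=(L+B)-B=L$, which is ample on $S$. Since $H^1(\wt Y,\cO_{\wt Y})=0$ (from $Y$, using rational singularities and the $\bP^1$-fibers of $\phi$), the sequence
\[
0\to\cO_{\wt Y}\to\cO_{\wt Y}(\wt S)\to\cO_{\wt S}(L)\to0
\]
shows that $H^0(\wt Y,\wt S)\to H^0(S,L)$ is surjective. As $|L|$ is base-point free by Theorem~\ref{Mayer} in the relevant genus range, $\operatorname{Bs}|-K_{\wt Y}|$ is disjoint from $\wt S$. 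On the other hand, $\wt S$ is itself a member of the linear system, so the base locus is empty. Hence $-K_{\wt Y}$ is base-point free, in particular nef. It is big because $(-K_{\wt Y})^3=(L^2)>0$. Here the restriction $-K_{\wt Y}|_{\wt S}=L$ computes $(-K_{\wt Y})^2\cdot \wt S$, and $\wt S\sim -K_{\wt Y}$. Thus $\wt Y$ is a Gorenstein canonical weak Fano threefold.

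\textbf{Step 3: the anticanonical morphism.} Let $\psi$ be the morphism defined by $|m(-K_{\wt Y})|$, and $X$ its image (normal, Gorenstein canonical). Its restriction to $\wt S$ is given by $|mL|$ with $L$ ample, so $\psi|_{\wt S}$ is finite and birational onto $\overline S$. Curves contracted by $\psi$ are exactly curves $C$ with $\wt S\cdot C=0$; these either lie in $\wt S$, which is impossible since $L$ is ample, or are disjoint from $\wt S$. So the exceptional locus $\mathrm{Exc}(\psi)$ is disjoint from $\wt S$. Since $\wt S$ is a fiber-complement of an ample divisor, namely $\wt S=\psi^*\overline S$ with $\overline S$ Cartier ample on $X$, its complement maps onto the affine variety $X\setminus\overline S$. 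A prime divisor $\wt E$ disjoint from $\wt S$ is therefore a complete surface contained in $\psi^{-1}(X\setminus\overline S)$, so $\psi(\wt E)$ is a point $x_0$. Because $\psi$ is crepant, $\wt E$ is a crepant exceptional divisor over $x_0$, so $x_0$ is non-terminal.

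Uniqueness of $x_0$ needs connectedness of the union of such divisors. I expect this can be arranged: in the general setup, $F$ restricted over the fibers is where they come from, though I may need to allow several points. Finally, $\psi|_{\wt S}$ is an isomorphism onto its image by projective normality of $(S,L)$ together with the surjectivity of restriction in all degrees, which comes from the same exact sequence. Hence $-K_X|_{\overline S}\simeq L$.

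The main obstacle I anticipate is Step 1's canonicity of $\wt Y$ at points where $B$ passes through singularities of $Y$ or is non-reduced; I would handle it by the explicit local equation $sv=bu$ and a discrepancy computation on that hypersurface.
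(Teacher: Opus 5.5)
\textbf{Step 2 breaks.} The overall shape of your argument matches the paper's: compute $-K_{\wt Y}|_{\wt S}\sim L$, deduce nef and big, then contract divisors disjoint from $\wt S$. The failing claim is that $|L|$ is base-point free by Theorem~\ref{Mayer}. This is false when $(S,L)$ is unigonal: there $|L|$ has a fixed rational curve. The statement is explicitly applied to that case, namely the divisor $\cD^{22}_{1,0}$ with $L=12F+B$. In that case $|-K_{\wt Y}|$ really does have base locus along $B\subset\wt S$, so deriving nefness from base-point freeness does not work.

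The other input, $H^1(\wt Y,\cO_{\wt Y})=0$ ``from $Y$'', is also unavailable. $Y$ is only assumed Gorenstein canonical with $S\sim -K_Y$; it is not assumed Fano or rational. The vanishing is known only after $\wt Y$ is shown to be weak Fano, via Kawamata--Viehweg, so using it to prove nefness is circular.

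\textbf{Repair.} Use only what you already computed.
\begin{itemize}
\item For an integral curve $C\subseteq \wt S$ one has $(-K_{\wt Y}\cdot C)=(L\cdot C)>0$.
\item For $C\not\subseteq\wt S$ one has $(-K_{\wt Y}\cdot C)=(\wt S\cdot C)\geq 0$.
\end{itemize}
So $-K_{\wt Y}$ is nef. It is big since $(-K_{\wt Y})^3=(L^2)>0$, and semiample by the base-point-free theorem because $\wt Y$ is klt. Only now does Kawamata--Viehweg give $H^1(\wt Y,-(m-1)K_{\wt Y})=0$ for $m\ge 1$. This makes $R(-K_{\wt Y})\to R(S,L)$ surjective, so $\overline S=\Proj R(-K_{\wt Y})/(s_0)\simeq\Proj R(S,L)\simeq S$ with $-K_X|_{\overline S}\simeq L$. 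Projective normality of $(S,L)$ is not needed, and it fails in the hyperelliptic and unigonal cases anyway.

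\textbf{Step 1.} Your conclusion is right, but the anticipated obstacle is misplaced. $S$ is a smooth Cartier divisor, so $Y$ is smooth along $S\supseteq B$, and $B$ never meets $\mathrm{Sing}(Y)$. Locally $\wt Y\simeq\{sv=\bar b(y,z)\}$, where $\bar b$ is the local equation of $B$ on $S$. This is a $cA$ point, hence canonical, even when $B$ is non-reduced. Non-reduced $B$ does occur in the applications, e.g.\ $S\cap 5E$, and then $\mathrm{Sing}(\wt Y)$ is a curve in $F$, not a finite set of points.

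The paper avoids the local computation entirely. It uses $K_{\wt Y}+\wt S=\phi^*(K_Y+S)$: since $(Y,S)$ is plt, so is $(\wt Y,\wt S)$. Hence $\wt Y$ is klt and Gorenstein, i.e.\ canonical.

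\textbf{Step 3.} Your affine-complement argument is fine, as is the paper's shorter version, which uses $-K_{\wt Y}|_{\wt E}\sim 0$. Uniqueness of $x_0$ is not part of the statement: each such $\wt E$ is contracted to a point, and that is all that is claimed.
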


\begin{proof}
Since $S\sim -K_Y$ and $B\subseteq S$, one has $\wt{S}\sim -K_{\wt{Y}}$, which is a Cartier divisor. Moreover, as $(Y,S)$ has purely log terminal singularities, so does $(\wt{Y},\wt{S})$, and hence $\wt{Y}$ has Gorenstein canonical singularities. Under the natural isomorphism $\wt{S}\simeq S$, one has
\[
-K_{\wt{Y}}|_{\wt{S}} \ \sim\  -K_Y|_S - B \ \sim\  L.
\]
In particular, $-K_{\wt{Y}}$ is nef, and it is big since $(-K_{\wt{Y}})^3 = (L^2) > 0.$ As $\tS$ is a smooth Cartier divisor, and $B\subseteq \wt{S}$, then $\tY\setminus F$ is isomorphic to $Y\setminus B$, and $\tY$ has $cA$-singularities along $F$; see e.g.\ \cite[Proof of Lemma 5.11]{LZ25}. Thus $\wt{Y}$ is a Gorenstein canonical weak Fano threefold. On the other hand, since $\wt{E}\cap \wt{S}=\emptyset$, one has $-K_{\wt{Y}}|_{\wt{E}}\sim 0$, and hence $\psi$ contracts $\wt{E}$ to a point $x_0$.%\footnote{\AS{I do not understand why the anticanonical morphism cannot contract divisors to curves- why can't we have $S^2E=0$ if $S$ and $E$ are not disjoint on $\wt Y$?}} %We claim that these are precisely the curves contracted by the anticanonical morphism. Indeed, since $-K_{\wt{Y}} \sim \phi^*L + \wt{E}$, if $C\subseteq \wt{Y}$ is a curve not contained in $\wt{E}$, then
%\[(-K_{\wt{Y}}\cdot C) \ =  \ (\phi^*L\cdot C)+(\wt{E}.C) \ \ge\  (\phi^*L\cdot C)\ >\  0,\]
%since $L$ is ample on $Y$. It follows that the anticanonical morphism $\psi:\wt{Y}\to X$ contracts exactly $\wt{E}$.
Since $\psi$ is crepant, the image point $x_0\in X$ is necessarily singular, and the singularity of $X$ at $x_0$ is non-cDV by \cite[Theorem 2.19(1)]{Liu25}. Finally, since the normal bundle $N_{\wt{S}/\tY}\simeq L$ is ample, the morphism $\psi$ restricts to an isomorphism $\wt{S}\xrightarrow{\sim}\overline{S}$, and under this identification one has $-K_X|_{\overline{S}} \simeq L$.
\end{proof}

\begin{rem}
   If $E$ is an effective divisor on $X$ whose support does not contain $S$ such that $E|_S\leq B$, then the proper transform of $E$ on $\tY$ is disjoint with $\wt{S}$.%\footnote{\YL{I think we only need $E|_S\leq B$, this is probably if and only if condition for $\tE$ to be disjoint from $\tS$.}}
\end{rem}

Therefore, we obtain a Gorenstein canonical Fano threefold $X$ of volume $(L^2)$ containing $S$ as an anticanonical divisor and satisfying $-K_X|_S \simeq L$. Since $X$ is Gorenstein, the singular point $x_0$ cannot be a $\frac{1}{2}(1,1,1)$ quotient singularity. It then follows from \cite[Theorem 1.3(2)]{Liu25} that if $\vol(X)\geq 22$, then $X$ is K-unstable.

\begin{comment}
\subsection{Prokhorov's nodal Fano threefolds}

There are four types of Prokhorov's Fano threefolds, each general member contains one $A_1$-singularity \cite{Pro16}. They correspond to four  Noether--Lefschetz divisors according to the following table. By \cite{DK25} we know that each general member of Prokhorov's list is K-stable.

\begin{table}[htbp!]\renewcommand{\arraystretch}{1.5}
\caption{Prokhorov's list of nodal $V_{22}$}\label{table:singularities}
\begin{tabular}{|c|c|l|c|}
\hline 
Noether--Lefschetz divisor & \cite{Pro16}'s type   & Small resolution & K-ss\\ \hline \hline 
$D_{11,4}^{12}$ & (I)  & \specialcell{blowup of $\bP^3$ along a smooth rational quintic\\ curve not contained in a
quadric} &   Yes $\star$ \\ \hline
$D_{9,2}^{12}$ & (II) & \specialcell{blowup of $Q^3$ along a smooth rational quintic \\curve not contained in a hyperplane section} & Yes $\star$ \\\hline
$D_{6,0}^{12}$ &  (III)  & \specialcell{blowup of $V_5$ along a smooth rational quartic curve}     &  Yes $\star$ \\ 
    \hline

$D_{5,0}^{12}$ &  (IV) & \specialcell{del Pezzo fibration of degree 5}  &  Yes $\star$\tablefootnote{$\star$ means true for general member. Type (IV) is done in \cite{ACC+}.}\\ \hline

\end{tabular}
\end{table}
\end{comment}

\subsection{Application to Noether--Lefschetz divisors on $\cF_{22}$}

Now we apply the construction in \Cref{subsec:general reconstruction} to the seven Noether--Lefschetz divisors on $\cF_{22}$. For a general member $(S,L)$ in each of these divisors, we construct a Gorenstein canonical Fano threefold $X$ containing $S$ as an anticanonical divisor such that $-K_X|_S \sim L$. Moreover, each such $X$ is K-unstable. We note that some of the constructions are similar to those in \cite{ADL21,JR06,CS05}.

\subsubsection{\texorpdfstring{Unigonal divisor $\cD_{1,0}^{22}$}{\cD_{1,0}^{12}}}

We first start from the unigonal K3 surfaces, i.e.\ surfaces $(S,L)$ parametrized by $\cD_{1,0}^{22}$. Let $\pi: S\to \bP^1$ be the elliptic fibration with a section $B\subset S$. Let $F$ be a fiber of $\pi$. Then we know that $L=12F+B$. From e.g.\ \cite[Section 11]{Huy16}, we know that 
\[ \pi_* \cO_S(B) \simeq \cO_{\bP^1}, \quad \pi_* \cO_S(2B) \simeq \cO_{\bP^1}(-4) \oplus \cO_{\bP^1}, \quad \pi_* \cO_S(3B) \simeq \cO_{\bP^1}(-6) \oplus \cO_{\bP^1}(-4) \oplus \cO_{\bP^1}. \]  Let $\cE\coloneqq \pi_*\cO_S(3B)$ be the rank three bundle on $\bP^1$ and \[p\ : \ Y\ \coloneqq \ \Proj_{\bP^1} \Sym\,\cE \ \longrightarrow \ \bP^1 \] be the weighted projective bundle, where we assign degrees $1,2,3$ for $\cO_{\bP^1}$, $\cO_{\bP^1}(-4)$, and $\cO_{\bP^1}(-6)$. Then $\omega^*_Y\simeq \cO_{Y}(12)\otimes p^*\mathcal{O}_{\bP^1}(6)$, and $S$ is embedded into $Y$ as an anti-canonical divisor; see \cite[Section 4.3]{ADL21}. Let $E$ be the only divisor in $|\cO_Y(1)|$. Then \[B\ =\ E|_S \ \ \ \textup{and} \ \ \ -K_Y|_S\ \sim\  12 F+ 6B \ \sim\   L+5B.\] By the construction in \Cref{subsec:general reconstruction}, one can take $\tY$ to be the blowup of $Y$ along the non-reduced curve $S\cap 5E$, or equivalently, the $(5,1)$-weighted blowup along the divisors $(S, E)$. Then the anticanonical ample model $X$ of $\tY$ is a K-unstable Gorenstein canonical Fano threefold containing $S$ as an anticanonical divisor.  %Note that $\tY$ depends on the choice of $S$ which results a $5$-dimensional moduli. Denote by $\tS$ and $\tE$ the strict transforms of $S$ and $E$ in $\tY$. It is clear that $\tS\simeq S$. 

\subsubsection{Hyperelliptic divisor $\cD_{2,0}^{22}$}
Let $(S,L)$ be a \emph{general} elliptic K3 surface in $\cD_{2,0}^{22}$, the \emph{hyperelliptic divisor}. Then $S$ admits an elliptic fibration $\pi:S\rightarrow \bP^1$ with fiber class $F$. We have that $(L-6F)^2=-2$, so there is a unique bisection $(-2)$-curve $B\in |L-6F|$. By Leray spectral sequence, we have
\[
0 \ \longrightarrow \  H^1(\bP^1, \pi_*\cO_S(mB))\ \longrightarrow \  H^1(S, \cO_S(mB))\ \longrightarrow \   H^1(\bP^1, R^1\pi_*\cO_S(mB)).
\]
If $m\geq 1$, then  $R^1\pi_*\cO_S(mB)=0$, and hence $H^1(\bP^1, \pi_*\cO_S(mB))\simeq H^1(S, \cO_S(mB))$ which has dimension $m^2-1$ by the Riemann-Roch. Thus one can show that 
\[
\pi_*\cO_S(B)\simeq \cO_{\bP^1}\oplus \cO_{\bP^1}(-1),\quad \pi_*\cO_S(2B)\simeq \cO_{\bP^1}\oplus \cO_{\bP^1}(-1)\oplus \cO_{\bP^1}(-2)\oplus \cO_{\bP^1}(-3).
\]
Let $\cE\coloneqq \cO_{\bP^1}\oplus \cO_{\bP^1}(-1)\oplus\cO_{\bP^1}(-3)$ where we assign degrees $1,1,2$ for the direct summands.
Let $Y\coloneqq \Proj \Sym \cE$ be the $\bP(1,1,2)$-bundle over $\bP^1$, which contains $S$ as an anti-canonical divisor. Let $E$ be the unique effective divisor in $|\cO_Y(1)|$. Then $B=E|_S$. Let $\tY$ be the $(3,1)$-weighted blow up of $Y$ along the divisors $(S,E)$. Note that $-K_Y\sim 6F_Y+4E$, where $F_Y$ is the fiber class of $Y\rightarrow \bP^1$, and $(E+6F_Y)|_S\sim L$. Then by the construction in \Cref{subsec:general reconstruction}, one can take $\tY$ to be the blowup of $Y$ along the non-reduced curve $S\cap 3E$, and the anticanonical ample model $X$ of $\tY$ is a K-unstable Gorenstein canonical Fano threefold containing $S$ as an anticanonical divisor.

\subsubsection{Trigonal divisor $\cD_{3,0}^{22}$}

Let $(S,L)$ be a general polarized K3 surface in $\cD_{3,0}^{22}$. Then $S$ admits an elliptic fibration $\pi \colon S \to \bP^1$ with fiber class $F$, and one has 
$(L - 4F)^2 = -2$. Thus there exists a unique $(-2)$-curve $B \in |L - 4F|$, which is a trisection of $\pi$.

By \cite{Hana}, since $(S,L)$ is general, it admits an embedding into the scroll
\[
Y = \bP(\cE), \qquad \textup{where} \qquad  \cE = \cO_{\bP^1}(4) \oplus \cO_{\bP^1}(3) \oplus \cO_{\bP^1}(3).
\]
Let $E \subseteq Y$ be the distinguished $\bP^1$-bundle over $\bP^1$ corresponding to the inclusion $\cO_{\bP^1}(4) \hookrightarrow \cE$, and let $F_Y$ denote a fiber of the projection $Y \to \bP^1$. Then the divisor classes satisfy
\[
-K_Y \sim \cO_Y(3) - 8F_Y, \ \ 
E \sim \cO_Y(1) - 4F_Y, \ \ 
L = \cO_Y(1)|_S, \ \
B = E|_S, \ \ \textup{and} \ \ -K_Y|_S = L + 2B.
\] By the construction in \Cref{subsec:general reconstruction}, one can take $\tY$ to be the blow-up of $Y$ along the non-reduced curve $S \cap 2E$. The anticanonical ample model $X$ of $\tY$ is then a Gorenstein canonical Fano threefold that is K-unstable and contains $S$ as an anticanonical divisor.

\begin{comment}

\begin{prop}
With the above notation, $\tY$ is a Gorenstein canonical weak Fano threefold. The anti-canonical model $\tY\to X$ contracts $\tE$ to an isolated singularity $x$ on $X$. Moreover, $X$ is a \textcolor{blue}{$\bQ$-Gorenstein smoothable} Gorenstein canonical Fano threefold of degree $22$ and Picard rank $1$, and $\hvol(x,X)\leq \frac{3}{2}$.
\end{prop}

\begin{proof}
The proof is similar to before, just notice that $\tS|_{\tS}=4\tF+\tB$ which is the pull-back of $L$. In this case, $(\tE,\Delta_{\tE})\simeq (\bP^1\times\bP^1, \frac{1}{2}B)$ where $B\in |\cO(1,3)|$ is a smooth rational curve. Thus $\hvol(x,X)\leq (-K_{\tE}-\Delta_{\tE})^2=\frac{3}{2}$. \textcolor{blue}{Need to think about $\bQ$-Gorenstein smoothability.}
\end{proof}
\end{comment}

\subsubsection{Tetragonal divisor $\cD_{4,0}^{22}$}

Let $(S,L)$ be a general polarized K3 surface in $\cD_{4,0}^{22}$. Then $S$ admits an elliptic fibration $\pi \colon S \to \bP^1$ with fiber class $F$, and one has $
(L - 3F)^2 = -2$. Thus there exists a unique $(-2)$-curve $B \in |L - 3F|$, which is a degree $4$ multisection of $\pi$. By \cites{JK04, Hana}, $S$ admits an embedding into the scroll $\bP(\cE)$, where
\[
\cE \  =\  \cO_{\bP^1}(3) \oplus \cO_{\bP^1}(2) \oplus \cO_{\bP^1}(2) \oplus \cO_{\bP^1}(2).
\]
Let $H_{\cE} \coloneqq \cO_{\bP(\cE)}(1)$ and let $F_{\cE}$ denote a fiber of the projection $\bP(\cE) \to \bP^1$. By \cites{JK04, Hana}, the ideal sheaf of $S$ admits a locally free resolution
\[
0 \longrightarrow \cO_{\bP(\cE)}(-4H_{\cE} + 11F_{\cE})
\longrightarrow  \cO_{\bP(\cE)}(-2H_{\cE} + 4F_{\cE}) \oplus \cO_{\bP(\cE)}(-2H_{\cE} + 3F_{\cE})
\longrightarrow  \cI_{S/\bP\cE} \longrightarrow  0.
\]
In particular, $S$ is a complete intersection of two divisors in the linear systems $|2H_{\cE} - 4F_{\cE}|$ and $|2H_{\cE} - 3F_{\cE}|$. It follows that there exists a unique divisor $
Y \in |2H_{\cE} - 4F_{\cE}|$ containing $S$, and the induced morphism $Y \to \bP^1$ is a quadric surface fibration. From this construction, one has $H_{\cE}|_S = L$, and therefore
\[
-K_Y|_S \sim (2H_{\cE} - 3F_{\cE})|_S = 2L - 3F \sim L + B.
\] Since $\cE$ is of type $(3,2,2,2)$, there exists a unique divisor $E_{\cE} \in |H_{\cE} - 3F_{\cE}|$. Let $E \coloneqq E_{\cE}|_Y$. Then $E \to \bP^1$ is a conic bundle, and one has 
$E|_S = B$. By the construction in \Cref{subsec:general reconstruction}, one can take $\tY$ to be the blow-up of $Y$ along $S \cap E$. The anticanonical ample model $X$ of $\tY$ is then a Gorenstein canonical Fano threefold that is K-unstable and contains $S$ as an anticanonical divisor.

\begin{comment}
\begin{prop}
With the above notation, $\tY$ is a Gorenstein canonical weak Fano threefold. The anti-canonical model $\tY\to X$ contracts $\tE$ to an isolated singularity $x$ on $X$. Moreover, $X$ is a $\bQ$-Gorenstein smoothable Gorenstein canonical Fano threefold of degree $22$ and Picard rank $1$, and $\hvol(x,X)\leq 8$.
\end{prop}

\begin{proof}
The proof is similar to before, just notice that $\tS|_{\tS}=3\tF+\tB$ which is the pull-back of $L$. In this case, $(\tE,\Delta_{\tE})\simeq (\bP^1\times\bP^1, 0)$. Thus $\hvol(x,X)\leq (-K_{\tE}-\Delta_{\tE})^2=8$.
\end{proof}
\end{comment}

\subsubsection{Tritangent divisor $\cD_{7,2}^{22}$}
Let $(S,L)$ be a \emph{general} polarized K3 surface in $\cD_{7,2}^{22}$. Then there exists a divisor $D$ on $S$ such that $(L\cdot D)=7$ and $(D^2)=2$. It follows that
\[
(L-3D)^2=-2 \quad \text{and} \quad (4D-L)^2=-2.
\]
Let $B \in |L-3D|$ and $ B' \in |4D-L|$ be the corresponding $(-2)$-curves. Then
\[
D \sim B+B', \qquad (D\cdot B)=(D\cdot B')=1, \qquad (B\cdot B')=3.
\]
In particular, the linear system $|D|$ induces a double cover $\pi:S \to \bP^2$ branched along a sextic curve $C_6 \subset \bP^2$, and $B+B'$ is the pullback of a tritangent line $\ell \subset \bP^2$ to $C_6$. As $S$ is a double cover of $\bP^2$, we may embed $S$ into $Y \coloneqq \bP(1,1,1,3)$ as an anticanonical divisor, where $S$ is defined by an equation
\[
w^2 = f_6(x,y,z),
\]
with $C_6=V(f_6)$. %Let $E_1 \simeq \bP(1,1,3)$ denote the cone over the line $l$. Then $E_1|_S = B+B'$. 
Moreover, we have
\[
-K_Y|_S \ \sim\  6D \ \sim\  L + 2B + 3B'
\]
in $Y$. By the construction in \Cref{subsec:general reconstruction}, one can take $\tY$ to be the blow-up of $Y$ along the non-reduced and reducible curve $2B+3B'$. The anticanonical ample model $X$ of $\tY$ is then a Gorenstein canonical Fano threefold that is K-unstable and contains $S$ as an anticanonical divisor.
\subsubsection{The conic divisor $\cD_{8,2}^{22}$}

Let $(S,L)$ be a \emph{general} polarized K3 surface in $\cD_{8,2}^{22}$. Then there exists a divisor $D$ on $S$ such that $(L\cdot D)=8$ and $(D^2)=2$. It follows that
\[
(L-D)^2=8 \qquad \text{and} \qquad (L-2D)^2=-2.
\]
Let $F \in |L-D|$ and $B \in |L-2D|$. Then $B$ is a $(-2)$-curve and $F^2=8$. The linear system $|F|$ defines an embedding $
S \hookrightarrow \bP^5$ as a complete intersection of three quadric hypersurfaces. Under this embedding, the curve $B \subset S$ is realized as a smooth conic in $\bP^5$. Let
\[
\Pi\ \coloneqq\ \langle B \rangle \ \simeq\  \bP^2\ \subseteq \ \bP^5
\]
be the plane spanned by $B$. Among the $2$-dimensional linear system of quadrics containing $S$, there is a distinguished pencil $\{Q_\lambda\}$ consisting of quadrics that contain the plane $\Pi$. Let $Y$ be the base locus of this pencil, which is a quartic del Pezzo threefold with three $A_1$-singularities located on $\Pi$; see \Cref{lem:containing-a-plane}. Then $S \in |-K_Y|$. Let
$\pi:\hY \to Y$ be a small resolution of the $A_1$-singularity such that the strict transform $\widehat{\Pi}$ of $\Pi$ contains the exceptional rational curves. Then $\widehat{\Pi}$ is a smooth del Pezzo surface of degree $6$ and is Cartier in $\hY$. Set $\hB\coloneqq \widehat{\Pi}\cap \hS$. Then $\hS$ is isomorphic to $S$, under which one has $\hB=B$ and $-K_{\hY}|_{\hS}= L+B$. By the construction in \Cref{subsec:general reconstruction}, one can take $\tY$ to be the blow-up of $\hY$ along $\hB$. The anticanonical ample model $X$ of $\tY$ is then a Gorenstein canonical Fano threefold that is K-unstable and contains $S$ as an anticanonical divisor.
\begin{comment}

\begin{prop}
With the above notation, $\tY$ is a Gorenstein canonical weak Fano threefold. The anti-canonical model $\tY\to X$ contracts $\tE$ to an isolated singularity $x$ on $X$. Moreover, $X$ is a \textcolor{blue}{$\bQ$-Gorenstein smoothable} Gorenstein canonical Fano threefold of degree $22$ and Picard rank $1$, and $\hvol(x,X)<8$.
\end{prop}

\begin{proof}
Notice that $S|_S=2F\sim L+B$, so $\tS|_{\tS}$ is the pull-back of $L$. Then the proof is similar to before. We have $\hvol(x,X)\leq (-K_{\tE})^2=(-K_{\bF_1})^2=8$. Moreover, the equality cannot hold by \cite{LX20} because $\bF_1$ is K-unstable.
\end{proof}
\end{comment}

\subsubsection{The nodal quadric divisor $\cD_{10,4}^{22}$}

Let $(S,L)$ be a \emph{general} polarized K3 surface in $\cD_{10,4}^{22}$. Then there exists a divisor $D$ on $S$ such that $ (L\cdot D)=10$ and $(D^2)=4$. Then
\[
(L-D)^2=6, \qquad (L-2D)^2=-2, \qquad (L-D.L-2D)=0.
\]
Let $F \in |L-D|$ and $B \in |L-2D|$. Then $B$ is a $(-2)$-curve and $F^2=6$. The linear system $|F|$ induces a morphism $
\varphi_{|F|}\colon S \to \bP^4$ which is birational onto its image. The image $S' \coloneqq \varphi_{|F|}(S)$ is a $(2,3)$-complete intersection K3 surface, and $\varphi_{|F|}$ contracts $B$ to a single $A_1$-singularity $p \in S'$. Since $(S,L)$ is general, we may assume that $S'$ is contained in a smooth quadric threefold $Q \subset \bP^4$. Let $Y \coloneqq \Bl_p Q$ be the blow-up of $Q$ at $p$, and denote by $E_1 \subset Y$ the exceptional divisor. Then 
$S \hookrightarrow Y$ as an anticanonical divisor, and $E_1|_S = B$. Let $E_2'$ be the intersection of $Q$ with its tangent hyperplane $T_p Q$. Then $E_2'$ is isomorphic to a quadric cone. Let $E_2$ be the strict transform of $E_2'$ in $Y$. The pull-back of $E_2'$ to $Y$ is linearly equivalent to $E_2 + 2E_1$, and hence
\[
E_2|_S \ \sim\  F - 2E_1|_S \ =\  F - 2B \ \sim\  3D - L.
\]
Set $C \coloneqq E_2|_S$.
Then $(C^2)=-2$, $(C. B)=4$, and $D \sim B + C$. In particular, $D$ decomposes uniquely as the sum of two $(-2)$-curves. Let $B_2 \coloneqq E_1 \cap E_2$, which is a smooth conic in $E_1 \simeq \bP^2$. The three surfaces $S, E_1,$ and $E_2$ intersect transversely, with four triple intersection points. One has $$-K_Y|_S\ \sim\ (3E_2+4E_1)|_S \ \sim \ L+B+C .$$ By the construction in \Cref{subsec:general reconstruction}, one can take $\tY$ to be the blow-up of $Y$ along the reducible curve $B\cup C$. The anticanonical ample model $X$ of $\tY$ is then a Gorenstein canonical Fano threefold that is K-unstable and contains $S$ as an anticanonical divisor. 

\medskip

At the end of this section, we propose the following conjecture. By explicit computation, we are able to verify the conjecture for several of the seven divisors, providing supporting evidence for its validity. However, since we are currently unable to resolve the conjecture in full, and as it is not the primary focus of this paper, we leave it for future investigation.

\begin{conj}\label{conj:qG-smoothing}
The Gorenstein canonical Fano threefolds constructed for the above seven Noether--Lefschetz divisors are of Picard rank 1 and admit $\bQ$-Gorenstein smoothing to $V_{22}$.
\end{conj}

\begin{remark}
The reconstruction of a Fano threefold from a polarized K3 surface need not be unique in general.
For instance, a polarized K3 surface in the trigonal divisor $\cD_{3,0}^{22}$ may be embedded into several rational normal scrolls $Y=\bP(\cE)$ with $L\simeq \cO_{\bP\cE}(1)|_S$ (see \cite[Table on p.~103]{JK04}). Applying the reconstruction procedure of \Cref{subsec:general reconstruction} to these scrolls produces distinct families of Gorenstein canonical Fano threefolds of degree $22$ containing $S$ as an anticanonical divisor.

This phenomenon occurs only for K3 surfaces lying in deeper Noether--Lefschetz strata inside the divisor; for a general K3 surface in each Noether--Lefschetz divisor considered above, the reconstruction produces a unique Fano threefold. Nevertheless, we expect that uniqueness should hold after restricting to Fano threefolds that admit a $\bQ$-Gorenstein smoothing to $V_{22}$, which motivates Conjecture~\ref{conj:Fano-K3-inj}.
\end{remark}

\smallskip
\appendix
\section[tocentry={K-moduli of the Fano threefolds \textnumero2.16}]{K-moduli of the Fano threefolds \textnumero2.16}\label{appendix A}

In this appendix, we study the K-moduli stack of the family \textnumero2.16 of Fano threefolds of volume $22$. 
A smooth member of this family is obtained by blowing up a smooth $(2,2)$-complete intersection in $\mathbb{P}^5$ along a smooth conic curve. 
We prove the following result, which is used in \Cref{lem:exclude the other families}.%\footnote{\YL{mention related work from many people including Vanya and Junyan. Also say a few words on how far are we from a complete description of K-moduli of 2.16.}}

\begin{theorem}\label{thm:K-ss limit of 2.16}
    Every K-semistable degeneration $X$ of Fano threefolds {\rm\textnumero2.16} is the blowup of a $(2,2)$-complete intersection in $\bP^5$ along a conic. Moreover, the K-moduli stack $\mtc{M}^{\K}_{\textup{\textnumero2.16}}$ is a smooth connected component of $\mtc{M}^{\K}_{3,22}$.
\end{theorem}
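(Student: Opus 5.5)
Given a K-semistable degeneration $X$ of \textnumero2.16, Theorem~\ref{thm:volume bound}(1) makes $X$ Gorenstein canonical. Theorem~\ref{thm:Kss-very-ample} makes $-K_X$ very ample and projectively normal, and Theorem~\ref{thm:volume bound}(2) restricts the singularities to isolated $cA_{\le 2}$, $A_\infty$ or $D_\infty$. The plan is to show directly that $X$ is the blowup of a $(2,2)$-complete intersection $W\subset\bP^5$ along a conic, by recovering the two extremal contractions of the smooth members in the degenerate setting.

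\textbf{Step 1: extend the divisor classes.} On a smooth member $X_t$ we have $\Pic(X_t)=\bZ H\oplus\bZ E$, where $H$ is the pullback of the hyperplane class of $W_t\subset\bP^5$ and $E$ is the exceptional divisor. Also $-K_{X_t}=2H-E$. I would first treat the terminal case. There Lemma~\ref{lem:line-bundle-extension}, together with the local constancy of $\Pic$ in Gorenstein terminal families \cite{JR11}, extends $H$ and $E$ to $X$. Then $H$ is nef with $h^0(H)=6$ by Kawamata--Viehweg vanishing and semicontinuity, and $|H|$ defines a birational morphism $X\to W\subset\bP^5$. The $(2,2)$-complete intersection structure of $W$ is read off from $H^3=4$ and from the two quadrics, which deform flatly because $h^0(\cI_W(2))$ is constant. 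The exceptional locus of $X\to W$ is the flat limit of $E$, and it is contracted onto a conic.

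\textbf{Step 2: the non-terminal case.} Here I would pass to the blowup $\tX\to X$ of the one-dimensional singular locus, which is Gorenstein terminal weak Fano. Lemma~\ref{lem:line-bundle-extension} applied over the smoothing $\tX_t$ (which exists by Theorem~\ref{thm:smoothable}) extends $H$ and $E$ to $\tX$, and the argument of Step 1 then runs on $\tX$. The difficulty is to check that $H$ is trivial on the $\mu$-exceptional conic-bundle fibres. I expect to show this with a $\beta$-invariant estimate in the style of Theorem~\ref{thm:sing-line}: if a singular curve of $X$ did not lie in the conic or on $W$ compatibly, then $\beta(E_i)<0$. I expect this step to be the main obstacle, and I would first try to bound $S_X$ of the exceptional divisor over the singular curve, using the explicit volume polynomial of $-K_{\tX}-tE_i$ as in the proof of Theorem~\ref{thm:sing-line}.

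\textbf{Step 3: smoothness of the stack.} Write $X=\Bl_C W$, and consider the space of pairs $(W,C)$: a $(2,2)$-complete intersection together with a conic. This space is a smooth open subset of a Grassmannian bundle or flag variety, because the pairs $(C\subset\Pi\cap W)$ move in a bundle over $\Gr(3,6)$ with fibres parametrizing pencils of quadrics containing $C$. Sending $(W,C)$ to $\Bl_C W$ gives a smooth surjection onto $\cM^\K_{\textup{\textnumero2.16}}$. Alternatively, $H^2(X,T_X)=0$ holds because $X$ is a blowup of a complete intersection with mild singularities, and the local deformation functor is then unobstructed by the local analysis of Theorem~\ref{thm:smoothable}. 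Either way $\cM^\K_{\textup{\textnumero2.16}}$ is smooth, hence normal.

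\textbf{Step 4: connected component.} $\cM^\K_{\textup{\textnumero2.16}}$ is closed in $\cM^\K_{3,22}$ by properness of K-moduli. It is also open, because Step 1 shows that every point of $\cM^\K_{3,22}$ near $X$ carries a deformation of the classes $H$ and $E$, and therefore lies again in the family \textnumero2.16. Being smooth, open and closed, it is a smooth connected component of $\cM^\K_{3,22}$.
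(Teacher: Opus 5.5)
Your overall plan (recover the hyperplane class, contract to a $(2,2)$-complete intersection, then prove unobstructedness) matches the paper's. However, Step~2 has a genuine gap, and it is where the real content of the theorem lies. The problem is not only missing detail: the mechanism you propose does not exist.

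\textbf{Step~2 and the construction of $V$.} The blowup $\tX$ of the one-dimensional singular locus of $X$ is in general not the central fibre of any family over your smoothing $\sX\to T$. For instance, if $\sX$ is locally $xy-z^2=t$ along the $A_\infty$-curve, the total space is smooth there. It therefore admits no nontrivial small modification, so $\tX$ cannot be the central fibre of a family mapping to $\sX$ over $T$. Three further problems follow.
\begin{itemize}
\item The deformation $\tX_t$ from \Cref{thm:smoothable} is just \emph{some} deformation of $\tX$ with terminal anticanonical model; nothing says its general fibre is a \textnumero2.16 threefold.
\item \Cref{lem:line-bundle-extension} extends line bundles from the special fibre to nearby fibres, not from the general fibre to the special one. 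So $H$ and $E$ are never actually produced on $\tX$.
\item The $\beta$-invariant estimate, which has no concrete form in your proposal, addresses a problem created only by passing to $\tX$.
\end{itemize}

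The paper instead takes the Zariski closures $\mts{E},\cL$ in the total space, passes to a small $\bQ$-factorialization, and runs an MMP for $\cL$ over $\sX$. This yields a small $f\colon \wt{\mts{X}}\to\sX$ with $\wt{\cL}$ Cartier and $f$-ample, and it treats terminal and non-terminal $X$ at once.

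Even then, nefness and global generation of $\wt{L}$ are not formal. Your appeal to Kawamata--Viehweg and semicontinuity in Step~1 does not give them: nefness is not closed under specialization, and KV already presupposes $H-K_X$ nef and big. The paper proves them on a general elephant $\wt{S}$ (\Cref{quasipolarized}, \Cref{nefness}):
\begin{itemize}
\item a lattice computation with Gram matrix $\begin{pmatrix}22&14\\14&8\end{pmatrix}$ rules out a $(-2)$-class making $\wt{L}|_{\wt{S}}$ non-nef;
\item unigonality is excluded because $-K_X$ is very ample (\Cref{thm:Kss-very-ample}), so an elliptic curve cannot have anticanonical degree $\le 2$.
\end{itemize}
The $\wt{\cL}$-ample model then gives a Gorenstein canonical del Pezzo threefold $V$ of degree $4$, which is a $(2,2)$-complete intersection by Fujita's classification.

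Finally, you never show that $X$ \emph{is} $\Bl_C W$. The paper needs that blowing up commutes with taking the central fibre. It must also exclude the case where $V$ contains the plane spanned by $C_V$, in which $\Bl_{C_V}V$ is only weak Fano; this is done via K-semistability (\Cref{cor:blowup-conic-unstable}).

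\textbf{Steps~3--4: smoothness and openness.} Neither of your routes works as stated.
\begin{itemize}
\item Route (a) presupposes that every infinitesimal deformation of $X$ comes from one of $(W,C)$, which is exactly what has to be proved. Even granting it, it only yields smoothness of the reduced stack $\mtc{M}^{\K}_{\textup{\textnumero2.16}}$, not of $\mtc{M}^{\K}_{3,22}$ at its points. The latter is what being a connected component requires.
\item Route (b) is insufficient when $X$ is singular along a curve. Besides $H^2(X,T_X)$, the local-to-global term $H^1\big(X,\cE\textup{xt}^1(\Omega_X,\cO_X)\big)$ contributes to $\Ext^2(\Omega_X,\cO_X)$. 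Also, \Cref{thm:smoothable} contains no local deformation analysis of the kind you invoke.
\end{itemize}

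The paper proves the full vanishing $\Ext^2(\Omega^1_X,\cO_X)=0$ (\Cref{prop:deformation}). It writes $X$ as a complete intersection of two divisors of class $2L-F$ in $\Bl_C\bP^5$, which needs $C$ not contained in the singular locus of $V$; that in turn comes from the ADE elephant $S_V\supseteq C_V$. It then runs the conormal, Euler and relative Euler sequences through Kawamata--Viehweg vanishing. This vanishing makes $\mtc{M}^{\K}_{3,22}$ itself smooth at every point of $\mtc{M}^{\K}_{\textup{\textnumero2.16}}$, so no other component passes through it, and that is the openness.

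Your Step~4 argument (nearby fibres carry deformations of $H$ and $E$, ``therefore'' lie in \textnumero2.16) would require rerunning the whole reconstruction for those fibres. By itself it does not rule out a common degeneration with another family of volume~$22$.
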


In the recent work \cite{LastFano}, the authors prove the K-stability of certain Fano threefolds in the family \textnumero2.16 and outline a strategy to describe the K-moduli of this family. \Cref{thm:K-ss limit of 2.16} confirms \cite[Conjecture~6.1.1]{LastFano}, which is the most technical step in this approach. To complete the description of the K-moduli, one needs to study the parameter space $W$ of quartic del Pezzo--conic pairs in $\bP^5$ and compute the CM line bundle $\cL_{\rm CM}$ associated with the universal family obtained by blowing up quartic del Pezzo threefolds along conics. The expected outcome is an identification of the K-moduli stack (resp.\ space) with the VGIT quotient
\[
[W^{\rm ss}/\PGL(6)]
\qquad (\text{resp.\ } W\sslash_{\cL_{\rm CM}}\PGL(6)),
\]
together with a study of the corresponding GIT (semi/poly)stability; see \cite[Conjecture~6.3.1]{LastFano}. 
As this direction is tangential to the main focus of the present paper, we leave it for future work.

\subsection{Geometry of blowups of (2,2)-complete intersections}

In this subsection, we study the geometry of the blowup of a $(2,2)$-complete intersection in $\bP^5$ along a conic, as preparation for the next subsection. Throughout this subsection, we assume that
\begin{itemize}
    \item $V = Q_1 \cap Q_2$ is a Gorenstein canonical $(2,2)$-complete intersection in $\bP^5$, and
    \item $C \subseteq V$ is a conic curve, i.e.\ a subscheme of $\bP^5$ with Hilbert polynomial $p_C(t) = 2t + 1$.
\end{itemize}
Such a curve is contained in a unique $2$-plane and is either a smooth conic, the nodal union of two lines, or a double line. We further assume that $V$ is generically smooth along every irreducible component of $C$ and has hypersurface singularities along $C$.

\begin{lem}\label{lem:complete-intersection-after-blowup}
Let $\pi:\Bl_{C}\bP^5\rightarrow \bP^5$ be the blow-up and let $F$ be the exceptional divisor. Then the following hold:
\begin{enumerate}
    \item[\textup{(1)}] the blow-up $X\coloneqq\Bl_{C}V$ coincides with the proper transform $\wt{V}$ of $V$ under $\pi$; and 
    \item[\textup{(2)}] the scheme-theoretic intersection $E\coloneqq F\cap X$ is a complete intersection in $F$.
\end{enumerate}
\end{lem}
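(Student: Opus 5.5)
The plan is to work locally along $C$ and show that the ideal of $C$ in $V$ has the same Rees algebra behaviour as the ideal of $C$ in $\bP^5$ restricted to $V$. Every conic $C$ (smooth, a line pair, or a double line) is a complete intersection in $\bP^5$. It is cut out by three linear forms defining the plane $\Pi=\langle C\rangle$ together with one quadric $q$ on $\Pi$. So $\Bl_C\bP^5$ is the blowup along a complete intersection ideal $I_C=(l_1,l_2,l_3,q)$ of codimension $4$. Hence $\Bl_C\bP^5\subseteq \bP^5\times\bP(1,1,1,2)$ is cut out by the $2\times2$ minors of the usual syzygy matrix, and $F\to C$ is a $\bP(1,1,1,2)$-bundle (locally a weighted projective bundle, which is a $\bP^3$-bundle near points where $q$ is part of a regular system of parameters). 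The key point is that $I_C^k/I_C^{k+1}\simeq \Sym^k(I_C/I_C^2)$ is locally free over $\cO_C$.

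For (1), I will use the general fact that the strict transform of $V$ equals $\Proj\bigoplus_k (I_C^k+I_V)/I_V$. Meanwhile, $\Bl_CV=\Proj\bigoplus_k I_{C/V}^k$ with $I_{C/V}=I_C\cdot\cO_V$, and $I_{C/V}^k=(I_C^k+I_V)/I_V$. So the two Rees algebras literally coincide as graded quotients of the Rees algebra of $I_C$, and (1) is formal. The subtlety is only whether $\wt V$, defined as the closure of $V\setminus C$, agrees with this Proj. This requires the Proj to have no component supported over $C$, i.e.\ that $E$ has dimension $2$. That dimension statement is therefore the real content, and I will establish it in part (2).

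For (2), write $V=V(f_1,f_2)$ with $f_i\in I_C^{m_i}\setminus I_C^{m_i+1}$ generically along each component of $C$. Since $V$ is generically smooth along $C$, the multiplicities $m_i$ are $1$ generically, and $E$ should be cut out in $F$ by the two initial forms $\mathrm{in}(f_i)\in H^0(F,\cO_F(1))$ (sections of the relative $\cO(1)$ twisted from the leading terms in $I_C/I_C^2$). Concretely, $\wt V\subseteq \pi^*V-F-F=\wt{Q_1}\cap\wt{Q_2}$ if both quadrics contain $C$ with multiplicity one. Then I will show $\wt{Q_1}\cap\wt{Q_2}$ has no component inside $F$, by checking that $\mathrm{in}(f_1),\mathrm{in}(f_2)$ form a regular sequence on each fibre-dimension-wise: $F$ has dimension $4$, so it suffices that $V(\mathrm{in}(f_1),\mathrm{in}(f_2))\cap F$ has dimension $2$. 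Over a general point of $C$ this holds because $V$ is smooth there, so the differentials are independent in the conormal space $\bP^3$ and the fibre is a line. At the finitely many special points of $C$ I will use the hypersurface-singularity assumption: at such a point the tangent cone condition implies at least one $f_i$ has nonzero linear part transverse to $\Pi$. Then $\wt{Q_1}$ is locally smooth and the fibre of $E$ over the point is a conic in $\bP^2$ or a plane in $\bP^3$, still of dimension $\le 2$ but not the whole $\bP^3$. Consequently $\wt Q_1\cap\wt Q_2\cap F$ has pure dimension $2$, so it is a complete intersection in $F$ and equals $E$. Since $\wt{Q_1}\cap\wt{Q_2}$ is Cohen--Macaulay of pure dimension $3$ and $F$ is Cartier without containing a component, $\wt{Q_1}\cap \wt{Q_2}=\wt V$, which completes (1).

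The main obstacle is the special points of $C$: the node of a line pair, points of a double line, and singular points of $V$ on $C$. There the fibre of $F$ may be a weighted or degenerate fibre, and I must rule out an entire fibre of $F$ lying in $E$. I will handle this by a local coordinate computation in the model $I_C=(x,y,z,q(u,w))$, splitting into the three conic types and using the hypersurface condition to guarantee one defining equation has linear term.
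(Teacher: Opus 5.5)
Your proposal is correct. It has the same skeleton as the paper's proof: realize $X$ inside $\Bl_C\bP^5$ as the intersection of the two divisors $D_i=\pi^*Q_i-F\in|2H-F|$, then cut with $F$. The difference is where the work is placed.

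\textbf{Part (1).} The paper argues only that $\Bl_CV$ and $\wt V$ are integral closed subschemes of $\Bl_C\bP^5$ with a common dense open subset. This is purely formal: the Rees algebra of the nonzero ideal $I_{C/V}$ in the domain $\cO_V$ is a domain, so $\Bl_CV$ can never have a component over $C$. Your claim that (1) ``requires'' the dimension statement is therefore a detour. Your chain $\wt V\subseteq\Bl_CV\subseteq D_1\cap D_2=\wt V$ does also prove it, just with more effort than needed.

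\textbf{Part (2).} Here the paper simply asserts ``consequently, $X$ is a complete intersection of two divisors of class $2H-F$''. Your fibrewise count is exactly the justification that word hides:
\begin{itemize}
\item At points $p\in C$ where $V$ is smooth, the classes of $f_1,f_2$ in $I_C/\fm_pI_C$ are independent, so the fibre of $D_1\cap D_2\cap F$ over $p$ is a line.
\item At the finitely many singular points, the hypersurface hypothesis gives $df_i(p)\neq 0$ for some $i$. Since $\fm_pI_C\subseteq\fm_p^2$, this forces $f_i\notin\fm_pI_C$, so the fibre is at most a plane.
\item Hence $\dim(D_1\cap D_2\cap F)=2$, so no component of $D_1\cap D_2$ lies in $F$.
\item Cohen--Macaulayness of $\Bl_C\bP^5$ (a blowup along an lci centre) then gives $D_1\cap D_2=\wt V$, and $E$ is cut out in the Cohen--Macaulay $F$ by two sections in the expected dimension.
\end{itemize}
This makes explicit where both standing hypotheses on $(V,C)$ enter, which the paper's two-line proof leaves implicit.

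\textbf{Corrections to your setup.} Two points should be fixed, though neither breaks the argument.
\begin{itemize}
\item Since $C$ is a local complete intersection of codimension $4$, $F=\Proj\Sym(I_C/I_C^2)\simeq\bP(N^\vee_{C/\bP^5})$ is an honest $\bP^3$-bundle over $C$. This holds at every point, including the node of a line pair and the points of a double line; there are no weighted or degenerate fibres. The closure of the graph of $[l_1:l_2:l_3:q]$ in $\bP^5\times\bP(1,1,1,2)$ would be a weighted blowup, which is a different variety. So the ``main obstacle'' you anticipate does not arise, and no case-by-case local computation is needed.
\item ``Nonzero linear part transverse to $\Pi$'' is more than you need and is not always true. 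The nonzero differential can restrict nontrivially to $\Pi$, namely when $f_i|_\Pi$ is a nonzero multiple of $q$ and $C$ is smooth at $p$. All the argument uses is $df_i(p)\neq 0$.
\end{itemize}
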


\begin{proof}
For (1), both $X=\Bl_C V$ and $\wt{V}$ are integral subvarieties of $\Bl_C\bP^5$, and there exists an open subscheme $U\subseteq \Bl_C\bP^5$ contained in both of them. Hence they coincide. Consequently, $X$ is a complete intersection of two divisors of class $2H-F$, where $H=\pi^*\mtc{O}_{\bP^5}(1)$. This proves (2), since $F\cap X$ has dimension $2$.
\end{proof}

\begin{lemma}\label{lem:blowup-conic-weak-Fano}
Set $Y\coloneqq \Bl_{C}\mathbb{P}^5$. Then $Y$ is weak Fano. 
Moreover, any curve on $Y$ intersecting $-K_Y$ trivially is contained in the strict transform of the $2$-plane spanned by $C$.
\end{lemma}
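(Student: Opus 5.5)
We work with $Y=\Bl_C\bP^5$, $\pi\colon Y\to\bP^5$ the blowup, $F$ the exceptional divisor and $H=\pi^*\cO_{\bP^5}(1)$. Let $\Pi\simeq\bP^2$ be the plane spanned by $C$. Since $C$ is a local complete intersection of codimension $4$ in $\bP^5$ (it is a Cartier divisor of degree $2$ in $\Pi$), we have $K_Y=\pi^*K_{\bP^5}+3F$. Hence
\[
-K_Y\ \sim\ 6H-3F\ =\ 3(2H-F).
\]
It therefore suffices to show that $2H-F$ is nef and big, and to locate the curves on which it has degree zero.

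\emph{Nefness.} The ideal $I_C$ is generated by quadrics: the linear forms $\ell_1,\ell_2,\ell_3$ cutting out $\Pi$, multiplied by all linear forms, together with the conic equation $q$ on $\Pi$. So $I_C(2)$ is globally generated, and consequently $|2H-F|$ is base-point free on $Y=\Bl_C\bP^5$. It follows that $2H-F$ is nef, and so is $-K_Y$.

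\emph{Bigness.} I would compute
\[
(2H-F)^5\ =\ 32-(\text{correction from }C),
\]
using the Segre class of $C$ via the normal bundle $N_{C/\bP^5}\simeq\cO(1)^{\oplus 3}\oplus\cO(2)$ restricted to $C$. Alternatively, and more simply, I would argue that the morphism given by $|2H-F|$ is birational: the linear system of quadrics through $C$ separates points off $\Pi$. Indeed, for $x,y\notin\Pi$ distinct, take a product $\ell_i\cdot m$ with $m$ a linear form vanishing at $y$ but not at $x$, and $\ell_i$ chosen not to vanish at $x$. This shows the map is injective, in fact an embedding, away from $\wt\Pi\cup F$, so $2H-F$ is big. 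Hence $Y$ is weak Fano.

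\emph{Curves of degree zero.} Let $\Gamma\subset Y$ be a curve with $(2H-F)\cdot\Gamma=0$. Since $|2H-F|$ is base-point free, $\Gamma$ is contracted by the associated morphism $\phi$. I will show $\Gamma\subseteq\wt\Pi$ in two cases.

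If $\Gamma\not\subseteq F\cup\wt\Pi$, then the previous paragraph shows $\phi$ is an immersion near a general point of $\Gamma$ (the sections $\ell_i m$ also separate tangent directions off $\Pi$), which is a contradiction.

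If $\Gamma\subseteq F$, write $F=\bP(N^\vee_{C})$, a $\bP^3$-bundle over $C$. Then $(2H-F)|_F=2\pi^*\cO_C(1)+\cO_{\bP(N^\vee)}(1)$, which is ample on the fibers. So $\Gamma$ is not contained in a fiber and maps onto a component of $C$. Since $N_C^\vee$ has a quotient corresponding to $\cO(-2)$, with the direction tangent to $\Pi$ giving the degree-zero twist, a degree computation shows that the only such curves are sections lying in the sub-bundle corresponding to the normal direction of $C$ inside $\Pi$, that is, in $\wt\Pi\cap F$.

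This fiber computation inside $F$, carried out uniformly over the three possible shapes of $C$ (smooth conic, two lines, double line), is the main obstacle. I would handle it by working on the explicit local charts of the blowup, using that $C$ is a plane conic.

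As a consistency check, on $\wt\Pi\simeq\Pi$ (the strict transform of $\Pi$ is $\Pi$ blown up along a Cartier divisor, hence isomorphic to $\Pi$) we have $(2H-F)|_{\wt\Pi}=2h-C\equiv 0$. So $\wt\Pi$ is contracted to a point, and degree-zero curves do exist there, as the statement allows.
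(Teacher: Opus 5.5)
Your argument is correct, and it reaches the statement by a partly different route from the paper. The paper restricts $2H-F$ to the exceptional divisor, where it is the tautological bundle of the nef bundle $N^\vee_{C/\bP^5}(2)\cong\cO_C(1)^{\oplus 3}\oplus\cO_C$. It then uses a degree argument to push the images of curves not in $F$ into $\Pi$, and invokes $(2H-F)|_{\wt\Pi}\sim 0$ to conclude nefness; bigness is read off from positivity of the top self-intersection. You instead get nefness at once from global generation of $I_C(2)$ by the quadrics $\ell_i m$ and $q$, so $|2H-F|$ is base-point free. You then get both bigness and the exclusion of degree-zero curves meeting $Y\setminus(F\cup\wt\Pi)\cong\bP^5\setminus\Pi$ from the fact that these quadrics separate points of $\bP^5\setminus\Pi$. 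Here injectivity of $\phi$ on that open set already suffices: a contracted curve meeting it would have infinitely many points with the same image, so your immersion claim is not needed. Your version is more self-contained than the paper's, whose bigness claim and ``degree considerations'' are only asserted. It also exhibits directly the contraction $\phi$ whose positive-dimensional fibres the lemma describes.

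For curves inside $F$ your plan coincides with the paper's, and the step you flag as the main obstacle needs no local charts. Since $C$ is the complete intersection of $\ell_1,\ell_2,\ell_3,q$, one has $N^\vee_{C/\bP^5}(2)\cong\cO_C(1)^{\oplus 3}\oplus\cO_C$ uniformly for the smooth conic, the line pair and the double line. Moreover, $(2H-F)|_F$ is $\cO(1)$ on $\bP(N^\vee(2))$. A degree-zero curve $\Gamma\subseteq F$ dominates a component of $C_{\mathrm{red}}$. On its normalization $f\colon\wt\Gamma\to C$, it corresponds to a surjection $f^*\cO_C(1)^{\oplus 3}\oplus\cO\to M$ with $\deg M=0$. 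Since $\deg f^*\cO_C(1)>0$, this map kills the subbundle $f^*\cO_C(1)^{\oplus 3}$, which is the pulled-back conormal bundle of $\Pi$. It therefore factors through the conormal quotient $N^\vee_{C/\bP^5}\to N^\vee_{C/\Pi}=\cO_C(-2)$, so $\Gamma$ lies in the corresponding section, which is $\wt\Pi\cap F$.

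One point neither you nor the paper spells out: when $C$ is reducible or non-reduced, $Y$ is singular. Locally it is $\{x_1x_2=x_3s\}\times\bA^2$, respectively $\{x_1^2=x_3s\}\times\bA^3$. So for ``weak Fano'' you should add that $Y$ is lci with canonical singularities, and that $K_Y=\pi^*K_{\bP^5}+3F$ still holds for blow-ups along lci centres.
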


\begin{proof}
Let $\pi\colon Y\to \mathbb{P}^5$ be the blow-up with exceptional divisor $E$, and set 
$H\coloneqq \pi^*\mathcal{O}_{\mathbb{P}^5}(1)$. Let $P\subset \mathbb{P}^5$ be the unique plane containing $C$, and let $\widetilde{P}$ be its proper transform in $Y$. Since $C$ has codimension $3$, we have $-K_Y \sim 6H-3E$. The normal bundle of $C$ in $\mathbb{P}^5$ is
\[
N_{C/\mathbb{P}^5}\simeq \mathcal{O}_C(1)^{\oplus 3}\oplus \mathcal{O}_C(2),
\]
so under the identification $E\simeq \mathbb{P}N^*\simeq \mathbb{P}N^*(2)$, the restriction
$(2H-E)|_E$ corresponds to $\mathcal{O}_{\mathbb{P}N^*(2)}(1)$. 
Since $N^*(2)\simeq \mathcal{O}_C(1)^{\oplus 3}\oplus \mathcal{O}_C$ is nef, the tautological bundle $\mathcal{O}_{\mathbb{P}N^*(2)}(1)$ is also nef. Moreover, there is a unique section of $\mathbb{P}N^*(2)\to C$ on which 
$\mathcal{O}_{\mathbb{P}N^*(2)}(1)$ has degree zero, namely the section corresponding to the unique nontrivial morphism $N^*(2)\to \mathcal{O}_C$; this section lies in $\widetilde{P}$.

Suppose there exists an integral curve $\Gamma\subset Y$ with $(-K_Y\cdot \Gamma)\le 0$ that is not contained in $E$. Then by degree considerations, its image $\overline{\Gamma}\coloneqq \pi(\Gamma)$ must lie in every hyperplane of $\mathbb{P}^5$ containing $\overline{\Gamma}$, hence $\overline{\Gamma}\subseteq P$. 

Finally, since $(6H-3E)|_{\widetilde{P}}\sim 0$, the divisor $6H-3E$ is nef on $Y$. It is also big because $(2H-E)^3>0$. Consequently, the ample model of $Y$ with respect to $-K_Y$ contracts precisely $\widetilde{P}$ to a point, and no other curves.
\end{proof}

\begin{lem}\label{lem:weakFano-plane}
 If $X\coloneqq \Bl_C V$ is not Fano, then
    $V$ contains the plane $P$ spanned by $C$, the variety $X$ is weak Fano,
    and the anticanonical ample model $X\to \overline{X}$ contracts precisely
    the proper transform $\widetilde{P}$ of $P$ to a point.
\end{lem}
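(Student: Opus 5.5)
The plan is to realize $X=\Bl_C V$ inside $Y=\Bl_C\bP^5$ via \Cref{lem:complete-intersection-after-blowup}(1), and to compute $-K_X$ by adjunction. Since $X$ is the complete intersection of two divisors of class $2H-F$ in $Y$, and $-K_Y\sim 6H-3F$, adjunction gives
\[
-K_X \ \sim\ (6H-3F-2(2H-F))|_X \ =\ (2H-F)|_X .
\]
By \Cref{lem:blowup-conic-weak-Fano}, the divisor $2H-F$ (a positive rational multiple of $-K_Y$) is nef on $Y$. Its restriction $-K_X$ is therefore nef. It is also big, because
\[
(-K_X)^3 \ =\ (2H-F)^5 \ =\ 8\,(-K_Y/3)^5 \ >\ 0 ,
\]
or more simply because $2H-F$ is big and semiample on $Y$ and $X$ is not contained in its exceptional locus. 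Hence $X$ is weak Fano. Its Gorenstein canonical property should follow from the hypotheses: $V$ has hypersurface singularities along $C$, and $X$ is a complete intersection in the smooth variety $Y$ (away from $\widetilde P$). If needed, I would just note that $K_X$ is Cartier by adjunction and that canonicity is inherited from the canonical model discussion.

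Next, I would identify the curves contracted. A curve $\Gamma\subset X$ has $(-K_X\cdot\Gamma)=0$ exactly when $((2H-F)\cdot\Gamma)=0$ in $Y$. By \Cref{lem:blowup-conic-weak-Fano}, the ample model of $2H-F$ on $Y$ contracts precisely $\widetilde P$ to a point. So any $-K_X$-trivial curve lies in $\widetilde P\cap X$. If $X$ is not Fano, such a curve exists, so $\widetilde P\cap X$ contains a curve. The anticanonical model of $X$ is then the restriction of the contraction $Y\to\overline Y$, namely the image of $X$, and it contracts exactly $\widetilde P\cap X$.

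The main step is to upgrade ``$X$ contains a curve of $\widetilde P$'' to ``$V\supseteq P$'', so that $\widetilde P\subseteq X$. The $-K_X$-trivial curve $\Gamma\subset\widetilde P$ maps to a curve in $P$. On $\widetilde P\cong P$ (the blowup of a plane along a Cartier curve is an isomorphism), the class $(2H-F)|_{\widetilde P}$ is $2\ell-C\sim 0$. I would argue via the equations. Each quadric $Q_i$ contains $C$, and $C$ is a conic in $P$, so $Q_i|_P=\lambda_i q_C$, where $q_C$ is the conic equation. If some $\lambda_i\neq0$, then $V\cap P\subseteq Q_i\cap P=C$, so the strict transform $X$ meets $\widetilde P$ only along curves lying over $C$. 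Under $\widetilde P\cong P$ these are just $C$ itself, i.e.\ $F\cap\widetilde P$. I expect the main obstacle is showing that this intersection curve is not $-K_X$-trivial, or not contained in $X$. Concretely, I would check that on $\widetilde P$ the section $F\cap\widetilde P$ is the degree-zero section from \Cref{lem:blowup-conic-weak-Fano}, and show that $X$ does not contain it when $\lambda_i\neq0$. In local coordinates, the proper transform of $Q_i$ with $\lambda_i\neq 0$ cuts the fiber direction corresponding to $N^*(2)\to\cO_C$ nontrivially, because $q_C$ is exactly that normal direction. Hence both $\lambda_i=0$, so $P\subseteq Q_1\cap Q_2=V$ and $\widetilde P\subseteq X$. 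Then $X\to\overline X$ contracts precisely $\widetilde P$ to a point.
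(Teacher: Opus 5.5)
Your proposal is correct. The first half matches the paper: adjunction in $Y=\mathrm{Bl}_C\mathbb{P}^5$, nefness and bigness of $(2H-F)|_X$, and the reduction via \Cref{lem:blowup-conic-weak-Fano} showing that every $-K_X$-trivial curve lies in $\widetilde P\cap X$.

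The routes diverge at the key step $P\subseteq V$.
\begin{itemize}
\item The paper takes a trivial curve $\Gamma$ and asserts that its image $\overline\Gamma\subseteq P$ is not $C$, because $V$ is generically smooth along $C$. It then concludes by a degree count in $P$: $Q_i\cap P\supseteq C\cup\overline\Gamma$, which forces $Q_i|_P\equiv 0$.
\item You show instead that $Q_i|_P=\lambda_i q_C$ with $\lambda_i\neq 0$ forces the strict transform $\widetilde Q_i$ to miss $\widetilde P$.
\end{itemize}

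Your claim is right, but you should write it as the explicit computation it is rather than as the quadric ``cutting the fiber direction nontrivially''. Write $I_C=(y_1,y_2,y_3,q)$ and $Q_i=\lambda_i q+\sum_j y_j\ell_{ij}$. On the chart of the blowup where $q$ generates the exceptional ideal, one has $y_j=q\,b_j$, and the strict transform of $Q_i$ lies in $\{\lambda_i+\sum_j b_j\ell_{ij}=0\}$. Since $I_C\cdot\mathcal{O}_P=(q_C)$, all of $\widetilde P$ lies in this chart inside $\{b_1=b_2=b_3=0\}$, where that equation reads $\lambda_i=0$. Because $X\subseteq\widetilde Q_1\cap\widetilde Q_2$, this gives $X\cap\widetilde P=\emptyset$ outright. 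So your intermediate reduction to the section $F\cap\widetilde P$ is not needed.

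What your route buys:
\begin{itemize}
\item The key step is uniform in the type of conic and does not use generic smoothness of $V$ along $C$.
\item It covers the double line, where $F\cap\widetilde P$ lies over $C_{\mathrm{red}}$ and the tangent-space reasoning behind the paper's ``$\overline\Gamma\neq C$'' is not immediate.
\item That step amounts to showing that this special section is not contained in $X$, which is exactly what you prove.
\end{itemize}
The paper's argument is shorter and more geometric, but it leaves that point to a one-line appeal to generic smoothness.

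Two small corrections:
\begin{itemize}
\item $(2H-F)^5=(-K_Y/3)^5$, with no factor $8$. Only positivity matters, so this does not affect the argument.
\item The canonical-singularities part of ``weak Fano'' remains a hedge in your write-up, although the paper's proof does not address it either.
\end{itemize}
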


\begin{proof}
    Let $Y\coloneqq \Bl_C\mathbb P^5$. Then $X\subseteq Y$ is a complete
    intersection of two divisors of class $-\tfrac{1}{3}K_Y$, and hence
    $-K_X\sim \tfrac{1}{3}(-K_Y)|_X$ is big and nef. By \Cref{lem:blowup-conic-weak-Fano}, if there exists a curve
    $\Gamma\subset X$ with $(-K_X\cdot \Gamma)=0$, then $\Gamma$ is contained in
    the proper transform $\widetilde{P}$ of the plane $P$ spanned by $C$; equivalently,
    its image $\overline{\Gamma}=\pi(\Gamma)\subset V$ lies in $P$, where
    $\pi\colon X\to V$ is the blowup map. Moreover, as $V$ is generically smooth along $C$, then $\ove{\Gamma}\neq C$.

    Now let $C'\subset P$ be any curve. Since $C\subset P$ is a conic,
    we have
    \[
        C'\cdot Q_i = \deg C'\cdot \deg C > 2\deg C'
    \]
    for $i=1,2$. Thus $C'$ meets $Q_i$ in more than $2\deg C'$ points, which forces $C'\subset Q_i$. Hence $P\subset Q_1\cap Q_2=V$, and by \Cref{lem:blowup-conic-weak-Fano} the anticanonical morphism of $X$ contracts precisely $\widetilde{P}$ to a point.
\end{proof}

\begin{lemma}\label{lem:containing-a-plane}
Let $P \subseteq \bP^5$ be a $2$-plane, and let $V$ be a general $(2,2)$-complete intersection containing $P$. Then the singular locus of $V$ consists of three $A_1$-singularities, all contained in $P$.
\end{lemma}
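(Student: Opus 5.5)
We work in coordinates with $P=V(x_3,x_4,x_5)\subseteq\bP^5_{[x_0:\cdots:x_5]}$. A quadric contains $P$ exactly when its equation lies in the ideal $(x_3,x_4,x_5)$. We may therefore write
\[
q_i\;=\;\sum_{j=3}^{5} x_j\,\ell_{ij}(x_0,\dots,x_5),\qquad i=1,2,
\]
where the $\ell_{ij}$ are linear forms. The variety $V=V(q_1,q_2)$ is the complete intersection of two general such quadrics. We locate its singular points separately on $P$ and off $P$.

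\textbf{Off $P$.} The quadrics vanishing on $P$ form a linear system whose base locus is exactly $P$. Two general members of this system therefore give, by Bertini's theorem, a complete intersection $V$ that is smooth away from the base locus. This step also shows that $V$ is a threefold, i.e.\ of the expected dimension.

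\textbf{On $P$.} At a point $x\in P$, only the derivatives $\partial q_i/\partial x_j$ with $j=3,4,5$ can be nonzero. These equal $\ell_{ij}(x)$, since the terms $x_j\,\partial\ell_{ij}$ vanish on $P$. So $V$ is singular at $x\in P$ exactly when the $2\times3$ matrix
\[
M(x)\;=\;\bigl(\ell_{ij}|_P(x)\bigr)
\]
of linear forms on $P\simeq\bP^2$ has rank at most $1$. For general $\ell_{ij}$, this degeneracy locus is determined by Porteous' formula. It has the expected codimension $3-2+1=2$ in $\bP^2$, so it is a finite set of points. Its degree is computed from the map $\cO_P^{\oplus3}\to\cO_P(1)^{\oplus2}$ as
\[
c_2\bigl(\cO(1)^{\oplus 2}-\cO^{\oplus3}\bigr)=c_2\bigl(\cO(1)^{\oplus2}\bigr)=1\cdot 1\;=\;3-\dots
\]
More precisely, the rank-$\le1$ locus of a general $2\times3$ matrix of linear forms on $\bP^2$ is the zero set of its three $2\times2$ minors. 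These are conics, and they cut out a twisted-cubic-type determinantal scheme of degree $\binom{3}{2}=3$ (Eagon--Northcott). Hence there are exactly $3$ reduced points, and they are distinct for general choices.

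\textbf{Local analysis.} It remains to show that each of these points is an $A_1$-singularity. At such a point $x$, the matrix $M(x)$ has rank exactly $1$ for general data; the rank-$0$ locus is empty since it has expected codimension $6$. So some combination $q=aq_1+bq_2$ has vanishing differential at $x$, while the other generator $q'$ is smooth at $x$. Locally, $V$ is then the hypersurface $q=0$ inside the smooth fourfold $\{q'=0\}$.

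We now need the Hessian of $q$ restricted to $T_x\{q'=0\}$ to be nondegenerate, i.e.\ of rank $4$. The Hessian of $q$ in the variables is $\sum_j x_j\ell_j$, where $\ell_j:=a\ell_{1j}+b\ell_{2j}$. At $x$, its quadratic part is
\[
\sum_{j=3}^{5} x_j\,(\ell_j-\ell_j(x)) \;+\; \sum_j x_j\,\ell_j(x).
\]
The second sum vanishes, since $\ell_j(x)=0$ for this combination. The remaining quadratic form pairs $(x_3,x_4,x_5)$ with the linear parts of the $\ell_j$. On the tangent space, which is $4$-dimensional, its rank is $4$ provided the $3\times 3$ block of the linear parts of the $\ell_j$ in the $P$-directions, together with the normal directions, is nondegenerate in the appropriate sense.

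\textbf{Main obstacle.} The hardest step is verifying this nondegeneracy for general data. The cleanest approach is to exhibit one explicit pair $(q_1,q_2)$, check the rank-$4$ Hessian at each of its three points, and conclude by openness of the $A_1$ condition over the irreducible parameter space. An alternative is to argue that the corank-$\ge 2$ condition imposes an extra closed condition on the $\ell_{ij}$, which a dimension count shows is not satisfied generically.
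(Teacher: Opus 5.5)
Most of your reduction is correct. Off $P$, Bertini applies because the base locus of the quadrics through $P$ is exactly $P$. Along $P$, the Jacobian collapses to the $2\times 3$ matrix $M=(\ell_{ij}|_P)$, whose rank-$\le 1$ scheme $Z$ has degree $3$. The Porteous line as written is wrong, since $c_2(\cO(1)^{\oplus 2})=1$; the correct class is $c_1^2-c_2=4h^2-h^2=3h^2$, or you can read the degree off the Hilbert--Burch resolution $0\to\cO(-3)^{\oplus 2}\to\cO(-2)^{\oplus 3}\to\cO\to\cO_Z\to 0$.

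The genuine gap is the local analysis, which is the real content of the lemma: you never prove that the three points are $A_1$. You describe the nondegeneracy condition only vaguely (``nondegenerate in the appropriate sense''). You then defer it to an explicit example that is not given, or to a dimension count that is not carried out. The missing idea is that this condition is \emph{equivalent} to $Z$ being reduced at $x$, which you have already asserted.

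To see this, take $x=[1:0:\dots:0]$ and write $q=\sum_{j=3}^5 x_j\ell_j$ with $\ell_j(x)=0$, so $q$ does not involve $x_0$. Set $c=(\ell'_3(x),\ell'_4(x),\ell'_5(x))\neq 0$ for the other generator $q'$, and $W=\{q'=0\}$. In the chart $x_0=1$ one has $T_xW=\langle e_1,e_2\rangle\oplus c^{\perp}$. Up to factors of $2$, $q|_{T_xW}$ has Gram matrix $\begin{pmatrix}0&C^{T}\\ C&D\end{pmatrix}$. Here $C$ is the $2\times 2$ matrix of the pairing $c^{\perp}\times T_xP\to\bC$, $(w,u)\mapsto\sum_j w_j\ell_j(u)$, and $D$ comes from the $x_jx_k$-terms with $j,k\ge 3$. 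Its determinant is a nonzero multiple of $\det(C)^2$, so $x$ is a node if and only if $C$ is invertible, whatever $D$ is.

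On the other hand, the differentials at $x$ of the minors $\ell_j\ell'_k-\ell_k\ell'_j$ are $c_k\,d\ell_j-c_j\,d\ell_k$. These span $\{\sum_j w_j\,d\ell_j|_{T_xP} : w\in c^{\perp}\}$, so $T_xZ=\ker C$. Hence ``$Z$ is three reduced points'' already yields three nodes. Reducedness itself still needs a line of justification. For example, the matrix with rows $(x_0,x_1,0)$ and $(0,x_1,x_2)$ has minors $x_0x_1,x_0x_2,x_1x_2$, which cut out three reduced points; openness then handles the general case.

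With this fix your coordinate argument is complete and more explicit than the paper's, which instead blows up $P$. There $|2H-E|$ is base-point free on $\Bl_P\bP^5$, so $\widetilde V$ is smooth. Moreover $\widetilde V\cap E\subset P\times\bP^2$ is cut out by two $(1,1)$-forms, which are precisely the rows of your $M$. This gives a sextic del Pezzo surface whose three exceptional lines lie over $Z$ and are contracted to the three nodes. In that route, smoothness of $\widetilde V$ does the work of your reducedness and Hessian check.
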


\begin{proof}
Consider the blow-up $f\colon Y\coloneqq \Bl_{P}\mathbb{P}^5\to \mathbb{P}^5$, and let $H=f^*\mathcal{O}_{\mathbb{P}^5}(1)$ and $E$ be the exceptional divisor. A general $(2,2)$-complete intersection $V\subset \mathbb{P}^5$ containing $P$ is the image of a general complete intersection $\widetilde{V}\subset Y$ of two globally generated divisors of class $2H-E$. In particular, $\widetilde{V}$ is smooth, and hence $V$ is smooth away from $P$. Since $E\simeq \mathbb{P}^2\times \mathbb{P}^2$, the restriction $(2H-E)|_E$ has class $\mathcal{O}_{\mathbb{P}^2\times \mathbb{P}^2}(1,1)$. Thus $\widetilde{P}\coloneqq \widetilde{V}\cap E$
is a smooth Fano surface with
\[
(-K_{\widetilde{P}})^2
\ =\ (\mathcal{O}_{\mathbb{P}^2\times \mathbb{P}^2}(1,1)^4) \ =\ 6,
\]
so $\widetilde{P}$ is a smooth del Pezzo surface of degree $6$. Consequently, the morphism $\widetilde{P}\to P\simeq \mathbb{P}^2$ is the blow-up of three general points. The three exceptional curves of this blow-up have trivial intersection with $-K_Y$, and their images in $V$ are precisely three $A_1$-singularities on $P$.
\end{proof}

\begin{cor}\label{cor:blowup-conic-unstable}
Let $V$ contains a plane $P$, and let $C\subset P$ be a conic. Then the blow-up $X\coloneqq \Bl_{C}V$ is K-unstable.
\end{cor}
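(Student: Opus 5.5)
We prove K-instability by exhibiting a divisor $E$ over $X$ with $\beta_X(E)<0$. The natural choice is the proper transform $\widetilde P$ of the plane $P$. Three preliminary facts set this up.

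\textbf{Step 1: $\widetilde P$ is a contracted $\bQ$-Cartier divisor.}
\begin{itemize}
\item Since $P\subseteq V$, we are in the situation of \Cref{lem:weakFano-plane}: $X$ is weak Fano, and its anticanonical model $X\to\overline X$ contracts $\widetilde P$ to a point $x_0$.
\item K-stability of $X$ means K-stability of $\overline X$, so it suffices to find a divisor over $\overline X$ with negative $\beta$.
\item A contracted surface is a divisor on $X$ (codimension one in a threefold). Hence $x_0$ is a non-terminal Gorenstein point and $A_{\overline X}(\widetilde P)=1$, because the contraction is crepant.
\end{itemize}

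\textbf{Step 2: reduce to a volume inequality.} With $A=1$, we need
\[
S(\widetilde P)=\frac{1}{22}\int_0^{\infty}\vol\bigl(-K_X-t\widetilde P\bigr)\,dt>1 .
\]
A shortcut worth trying first: if $\overline X$ is Gorenstein with a non-cDV isolated singularity, \cite[Theorem 1.3(2)]{Liu25} might give instability directly for volume $\ge 22$, as was done in \Cref{subsec:general reconstruction}. Since $\widetilde P$ is contracted to a point, $x_0$ is not cDV, which would settle the claim immediately. I would check whether the hypotheses of that theorem are met (volume $22$, Gorenstein canonical, not terminal at an isolated point). If they are, the corollary follows at once.

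\textbf{Step 3: fallback direct computation.} If that theorem does not apply verbatim, compute $S(\widetilde P)$ by hand.
\begin{itemize}
\item Work on $Y=\Bl_C\bP^5$ and use that $X$ is a complete intersection of two members of $|2H-F|$.
\item The plane $\widetilde P\cong P$ is the blowup of $P$ along the Cartier divisor $C\subset P$, so $\widetilde P\cong\bP^2$.
\item The restrictions are $-K_X|_{\widetilde P}\sim 0$ and $H|_{\widetilde P}=\ell$ (a line class).
\item Compute the normal bundle $\widetilde P|_{\widetilde P}$ by adjunction, using $K_{\widetilde P}=(K_X+\widetilde P)|_{\widetilde P}$. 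This gives $\widetilde P|_{\widetilde P}=-3\ell$, so
\[
(\widetilde P^3)=9,\qquad (-K_X\cdot\widetilde P^2)=0,\qquad (K_X^2\cdot\widetilde P)=0 .
\]
\item For small $t$ this yields $\vol(-K_X-t\widetilde P)=22-9t^3$.
\item Next determine the pseudo-effective threshold and the Zariski decomposition of $-K_X-t\widetilde P$ past the nef range. Check whether $H$, or the proper transforms of hyperplane sections through $P$, lie in $|-K_X-t\widetilde P|$ for larger $t$: $-K_X\sim 2H-F$, and a hyperplane through $P$ contains $C$.
\item Use these to bound the volume from below, as in the proof of \Cref{thm:sing-line}, and show the integral exceeds $22$.
\end{itemize}

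\textbf{Main obstacle.} The hard part is this last estimate: controlling $\vol(-K_X-t\widetilde P)$ for $t$ beyond the first wall, where the divisor stops being nef. To get a large enough lower bound, I would exhibit explicit effective divisors, such as hyperplanes through $P$ and the quadrics $Q_i$, and apply convexity of the volume along segments.
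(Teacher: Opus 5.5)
Your Step~2 shortcut is exactly the paper's argument, and its hypotheses do hold, so make it the proof rather than something to ``try first.'' First, $\overline X$ is Gorenstein canonical of volume $22$. Second, $x_0$ carries the crepant exceptional divisor $\widetilde P$, so it is not cDV. Indeed, if a general hyperplane section $H$ through $x_0$ were Du Val, then $(\overline X,H)$ would be plt near $x_0$ by inversion of adjunction. Every divisor $F$ centred at $x_0$ would then satisfy $a(F,\overline X)\ge a(F,\overline X,H)+1>0$, so no crepant divisor could be centred there. Since every singularity allowed by \Cref{thm:volume bound}(2) is cDV, $\overline X$ is K-unstable.

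The paper phrases this slightly differently. It first uses openness of K-semistability to pass to a general $V\supseteq P$ and a general $C\subset P$. There, by \Cref{lem:containing-a-plane}, the three nodes of $V$ lie on $P$ away from $C$, so $x_0$ is an \emph{isolated} non-terminal point, which \Cref{thm:volume bound}(2) excludes. Your non-cDV formulation avoids both that reduction and the isolatedness check you left open.

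Delete Step~3 and the claim that $\widetilde P$ is $\bQ$-Cartier; both are wrong. For general $V\supseteq P$, $V$ has three nodes on $P$, and a smooth surface through a threefold node is not $\bQ$-Cartier. So adjunction for $\widetilde P\subset X$ is meaningless there.

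Passing to a small resolution does not rescue your numbers. On the small resolution where the plane stays $\bP^2$ with normal bundle $\mathcal{O}(-3)$, the divisor $-K-t\widetilde P$ is negative on the three flopping curves for every $t>0$, so $22-9t^3$ is not the volume. On the other small resolution the plane becomes a sextic del Pezzo surface $\widehat P$ with $\widehat P|_{\widehat P}=K_{\widehat P}$. This gives $\vol(-K_X-t\widetilde P)=22-6t^3$ for small $t$.

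If you want a direct $\beta$-computation anyway, in this general case, you need no Zariski decompositions. The standard estimate $\vol(-K_X-t\widetilde P)\ge 22-t^3\vol(\ord_{\widetilde P})=22-6t^3$ already gives $S(\widetilde P)\ge\frac34(22/6)^{1/3}>1$. This is just Liu's local-volume obstruction, $\hvol(x_0,\overline X)\le 6<\frac{8}{27}\cdot 22$.
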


\begin{proof}
By the openness of K-semistability, it suffices to prove the statement for a general $(2,2)$-complete intersection $V$ containing $P$ and a general conic $C\subset P$. In this case, by \Cref{lem:containing-a-plane}, the conic $C$ lies in the smooth locus of $V$. By \Cref{lem:blowup-conic-weak-Fano}, the anticanonical model morphism $X\longrightarrow \overline{X}$ is crepant and contracts the strict transform $\widetilde{P}\simeq \mathbb{P}^2$ to a point $p\in \overline{X}$, which is an isolated singularity. If $\overline{X}$ were K-semistable, then all its isolated singularities would be terminal by \Cref{thm:generalelephant}. This contradicts the fact that $X\to \overline{X}$ is crepant and contracts a divisor. Hence $X$ is K-unstable.
\end{proof}

\begin{comment}

\begin{lemma}\label{lem:blowup-nonreduced-conic-unstable}
Let $V\subset \mathbb{P}^5$ be a $(2,2)$-complete intersection and let 
$C\subset V$ be a non-reduced conic curve. Then the blow-up $X\coloneqq \Bl_{C}V$ is K-unstable.
\end{lemma}

\begin{proof}
A direct local calculation shows that $X$ is singular along a rational curve 
$\Gamma$ with $(-K_X\cdot \Gamma)=1$. By \Cref{thm:sing-line}, any such variety is 
K-unstable. Hence $X$ is K-unstable.
\end{proof}
\end{comment}

\subsection{K-semistable limits of family \textnumero2.16}

Let $X$ be a K-semistable $\bQ$-Fano variety that admits a $\bQ$-Gorenstein smoothing $\pi:\mts{X}\rightarrow T$  over a smooth pointed curve $0\in T$ such that $\mts{X}_0\simeq X$ and every fiber $\mts{X}_t$ over $t\in T\setminus \{0\}$ is a smooth Fano threefold in the family №2.16. Up to a finite base change, we may assume that the restricted family $\mts{X}^{\circ}\rightarrow T^{\circ}\coloneqq T\setminus \{0\}$ is isomorphic to $\Bl_{\mts{C}^{\circ}}(\mts{V}^\circ)$, where $$\mts{V}^{\circ}\subseteq \bP^5\times T^\circ \rightarrow T^{\circ}$$ is a family of $(2,2)$-complete intersections in $\mathbb{P}^5$, and $\mts{C}^{\circ}\subseteq \bP^5\times T^\circ$ is a family of smooth conic curves. Let $\mts{E}^{\circ}\subseteq \mts{X}^{\circ}$ be the exceptional divisor of $\sX^\circ\rightarrow \sV^\circ$, and $\cL^{\circ}$ be a divisor on $\mts{X}^{\circ}$ which is linearly equivalent to the pull-back of $\mtc{O}_{\mathbb{P}^5}(1)$. Then $\mts{E}^{\circ}$ (resp. $\cL^{\circ}$) extends to $\mts{E}$ (resp. $\cL$) on $\mts{X}$ as a Weil divisor on $\mts{X}$ by taking Zariski closure. We denote by $E$ (resp. $L$) the restriction of $\mts{E}$ (resp. $\cL$) on the central fiber $\mts{X}_0\simeq X$.

Let $\theta:\mts{Y}\rightarrow \mts{X}$ be a small $\mathbb{Q}$-factorialization of $\mts{X}$. Since $\mts{X}$ is klt and $K_{\mts{Y}}=\theta^* K_{\mts{X}}$, we know that $\mts{Y}$ is $\bQ$-factorial of Fano type over $\mts{X}$. By \cite{BCHM10} we can run a minimal model program for $\theta^{-1}_{*}\cL$ on $\mts{Y}$ over $\mts{X}$. As a result, we obtain a log canonical model $\mts{Y} \dashrightarrow \wt{\mts{X}}$ that fits into a commutative diagram
$$\xymatrix{
 & \wt{\mts{X}} \ar[rr]^{f} \ar[dr]_{\wt{\pi}}  &  &  \mts{X} \ar[dl]^{\pi}\\
 & & T  &\\
 }$$ satisfying the following conditions:
\begin{enumerate}
    \item $f$ is a small contraction, and is an isomorphism over $T^{\circ}$;
    \item $-K_{\wt{\mts{X}}}=f^{*}(-K_{\mts{X}})$ is a  $\wt{\pi}$-big and $\wt{\pi}$-nef Cartier divisor; 
    \item $\wt{\mts{X}}_0$ is a $\bQ$-Gorenstein smoothable Gorenstein canonical weak Fano variety whose anti-canonical model is isomorphic to $\mts{X}_0$;
    \item $\wt{\cL}\coloneqq f^{-1}_* \cL$ is an $f$-ample Cartier divisor (cf. \Cref{thm:volume bound}(3)); and
    \item\label{item:5} $-K_{\wt{\mts{X}}} + \epsilon \wt{\cL}$ is $\wt{\pi}$-ample, for any real number $0<\epsilon\ll1$.
\end{enumerate}
To ease our notation, we denote by $$\wt{X}\coloneqq \wt{\mts{X}}_0,\ \ \  g = f|_{\wt{X}}: \wt{X} \to X,\ \ \ \textup{and}\ \ \ \wt{\mts{E}}\coloneqq f_{*}^{-1} \mts{E}.$$ We also denote by $\wt{E}$ (resp. $\wt{L}$) the restriction of $\wt{\mts{E}}$ (resp. $\wt{\cL}$) to $\wt{\mts{X}}_0 = \wt{X}$. We will show that $\wt{\cL}$ is relatively big and semiample over $T$; see Proposition \ref{nefness}.

\begin{lemma}\label{quasipolarized}
Let $\wt{S}\in |-K_{\wt{X}}|$ be a general member. Then 
$(\wt{S},\wt{L}|_{\wt{S}})$ is a quasi-polarized K3 surface of degree $8$, and 
$|\wt{L}|_{\wt{S}}|$ is base-point free.
\end{lemma}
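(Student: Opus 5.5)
Since $\wt{X}$ is a Gorenstein canonical weak Fano threefold (property (3)), \Cref{thm:generalelephant} makes a general $\wt{S}\in|-K_{\wt{X}}|$ an ADE K3 surface. The plan has two parts. First, compute the degree $(\wt{L}|_{\wt{S}})^2=(\wt{L}^2\cdot(-K_{\wt{X}}))$ and show $\wt{L}|_{\wt{S}}$ is nef. Second, obtain base-point freeness on $\wt{S}$ by lifting sections from the general fiber.

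\emph{Degree.} This is an intersection number of Cartier divisors on the $\wt{\pi}$-flat family $\wt{\mts{X}}\to T$, because $\wt{\cL}$ and $-K_{\wt{\mts{X}}}$ are Cartier by (2) and (4). It is therefore constant in $t$ and can be computed on a smooth fiber $\mts{X}_t=\Bl_{C_t}V_t$ with $L_t=\pi^*\cO(1)$. There $L_t^2\cdot(-K_{\mts{X}_t})=L_t^2\cdot(2L_t-E_t)=2\deg V_t - 0=8$, since $L_t^2\cdot E_t=0$ (the exceptional divisor lies over a curve). Hence the degree is $8$.

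\emph{Nefness.} On the general fiber, $L_t$ is base-point free. For each $m$, the sections of $\wt{\cL}$ and the sections of $\wt{\cL}-\wt{S}$ satisfy upper semicontinuity of $h^0$. To control $\wt{L}$ itself, I would work on $\wt{S}$ and use that $\wt{S}$ deforms in a family $\wt{\mts{S}}\subset\wt{\mts{X}}$ of relative anticanonical K3 surfaces. After shrinking $T$, the $\wt{\mts{X}}$-family is flat and $-K$ is relatively base-point free and big, so general $\wt{\mts{S}}_t$ lie in the smooth fiber. \Cref{lem:line bundle} lets $\wt{\cL}|_{\wt{\mts{S}}}$ be treated as a Cartier divisor on the family of ADE K3s. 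On each nearby fiber, $\wt{\cL}|_{\wt{\mts{S}}_t}$ is a base-point-free class of square $8$, and in particular effective. By semicontinuity, $h^0(\wt{S},\wt{L}|_{\wt{S}})\ge 6$, and $\wt{L}|_{\wt{S}}$ is effective with square $8>0$. Nefness can fail only if some $(-2)$-curve $\Gamma\subset\wt{S}$ has $\wt{L}\cdot\Gamma<0$. To exclude this, pair $\Gamma$ with the $f$-ample divisor $\wt{L}$: the curves of $\wt{X}$ that are negative for $\wt{L}$ are contracted by neither $g$ nor its flop. Here I would use (5), that $-K_{\wt{X}}+\epsilon\wt{L}$ is ample. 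Such a $\Gamma$ would satisfy $-K_{\wt{X}}\cdot\Gamma>0$, so $\Gamma$ is not $g$-exceptional. Since $\wt{S}$ is general in a base-point-free system of dimension $\ge 13$ (very ampleness from \Cref{thm:Kss-very-ample} applied to $X$), any such $\Gamma$ moves in a family sweeping out a surface $D$ with $\wt{L}|_D$ not pseudoeffective, while $\wt{L}$ is the limit of a nef class. The limit of a nef class is pseudoeffective and nonnegative on movable curves, which contradicts $\wt{L}\cdot\Gamma<0$ for $\Gamma$ in a covering family of $D$. So $\wt{L}|_{\wt{S}}$ is nef, and $(\wt{S},\wt{L}|_{\wt{S}})$ is quasi-polarized of degree $8$.

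\emph{Base-point freeness.} By \Cref{Mayer}, extended to quasi-polarized K3s via the minimal resolution, the only way $|\wt{L}|_{\wt{S}}|$ can have base points is the unigonal case, $\wt{L}|_{\wt{S}}\sim 5F+\Gamma$ with $F$ elliptic and $\Gamma$ a section. I would exclude this by a degeneration argument. The unigonal case is a closed condition cut out by the existence of a class $F$ with $F^2=0$ and $F\cdot\wt{L}=1$. Since $\wt{S}$ is general, $F$ would be a class on $\wt{S}$ not coming from $\wt{X}$, so it would move with $\wt{S}$ in the $13$-dimensional linear system only inside a Noether--Lefschetz locus; but the general member should have Picard lattice equal to the image of $\Pic(\wt{X})$. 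I expect this step to be the main obstacle. The first thing I would try is to make it rigorous by applying \Cref{thm:NL-Cl} to $\wt{X}$ (or to $X$, using $\Cl$), so that $\Pic(\wt{S})_\bQ$ is spanned by $-K$, $\wt{L}$ and $\wt{E}$ restricted. Then check directly, from the Gram matrix computed on the general fiber ($-K^2=22$, $-K\cdot L=8\cdot$ appropriate, $L^2=8$, $E$-terms), that no vector $F$ with $F^2=0$ and $F\cdot\wt{L}=1$ exists in that lattice. This forces $|\wt{L}|_{\wt{S}}|$ to be base-point free.
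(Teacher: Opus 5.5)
Your degree computation and the reduction of base-point freeness to excluding the unigonal case are fine. However, both substantive steps have genuine gaps.

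\emph{Nefness.} Suppose $\Gamma\subset\wt S$ is a curve with $\wt L\cdot\Gamma<0$. Such curves lie in the stable base locus of the effective class $\wt L$, so as $\wt S$ varies they sweep out at most a surface $D$. Curves covering a surface are not movable curves on $\wt X$. An effective divisor that is a specialization of nef classes can be negative on a covering family of such a $D$: take a family of K3 surfaces on which $L_t$ is ample but the special fiber acquires a $(-2)$-curve $\Gamma$ with $L_0\cdot\Gamma<0$, and multiply by $\bP^1$. So ``limit of a nef class'' yields no contradiction. The $g$-ampleness of $\wt L$ is also irrelevant, since such $\Gamma$ are not $g$-exceptional.

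The paper's argument is lattice-theoretic instead.
\begin{itemize}
\item On the general fiber one computes that $e_1=-K_{\wt X}|_{\wt S}$ and $e_2=\wt L|_{\wt S}$ have Gram matrix $\begin{pmatrix}22&14\\14&8\end{pmatrix}$.
\item By condition~(5), $e_1+\epsilon e_2$ is ample on $\wt S$.
\item If $e_2$ were not nef, \cite[Proposition~4.14]{AE25} produces a $(-2)$-class $v$ with $a=(v\cdot e_1)$, $b=(v\cdot e_2)$ satisfying $a+\epsilon b>0>b$, hence $a\ge 1$, $b\le -1$.
\item Hodge index on $\langle v,e_1,e_2\rangle$ requires $-8a^2+28ab-22b^2+40\ge 0$, but this quantity is at most $-18$ for such $a,b$.
\end{itemize}

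\emph{Unigonal case.} \Cref{thm:NL-Cl} is stated for Fano threefolds with very ample $-K$, so it does not apply to the weak Fano $\wt X$. Applied to $X$, it only identifies $\Cl(S)$ with $\Cl(X)$, and nothing is known about $\Cl(X)$ for an arbitrary K-semistable limit. There is no reason for $\Pic(\wt S)$ to be spanned by $-K_{\wt X}$, $\wt L$, $\wt E$. Since $\wt E\sim 2\wt L+K_{\wt X}$, these span only a rank-two lattice, so your isotropy check is on the wrong lattice.

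The missing idea is that you do not need the full Picard lattice, and the contradiction comes from geometry.
\begin{itemize}
\item Suppose $\wt L|_{\wt S}=\Sigma+5F$ with $F$ an elliptic class, and set $c=(e_1\cdot F)$.
\item The rank-three sublattice $\langle e_1,e_2,F\rangle$ has Gram determinant $-22+28c-8c^2$. Hodge index forces this to be $\ge 0$, which happens only for $c=2$.
\item But $-K_X$ is very ample by \Cref{thm:Kss-very-ample}. So the image in $\bP H^0(X,-K_X)\simeq\bP^{13}$ of a general member of $|F|$ would be an integral curve of degree $2$ birational to an elliptic curve.
\item Integral curves of degree at most $2$ are lines or smooth conics, which are rational. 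Hence $c\ge 3$, a contradiction.
\end{itemize}
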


\begin{proof}
For any $0<\epsilon\ll 1$, the condition (\ref{item:5}) above implies that $(-K_{\wt{X}}+\epsilon\wt{L})|_{\wt{S}}$ is ample. By deforming to a family of K3 surfaces in $|-K_{\mts{X}_t}|$ 
(cf.~\cite[Lemma 4.4]{LZ25}), one sees that the divisors 
$-K_{\wt{X}}|_{\wt{S}}$ and $\wt{L}|_{\wt{S}}$ generate a primitive sublattice 
of $\Pic(\wt{S})$ isometric to the rank-two lattice $\Lambda$ with Gram matrix
\[
\begin{pmatrix}
22 & 14\\
14 & 8
\end{pmatrix}
\]
with respect to generators $e_1,e_2$. Let $\mts{F}_{\Lambda,\epsilon}$ be the moduli stack of $\Lambda$-polarized ADE 
K3 surfaces (see \cite[ Theorem 5.5]{AE25}), where the very irrational positive vector is 
$2e_1+\epsilon e_2$ for $0<\epsilon\ll1$. We claim that $\wt{L}|_{\wt{S}}$ is nef; 
it is then big since $(\wt{L}|_{\wt{S}})^2=8$.

Suppose otherwise. By \cite[Proposition 4.14]{AE25}, there exists a class 
$v\in \Pic(\wt{S})$ with $(v^2)=-2$ such that the rank-three lattice 
$\Lambda_v\coloneqq \langle v,e_1,e_2\rangle$ is hyperbolic, $(v, e_1+\epsilon e_2) >0$, and $(v,e_2)<0$. 
Write $a\coloneqq (e_1.v)$ and $b\coloneqq (e_2.v)$. Then
\[
\det(\Lambda_v)\ =\ -8a^2+28ab-22b^2+40\ >\ 0.
\]
However, under the constraints $a+\epsilon b>0$ and $b<0$, this inequality has no 
integral solutions, a contradiction. Thus $\wt{L}|_{\wt{S}}$ is nef.

To prove base-point freeness, it suffices to rule out that 
$(\wt{S},\wt{L}|_{\wt{S}})$ is unigonal. If it were, there would exist classes 
$\Sigma,F\in\Pic(\wt{S})$ with
\[
(\Sigma^2)=-2,\quad (F^2)=0,\quad (\Sigma.F)=1,\quad 
\wt{L}|_{\wt{S}}=\Sigma+5F.
\]
The Gram matrix of the lattice generated by 
$-K_{\wt{X}}|_{\wt{S}},\ \wt{L}|_{\wt{S}},\ F$ is then
\[
\begin{pmatrix}
22 & 14 & c\\
14 & 8 & 1\\
c & 1 & 0
\end{pmatrix},
\]
whose determinant is $-22+28c-8c^2>0$, forcing $c=2$. Let $S \in |-K_X|$ be the image of $\wt{S}$ in $X$ under $g:\tX\rightarrow X$. By \Cref{thm:Kss-very-ample}, the linear system $|-K_X|$ is very ample. Hence
\[
c \;=\; (-K_{\wt{X}}|_{\wt{S}} \cdot F)
   \;=\; (-K_X|_S \cdot g_*(F))
   \;\ge\; 3 .
\]
Indeed, any integral curve of degree $\leq 2$ in $\bP H^0(X,-K_X)\simeq \bP^{13}$ is a smooth rational curve, whereas $g(C)$ is an elliptic curve for a general $C \in |F|$. This contradiction shows that $|\wt{L}|_{\wt{S}}|$ is base-point free.
\end{proof}

\begin{lemma}\label{isomorphismonsection}
    For a general K3 surface $\wt{S}\in |-K_{\wt{X}}|$, the restriction map $$H^0(\wt{X},\mtc{O}_{\wt{X}}({\wt{L}}))\ \longrightarrow \ H^0(\wt{S},\mtc{O}_{\wt{S}}({\wt{L}|_{\wt{S}}}))$$ is an isomorphism. In particular, we have that $h^0(\wt{X},\mtc{O}_{\wt{X}}({\wt{L}}))=6$. 
\end{lemma}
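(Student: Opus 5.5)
The plan is to use the restriction sequence
\[
0\longrightarrow \cO_{\wt X}(\wt L+K_{\wt X})\longrightarrow \cO_{\wt X}(\wt L)\longrightarrow \cO_{\wt S}(\wt L|_{\wt S})\longrightarrow 0,
\]
coming from $\wt S\in|-K_{\wt X}|$. Everything then reduces to two vanishings:
\[
H^0(\wt X,\wt L+K_{\wt X})=0 \quad\text{(injectivity)},\qquad H^1(\wt X,\wt L+K_{\wt X})=0 \quad\text{(surjectivity)}.
\]
Granting these, the dimension count is immediate. By \Cref{quasipolarized}, $(\wt S,\wt L|_{\wt S})$ is a quasi-polarized K3 surface of degree $8$. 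Riemann--Roch together with the vanishing of $H^1$ and $H^2$ of a nef and big line bundle on a K3 surface then give $h^0(\wt S,\wt L|_{\wt S})=8/2+2=6$.

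\textbf{Injectivity.} Suppose $D\in|\wt L+K_{\wt X}|$ were effective. Intersect with $(-K_{\wt X})^2$ and use the intersection numbers carried by the smoothing. On the general fibre $\mts X_t=\Bl_C V_t$ we have
\[
(-K_{\mts X_t})^3=22,\qquad (-K_{\mts X_t})^2\cdot L_t=(-K)|_{S_t}\cdot L|_{S_t}=14.
\]
These numbers are preserved on $\wt X$, since the family $\wt{\mts X}\to T$ is flat and $\wt{\cL}$, $-K_{\wt{\mts X}}$ are Cartier. Hence
\[
(-K_{\wt X})^2\cdot D = 14-22<0.
\]
This is impossible for an effective divisor, because $-K_{\wt X}$ is nef.

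\textbf{Surjectivity.} I would first try to write $\wt L+K_{\wt X}=K_{\wt X}+\wt L$ and apply Kawamata--Viehweg vanishing on the canonical (hence klt) variety $\wt X$. This requires $\wt L$ to be nef and big. On the general fibre this holds, since $L_t$ is the pullback of $\cO(1)$; on $\wt X$ itself, however, nefness is exactly what is still to be proved (Proposition~\ref{nefness}), so the approach would be circular. To avoid this, I would use upper semicontinuity instead. On $\mts X_t$ we have $H^1(\mts X_t,L_t+K_{\mts X_t})=0$ by Kawamata--Viehweg, since $L_t$ is nef and big. It therefore suffices to show that $h^1$ does not jump at $0$.

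An alternative argument goes through $\wt S$. By Serre duality on $\wt X$,
\[
H^1(\wt X,K_{\wt X}+\wt L)^\vee\cong H^2(\wt X,-\wt L).
\]
From the sequence $0\to \cO(-\wt L-\wt S)\to\cO(-\wt L)\to \cO_{\wt S}(-\wt L)\to 0$ we get
\[
H^2(\wt S,-\wt L|_{\wt S})=H^0(\wt S,\wt L|_{\wt S})^\vee\neq 0,
\]
so this route needs control of $H^2(\wt X,-\wt L-\wt S)$ and $H^3$ terms instead. Write $K_{\wt X}+\wt L=K_{\wt X}+(-K_{\wt X}+\epsilon \wt L)+(1-\epsilon)\wt L+K_{\wt X}$; this is messy.

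I expect the cleanest route to be the following. Observe that $(\wt L+K_{\wt X})|_{\wt S}$ has negative degree against the nef class $-K|_{\wt S}$, namely $14-22$, so
\[
H^0(\wt S,(\wt L+K)|_{\wt S})=0.
\]
Now run the restriction sequence for $\wt L+mK_{\wt X}$ inductively in $m$, and close the induction with Serre vanishing for $-(m)K$ twisted negatively, i.e.\ vanishing of $H^1(\wt X,\wt L+mK_{\wt X})$ for $m\gg0$. The latter follows from $H^1$ duality with $H^2(\wt X,-\wt L-(m-1)K)$ and Kawamata--Viehweg applied to the ample class $-K+\epsilon\wt L$.

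This $H^1$ vanishing is the main obstacle; the dimension count and injectivity are routine.
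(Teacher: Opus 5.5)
Your dimension count and your injectivity step are fine. The computation $(-K_{\wt X})^2\cdot(\wt L+K_{\wt X})=14-22<0$, together with nefness of $-K_{\wt X}$, is a clean justification that $\wt L+K_{\wt X}$ is not effective, which the paper only asserts.

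The gap is surjectivity: you never prove $H^1(\wt X,\wt L+K_{\wt X})=0$, and none of the routes you sketch reaches it.
\begin{itemize}
\item Upper semicontinuity of $h^1$ only bounds $h^1$ on the special fiber from below, so it cannot give vanishing there.
\item Your downward induction needs $H^1(\wt S,(\wt L+mK_{\wt X})|_{\wt S})=0$ for every $m\ge 1$ in order to pass from $H^1(\wt X,\wt L+(m+1)K_{\wt X})=0$ to $H^1(\wt X,\wt L+mK_{\wt X})=0$. The vanishing of $H^0(\wt S,(\wt L+K_{\wt X})|_{\wt S})$ that you record controls the other connecting map. It only gives an injection $H^1(\wt X,\wt L+2K_{\wt X})\hookrightarrow H^1(\wt X,\wt L+K_{\wt X})$, which runs the induction in the wrong direction.
\item Already for $m=1$, Riemann--Roch and Serre duality on $\wt S$ give $h^1(\wt S,(\wt L+K_{\wt X})|_{\wt S})=h^0(\wt S,(-K_{\wt X}-\wt L)|_{\wt S})-3$. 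So you would need something like nefness of $(-K_{\wt X}-\wt L)|_{\wt S}$, which is no more available here than the nefness of $\wt L$ you were trying to avoid.
\item Kawamata--Viehweg with the ample class $-K_{\wt X}+\epsilon\wt L$ does not give your large-$m$ vanishing either. The relevant class $m(-K_{\wt X})-\wt L$ is negative on every curve contracted by $g\colon\wt X\to X$, since $\wt L$ is $g$-ample and $-K_{\wt X}$ is $g$-trivial.
\end{itemize}

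The missing idea is that no $H^1$-vanishing is needed: apply upper semicontinuity to $h^0$ instead, where it points the right way. The family $\wt{\mts{X}}\to T$ is flat and $\wt{\cL}$ is Cartier, so $h^0(\wt X,\wt L)\ge h^0(\wt{\mts{X}}_t,\wt{\cL}_t)=6$ for general $t$. There $\wt{\cL}_t$ is the pullback of $\cO_{\bP^5}(1)$ to the blowup of a $(2,2)$-complete intersection. Combined with your injection into the $6$-dimensional space $H^0(\wt S,\wt L|_{\wt S})$, this forces $h^0(\wt X,\wt L)=6$ and the restriction map to be an isomorphism. This is exactly the paper's argument.
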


\begin{proof}
    Let $\wt{S}\in|-K_{\wt{X}}|$ be a K3 surface. By Lemma \ref{quasipolarized} we see that $(\wt{S},\wt{L}|_{\wt{S}})$ is a quasi-polarized degree $8$ K3 surface, and hence $$\textstyle h^0(\wt{S},\wt{L}|_{\wt{S}})\ =\ \frac{1}{2}({\wt{L}}|_{\wt{S}})^2+2\ =\ 6.$$
    Since $\wt{S}\sim -K_{\wt{X}}$ is Cartier, we have a short exact sequence 
    \[\textstyle 
    0 \ \longrightarrow \ \mtc{O}_{\wt{X}}({\wt{L}}- \wt{S}) \ \longrightarrow \  \mtc{O}_{\wt{X}}({\wt{L}}) \ \longrightarrow \ \mtc{O}_{\wt{S}}({\wt{L}}|_{\wt{S}})\ \longrightarrow \  0.
    \]
    As ${\wt{L}}-\wt{S}\sim \wt{L}+K_{\wt{X}}$ is not effective, then taking the long exact sequence on cohomology we see that  $H^0(\wt{X},\mtc{O}_{\wt{X}}({\wt{L}}))\hookrightarrow H^0(\wt{S},{\wt{L}}|_{\wt{S}})$ is injective, and thus $h^0(\wt{X},\mtc{O}_{\wt{X}}({\wt{L}}))\leq 6$. On the other hand, by upper semi-continuity, we have that $$\textstyle h^0(\wt{X},\mtc{O}_{\wt{X}}({\wt{L}}))\ \geq\  h^0(\wt{\mts{X}}_t,\mtc{O}_{\wt{\mts{X}}_t}({\wt{\cL}_t}))\ =\ 6$$ for a general $t\in T$. Therefore, one has $h^0(\wt{X},\mtc{O}_{\wt{X}}({\wt{L}}))=6$, and the restriction map is an isomorphism.
    
\end{proof}

\begin{prop}\label{nefness} The Cartier divisor $\wt{\cL}$ is $\wt{\pi}$-semiample and  $\wt{\pi}$-big.

\end{prop}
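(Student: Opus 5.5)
Over $T^\circ$ the divisor $\wt{\cL}$ is the pullback of $\mtc{O}_{\bP^5}(1)$ under the blowdown, hence base-point free and big there, so it suffices to show that $\wt{L}=\wt{\cL}|_{\wt{X}}$ is base-point free on the central fiber. Bigness is then automatic, since $(\wt{L}^3)=(\wt{\cL}_t^3)=4>0$ by flatness. Once $\wt L$ is base-point free, the sheaf $\wt\pi_*\cO(\wt\cL)$ has constant rank $6$: by upper semicontinuity and \Cref{isomorphismonsection} we have $h^0(\wt X,\wt L)=6=h^0(\wt{\mts X}_t,\wt\cL_t)$. Cohomology and base change then gives surjectivity of $\wt\pi_*\cO(\wt\cL)\otimes k(0)\to H^0(\wt X,\wt L)$, so the relative evaluation map is surjective along the central fiber, and $\wt\cL$ is $\wt\pi$-globally generated, hence semiample.

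To prove that $\wt L$ is base-point free, I would use the general elephant $\wt S\in|-K_{\wt X}|$. By \Cref{quasipolarized}, $|\wt L|_{\wt S}|$ is base-point free, and by \Cref{isomorphismonsection} every section of $\wt L|_{\wt S}$ lifts to $\wt X$. Hence $\Bs|\wt L|\cap\wt S=\emptyset$. Since $-K_{\wt X}$ is nef and big and $\wt S$ moves in a base-point free system (via \Cref{thm:Kss-very-ample} on $X$ and crepant pullback), any positive-dimensional component of $\Bs|\wt L|$ would meet a general $\wt S$ unless it is contracted by $g$. So $\Bs|\wt L|$ is contained in the union of finitely many points and $g$-exceptional curves.

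To exclude these, I would first handle the curves. A $g$-exceptional curve $\Gamma$ has $(-K_{\wt X}\cdot\Gamma)=0$, and $\wt L$ is $g$-ample, so $(\wt L\cdot\Gamma)>0$. Because $g$ is small, I would vary $\wt S$ among members of $|-K_{\wt X}|$ through the image point $g(\Gamma)$, that is, members pulled back from hyperplane sections of $X\subset\bP^{13}$ through $g(\Gamma)$. Such members contain $\Gamma$, are still ADE K3 surfaces for a general such choice (the singularities of $X$ are isolated $cA$ or $A_\infty/D_\infty$ by \Cref{thm:volume bound}), and the argument of \Cref{quasipolarized} and \Cref{isomorphismonsection} applies to them verbatim, since it only used the lattice and $h^0$ counts. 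This shows that $\wt L$ has no base point along $\Gamma$. For isolated base points one argues the same way with a member of $|-K_{\wt X}|$ through the point, which exists because $|-K_{\wt X}|$ is base-point free.

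The main obstacle is checking that these special anticanonical members through a given point or exceptional curve are still ADE K3 surfaces for which the nefness and non-unigonality arguments hold. If this fails, the fallback is Kawamata's base-point-free theorem: $\wt L-K_{\wt X}$ is $\wt\pi$-nef and big by (5). This yields semiampleness of $m\wt\cL$, but the nefness of $\wt\cL$ along the $g$-exceptional curves must then be supplied by their $g$-ampleness together with the nefness of $\wt L|_{\wt S}$.
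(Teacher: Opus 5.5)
Your second paragraph combined with your fallback is the paper's proof, and it is already complete. Once every positive-dimensional component of $\Bs|\wt L|$ is known to be $g$-exceptional, three things follow: any curve $C\not\subseteq\Bs|\wt L|$ has $(\wt L\cdot C)\ge 0$; any $g$-exceptional curve has $(\wt L\cdot C)>0$ by $g$-ampleness; and isolated base points play no role in nefness. Hence $\wt L$ is nef.

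For $t\neq 0$, $\wt\cL|_{\wt{\mts X}_t}$ is the pullback of $\cO_{\bP^5}(1)$, so $\wt\cL$ is $\wt\pi$-nef. Since $\wt{\mts X}$ is klt and of Fano type over $T$, the relative Kawamata--Shokurov base-point-free theorem gives $\wt\pi$-semiampleness. Bigness is read off on a general fiber.

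Two corrections to how you phrase the fallback:
\begin{enumerate}
\item Condition (5) by itself does not make $\wt\cL-K_{\wt{\mts X}}$ relatively nef and big. That only follows after nefness of $\wt\cL$ is established, because $-K_{\wt{\mts X}}$ is nef and big.
\item What excludes non-$g$-exceptional curves from the base locus is the base-point-freeness of $|\wt L|_{\wt S}|$ together with the surjectivity of restriction from \Cref{isomorphismonsection}. The mere nefness of $\wt L|_{\wt S}$ would not suffice.
\end{enumerate}

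Your primary route is to prove that $\wt L$ itself is base-point free on $\wt X$, then use constancy of $h^0$ and cohomology and base change. It would give a stronger conclusion, namely relative global generation of $\wt\cL$ rather than of a multiple. The paper never needs this, and as written the route is incomplete.

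\begin{itemize}
\item The assertion that $g$ is small is not justified. Only $f$ is small on the fourfold $\wt{\mts X}$, so $\mathrm{Exc}(f)$ may be a surface inside the threefold $\wt X$. Thus $g$ can a priori contract a divisor, necessarily onto a curve of $A_\infty$/$D_\infty$ singularities of $X$, since the isolated $cA$ points are terminal.
\item \Cref{quasipolarized} and \Cref{isomorphismonsection} are proved for a general member of $|-K_{\wt X}|$. You would have to verify that special members through a prescribed point or $g$-exceptional curve are ADE K3 surfaces to which the lattice argument, the non-unigonality argument and the $h^0$ count still apply.
\end{itemize}

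These points are probably fixable, but they are unnecessary. It is cleaner to drop that part and keep only the nefness argument followed by the base-point-free theorem.
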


\begin{proof}
    We first prove that $\wt{L}$ is a nef divisor. Recall that $\wt{L}$ is $g$-ample, so we have that $(\wt{L}.C)>0$ for any $g$-exceptional curve $C\subseteq \wt{X}$. We claim that the base locus of the linear series $|\wt{L}|$ is either some isolated points, or is contained in the $g$-exceptional locus, so $\wt{L}$ is nef. By Lemma \ref{quasipolarized}, we know that $|\wt{L}|_{\wt{S}}|$ is base-point-free, where $\wt{S}\in|-K_{\wt{X}}|$ is a general elephant. Suppose that $\wt{C}\subseteq \Bs|\wt{L}|$ is a curve which is not contracted by $g$. Then the intersection $\wt{C}\cap \wt{S}$ is non-empty and consists of finitely many points, which are all base points of $|\wt{L}|_{\wt{S}}|$. This contradicts Lemma \ref{quasipolarized}.

     Since $\wt{L}= \wt{\cL}|_{\wt{\mts{X}}_0}$ is nef, and $\wt{\cL}|_{\wt{\mts{X}}_t}$ is nef for any $t\in T\setminus\{0\}$ as $\wt{\mts{X}}_t\simeq \mts{X}_t$ is a smooth Fano threefold in the family \textnumero 2.16, we conclude that $\wt{\cL}$ is $\wt{\pi}$-nef. This implies the $\wt{\pi}$-semiampleness of $\wt{\cL}$ by the Kawamata--Shokurov base-point free theorem, as $\wt{\mts{X}}$ is of Fano type over $T$. Since $\wt{\cL}|_{\wt{\mts{X}}_t}$ is big for a general $t\in T$, we obtain the $\wt{\pi}$-bigness of $\wt{\cL}$.
    
\end{proof}

Taking the $\wt{\cL}$-ample model over $T$ yields a birational morphism $\phi:\wt{\mts{X}}\to \mts{V}$ that fits into a commutative diagram $$\xymatrix{
 & \wt{\mts{X}} \ar[rr]^{\phi \quad \quad \quad \quad \quad} \ar[dr]_{\wt{\pi}}  &  &  \mts{V}=\Proj_{T}\bigoplus_{m\in \bN} \wt{\pi}_* \big(\wt{\cL}^{\otimes m}\big)
 \ar[dl]^{\pi_{\mts{V}}} \\
 & & T&\\
 }.$$ By the base-point free theorem, as $\wt{\cL}$ is Cartier, it descends to a Cartier divisor $\cL_{\mts{V}}$ on $\mts{V}$ which is $\pi_{\mts{V}}$-ample. For any $0\neq t\in T$, the morphism $\wt{\mts{X}}_t\rightarrow \mts{V}_t$ contracts precisely the ruled surface $\wt{\mts{E}}_t$ to a smooth conic contained in $\mts{V}_t$, which can be embedded into $\mathbb{P}^5$ as a complete intersection of two quadrics by the line bundle $\cL_{\mts{V}}|_{\mts{V}_t}$. Now consider the restriction of the morphism $\phi$ to the central fiber $$\phi_0:\wt{\mts{X}}_0=\wt{X}\longrightarrow V\coloneqq \mts{V}_0.$$ Let $L_V\coloneqq \cL_{\mts{V}}|_{\mts{V}_0}$ be the Cartier divisor, which coincides with $(\phi_{0})_*\wt{L}$. 

\begin{lemma}\label{lem:contract-Q}
    The central fiber $V$ of $\pi_{\mts{V}}$ is a normal projective variety. Moreover, 
    the morphism $\phi_0: \wt{X}\to V$ is birational, contracts $\wt{E}$ to a curve $C_V$ of $V$, and is an isomorphism on $\wt{X}\setminus \wt{E}$.
\end{lemma}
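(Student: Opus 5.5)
The plan is to analyse $\phi_0$ by restricting to a general anticanonical K3 surface and comparing $\tilde{L}$ with its image in $V$. Normality of $V$ comes first, from the ample-model construction. Since $\wt{\mts{X}}$ is normal and of Fano type over $T$, the model $\mts{V}=\Proj_T\bigoplus_m \wt{\pi}_*\wt{\cL}^{\otimes m}$ is normal. By relative Kawamata--Viehweg vanishing, $R^1\wt{\pi}_*\wt{\cL}^{\otimes m}=0$ for $m\ge 0$, since $\wt{\cL}^{\otimes m}-K_{\wt{\mts X}}$ is $\wt\pi$-big and nef. Hence the formation of $\wt{\pi}_*\wt{\cL}^{\otimes m}$ commutes with base change to $0$. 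This gives $V=\Proj\bigoplus_m H^0(\wt X,m\wt L)$, the ample model of the nef and big divisor $\wt L$ on the normal variety $\wt X$. Being an ample model of a normal variety, $V$ is normal, and $\phi_0$ is the birational morphism defined by $|m\wt L|$ for $m\gg0$; birationality follows from $(\wt L^3)>0$, which holds because $\wt{L}^3=\wt{\cL}_t^3=4$.

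Next I identify the locus contracted by $\phi_0$, namely the curves $\Gamma$ with $(\wt L\cdot\Gamma)=0$.
\begin{itemize}
\item Since $-K_{\wt{\mts X}}\sim_T a\wt{\cL}-b\wt{\mts E}$ holds on the general fiber ($-K=2H-E$ for \textnumero2.16, with $H$ the class of $\cL$), the same relation holds on $\wt X$ after taking closures: $-K_{\wt X}\sim 2\wt L-\wt E$. I expect Lemma~\ref{lem:line bundle}-type arguments to give this as a Weil-divisor identity.
\item If $(\wt L\cdot\Gamma)=0$, then $(-K_{\wt X}\cdot\Gamma)=-(\wt E\cdot\Gamma)$. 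Since $-K_{\wt X}$ is nef, and is positive on $\Gamma$ because $\wt L$ is $g$-ample, we get $(\wt E\cdot\Gamma)<0$, so $\Gamma\subseteq\wt E$.
\end{itemize}
Thus $\phi_0$ is an isomorphism off $\wt E$; I would obtain this from Zariski's main theorem for the birational morphism onto the normal $V$, finiteness away from $\wt E$, and normality.

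It remains to show that $\wt E$ is contracted to a curve and not to a point, and that it is indeed contracted. On the general fiber, $\wt L|_{\wt{\mts E}_t}$ has degree $0$ on the rulings and degree $2$ on a section. Both the intersection numbers $(\wt L^2\cdot\wt E)=0$ and $(\wt L\cdot \wt E\cdot(-K_{\wt X}))=2$ are constant in the flat family. Hence $\wt L|_{\wt E}$ is nef with $(\wt L|_{\wt E})^2=0$ but $\wt L|_{\wt E}\not\equiv0$, because $(\wt L\cdot\wt E\cdot(-K))\ne0$. So $\phi_0(\wt E)$ is a curve $C_V$ with $\deg_{L_V}C_V=2$ (counted with multiplicity), and $\wt E$ is not mapped birationally.

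The main obstacle is the divisorial bookkeeping: $\wt{\mts E}$ need only be a Weil divisor, its restriction $\wt E$ may be nonreduced or reducible, and the identity $-K_{\wt X}\sim 2\wt L-\wt E$ must be justified on the central fiber. I would handle this through the general K3 surface $\wt S$, on which everything is Cartier. Lemma~\ref{quasipolarized} and the lattice $\Lambda$ give $(\wt E|_{\wt S})=2\wt L|_{\wt S}+K_{\wt X}|_{\wt S}$, with square $-4$ and with $\wt L$-degree $2$. The constancy of intersection numbers, together with $\wt S$ meeting every component of $\wt E$, then controls the components of $\wt E$.
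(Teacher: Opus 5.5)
Your handling of normality and birationality matches the paper's proof: relative Kawamata--Viehweg vanishing, base change, and $V\simeq\Proj\bigoplus_m H^0(\wt X,m\wt L)$. So does your handling of the exceptional locus, where you use $-K_{\wt X}\sim 2\wt L-\wt E$ to get $(\wt E\cdot\Gamma)<0$ for every $\wt L$-trivial curve $\Gamma$.

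\textbf{The gap} is in showing that $\wt E$ is contracted \emph{to a curve}. Write $\wt E=\sum_j m_jE_j$. Then $(\wt L^2\cdot\wt E)=0$ together with nefness gives $(\wt L|_{E_j})^2=0$ for every $j$, so each $E_j$ is mapped to a curve or to a point. The identity $(\wt L\cdot\wt E\cdot(-K_{\wt X}))=2$ shows that at least one $E_j$ is mapped onto a curve. But $\wt L|_{\wt E}\not\equiv0$ is a statement about the sum. Nothing in your argument prevents another component from being contracted to a point off that curve, and then $\phi_0(\wt E)$ is not a curve.

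Your proposed remedy via $\wt S$ does not settle this as it stands. Such a component only contributes $\wt L$-trivial $(-2)$-curves to $\wt E|_{\wt S}$, which a quasi-polarized K3 surface may well contain. The numerics on $\wt S$ do not exclude this without substantial further work. Incidentally, $(\wt E|_{\wt S})^2=(2e_2-e_1)^2=32-56+22=-2$, not $-4$; on the smooth members $\wt E|_{\wt S}$ is the conic itself.

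\textbf{The missing input comes from the total space.} The paper uses that $\wt{\mts E}$ is integral, so $\mts W:=\phi(\wt{\mts E})$ is an integral surface, flat over $T$, whose general fibre is a smooth conic. Since $\phi$ is a $T$-morphism, $\phi_0(\wt E)=\mts W\cap V=\mts W_0$. This is a Cartier divisor on $\mts W$, hence purely one-dimensional, of $L_V$-degree $2$ and arithmetic genus $0$.

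Alternatively, keep your intersection-theoretic argument and add connectedness. The map $\wt{\mts E}\to T$ is proper and flat, with integral source and connected general fibres, so $\wt E$ is connected by Zariski's connectedness theorem. A connected finite union of curves and points that contains a curve is a curve.

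The paper's route is shorter. It also gives the scheme-theoretic description of $C_V$ (degree $2$, $p_a=0$) that Proposition~\ref{Gorensteincan} relies on. Your route, once repaired, gives the degree only as a cycle.
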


\begin{proof}
    We first show that $V$ is normal and  $\phi_0$ is birational. Since both $\wt{\cL}$ and $-K_{\wt{\mts{X}}}$ are nef and big over $T$ and $\wt{\mts{X}}$ is klt, Kawamata--Viehweg vanishing theorem implies that $R^i \wt{\pi}_* \wt{\cL}^{\otimes m} \ =\  0$ for any $i> 0$ and $m\in \bN$. Thus, by cohomology and base change, the sheaf $\wt{\pi}_* \wt{\cL}^{\otimes m}$ is locally free and satisfies that $$\big(\wt{\pi}_* \wt{\cL}^{\otimes m}\big)\otimes k(0)\ \simeq\ H^0(\wt{X}, \wt{L}^{\otimes m}).$$ As a result, one has that $V\ =\ \mts{V}_0\ \simeq\ \Proj \bigoplus_{m\in \bN} H^0(\wt{X}, \wt{L}^{\otimes m})$ is the ample model of $\wt{L}$ on $\wt{X}$, which implies the normality of $V$ and the birationality of $\phi_0$. 
    
    Consider the restriction $\phi|_{\wt{\mts{E}}}:\wt{\mts{E}}\rightarrow \mts{W}\coloneqq \phi(\wt{\mts{E}})$. Since $\mts{E}$ is an integral scheme and a general fiber of $\mts{W}\rightarrow T$ is a smooth conic, one knows that $\mts{W}$ is an integral surface. In particular, $\mts{W}\rightarrow T$ is flat, and hence $C_V\coloneqq \mts{W}_0$ is also a curve of degree $2$ with respect to $L_V$ and arithmetic genus $0$. On the other hand, if $C\subseteq\wt{X}$ is a curve such that $(C.\wt{L})=0$, then $C\subseteq \wt{E}$ because $2\wt{L}-(1-\epsilon)\wt{E}$ is ample for $0<\epsilon \ll 1$. Thus the last statement is proved.
    
\end{proof}

\begin{comment}
    On a Q-Fano variety, the $\pi_1(X)$ is trivial, so if there is a Cartier divisor which is torsion, then it has to be trivial: otherwise can take an etale cover. However, it could happen that a torsion Q-Cartier Weil divisor is not trivial. For example, on the cubic surface $S$ with 3 $A_2$-singularities, the three lines are only Q-linearly equivalent but not linearly equivalent, and the triple cover $\bP^2\rightarrow S$ is ramified over the 3 singularities.
\end{comment}

\begin{prop}\label{Gorensteincan}
The variety $V$ is a Gorenstein canonical $(2,2)$-complete intersection in $\bP^5$, and $C_V$ is a (possibly singular) conic curve. Moreover, $V$ is generically smooth along each irreducible component of $C_V$, and if $p\in C_V$ is a singularity of $V$, then it is a hypersurface singularity with multiplicity 2.
\end{prop}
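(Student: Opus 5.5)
We first establish that $V$ is a $(2,2)$-complete intersection in $\bP^5$ via the flat family $\pi_{\mts{V}}:\mts{V}\to T$ and the relatively ample Cartier divisor $\cL_{\mts{V}}$.

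\textbf{Embedding.} By \Cref{isomorphismonsection} and cohomology and base change (as in the proof of \Cref{lem:contract-Q}), $\pi_{\mts{V}*}\cL_{\mts{V}}$ is locally free of rank $6$. Moreover, $h^0(V,L_V)=h^0(\wt X,\wt L)=6$.

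\textbf{Projective normality of $(V,L_V)$.} I would prove that $L_V$ is very ample and that $V\subseteq\bP^5$ is projectively normal. The plan is to restrict to a general $\wt S\in|-K_{\wt X}|$. By \Cref{quasipolarized}, $(\wt S,\wt L|_{\wt S})$ is a base-point-free quasi-polarized K3 surface of degree $8$. I must exclude the hyperelliptic case. This is done exactly as the unigonal case in \Cref{quasipolarized}: a hyperelliptic degree $8$ K3 surface has an elliptic class $F$ with $(\wt L|_{\wt S}\cdot F)=2$. One then computes the Gram determinant of $\langle -K,\wt L,F\rangle$, which should force $(-K\cdot F)$ to be small. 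This contradicts the very ampleness of $-K_X$ from \Cref{thm:Kss-very-ample}, since the image of a general $F$ is an elliptic curve, which has degree $\ge 3$. Hence the image of $\wt S$ in $\bP^4$ is a projectively normal $(2,2,2)$-complete intersection, using \cite{SD74}/\cite{May72}.

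Next I lift this to $V$ by the hyperplane-section induction used in the proof of \Cref{thm:Kss-very-ample}. The image of $\wt S$ is a hyperplane-type section of $V$ for the class $-K_{\wt X}-\wt L$? No: $\wt S$ is not in $|\wt L|$. The better route is the following. Degree considerations give that the image of $\wt X$ is a threefold of degree $(\wt L^3)=4$ in $\bP^5$; this is constant in the flat family, equal to $4$ on the general fiber. A general member $D\in|L_V|$ is a degree $4$ surface in $\bP^4$ whose general hyperplane section is an elliptic normal quartic curve, which is a $(2,2)$-complete intersection. I then lift quadric equations using vanishing of $H^1(V,\cO_V(m L_V))$ for $m\ge 0$, which holds by Kawamata--Viehweg on $\wt X$ since $V$ has rational singularities. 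This shows $h^0(\bP^5,\cI_V(2))=2$ and that $V$ equals the complete intersection $Q_1\cap Q_2$, since both have degree $4$ and the complete intersection is irreducible of the right dimension. Alternatively, the flat limit of $\mts V_t\subseteq\bP^5$ lies in the closed locus of $(2,2)$-complete intersections, because the Hilbert scheme component of such complete intersections is closed when $h^0(\cI(2))$ stays $2$, by upper semicontinuity together with a degree count.

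\textbf{Singularities of $V$.} $V$ is Gorenstein since it is a complete intersection. Canonicity follows because $\phi_0$ is an isomorphism off $\wt E$ by \Cref{lem:contract-Q}, and $K_{\wt X}=\phi_0^*K_V+a\wt E$ with $a\ge 0$. The coefficient $a$ is computed on the general fiber, where it equals $1$; hence $a=1>0$ and $V$ is canonical, indeed terminal along the generic point of $C_V$.

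\textbf{The curve $C_V$.} $C_V$ is the flat limit $\mts W_0$ of conics, so its Hilbert polynomial is $2t+1$, i.e.\ it is a conic. Generic smoothness of $V$ along each component of $C_V$, and multiplicity $2$ hypersurface singularities, I would deduce from the fact that $\wt X\to V$ is the blowup of $C_V$. Flatness of $\wt{\mts E}$ and the relation $a=1$ force discrepancy $1$ for the divisor over a generic point of $C_V$, which implies that $V$ is smooth there. At a singular point $p\in C_V$, $V$ is a complete intersection with embedding dimension $\le 4$, because the tangent space is cut by the differential of a quadric through the conic's plane. Multiplicity $2$ then follows from $\deg V=4$ and $C_V$ not lying in the vertex. \textbf{The main obstacle} is this last local statement together with the projective normality lift; I would attack the latter first through the hyperplane-section induction on a general $D\in|L_V|$.
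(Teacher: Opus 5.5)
Your proposal has genuine gaps in both the global part and the local part.

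\emph{The complete-intersection claim.}
\begin{itemize}
\item The hyperelliptic exclusion does not work. If $(\wt L|_{\wt S}\cdot F)=2$ and $c=(-K_{\wt X}|_{\wt S}\cdot F)$, the Gram determinant of $\langle -K_{\wt X}|_{\wt S},\wt L|_{\wt S},F\rangle$ is $-8(c^2-7c+11)$. This is positive exactly for $c\in\{3,4\}$, which is compatible with the bound $c\ge 3$ coming from very ampleness of $-K_X$. So there is no contradiction.
\item Even granting non-hyperellipticity, a degree-$8$ K3 surface is a $(2,2,2)$-complete intersection in $\bP^5$ (not $\bP^4$) only if it is also non-trigonal. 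The trigonal case $(\wt L|_{\wt S}\cdot F)=3$ allows $c\in\{4,5,6\}$, so it is not excluded either.
\item More fundamentally, as you noticed, $\wt S\in|2\wt L-\wt E|$ is not a member of $|\wt L|$. So nothing about $(\wt S,\wt L|_{\wt S})$ lifts to $V$ through the induction of \Cref{thm:Kss-very-ample}.
\item Your ladder via $D\in|L_V|$ needs adjunction $K_D\sim -L_V|_D$, i.e.\ that $K_V\sim -2L_V$ is Cartier. You only derive this afterwards, from the complete-intersection structure.
\item The flat-limit argument presupposes that $|L_V|$ already embeds $V$, which is exactly what is to be shown.
\end{itemize}
The missing observation makes all of this unnecessary. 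Since $\wt E$ is $\phi_0$-exceptional, pushing forward $-K_{\wt X}\sim 2\wt L-\wt E$ gives $-K_V\sim 2L_V$, which is Cartier. Together with $\phi_0^*L_V=\wt L$ this yields $K_{\wt X}=\phi_0^*K_V+\wt E$. Hence $V$ is a Gorenstein klt (so canonical) Fano threefold of index $2$ and degree $L_V^3=\wt L^3=4$. Fujita's classification of del Pezzo threefolds then gives the embedding by $|L_V|$ as a $(2,2)$-complete intersection. After that, $C_V=\sW_0$ is a conic by flatness, as you say.

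\emph{The local statements along $C_V$.}
\begin{itemize}
\item You may not use that $\wt X\to V$ is the blowup of $C_V$. The identification $X\simeq \Bl_{C_V}V$ is \Cref{prop:blow-up-iso}, whose proof goes through \Cref{lem:commutativity-blowup-fiber}. That lemma uses precisely the hypersurface-singularity conclusion you are trying to prove. In any case $\wt X\to X$ is only small.
\item A discrepancy-one divisor over a curve does not force smoothness. On $\bA^1\times\{xy=z^2\}$, blow up the singular line, then blow up a section of the resulting exceptional $\bP^1$-bundle: this gives a divisor of discrepancy $1$ over the singular line.
\item Your tangent-space and degree arguments for embedding dimension $\le 4$ and multiplicity $2$ are not justified.
\end{itemize}
The idea you are missing is to push forward a general $\wt S\in|-K_{\wt X}|$.
\begin{itemize}
\item $S_V\coloneqq(\phi_0)_*\wt S\in|-K_V|$ is Cartier, and $\phi_0^*(K_V+S_V)=K_{\wt X}+\wt S$.
\item So $(V,S_V)$ is a plt log Calabi--Yau pair, and $S_V$ is an ADE K3 surface.
\item $S_V$ contains $C_V$, because the image of a deformation $\wt S_t$ in $\sV_t$ contains the conic $\sW_t$.
\item Since $S_V$ is a Cartier divisor containing $C_V$, $V$ is smooth wherever $S_V$ is, in particular at the generic point of each component of $C_V$.
\item At an ADE point $p\in S_V$ one gets embedding dimension of $V$ at $p$ at most $4$, and $\mathrm{mult}_pV\le \mathrm{mult}_pS_V=2$. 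This is exactly the hypersurface double point claimed in the statement.
\end{itemize}
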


\begin{proof}
From the linear equivalence $-K_{\wt{X}}\sim 2\wt{L}-\wt{E}$ and \Cref{lem:contract-Q}, we obtain
\[
-K_{V} \;=\; (\phi_0)_*(-K_{\wt{X}}) \;\sim\; 2L_V ,
\]
which is ample. Since $\wt{X}$ is klt and $\phi_0^*K_V = K_{\wt{X}}-\wt{E}$, it follows that $V$ is also klt. As $L_V$ is Cartier, $V$ is a Gorenstein canonical Fano threefold of Fano index $2$. Therefore, by \cite{Fuj90}, $V$ is embedded in $\bP^5$ via $L_V$ as a $(2,2)$ complete intersection.

Let $\wt{S}\in |-K_{\wt{X}}|$ be a general member, and set $S_V:=(\phi_0)_*\wt{S}$. Then $S_V\sim -K_V$ is Cartier as $V$ is Gorenstein. Since $\wt{S}$ has ADE singularities, the pair $(\wt{X},\wt{S})$ is a plt log Calabi--Yau pair. Hence $(V,S_V)$ is also a plt log Calabi--Yau pair, which in particular implies that $S_V$ is an ADE K3 surface and that $\phi_0|_{\wt{S}}\colon \wt{S}\to S_V$ is birational. Let $\wt{S}_t\in |-K_{\wt{\mts{X}}_t}|$ be a deformation of $\wt{S}$. Since the image of $\wt{S}_t$ contains $\mts{W}_t$, it follows that $C_V\subseteq S_V$. 
Since the Cartier divisor $S_V$ is an ADE K3 surface and is generically smooth along $C_V$, we conclude that $V$ is also generically smooth along each irreducible component of $C_V$, and if $p\in C_V$ is a singularity of $V$, then it is a hypersurface singularity with multiplicity 2.
\end{proof}

\begin{lem}\label{lem:commutativity-blowup-fiber}
For any $t\in T$, one has $(\Bl_{\sW}\sV)_t \simeq \Bl_{\sW_t}\sV_t$.
\end{lem}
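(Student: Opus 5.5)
The plan is to mimic the proof of the fiberwise blowup compatibility lemma used in the proof of \Cref{thm:open immersion forgetful map}. The statement is local on $\sV$ and trivially true for $t\neq 0$, where $\sW\to T$ is a smooth family of conics in a smooth family of threefolds. So I only need the central fiber. I will work on an affine open $\Spec A\subseteq \sV$ over $\Spec R$, with $R$ the local ring of $T$ at $0$ and uniformizer $s$, and with $\sW=V(I)$. Since $\Bl_{\sW}\sV=\Proj\bigoplus_k I^k$ and $\Bl_{\sW_0}\sV_0=\Proj\bigoplus_k \big((I+sA)/sA\big)^k$, it suffices to show that $I^k\otimes_R R/(s)\to A/sA$ is injective for all $k\gg0$. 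This means $I^k\cap sA=sI^k$, equivalently $A/I^k$ is $s$-torsion free, i.e.\ flat over the DVR $R$.

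The key steps, in order, are as follows.
\begin{enumerate}
\item[(i)] $\sV$ and $\sW$ are flat over $T$. For $\sV$ this holds since it is integral and dominates $T$. For $\sW$ it was noted in the proof of \Cref{lem:contract-Q}, and $\sW_0=C_V$ is a conic.
\item[(ii)] Using the exact sequences $0\to I^k/I^{k+1}\to A/I^{k+1}\to A/I^k\to 0$, reduce flatness of $A/I^k$ to flatness, i.e.\ $s$-torsion freeness, of each graded piece $I^k/I^{k+1}$. These are the degree-$k$ pieces of the normal cone algebra $\bigoplus_k I^k/I^{k+1}$ of $\sW$ in $\sV$.
\item[(iii)] Identify that normal cone. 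By \Cref{Gorensteincan}, $V$ is generically smooth along each component of $C_V$, and at the finitely many singular points on $C_V$ it is a hypersurface double point. Near $C_V$ I therefore write $\sV$ as a hypersurface $f=0$ in a smooth fourfold $\mathcal{A}$ over $T$ containing $\sW$ as a flat family of planar-type curves, which is possible since $\sW_t$ lies in a plane. Then $\sW$ is a relative complete intersection $(x_1,x_2,x_3)$ in $\mathcal{A}$, and the normal cone of $\sW$ in $\sV$ is cut out in $\Sym$ of a rank-$3$ bundle by the leading form of $f$. Its degree-$k$ pieces are flat over $R$ exactly when that leading form does not vanish identically on $\sW_0$, that is, when $f|_{\sV_0}$ has order exactly $\le 2$ along $C_V$ generically. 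This is guaranteed by generic smoothness plus the multiplicity-$2$ statement.
\item[(iv)] Conclude as in the earlier lemma. Tensoring $0\to I^k\to A\to A/I^k\to0$ with $R/(s)$ stays exact, so $(I_0)^k=I^k/sI^k$ and the two Rees algebras agree in large degrees. This gives the isomorphism of the fibers of the blowups.
\end{enumerate}

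The main obstacle is step (iii): the local description must be uniform in $t$, especially at points where $C_V$ is a nodal or double line and $V$ is singular. There the quotient $I^k/I^{k+1}$ might acquire $s$-torsion if the leading form of $f$ degenerates over $0$. The first approach I would try is to argue that an associated point of $I^k/I^{k+1}$ in the central fiber would produce an extra component of $(\Bl_{\sW}\sV)_0$ not dominating $\sW_0$. Such a component cannot exist, since $\Bl_{\sW}\sV$ is identified with $\wt{\sX}$ (the exceptional divisor $\wt{\mts{E}}$ is the preimage of $\sW$ and is flat with irreducible fibers of dimension $2$ over $T$). If this fails, I would pass to an explicit Weierstrass-type normal form $f=q(x_1,x_2,x_3)+(\text{higher order})$ with $q$ a family of quadrics of rank $\ge1$ along $\sW$, and verify torsion-freeness directly as in Lemma~\ref{lem:conic bundle}.
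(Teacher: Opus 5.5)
Your strategy, proving that every $\cO_{\sV}/I^k$ is flat over $R$ via the graded pieces of the normal cone, is a genuinely different route from the paper's. The paper never computes a normal cone. It shows that $(\Bl_{\sW}\sV)_0$ is irreducible, because the fibers of the exceptional divisor over $\sW$ have dimension $\le 2$ since $\sV$ has hypersurface singularities. It shows $(\Bl_{\sW}\sV)_0$ is Cohen--Macaulay, because it is a complete intersection in $\Bl_{\sW}\bP^5_T$, whose formation commutes with base change since $\sW$ is a complete intersection in $\bP^5_T$. Hence $(\Bl_{\sW}\sV)_0$ is integral, and it is matched with $\Bl_{\sW_0}\sV_0$ inside $\Bl_{\sW_0}\bP^5$. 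Your route can work and would even give normal flatness. However, its crux, step (iii), is stated incorrectly, and you leave it unresolved.

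\textbf{The error in (iii).} Since $\sV$ is smooth at the generic point of $\sW$, $f\notin I_{\mathcal A}^2$ there. So the leading form of $f$ along $\sW$ is \emph{linear}: it is the class $\ell$ of $f$ in the rank-$3$ conormal bundle $I_{\mathcal A}/I_{\mathcal A}^2$, and it can vanish only at singular points of $\sV$. Because $\sW$ is integral, $\ell$ is a nonzerodivisor on $\cO_{\sW}[y_1,y_2,y_3]$, so the normal cone is $\cO_{\sW}[y_1,y_2,y_3]/(\ell)$. This is $s$-torsion free if and only if $\ell_0=\ell|_{\sW_0}$ is a nonzerodivisor on $\cO_{C_V}[y_1,y_2,y_3]$. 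Several consequences follow:
\begin{itemize}
\item Your criterion ``order exactly $\le 2$'' is wrong. If $f_0$ had order $2$ along a component of $C_V$, then $\ell_0$ would vanish there and torsion would appear.
\item The multiplicity-$2$ statement plays no role.
\item Your second fallback, $f=q(x_1,x_2,x_3)+\cdots$, presupposes $f\in I^2$, i.e.\ that $\sV$ is singular along $\sW$, which is false.
\item Your first fallback is unavailable. $\Bl_{\sW}\sV$ is identified with $\sX$ only in Proposition~\ref{prop:blow-up-iso}, whose proof uses this lemma. Moreover, $\wt{\mts{E}}_0$ is reducible when $C_V$ is a pair of lines.
\end{itemize}

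\textbf{How to close (iii).} The missing idea is McCoy's theorem: a linear form over $\cO_{C_V}$ is a zero divisor in the polynomial ring if and only if its coefficients lie in an associated prime of $\cO_{C_V}$. A planar conic has no embedded points, so only the generic points of its components matter. At each of these, $\ell_0\neq 0$: at a point $p$ of the component where $V$ is smooth, $f_0\notin\mathfrak{m}_p^2\supseteq\mathfrak{m}_p I_0$, so some coefficient of $\ell_0$ is a unit at $p$. This argument also covers the double line. The isolated singular points of $V$ on $C_V$, where $\ell_0$ may vanish, are therefore harmless, and your ``main obstacle'' disappears.

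\textbf{Existence of $\mathcal A$.} Planarity of $\sW_t$ is not what provides $\mathcal A$. It exists because $V$ has embedding dimension $\le 4$ along $C_V$ (Proposition~\ref{Gorensteincan}). A local equation $g_0$ of $V$ with $dg_0(p)\neq 0$ lifts to $\sV$ by flatness, and its zero locus is smooth over $T$. Separately, $\sW\subset\mathcal A$ is a regular embedding because $\sW$ is flat with lci fibers.
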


\begin{proof}
It suffices to prove the statement for $t=0$. In the proof of \Cref{lem:contract-Q}, we showed that the sheaf $\wt{\pi}_* \wt{\cL}$ is locally free of rank $6$. By shrinking the base $T$, we may assume that $T=\Spec R$, where $R$ is a DVR with uniformizer $t$, and that $\mts{V}\subseteq \mathbb{P}^5_T\coloneqq \mathbb{P}^5\times T$. Since $V$ has hypersurface singularities, the same holds for $\sV$. Hence the exceptional divisor of $\Bl_{\sW}\sV\rightarrow \sV$, denoted by $\sG$, has the property that every fiber of $\sG\rightarrow\sW$ has dimension at most $2$.

We claim that $(\Bl_{\sW}\sV)_0$ is irreducible. Note that over any smooth point $p$ of $\sV_0$, the schemes $(\Bl_{\sW}\sV)_0$ and $\Bl_{\sW_0}\sV_0$ coincide, since $\sW_0$ is a local complete intersection in $\sV_0$. Hence, if $(\Bl_{\sW}\sV)_0$ were reducible, then $\Bl_{\sW}\sV$ would contain a divisor whose center on $\sV$ is a single point, which is a contradiction.

On the other hand, by the same argument as in \Cref{lem:complete-intersection-after-blowup}, $\Bl_{\sW}\sV$ is a complete intersection in $\Bl_{\sW}\bP^5_T$. Since $\sW$ is a complete intersection in $\bP^5_T$, one has $(\Bl_{\sW}\bP^5_T)_0=\Bl_{\sW_0}\bP^5$, and $\Bl_{\sW}\bP^5_T$ is Cohen--Macaulay. It follows that $\Bl_{\sW}\sV$ and $(\Bl_{\sW}\sV)_0$ are also Cohen--Macaulay. Since $(\Bl_{\sW}\sV)_0$ is irreducible and birational to $V=\sV_0$, it is generically reduced, and hence reduced. Therefore, $(\Bl_{\sW}\sV)_0$ is an integral variety.

Combining this with \Cref{lem:complete-intersection-after-blowup}, we see that both $(\Bl_{\sW}\sV)_0$ and $\Bl_{\sW_0}\sV_0$ are integral subvarieties of $\Bl_{\sW_0}\bP^5$ sharing a common open subset, and hence they coincide.
\end{proof}

\begin{prop}\label{prop:blow-up-iso}
    There exists a natural isomorphism $$\mts{X} \ \simeq\ \Bl_{\mts{W}}\mts{V}$$ over $T$. In particular, we have $X\simeq \Bl_{C_V}V$.
\end{prop}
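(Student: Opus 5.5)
The plan is to build a morphism $\Bl_{\sW}\sV \to \sX$ (or an isomorphism $\wt{\sX}\to \Bl_{\sW}\sV$ followed by $f$) over $T$, check it is an isomorphism over $T^\circ$, and conclude by comparing anticanonical models. Over $T^\circ$ everything is clear: $\sX^\circ\simeq \Bl_{\sC^\circ}\sV^\circ$ and $\sW|_{T^\circ}=\sC^\circ$ by construction of $\phi$. Set $\sX':=\Bl_{\sW}\sV$. By \Cref{lem:commutativity-blowup-fiber}, its central fiber is $X':=\Bl_{C_V}V$. Then \Cref{Gorensteincan} and \Cref{lem:complete-intersection-after-blowup} show that $X'$ is a complete intersection of two divisors of class $2H-F$ in $\Bl_{C_V}\bP^5$. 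In particular $X'$ is Gorenstein with $-K_{X'}\sim 2H-F$, which is big and nef by \Cref{lem:blowup-conic-weak-Fano}. Hence $\sX'\to T$ is a flat family of Gorenstein weak Fano varieties with $-K_{\sX'/T}$ relatively nef and big.

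Next compare $\wt{\sX}$ with $\sX'$. The ideal sheaf $\cI_{\sW}\cdot\cO_{\wt{\sX}}$ equals $\cO_{\wt{\sX}}(-\wt{\sE})$ on the generic fiber. On the central fiber, $\phi_0$ contracts exactly $\wt{E}$ onto $C_V$ and is an isomorphism elsewhere (\Cref{lem:contract-Q}). I would therefore argue that the rational map $\wt{\sX}\dashrightarrow \sX'$ over $\sV$ is an isomorphism in codimension one, since both are isomorphic to $\sV$ away from $\sW$ and each has a single irreducible exceptional divisor dominating $\sW$. This needs the irreducibility of $(\Bl_{\sW}\sV)_0$ proved in \Cref{lem:commutativity-blowup-fiber}, together with irreducibility of $\wt{E}$ (as the flat limit of the irreducible $\wt{\sE}_t$, it has no component over a point, again because $\sV$ has hypersurface singularities with fibers of dimension $\le 2$). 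Both are then birational models over $T$ on which $-K$ is relatively big and nef. Since $K_{\wt{\sX}}$ and $K_{\sX'}$ correspond under this small map, their relative anticanonical rings agree:
\[
R(\wt{\sX}/T,-K_{\wt{\sX}}) \;\simeq\; R(\sX'/T,-K_{\sX'}).
\]
The left side gives $\sX$, because $\sX$ is the anticanonical model of $\wt{\sX}$ and is Fano over $T$. So the anticanonical model of $\sX'$ over $T$ is $\sX$.

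It remains to show that $-K_{\sX'}$ is already relatively ample, so that $\sX'$ is its own anticanonical model and $\sX\simeq\sX'$. This is the main obstacle. By \Cref{lem:weakFano-plane}, $-K_{X'}$ fails to be ample only when $V$ contains the plane $P$ spanned by $C_V$, in which case $\wt{P}$ is contracted to a point. So suppose $P\subseteq V$. Then $X=\sX_0$ is the anticanonical model of $X'$ (using base change for the anticanonical rings via Kawamata--Viehweg, as in \Cref{lem:contract-Q}), and so $X$ would have an isolated non-terminal Gorenstein point as the image of the divisor $\wt{P}$. This contradicts \Cref{thm:volume bound}(2), which says that isolated singularities of K-semistable $X$ are $cA_{\le 2}$ and hence terminal, exactly as in \Cref{cor:blowup-conic-unstable}. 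Therefore $-K_{\sX'}$ is $T$-ample, and $\sX'\simeq\sX$ over $T$, extending the identification over $T^\circ$. Restricting to $t=0$ and applying \Cref{lem:commutativity-blowup-fiber} gives $X\simeq \Bl_{C_V}V$.
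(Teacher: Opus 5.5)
Your proof is correct and follows the paper's proof. Both establish an isomorphism in codimension one from the identification over $T^\circ$ and the irreducibility of the central fibers, deduce that $\sX$ is the relative anticanonical model of $\Bl_{\sW}\sV$, and rule out the non-Fano case via \Cref{lem:weakFano-plane} and the K-semistability of $X$. The differences are minor: the paper compares $\sX$ with the anticanonical model of $\Bl_{\sW}\sV$ and cites \Cref{cor:blowup-conic-unstable}, whereas you compare $\wt{\sX}$ with $\Bl_{\sW}\sV$ and apply \Cref{thm:volume bound}(2) to $X$ directly.

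Two small corrections. First, irreducibility of $\wt{E}$ plays no role in a codimension-one comparison of fourfolds, and it fails when $C_V$ is a pair of lines; what you need is irreducibility of $\wt{X}$ and of $(\Bl_{\sW}\sV)_0$. Second, you do not justify that the image of $\wt{P}$ is an isolated singular point of $X$, but you do not need to. That point is the center of a crepant exceptional divisor, so it is not cDV: a Du Val hyperplane section through it would make the pair plt, by inversion of adjunction. \Cref{thm:volume bound}(2) excludes non-cDV points just as well.
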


\begin{proof}

By the proof of \Cref{lem:commutativity-blowup-fiber}, we see that $\Bl_{\mts{W}}\mts{V}$ is integral and Cohen--Macaulay, and hence normal. By \Cref{lem:blowup-conic-weak-Fano}, the morphism $\Bl_{\mts{W}}\mts{V}\rightarrow T$ is a family of weak Fano varieties. Let $\mts{X}'\rightarrow T$ be the relative anticanonical ample model; it is also normal.

Notice that $\mts{X}$ and $\mts{X}'$ are isomorphic over $T^{\circ}$, and hence they are birational and isomorphic in codimension one. Since both are normal and have anti-canonical divisors relatively ample over $T$, it follows that they are isomorphic. In particular, $\mts{X}'_0$ is K-semistable. If $\sX'$ were not isomorphic to $\Bl_{\sW}\sV$, then $\Bl_{\sW_0}\sV_0$ would not be Fano. Hence, by \Cref{lem:weakFano-plane}, $V$ would contain the plane spanned by $C_V$. However, \Cref{cor:blowup-conic-unstable} implies that $\mts{X}'_0$ is K-unstable, which is a contradiction. Therefore, $\Bl_{\mts{W}}\mts{V}\rightarrow T$ is a family of Fano varieties, and $\mts{X}'\simeq \Bl_{\mts{W}}\mts{V}\simeq \mts{X}$. The last statement follows immediately from \Cref{lem:commutativity-blowup-fiber}.
\end{proof}

\begin{prop}\label{prop:deformation}
    Let $X$ be the blow-up of a Gorenstein canonical $(2,2)$-complete intersection $V$ in $\bP^5$ along a conic curve $C$ which is not contained in the singular locus of $V$. Assume in addition that $V$ does not contain the 2-plane spanned by $C$. Then $\Ext^2(\Omega^1_{X},\mtc{O}_X)=0$. In particular, there are no obstructions to deformation of $X$.
\end{prop}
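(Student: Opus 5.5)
The approach is to compute $\Ext^2(\Omega^1_X,\cO_X)$ through the local-to-global spectral sequence
\[
E_2^{p,q}=H^p(X,\mathcal{E}xt^q(\Omega^1_X,\cO_X))\ \Rightarrow\ \Ext^{p+q}(\Omega^1_X,\cO_X),
\]
and to show that the three contributions $H^2(X,T_X)$, $H^1(X,\mathcal{E}xt^1(\Omega^1_X,\cO_X))$ and $H^0(X,\mathcal{E}xt^2(\Omega^1_X,\cO_X))$ all vanish.

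First, the hypotheses (together with Lemma~\ref{lem:weakFano-plane}, or its proof, since $V$ does not contain the plane spanned by $C$) show that $X$ is Fano. Moreover, $X$ has only hypersurface singularities: away from the exceptional divisor $E$ they are those of $V$, and along $E$ we use that $X$ is a complete intersection of two divisors of class $2H-F$ in $\Bl_C\bP^5$ (Lemma~\ref{lem:complete-intersection-after-blowup}). Hence $X$ is lci, and $\Omega^1_X$ has a two-term locally free resolution $0\to \mathcal N^\vee\to \Omega_{P}|_X\to\Omega^1_X\to0$ coming from the embedding $X\subset P:=\Bl_C\bP^5$. This gives $\mathcal{E}xt^2(\Omega^1_X,\cO_X)=0$.

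Next, $\mathcal{E}xt^1(\Omega^1_X,\cO_X)=T^1_X$ is supported on the singular locus, which has dimension at most $1$. Where the singular locus is $0$-dimensional the corresponding $H^1$ vanishes trivially. Where it is $1$-dimensional (curves of $A_\infty$/$D_\infty$ type, which can occur for canonical $V$), I would argue that $T^1_X$ restricted to a singular curve $\Gamma$ is a line bundle of the form $\cO_X(Z)|_\Gamma$, where $Z$ is a suitable positive combination of $-K_X$, $H$ and $E$; it should be the normal bundle of the hypersurface twisted appropriately. Then $H^1$ vanishes provided this bundle has degree $>2g(\Gamma)-2$ on each singular curve. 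Since $\Gamma$ is rational or elliptic of small degree inside a Gorenstein canonical $(2,2)$ complete intersection, this should follow from an explicit degree estimate using $\cO_{\bP^5}(2)$ restricted to $\Gamma$.

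Finally, for $H^2(X,T_X)$ I would use the resolution above to obtain
\[
0\to T_X\to T_P|_X\to \mathcal N_{X/P}\to T^1_X\to 0,
\]
with $\mathcal N_{X/P}=\cO_X(2H-F)^{\oplus2}=\cO_X(-\tfrac13K_P)^{\oplus 2}$, which is nef and big. Kawamata--Viehweg gives $H^i(\mathcal N)=0$ for $i>0$. I would then reduce to the vanishing of $H^2(X,T_P|_X)$ and $H^1(X,\cN')$, where $\cN'$ is the image sheaf. $H^2(T_P|_X)$ should follow from the Euler-type and blowup sequences for $T_P$: $T_{\bP^5}$ pulled back is a quotient of $\cO(H)^{\oplus6}$, and the correction along $F$ involves $T_F$-type terms and $\cO_F(F)$. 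Their cohomology on $X\cap F=E$, a complete intersection conic-bundle surface, can be computed via Koszul resolutions using $(2H-F)|_F=\cO_{\bP N^*(2)}(1)$, which is nef (Lemma~\ref{lem:blowup-conic-weak-Fano}). The hypothesis that $V$ does not contain the plane $P$ is exactly what makes $-K_X$ ample, so all vanishing theorems apply.

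I expect the main obstacle to be the blowup correction terms in $H^2(X,T_P|_X)$, namely the cohomology of $\cO_E(F)$ and $T_{F/C}|_E$ twisted on the possibly singular surface $E$. If this becomes unwieldy, I would instead use Serre duality, $\Ext^2(\Omega^1_X,\cO_X)\simeq H^1(X,\Omega^1_X\otimes\omega_X)^\vee$, and prove $H^1(X,\Omega^1_X(K_X))=0$ by an Akizuki--Nakano type argument for lci Fano threefolds. That is the dual form in the spirit of Namikawa's/Sano's unobstructedness proofs cited earlier (\cite{Nam97,San18}), restricting to a general anticanonical K3 surface $S$ and using $H^0(S,T_S)=0$.
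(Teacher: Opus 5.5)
\textbf{Gap: the local-to-global route needs vanishings you do not prove.}

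Going through the local-to-global spectral sequence forces you to prove $H^1(X,T^1_X)=0$ and $H^2(X,T_X)=0$ separately. Your sketch establishes neither.

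\emph{The term $H^1(X,T^1_X)$.} The proposition assumes nothing about $V$ beyond being Gorenstein canonical, so the one-dimensional part of $\mathrm{Sing}(X)$ need not have transversal type $A_1$. At a $D_\infty$ point, $T^1_X$ already has an embedded point (locally $\bC[z,w]/(zw,z^2)$). Along a transversal $A_n$ with $n\ge 2$, it is supported on a non-reduced curve. So the claim that $T^1_X|_\Gamma$ is a line bundle $\cO_X(Z)|_\Gamma$ of large degree is unfounded, and the degree estimate itself is only asserted.

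\emph{The term $H^2(X,T_X)$.} Your reduction needs $H^1(\cN')=0$. Since $H^1(\cN_{X/P})=0$, this group is $\coker\bigl(H^0(\cN_{X/P})\to H^0(T^1_X)\bigr)$. So you would have to show that every global section of $T^1_X$ comes from perturbing the two equations of class $2H-F$ in $\Bl_C\bP^5$. No vanishing theorem gives this. It is also more than you need: $\Ext^2(\Omega^1_X,\cO_X)=0$ only forces $H^2(T_X)$ to equal the image of $d_2\colon H^0(T^1_X)\to H^2(T_X)$, not to vanish.

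\emph{The fallback.} The Serre-duality fallback does not close the gap either. Akizuki--Nakano-type vanishing of $H^1(\Omega^1_X\otimes\omega_X)$ for K\"ahler differentials is not available for non-isolated singularities. Namikawa's and Sano's arguments use isolated terminal singularities.

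\textbf{The fix.} Apply $\Hom(-,\cO_X)$ directly to the locally free resolution you already wrote, $0\to\cN^\vee\to\Omega_P|_X\to\Omega^1_X\to0$. Since the first two terms are locally free, this gives an exact sequence
\[
H^1(X,\cN_{X/P})\ \longrightarrow\ \Ext^2(\Omega^1_X,\cO_X)\ \longrightarrow\ H^2(X,T_P|_X).
\]
So the result follows from exactly the two vanishings you already planned.
\begin{enumerate}
\item $H^1(\cN_{X/P})=0$ by Kawamata--Viehweg.
\item $H^2(T_P|_X)=\Ext^2(\Omega_P|_X,\cO_X)=0$. Write $\psi\colon P\to\bP^5$ for the blowup and use $0\to\psi^*\Omega_{\bP^5}|_X\to\Omega_P|_X\to\Omega_{F/C}|_X\to0$. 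The first term is handled by the Euler sequence, using $H^2(X,H|_X)=H^3(X,\cO_X)=0$. The last term is handled by the relative Euler sequence on $F\simeq\bP(N^*_{C/\bP^5})$. Serre duality on $X$ then reduces everything to $H^0(E,\omega_X|_E)=0$ and $H^1\bigl(E,(2E-(m+2)H)|_E\bigr)=0$ for $m=1,2$.
\end{enumerate}
This is the paper's proof. It never involves $T^1_X$, $T_X$, or the local structure of $\mathrm{Sing}(V)$, which is why it works for arbitrary Gorenstein canonical $V$.
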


\begin{proof}
   The blow-up morphism $\phi:X\rightarrow V\subseteq \mathbb{P}^5$ sits in a commutative diagram $$\xymatrix{
 & X \ar[rr]^{\phi} \ar@{^(->}[d]  &  &  V \ar@{^(->}[d]\\
 & \wt{\mathbb{P}}\coloneqq \Bl_{C}\mathbb{P}^5  \ar[rr]^{\quad \psi}  &   & \mathbb{P}^5. \\
 }$$ Let $L\coloneqq \psi^{*}\mtc{O}_{\mathbb{P}^5}(1)$, $F$ be the $\psi$-exceptional divisor, i.e. the closed subscheme defined by the ideal $\psi^{-1}I_{C/\bP^5}\cdot \cO_{\wt{\bP}}$, and $E\coloneqq F\cap X$ be the $\phi$-exceptional locus. Since $C$ is not contained in the singular locus of $V$, $X$ is a complete intersection in $\wt{\bP}$, and hence $$N_{X/\wt{\mathbb{P}}}\ \simeq \ \mtc{O}_X(2L-F)\oplus\mtc{O}_X(2L-F),$$ which is big and nef. By taking $R\Hom(\cdot, \mtc{O}_X)$ of the short exact sequence $$0 \ \longrightarrow \ (N_{X/\wt{\mathbb{P}}})^*\ \longrightarrow\ \Omega^1_{\wt{\mathbb{P}}}|_X\  \longrightarrow\   \Omega^1_X \ \longrightarrow\   0,$$ it suffices to show that $H^1(X,N_{X/\wt{\mathbb{P}}})=0$ and $\Ext^2(\Omega^1_{\wt{\mathbb{P}}}|_X,\mtc{O}_X)=0$. The first vanishing follows immediately from Kawamata-Viehweg vanishing.

 To show the second vanishing, let us consider the short exact sequence $$0 \longrightarrow \psi^*\Omega^1_{\mathbb{P}^5}|_X \ \longrightarrow\ \Omega^1_{\wt{\mathbb{P}}}|_X \ \longrightarrow\  \Omega^1_{F/C}|_X \ \longrightarrow \ 0,$$ where we use the fact that $F$ intersects generically transversely with $X$. It suffices to show that $\Ext^2(\psi^*\Omega^1_{\mathbb{P}^5}|_X,\mtc{O}_X)=0$ and $\Ext^2(\Omega^1_{F/C}|_X,\mtc{O}_X)=0$. Consider the pull-back of the Euler sequence  $$0 \ \longrightarrow\  \mtc{O}_X \ \longrightarrow\ (L|_X)^{\oplus 6 } \ \longrightarrow \  \psi^{*}T_{\mathbb{P}^5}|_X \ \longrightarrow \  0.$$ One has that $H^3(X,\mtc{O}_X)=0$ and $H^2(X,L|_X)=0$ by Kawamata-Viehweg vanishing, and hence $$\Ext^2(\psi^*\Omega^1_{\mathbb{P}^5}|_X,\mtc{O}_X)\ \simeq\  H^2(X,\psi^{*}T_{\mathbb{P}^5}|_X)\ =\ 0.$$ Let $p:F\rightarrow C$ be the natural projection. As $C$ is a complete intersection of hypersurfaces in $\bP^5$ of multidegree $(1,1,1,2)$, then $N_{C/\bP^5}\simeq \cE\coloneqq (\mtc{O}_{\bP^1}(1)^{\oplus3}\oplus\mtc{O}_{\bP^1}(2))|_C$, and hence $F\rightarrow C$ is isomorphic to $\bP\cE^*\rightarrow C$. Consider the relative Euler sequence on $F\simeq \bP\cE^*\rightarrow C$ (each entry viewed as a torsion sheaf on $\wt{\mathbb{P}}$) $$0 \ \longrightarrow \ \Omega^1_{F/C}\ \longrightarrow \ (p^{*}\cE^*)(-1) \ \longrightarrow \   \mtc{O}_{F} \longrightarrow  0.$$ Twisting by $\mtc{O}_X$, one obtains $$0 \ \longrightarrow \ \Omega^1_{F/C}|_X\ \longrightarrow \ (p^{*}\cE^*)(-1)|_X \ \longrightarrow \   \mtc{O}_{F\cap X} \longrightarrow  0.$$ We now need to prove that $\Ext^2((p^{*}\cE^*)(-1)|_X,\mtc{O}_X)=0$ and $\Ext^3(\mtc{O}_F|_X,\mtc{O}_X)=0$. Since $\mtc{O}_F|_X=\mtc{O}_E$, then by Serre duality one has $$\Ext^3_X(\mtc{O}_F|_X,\mtc{O}_X)\ \simeq\  H^0(X,\omega_X\otimes \mtc{O}_F|_X)^{*} \ \simeq\  H^0(E,\omega_X|_E)^{*}\ =\ 0$$ as $-K_X|_E$ is big and nef. Similarly, for $m=1,2$ one has $$\Ext^2_X((p^{*}\mtc{O}_{C}(-m))(-1)|_X,\mtc{O}_X)\ \simeq\  H^1(X,\omega_X\otimes (p^{*}\mtc{O}_{C}(-m))(-1)|_X)^* \ \simeq\  H^1(E,(2E-(m+2)L)|_E)^{*},$$ where we use that $\mtc{O}_F(-1)=\mtc{O}_{F}(F)$ and $\omega_X\sim F-2L$. If $m=2$, then $((m+2)L-2E)|_E$ is big and nef, and hence $H^1(E,(2E-(m+2)L)|_E)=0$ by Kawamata-Viehweg vanishing. For $m=1$, consider the short exact sequence $$0\ \longrightarrow \mtc{O}_X(E-2L)\ \longrightarrow \mtc{O}_X(2E-3L)\ \longrightarrow \ \mtc{O}_E(2E-3L)\ \longrightarrow \ 0.$$ Since $2L-F$ is big and nef, then by Kawamata-Viehweg Vanishing $\mtc{O}_X(F-2L)$ has no middle cohomology, and hence $$H^1(E,(2E-3L)|_{E})\ =\ H^1(X,2E-3L)\ \simeq \ H^2(X,L-E)^* \ = \ 0$$ by Kawamata-Viehweg vanishing theorem, where we use the fact that $(L-F)|_X$ is nef. 
 %Consider the short exact sequence $$0\ \longrightarrow \mtc{O}_X(L-E)\ \longrightarrow \mtc{O}_X(L)\ \longrightarrow \ \mtc{O}_E(L)\ \longrightarrow \ 0.$$ Since $L$ is big and nef, then by Kawamata-Viehweg Vanishing $\mtc{O}_X(L)$ has no higher cohomology, and hence $$H^2(X,L-E) \ =\  H^1(E,L|_{E}) \ \simeq \ H^1(C,\mtc{O}_{\bP^5}(1)|_C)\ = \ 0 ,$$ where for the middle isomorphism we use $R^1p_*\mtc{O}_E=0$. 
\end{proof}

\begin{proof}[Proof of \Cref{thm:K-ss limit of 2.16}]
    It follows from \Cref{thm:volume bound}(1) that $X$ is Gorenstein canonical. By \Cref{prop:blow-up-iso}, we have $X\simeq \Bl_{C_V}V$ where $V$ is a Gorenstein canonical $(2,2)$-complete intersection in $\bP^5$ and $C_V\subseteq V$ is a conic. Thus by \Cref{prop:deformation}, the K-moduli stack $\mtc{M}^\K_{\textup{\textnumero2.16}}$ is smooth, and hence a smooth connected component of $\cM^\K_{3,22}$.
\end{proof}

\medskip
\section[tocentry={K-stability of special Type I degeneration}]{K-stability of special Type I degeneration}\label{appendix B}

In this appendix, we construct a three-dimensional family of K-stable $V_{22}$ with non-isolated singularities. 
These arise as degenerations of Type I one-nodal $V_{22}$.

\smallskip
Let $Q \subset \bP^3$ be the smooth quadric surface defined by $x_0x_3 = x_1x_2$. Fix the isomorphism $Q\simeq \bP^1\times\bP^1$ given by
\[
([u_0:u_1],[v_0:v_1]) \ \mapsto \ [u_0v_0:u_0v_1:u_1v_0:u_1v_1].
\]
Let $C\subset Q$ be a curve of bidegree $(1,4)$ with respect to the coordinates $([u_0:u_1],[v_0:v_1])$. 
Let 
\[
\pi\ :\ Y:=\Bl_C\bP^3\ \longrightarrow \ \bP^3
\]
be the blowup, and let $f:Y\rightarrow X$ be the contraction of the strict transform $\wt{Q}$ of $Q$. 
Then $X$ is a Gorenstein canonical Fano threefold with an $A_{\infty}$-singularity along a rational curve of $(-K_X)$-degree $3$. The Fano threefolds obtained in this way form a three-dimensional family. As Type I one-nodal $V_{22}$ are obtained by taking the anticanonical ample model of the blowup of $\bP^3$ along a general rational quintic curve (cf.\ \Cref{thm:Prokhorov's type I-IV}), $X$ can be viewed as a degeneration of Type I $V_{22}$.

\begin{thm}\label{thm:existence of K-stable with non-isolated sing}
For a general $(1,4)$-curve $C$, the Fano threefold $X$ is K-stable.
\end{thm}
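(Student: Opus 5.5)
The plan is to prove that $\delta(X)>1$ for a general $(1,4)$-curve $C$, using the Abban--Zhuang method. We work throughout on the small model $Y=\Bl_C\bP^3$. On $Y$ write $H=\pi^*\cO_{\bP^3}(1)$ and $E_C$ for the $\pi$-exceptional divisor. Then
\[
-K_Y=f^*(-K_X)\sim 4H-E_C,\qquad \wt{Q}\sim 2H-E_C,
\]
and $f$ contracts $\wt{Q}\simeq\bP^1\times\bP^1$ onto the singular curve $\Gamma$ along its first ruling, so $\wt Q$ is a divisor over $X$ with centre $\Gamma$. Intersection numbers on $Y$ come from the standard formulas for the blowup of a smooth rational quintic: $H^2\cdot E_C=0$, $H\cdot E_C^2=-5$, $E_C^3=-\deg N_{C/\bP^3}=-18$. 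Since $C$ is a $(1,4)$-curve, the classes $f^*(-K_X)$, $\wt Q$ and $E_C$ all restrict to $\wt Q$ as explicit bidegrees.

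The first step is to control the two global divisorial invariants that are most likely to destabilize, namely $\beta(\wt Q)$ and $\beta(E_C)$. For $\wt Q$ we have $A_X(\wt Q)=1$, because $f$ is crepant. We compute $S_X(\wt Q)=\frac{1}{22}\int_0^{\tau}\vol(-K_Y-t\wt Q)\,dt$ via a Zariski decomposition on the Mori dream space $Y$, in the style of the computation in \Cref{thm:sing-line}. The divisor $-K_Y-t\wt Q=(4-2t)H-(1-t)E_C$ is nef for small $t$, and the pseudo-effective threshold is $\tau=2$, since at $t=2$ the class is $E_C$. In between we expect a negative part supported on the strict transforms of the trisecant/quadrisecant surfaces of $C$, and we will read these off from the Mori cone of $Y$. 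We also check that $\beta(E_C)>0$ and $\beta(H)>0$ by the same method; these are routine.

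The second step is local. For every point $p\in X$ we choose a plt-type flag and apply the Abban--Zhuang estimate
\[
\delta_p(X)\ \ge\ \min\Big\{\frac{A_X(F)}{S_X(F)},\ \inf_{q}\frac{A_{F,\Delta_F}(Z)}{S(W^F_{\bullet,\bullet};Z)},\ \dots\Big\}.
\]
We split into two cases.

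\begin{itemize}
\item \emph{Points $p\notin\Gamma$.} We take the first divisor $F$ to be the image of a general hyperplane section of $\bP^3$ through $\pi(p)$, or the image of a smooth member of $|\wt Q|$-type quadrics through $\pi(p)$. We then refine along a curve in $F$ and compute the refined $S$-invariants from the Zariski chamber decomposition of the first step.
\item \emph{Points $p\in\Gamma$.} Here the natural flag is $\wt Q\simeq\bP^1\times\bP^1$, with different $\Delta_{\wt Q}=0$. We refine by the fibre of $\wt Q\to\Gamma$ through $p$, or by the curve $\wt Q\cap E_C$ (the $(1,4)$-curve), and bound the refined $S$-invariant on $\wt Q$ using the restricted volumes computed above.
\end{itemize}

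The main obstacle is the second case, and in particular the points of $\Gamma$ lying under the special points of $C$: the branch points of the degree-$4$ projection $C\to\bP^1_u$, and the points where $C$ meets $\wt Q\cap E_C$ non-transversally. These are the points where a naive flag gives ratios equal to $1$. Genericity of $C$ enters here: for general $C$ the projection has only simple ramification and no secant-line degeneracies, so the flag $\wt Q\supset(\text{fibre})\supset p$ stays admissible. If the direct estimate is only $\ge 1$ rather than $>1$ at some of these points, we fall back on a degeneration argument. We degenerate $C$ to a $\bG_m$-invariant curve $u_0v_0^4=u_1v_1^4$ and prove K-polystability of the resulting $X_0$ by an equivariant Abban--Zhuang computation, which only needs to check $\bG_m$-invariant centres. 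We then check that the $3$-dimensional deformation directions of the family carry $\bG_m$-weights of both signs on the Luna slice, so a general deformation has a closed orbit and finite stabilizer. K-polystability with finite automorphism group then gives K-stability for general $C$.
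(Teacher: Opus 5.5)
\textbf{Gap in the primary route.} Your primary route breaks down at the points of $\Gamma$, and not only at special points of $C$. Take $p\in\Gamma$ and let $f_p\subset\wt Q$ be the fibre of $\wt Q\to\Gamma$, which is the strict transform of a $4$-secant line of $C$.

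Write $g,f$ for the two ruling classes of $\wt Q$, with $f$ the class that is contracted. Then $C=\wt Q\cap E_C\sim 4g+f$ and $-K_Y|_{\wt Q}\sim 3f$. On $[1,2]$ the negative part of $-K_Y-u\wt Q$ is $(u-1)E_C$; it is not supported on trisecant or quadrisecant surfaces. The positive part restricts to $2u\,g+(3-u)f$ on $[0,1]$ and to $(4-2u)(g+f)$ on $[1,2]$. The negative part restricts to $(u-1)C$, which does not contain $f_p$. Hence
\[
S(W^{\wt Q}_{\bullet,\bullet};f_p)\;=\;\frac{3}{22}\Bigl(\int_0^1 2u(3-u)^2\,du+\int_1^2(4-2u)^3\,du\Bigr)\;=\;\frac{3}{22}\Bigl(\frac{11}{2}+2\Bigr)\;=\;\frac{45}{44}\;>\;1 .
\]
Since $f_p$ is a prime divisor on $\wt Q$ passing through every $q\in f_p$, you get $\delta_q(\wt Q;W^{\wt Q}_{\bullet,\bullet})\le 44/45$ for all such $q$. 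So no refinement of the first step $\wt Q$ can certify $\delta_p(X)>1$ at any point of $\Gamma$, however general $C$ is. This applies to refining by the fibre, by $\wt Q\cap E_C$, or by anything else.

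This does not show that $X$ is unstable, since $\beta(\wt Q)=1-\tfrac{19}{22}>0$. It only shows that your chosen flag cannot work. A general plane through the $4$-secant line does not rescue it either, because $\wt Q$ lies in the negative part of $-K_Y-t\wt\Pi$ and contains that line. You would need a genuinely different plt-type divisor over points of $\Gamma$, and the proposal does not supply one. Separately, $|2H-E_C|=\{\wt Q\}$, so the ``$|\wt Q|$-type quadrics through $\pi(p)$'' you mention do not exist.

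\textbf{The fallback.} Your fallback is essentially the paper's route, but with $\bG_m$ alone it runs into the same obstruction. The torus has two fixed points on $\Gamma$, so the two fibres of $\wt Q\to\Gamma$ over them are $\bG_m$-invariant centres. In the paper's coordinates these are the lines $\{x_1=x_3=0\}$ and $\{x_0=x_2=0\}$, and the $45/44$ computation applies to them verbatim. There are also four $\bG_m$-fixed points in $\bP^3$ still to be treated.

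The paper avoids all of this by using $G=\bG_m\rtimes\mu_2$, with the involution $[x_0:x_1:x_2:x_3]\mapsto[x_3:x_2:x_1:x_0]$ swapping those fibres and fixed points. With this group:
\begin{itemize}
\item there are no $G$-fixed points and no $G$-invariant rulings of $\wt Q$, so an invariant curve in $\wt Q$ dominates the diagonal and has refined $S$-invariant at most about $1/2$;
\item the $\alpha_{G,Z}$ and non-klt-locus arguments then force $\pi(Z)$ to be one of the two $G$-invariant lines $\ell_1,\ell_2$;
\item these lines are handled by a plane flag, giving $53/88$ and $29/44$.
\end{itemize}

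\textbf{Your final step is sound.} It is in fact more explicit than the paper's appeal to \cite[Cor.~1.16]{ACC+}. On the $4$-dimensional slice of $(1,4)$-curves at $C_0$ (the moduli is $3$-dimensional, not the slice), the $\bG_m$-weights are $\pm2,\pm3$. So general orbits are closed with finite stabilizers. You still need to justify that this slice maps equivariantly and isomorphically onto the locally trivial part of the local chart of $\cM^\K_{3,22}$. The paper supplies this through $\Ext^2(\Omega_X,\cO_X)=0$ and $h^1(X,T_X)=4$.
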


Let $C_0\subset Q$ be the special $(1,4)$-curve defined by
\[
u_0^4v_0=u_1^4v_1.
\]
Then $C_0$ is a rational curve parametrized by
\[
[y_0:y_1] \ \mapsto \ \big([y_0:y_1],[y_1^4:y_0^4]\big),
\]
and its image in $\bP^3$ is the rational quintic curve
\[
[y_0:y_1]\ \mapsto \ [y_0y_1^4:y_0^5:y_1^5:y_0^4y_1].
\] Let $Y_0:=\Bl_{C_0}\bP^3$ and let $X_0$ be its anticanonical ample model. 
To prove \Cref{thm:existence of K-stable with non-isolated sing}, we first establish the following result using the equivariant K-stability and admissible flag method (cf.\ \cite{AZ22, Zhu21, ACC+}), and then conclude using deformation theory.

\begin{prop}\label{prop:non-iso-K-ps}
The Fano variety $X_0$ is K-polystable.
\end{prop}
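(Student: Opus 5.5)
We will prove K-polystability of $X_0$ by equivariant K-stability. First we find the symmetry group. The curve $C_0\colon u_0^4v_0=u_1^4v_1$ is preserved by a one-dimensional torus
\[
\lambda\cdot([u_0:u_1],[v_0:v_1]) \;=\; ([u_0:\lambda u_1],[\lambda^{4}v_0:v_1]),
\]
which acts diagonally on $\bP^3$ via $[x_0:x_1:x_2:x_3]\mapsto[\lambda^4x_0:x_1:\lambda^5x_2:\lambda x_3]$. It is also preserved by the involution $\sigma$ that swaps $u_0\leftrightarrow u_1$ and $v_0\leftrightarrow v_1$. Let $G=\bG_m\rtimes\bZ/2\subseteq\Aut(\bP^3,C_0)$. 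This group lifts to $Y_0$ and descends to $X_0$.

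By \cite{Zhu21} it suffices to show that $\beta_{X_0}(F)>0$ for every $G$-invariant prime divisor $F$ over $X_0$, together with vanishing of the Futaki invariant of the $\bG_m$-action. The Futaki invariant vanishes because $\sigma$ inverts the torus ($\sigma\lambda\sigma=\lambda^{-1}$ up to scaling), so the barycenter of the Duistermaat--Heckman measure is $0$.

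We then control the centers of $G$-invariant valuations on $Y_0$, which is crepant over $X_0$. The $G$-invariant irreducible subvarieties of $\bP^3$ are few:
\begin{itemize}
\item the two $\sigma$-swapped pairs of torus-fixed points, namely the coordinate points, which $\sigma$ exchanges as $e_0\leftrightarrow e_3$ and $e_1\leftrightarrow e_2$;
\item the lines joining fixed points in $\sigma$-stable configurations;
\item $Q$ and its $\bG_m$-invariant curves;
\item $C_0$.
\end{itemize}
Since a $G$-invariant center must be $\sigma$-stable, the point centers come in pairs, and the only irreducible invariant centers are curves and surfaces: $C_0$, the exceptional divisor $E$ over $C_0$, the contracted surface $\wt Q$, the $\sigma$-stable invariant lines such as $\{x_1=x_2=0\}$ and $\{x_0=x_3=0\}$, and the $A_\infty$ curve $f(\wt Q)$ on $X_0$.

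For divisorial $F$ with center one of these, we will apply the Abban--Zhuang admissible flag method \cite{AZ22,ACC+}. For a curve center $Z$ we pick an invariant surface $D\supset Z$, such as $\wt Q$, $E$, or the strict transform of an invariant plane, and estimate
\[
\delta_Z(X_0)\;\ge\;\min\Bigl\{\tfrac{A(D)}{S(D)},\ \inf_{p\in Z}\delta_p(D;W^D)\Bigr\}.
\]
The $S$-invariants come from the Zariski decomposition of $-K_{Y_0}-tD$ on the rank-two Mori cone of $Y_0$. The cone is generated by the $E$-fibers and the rulings of $\wt Q$, so the volumes are explicit piecewise cubics. For the surface centers $E$ and $\wt Q$ we compute $\beta$ directly. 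The target is strict positivity in each case.

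We expect the main obstacle to be the center $C_0$ and the $A_\infty$ curve, or equivalently the flags through $\wt Q\simeq\bP^1\times\bP^1$ and $E$. The refinement $W^D$ on a ruled surface involves a non-trivial Zariski chamber, and the resulting estimate may be close to $1$. If the naive flag fails there, we will first try replacing $D$ by the divisor with center $C_0$ obtained from the weighted blowup along $(Q,E)$, and then redo the curve estimate on that surface.

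Granting this, $X_0$ is K-polystable, which is Proposition \ref{prop:non-iso-K-ps}. For Theorem~\ref{thm:existence of K-stable with non-isolated sing}, we then use openness of K-semistability in the three-dimensional family of $(1,4)$ curves, so that $X$ is K-semistable for general $C$. We also use that a general $X$ has finite automorphism group, since a general $(1,4)$ curve has no $\bG_m$ symmetry. Finally, the K-polystable degeneration of a general member can only be $X_0$-type or itself, and the family is not isotrivial near $X_0$, so the general member is K-stable.
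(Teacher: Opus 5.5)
\textbf{Genuine gap.} Your setup is fine: the group $G=\bG_m\rtimes\mu_2$ is the paper's (your torus is its torus with $\lambda\mapsto\lambda^{-1}$), and the equivariant reduction is the right one. The gap is your enumeration of $G$-invariant centers, which is false. The list is not finite.

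The rational function $x_0x_3/(x_1x_2)$ is $G$-invariant, so every quadric $Q_c=\{x_0x_3=c\,x_1x_2\}$ is $G$-invariant. Moreover, every point $[1:\alpha:\alpha:1]$ is fixed by $\sigma$. Hence each orbit closure $\Gamma_\alpha=\overline{\{[\lambda:\alpha\lambda^5:\alpha:\lambda^4]\}}$ is a $G$-invariant rational quintic lying on $Q_{\alpha^{-2}}$, with $\Gamma_1=C_0$. For $\alpha\neq\pm1$, the curve $\Gamma_\alpha$ is not contained in $Q$, is not a line, and is not $C_0$. So the strict transforms of this one-parameter family are legitimate centers of $G$-invariant divisors over $Y_0$, and your plan never treats them. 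Relatedly, the flag ``strict transform of an invariant plane'' is unavailable: $\bP^3$ has no $G$-invariant plane, since $\sigma$ swaps the coordinate planes. The flag surface need not be invariant anyway.

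\textbf{The missing idea} is how the paper cuts an arbitrary invariant curve center $Z$ down to two lines.

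\begin{enumerate}
\item Points and divisors on $Y_0$ are excluded by the absence of $G$-fixed points and by divisorial stability, with $S(\wt{Q})=\frac{19}{22}$ and $S(E)=\frac14$.
\item The case $Z\subset\wt{Q}$ is excluded by Abban--Zhuang with flag $\wt{Q}\cong\bP^1\times\bP^1$, which gives $S(W^{\wt{Q}}_{\bullet,\bullet};Z)\le\frac12$.
\item From $\alpha_{G,Z}(Y_0)<\frac34$ the paper obtains a $G$-invariant $D\sim_{\bQ}-K_{Y_0}$ and $\lambda<\frac34$ with $Z\subseteq\Nklt(Y_0,\lambda D)$.
\item The effective cone, generated by $E$ and $\wt{Q}$, rules out $G$-irreducible surfaces in this non-klt locus.
\item Inversion of adjunction on $E\cong\bP^1\times\bP^1$, where the restricted boundary has class $\mathcal{O}(1,11)$, excludes $Z\subset E$.
\item Then $\pi(Z)$ is a rational curve in the one-dimensional non-klt locus of $(\bP^3,\lambda\pi_*D)$, where $\lambda\pi_*D$ has degree $4\lambda<3$. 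The non-klt results of \cite[Appendix A]{ACC+} force $\pi(Z)$ to be a line, hence $\ell_1=V(x_0,x_3)$ or $\ell_2=V(x_1,x_2)$.
\item Only one Abban--Zhuang computation then remains. The flag is the strict transform of a general, non-invariant plane through $\ell_i$, a quartic del Pezzo surface $S$ with $S_{Y_0}(S)=\frac12$. This gives $S(W^S_{\bullet,\bullet};\ell_2)=\frac{53}{88}$ and $S(W^S_{\bullet,\bullet};\ell_1)=\frac{29}{44}$.
\end{enumerate}

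Your proposal needs either this $\alpha$-invariant and non-klt-locus reduction, or a uniform estimate along the whole family $\Gamma_\alpha$ (for instance with flags the strict transforms of the $Q_c$). Without one of these, it does not cover all $G$-invariant valuations.
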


\begin{proof}
Let $G$ be the subgroup in $\Aut(\bP^3)$ generated by the involution $$[x_0:x_1:x_2:x_3] \ \mapsto \ [x_3:x_2:x_1:x_0]$$ and
automorphisms $$[x_0:x_1:x_2:x_3] \ \mapsto \ [\lambda x_0:\lambda^5x_1:x_2:\lambda^4x_3],$$ where $\lambda\in \bG_m$. Then $G\simeq \bG_m\rtimes \mu_2$, and $C_0$, $Q$ are both $G$-invariant. Thus, the action of the group $G$ lifts to the threefold $X_0$.

\begin{lemma}\label{lem: no fixed points and all}
    With the notation above, $\bP^3$ does not contain $G$-fixed points or $G$-invariant planes; and the only $G$-invariant lines in $\bP^3$ are the lines $$\ell_1\coloneqq V(x_0,x_3)  \ \ \ \textup{and} \ \ \ \ell_2\coloneqq V(x_1,x_2).$$ Moreover, one has $$\ell_1\cap Q \ = \ \ell_1\cap C_0 \ = \ [0:1:0:0]\cup[0:0:1:0],$$ and $$\ell_2\cap Q \ = \ [1:0:0:0]\cup[0:0:0:1], \  \ \ \ \ell_2\cap C_0 \ = \ \emptyset.$$
\end{lemma}

\begin{proof}
    The proof is elementary.
\end{proof}

\begin{lemma}
    The Fano variety $X_0$ is K-semistable if and only if the weak Fano variety $Y_0$ is K-semistable.
\end{lemma}

\begin{proof}
    This is because $f:Y_0\rightarrow X_0$ is crepant, i.e. $f^*K_{X_0}=K_{Y_0}$.
\end{proof}

\begin{lemma}\label{lem:divisorial stability}
    The weak Fano variety $Y_0$ is divisorially K-semistable.
\end{lemma}

\begin{proof}
    It suffices to show that for any prime divisor $F$ on $Y_0$, one has $S_{Y_0}(F)<1$. Note that the pseudo-effective cone of $Y_0$ is generated by two effective divisors $E$ and $\wt{Q}$, where $E$ is the $\pi$-exceptional divisor. Denote by $H$ the pull-back on $Y_0$ of the hyperplane class of $\bP^3$. Then $$-K_{Y_0}-u\wt{Q} \ \sim_{\bQ} \ 4H-E-u(2H-E),$$ whose positive part of the Zariski decomposition is \begin{equation}\nonumber
        P(-K_{Y_0}-u\wt{Q}) \ = \ \begin{cases}
        (4-2u)H-(1-u)E & 0\leq u\leq 1\\
        (4-2u)H & 1\leq u \leq 2.
    \end{cases}
    \end{equation} It follows that 
    \begin{equation}\nonumber
    \begin{split}
         22\cdot S_{Y_0}(\wt{Q}) \ & = \  \int_0^1(-K_{Y_0}-u\wt{Q})^3 du + \int_1^2\big((4-2u)H\big)^3du \\ 
         &= \ \int_0^1(4-2u)^3-15(4-2u)(1-u)^2+18(1-u)^3 du + \int_1^2(4-2u)^3du \\
         & = \ 17+2 \ = 19, 
    \end{split}
    \end{equation} and hence $S_{Y_0}(\wt{Q})=\frac{19}{22}$. Similarly, one has $$-K_{Y_0}-uE \ \sim_{\bQ} \ 4H-(1+u)E,$$ whose positive part of the Zariski decomposition is \begin{equation}\nonumber
        P(-K_{Y_0}-uE) \ = \ (1-u)(4H-E) \ \sim_{\bQ} \ (1-u)(-K_{Y_0})
    \end{equation} for $0\leq u\leq 1$. It follows that 
    \begin{equation}\nonumber
         S_{Y_0}(E) \ = \  \int_0^1 (1-u)^3 du  \ = \ \frac{1}{4}\ < \ 1 ,
    \end{equation} and hence $Y_0$ is divisorial K-semistable.
\end{proof}

Suppose that $Y_0$ is K-unstable. Then there exists a prime divisor $F$ over $Y_0$, which is $G$-invariant, such that $\delta_{Y_0}(F)\leq 1$. Let $Z\coloneqq C_{Y_0}(F)$ be the center of $F$ on $Y_0$. By Lemma \ref{lem: no fixed points and all} and Lemma \ref{lem:divisorial stability}, $Z$ must be a curve on $Y_0$.

\begin{lemma}\label{lem:Z not in Q}
    The curve $Z$ is not contained in $\wt{Q}$.
\end{lemma}

\begin{proof}
    Suppose otherwise, then by \cite[Corollary 1.110]{ACC+}, one has that $S(W^{\wt{Q}}_{\bullet,\bullet},Z)>1$. We compute in Lemma \ref{lem:divisorial stability} that positive part of the Zariski decomposition of $-K_{Y_0}-u\wt{Q}$ is \begin{equation}\nonumber
        P(-K_{Y_0}-u\wt{Q}) \ = \ \begin{cases}
        (4-2u)H-(1-u)E & 0\leq u\leq 1\\
        (4-2u)H & 1\leq u \leq 2.
    \end{cases}
    \end{equation} It follows that \begin{equation}\nonumber
        P(-K_{Y_0}-u\wt{Q})|_{\wt{Q}} \ = \ \begin{cases}
        \mathcal{O}(3-u,2u) & 0\leq u\leq 1\\
        \mathcal{O}(4-2u,4-2u) & 1\leq u \leq 2,
    \end{cases}
    \end{equation} where we identify $\wt{Q}$ with $\bP^1\times\bP^1$. By Lemma \ref{lem: no fixed points and all} again, $Z-\Delta$ is an effective divisor on $\bP^1\times\bP^1$, where $\Delta$ is the diagonal. Thus we have that 
    \begin{equation}
        \begin{split}
S(W^{\wt{Q}}_{\bullet,\bullet},Z)\ & \leq \ S(W^{\wt{Q}}_{\bullet,\bullet},\Delta) \\
& = \ \frac{3}{22}\left(\int_0^1\int_0^{2u}2(3-u-v)(2u-v)dvdu+\int_1^2\int_0^{4-2u}2(4-2u-v)^2dvdu\right)\\
& = \ \frac{3}{22}\left(\frac{7}{3}+\frac{4}{3}\right) \ = \ \frac{1}{2} \ < \ 1.
        \end{split}
    \end{equation} This leads to a contradiction. 
\end{proof}

As we assume $Y_0$ is K-unstable, we have $\alpha_{G,Z}(Y_0)<\frac{3}{4}$, and hence there is a $G$-invariant effective $\bQ$-divisor $D$ on $Y_0$ and a positive rational number $\lambda<\frac{3}{4}$ such that $D\sim_{\bQ} -K_{Y_0}$ and $Z$ is contained in the non-klt locus of $(Y_0,\lambda D)$, denoted by $\Nklt(Y_0,\lambda D)$.

\begin{lemma}\label{lem:1-dim nklt locus}
    The locus $\Nklt(Y_0,\lambda D)$ does not contain any $G$-irreducible surface.
\end{lemma}

\begin{proof}
    Suppose that $S$ is a $G$-irreducible surface contained in $\Nklt(Y_0,\lambda D)$. We can write $D = \gamma S + D'$, where $\gamma$ is a rational number such that $\gamma \geq \frac{1}{\lambda}$, and $D'$ is an effective $\bQ$-divisor on $Y_0$ whose support does not contain $S$. If $S = E$, then $$2\wt{Q}+E \sim_{\bQ} -K_{Y_0} \ \sim_{\bQ} \ \gamma S + D',$$ which implies that $2\wt{Q}-(\gamma-1)E$ is pseudo-effective, which is impossible. Thus $S\sim aH-bE$ for some $a\in \bZ_{>0}$ and $b\in \bZ_{\geq0}$ with $b\leq \frac{a}{2}$. Moreover, we have $a\gamma\leq 4$, and hence either $a = 1 $ or $a = 2$. 
    
    If $a=2$ and $b=0$, then one has that $D'\sim_{\bQ}(4-2\gamma)H-E$ is an effective $\bQ$-divisor, which is a contradiction. If $a = 2$ and $b = 1$, then $S = \wt{Q}$, which is a contradiction as $Z\subsetneq \wt{Q}$ by Lemma \ref{lem:Z not in Q}. If $a = 1$ and $b = 0$, then $\pi(S)$ is a $G$-invariant plane, which is impossible by Lemma \ref{lem: no fixed points and all}.
\end{proof}

\begin{lemma}
    The curve $Z$ is rational.
\end{lemma}

\begin{proof}
    Let $\ove{D}\coloneqq f(D)$ and $\ove{Z}\coloneqq f(Z)$, where $f:Y_0\rightarrow X_0$ is the contraction of $\wt{Q}$. Since $Z\subsetneq \wt{Q}$, then we see that $Z$ is a $G$-invariant irreducible curve, the induced morphism $Z\rightarrow \ove{Z}$ is birational, and $\ove{Z}\subseteq \Nklt(X_0,\lambda\ove{D})$. As $\Nklt(X_0,\lambda\ove{D})$ has no 2-dimensional components, then $Z$ is a smooth rational curve by \cite[Corollary A.14]{ACC+}.
\end{proof}

\begin{lemma}
    The curve $Z$ is not contained in $E$.
\end{lemma}

\begin{proof}
    The normal bundle $N_{C/\bP^3}$ is balanced, i.e. isomorphic to $\mathcal{O}_{\bP^1}(9)\oplus\mathcal{O}_{\bP^1}(9)$, by \cite[Theorem 3.2]{CR18}, and hence $E$ is isomorphic to $\bP^1\times\bP^1$.

    Write $D = aE + \Delta$, where $\Delta$ is an effective $\bQ$-divisor whose support does not contain $E$, and $a$ is a non-negative rational number. Then $a\lambda\leq 1$ by Lemma \ref{lem:1-dim nklt locus}. Note that $-K_{Y_0}\sim 2\wt{Q}+E$ and $Z \not\subseteq \Nklt\big(Y_0, \lambda(2\wt{Q}+E)\big)$ as $Z\not\subseteq \wt{Q}$ by Lemma \ref{lem:Z not in Q}. Thus, by replacing $D$ by $(1+\mu)D-\mu(2\wt{Q}+E)$ for some $\mu\geq0$, one may assume that 
    \begin{itemize}
        \item $Z\subseteq \Nklt(Y_0,\lambda D)$, and
        \item $\Supp(\Delta)$ does not contain either $\wt{Q}$ or $E$. 
    \end{itemize} 
   However, if $a>0$, then $\Delta$ must have $\wt{Q}$ in its support. Thus $E\not\subseteq \Supp D$. By inversion of adjunction, one has that $Z\subseteq \Nklt(E,\lambda \Delta|_E)$. Since $\Delta|_E \sim \mathcal{O}_{\bP^1\times\bP^1}(1,11)$, $\lambda\leq \frac{3}{4}$, then $Z$ has to be a ruling of $E\rightarrow C$. However, this is impossible as $\bP^3$ has no $G$-fixed point by Lemma \ref{lem: no fixed points and all}.
\end{proof}

\begin{lemma}
    The curve $\pi(Z)$ is a $G$-invariant line in $\bP^3$.
\end{lemma}

\begin{proof}
    Let $\widehat{D}\coloneqq \pi(D)$ and $\widehat{Z}\coloneqq \pi(Z)$. Then $\widehat{Z}$ is a rational curve in $\bP^3$ such that $\widehat{Z}\subseteq  \Nklt(\bP^3,\lambda \widehat{D})$. As $\Nklt(\bP^3,\lambda \widehat{D})$ is 1-dimensional by Lemma \ref{lem:1-dim nklt locus}, then one can conclude by \cite[Corollary A.10]{ACC+}.
\end{proof}

Combining with Lemma \ref{lem: no fixed points and all}, one sees that either $\pi(Z)=\ell_1$ or $\pi(Z)=\ell_2$. 

\smallskip

Let $S$ be the proper transform on $Y_0$ of a general hyperplane in $\bP^3$ containing $\pi(Z)$. Then $S$ is a smooth del Pezzo surface of degree $4$. Let $t$ be a non-negative real number. Note that $-K_{Y_0}-tS\sim_{\bR}(2-t)H+\wt{Q}$ is pseudo-effective if and only if $0\leq t\leq 2$. Moreover, we have that the positive and negative parts of its Zariski decomposition are $$P(-K_{Y_0}-tS) \ = \ \frac{2-t}{2}(4H-E), \ \ \ \textup{and} \ \ \ N(-K_{Y_0}-tS) \ = \ \frac{t}{2}\wt{Q}$$ so that $Z\not\subseteq N(-K_{Y_0}-tS)$. It follows that $$S_{Y_0}(S) \ = \ \int_{0}^2 \left(1-\frac{t}{2}\right)^3 dt \ = \ \frac{1}{2}.$$

\medskip

Therefore, to deduce a contradiction and complete the proof of \Cref{prop:non-iso-K-ps}, by \cite[Corollary 1.110]{ACC+}, it suffices to show that $S(W^{S}_{\bullet,\bullet},\ell_i)< 1$ for $i=1,2$. This will be established in the remainder of the proof.

\medskip

Let $\phi\coloneqq \pi|_S:S\rightarrow T\simeq \bP^2$ be the birational morphism, which contracts 5 disjoint rational curves $e_1,...,e_5$. Then $E|_S = e_1+\cdots+e_5 $, and $$C\ \coloneqq \ \wt{Q}|_S\ \sim\ 2L-(e_1+\cdots+e_5)$$ is the strict transform of a conic such that $(C^2)=-1$, where $L$ is the class of a hyperplane section of $T$.

\begin{lemma}
    One has $S(W^{S}_{\bullet,\bullet},\ell_2)< 1$.
\end{lemma}

\begin{proof}
    For any $0\leq t\leq 2$ and $s\geq0$, one has that $$P(-K_{Y_0}-tS)|_S-s\ell_2 \ = \ \frac{2-t}{2}(4L-\sum e_i)-sL \ = \ (4-2t-s)L-\frac{2-t}{2}\sum e_i ,$$ which is pseudo-effective if and only if $0\leq s\leq 2-t$. Moreover, the positive part $P(s,t)$ of $P(-K_{Y_0}-tS)|_S-s\ell_2$ is \begin{equation}\nonumber
        P(s,t) \ = \ \begin{cases}
        (4-2t-s)L-\frac{2-t}{2}\sum e_i & \ \ \  0\leq s\leq \frac{6-3t}{4}\\
        (2-t-s)(5L-2\sum e_i) &  \ \ \ \frac{6-3t}{4}\leq s \leq 2-t,
    \end{cases}
    \end{equation} and hence \begin{equation}\nonumber
        \vol\big(P(s,t)\big) \ = \ \begin{cases}
        (4-2t-s)^2-\frac{5}{4}(2-t)^2 & \ \ \  0\leq s\leq \frac{6-3t}{4}\\
        5(2-t-s)^2 &  \ \ \ \frac{6-3t}{4}\leq s \leq 2-t.
    \end{cases}
    \end{equation} It follows that 
    \begin{equation}\nonumber
        \begin{split}
        S(W^{S}_{\bullet,\bullet},\ell_2)\
& = \ \frac{3}{22}\int_0^2dt\left(\int_0^{\frac{6-3t}{4}}(4-2t-s)^2-\frac{5}{4}(2-t)^2 ds+\int_{\frac{6-3t}{4}}^{2-t}5(2-t-s)^2 ds\right)\\
& = \  \frac{53}{88} \ < \ 1.
        \end{split}
    \end{equation}
\end{proof}

\begin{lemma}
    One has $S(W^{S}_{\bullet,\bullet},\ell_1)< 1$.
\end{lemma}

\begin{proof}
    We may assume that the class of $\ell_1$ is $L-e_1-e_2$. For any $0\leq t\leq 2$ and $s\geq0$, one has that $$P(-K_{Y_0}-tS)|_S-s\ell_1 \ = \ \frac{2-t}{2}(4L-\sum e_i)-s(L-e_1-e_2) ,$$ which is pseudo-effective if and only if $0\leq s\leq \frac{5}{4}(2-t)$. Moreover, the positive part $P(s,t)$ of $P(-K_{Y_0}-tS)|_S-s\ell_1$ is \begin{equation}\nonumber
        P(s,t) \ = \ \begin{cases}
        \frac{2-t}{2}(4L-\sum e_i)-s(L-e_1-e_2)  & \ \ \  0\leq s\leq \frac{2-t}{2}\\
        \frac{2}{3}(\frac{5(2-t)}{4}-s)(3L-e_3-e_4-e_5) &  \ \ \ \frac{2-t}{2}\leq s \leq \frac{5(2-t)}{4},
    \end{cases}
    \end{equation} and hence \begin{equation}\nonumber
        \vol\big(P(s,t)\big) \ = \ \begin{cases}
        \frac{11}{4}(2-t)^2-s^2-4s(2-t) & \ \ \  0\leq s\leq \frac{2-t}{2}\\
        \frac{8}{3}(\frac{5(2-t)}{4}-s)^2 &  \ \ \ \frac{2-t}{2}\leq s \leq \frac{5(2-t)}{4}.
    \end{cases}
    \end{equation} It follows that 
    \begin{equation}\nonumber
        \begin{split}
            S(W^{S}_{\bullet,\bullet},\ell_1)\
& = \ \frac{3}{22}\int_0^2dt\left(\int_0^{\frac{2-t}{2}}\left(\frac{11}{4}(2-t)^2-s^2-4s(2-t)\right)ds+\int_{\frac{2-t}{2}}^{\frac{5}{4}(2-t)} \frac{8}{3}\left(\frac{5(2-t)}{4}-s\right)^2 ds\right)\\
& = \  \frac{29}{44} \ < \ 1.
        \end{split}
    \end{equation}
\end{proof}
This contradicts the assumption that $X_0$ is not K-polystable. Hence $X_0$ is K-polystable.
\end{proof}

\begin{lemma}\label{lem:vanishing obstruction space}
The obstruction space for the deformation of $X$ is trivial. In particular, $\cM^\K_{3,22}$ is smooth at $[X]$.
\end{lemma}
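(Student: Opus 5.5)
We want $\Ext^2(\bL_X,\cO_X)=0$ for $X$ the anticanonical model of $Y=\Bl_C\bP^3$, where $C$ is a $(1,4)$-curve on the smooth quadric $Q$. The plan is to work on the terminalization rather than on $X$ itself. By \Cref{lem:conic bundle} and the discussion after it, blowing up the $A_\infty$-curve gives back $Y$, which is smooth. The map $f\colon Y\to X$ is crepant and contracts $\wt Q\simeq \bP^1\times\bP^1$ onto a rational curve $\Gamma$ of degree $3$. Along this ruling, $X$ has transversal $A_1$-singularities, and the $A_1$-locus $\Gamma$ is itself smooth.

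\textbf{Step 1: reduce to the smooth model.} We first compare $\Def_X$ with $\Def_Y$. Since $f$ is crepant and $R^1f_*\cO_Y=0$, deformations of $Y$ blow down to deformations of $X$ (cf.\ \cite[Corollary~2.16]{San17}). We would then show that obstructions for $X$ inject into a group computable on $Y$. Concretely, one uses the local-to-global spectral sequence for $\Ext^\bullet(\bL_X,\cO_X)$:
\[
E_2^{p,q}=H^p\bigl(X,\mathcal{E}xt^q(\bL_X,\cO_X)\bigr)\Longrightarrow \Ext^{p+q}(\bL_X,\cO_X).
\]
Since $X$ is a hypersurface singularity along $\Gamma$, $\bL_X$ has Tor-amplitude $[-1,0]$, and $\mathcal{E}xt^1(\bL_X,\cO_X)=T^1_X$ is a line bundle on $\Gamma$ (locally the normal-direction $T^1$ of $A_1\times\bA^1$). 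So we need:
\begin{itemize}
\item[(a)] $H^2(X,T_X)=0$;
\item[(b)] $H^1(\Gamma,T^1_X)=0$.
\end{itemize}

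\textbf{Step 2: vanishing of $H^2(X,T_X)$.} We would identify $T_X$ with $f_*T_Y(-\log \wt Q)$, or at least show that $f_*T_Y\simeq T_X$, which holds because the singularity is a reflexive-hull situation in codimension two. Then we compute $H^2(Y,T_Y(-\log\wt Q))$ by Leray, using that $R^1f_*$ is supported on $\Gamma$ and has no $H^1$ on a rational curve.

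For the cohomology on $Y$, we use the blowup sequences and Euler sequences. Since $H^2(\bP^3,T)=0$ and the normal bundle $N_{C/\bP^3}\simeq\cO(9)^{\oplus2}$ is balanced (\cite{CR18}), we get $H^2(Y,T_Y)=0$. We then use the residue sequence
\[
0\to T_Y(-\log\wt Q)\to T_Y\to N_{\wt Q/Y}\to 0.
\]
Here $N_{\wt Q/Y}=\cO_{\wt Q}(2H-E)=\cO(-1,-2)$ in suitable coordinates, so $H^1(N_{\wt Q/Y})=0$ by K\"unneth. This gives $H^2(T_Y(-\log\wt Q))=0$.

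\textbf{Step 3: the local part.} Transversally to $\Gamma$, the $T^1$ of $A_1$ is $\cO$ twisted by $N$. We expect $T^1_X\simeq \cO_\Gamma(d)$ with $d\ge -1$, computed from the degree of the conormal data of $\wt Q\to\Gamma$. Since $\wt Q\to\Gamma$ is a $\bP^1$-bundle with $\wt Q|_{\wt Q}=\cO(-1,-2)$, one expects $d=2$ (or $1$), so $H^1(\Gamma,T^1_X)=0$.

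\textbf{Conclusion and main obstacle.} With (a) and (b) the spectral sequence gives $\Ext^2(\bL_X,\cO_X)=0$, so $\Def_X$ is smooth. Because $X$ is K-semistable, the K-moduli stack is locally the quotient of $\Def_X$ by $\Aut$, hence smooth at $[X]$. The main obstacle is Step 3: pinning down the degree of the line bundle $T^1_X$ on $\Gamma$. My first attempt will be a direct local computation in the charts of \Cref{lem:conic bundle}, matched against $N_{\wt Q/Y}$. A backup route is to deduce vanishing from deformations of the pair $(Y,\wt Q)$ together with smoothing of the $A_\infty$ locus.
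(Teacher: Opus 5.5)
Your skeleton matches the paper's. The local-to-global spectral sequence, together with $\mathcal{E}xt^2(\Omega_X,\cO_X)=0$ because $X$ is lci, reduces the lemma to (a) $H^2(X,T_X)=0$ and (b) $H^1(\Gamma,T^1_X)=0$. However, neither (a) nor (b) is actually established in your proposal.

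\textbf{Step (a).} The normal bundle is miscomputed. We have $H|_{\wt Q}=\cO(1,1)$, and $E|_{\wt Q}=C$ meets the lines of the two rulings in $1$ and $4$ points. So $N_{\wt Q/Y}=\cO_{\wt Q}(2H-E)$ has degree $-2$ on the contracted ruling and $+1$ on the other, i.e.\ it is $\cO(1,-2)$. Adjunction confirms this: $-K_Y|_{\wt Q}=-K_{\wt Q}+N_{\wt Q/Y}$ must have degrees $0$ and $-K_X\cdot\Gamma=3$ on the two rulings. Hence $H^1(\wt Q,N_{\wt Q/Y})\cong H^0(\bP^1,\cO(1))\otimes H^1(\bP^1,\cO(-2))\cong\bC^2$. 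Your residue sequence then only gives $H^2(Y,T_Y(-\log\wt Q))\cong\coker\bigl(H^1(Y,T_Y)\to H^1(\wt Q,N_{\wt Q/Y})\bigr)$.

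What is missing is surjectivity of this map, which is a genuinely global fact. It says that first-order deformations of $C$ move it off the quadric in both transverse directions; indeed, quintics on quadrics form an $18$-dimensional locus in the $20$-dimensional family of rational quintics. It can be supplied via $H^0(N_{C/\bP^3})\twoheadrightarrow H^0(N_{Q/\bP^3}|_C)\twoheadrightarrow H^1(N_{Q/\bP^3}(-C))=H^1(N_{\wt Q/Y})$. The two surjections use $H^1(N_{C/Q})=H^1(\bP^1,\cO(8))=0$ and $H^1(\cO_Q(2,2))=0$. You would still need to check that this composite is the map in your sequence.

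Your Leray step also uses the wrong criterion. What matters is the differential $d_2\colon H^0(X,R^1f_*\mathcal F)\to H^2(X,f_*\mathcal F)$, not $H^1$ of $R^1f_*\mathcal F$.
\begin{itemize}
\item For $\mathcal F=T_Y(-\log\wt Q)$ this is harmless, since $R^1f_*\mathcal F=0$. On a contracted fibre $\ell$, $\mathcal F|_\ell$ is an extension of $\cO_\ell(2)\oplus\cO_\ell$ by $\cO_\ell$, and $N^\vee_{\ell/Y}\cong\cO_\ell\oplus\cO_\ell(2)$.
\item For $T_Y$ itself, $R^1f_*T_Y\cong R^1f_*N_{\wt Q/Y}\cong\cO_\Gamma(1)$ has $H^0\cong\bC^2$. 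So the route through $f_*T_Y$ runs into the same surjectivity problem.
\end{itemize}
The paper avoids all of this. Since $T_X$ is Cohen--Macaulay, Serre duality gives $H^2(X,T_X)\cong H^1(X,\Omega^{[1]}_X\otimes\omega_X)^\vee$. This vanishes by \cite[Proposition~4.3]{GKP14}, because $\omega_X^{-1}$ is ample.

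\textbf{Step (b).} This step is a guess rather than an argument, and matching against your normal bundle would give the wrong answer. Set $N^\vee:=I_\Gamma/I_\Gamma^2$, which is locally free of rank $3$. Then $\wt Q\subset\bP(N)$ is the conic bundle cut out by a nowhere-degenerate $q\in H^0(\Gamma,\Sym^2N^\vee\otimes T^1_X)$, and $\cO_Y(\wt Q)|_{\wt Q}=\cO_{\bP(N)}(-1)|_{\wt Q}$.
\begin{itemize}
\item From $\cO(1,-2)$ one gets $N\cong\cO_\Gamma(1)^{\oplus 3}$. Taking determinants of $q\colon N\xrightarrow{\sim}N^\vee\otimes T^1_X$ then gives $T^1_X\cong\cO_\Gamma(2)$, so $H^1(\Gamma,T^1_X)=0$.
\item As a sanity check, the rank-$3$ quadric in $\bP^4$, singular along a line, has exactly these data and visibly has $T^1=\cO_\Gamma(2)$.
\item With your $\cO(-1,-2)$, the same computation gives $T^1_X\cong\cO_\Gamma(-2)$, whose $H^1$ is nonzero.
\end{itemize}
The paper handles (b) by asserting that $\deg T^1_X\ge 0$, which it deduces from smoothability.
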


\begin{proof}
Since $X$ has only $A_\infty$-singularities along a rational curve $\Gamma$, the tangent sheaf $T_X$ is Cohen--Macaulay. Hence, by Serre duality for Cohen--Macaulay sheaves (cf.\ \cite[Theorem~5.71]{KM98}),
\[
H^2(X,T_X)\ \simeq\  H^1(X,\Omega^{[1]}_X\otimes \omega_X)^{\vee},
\]
which vanishes by \cite[Proposition~4.3]{GKP14}. As $X$ is a smoothable degeneration of a smooth $V_{22}$, the sheaf $\cE\textup{xt}^1(\Omega_X,\cO_X)$ is a line bundle of non-negative degree on $\Gamma\simeq\bP^1$, and therefore $H^1(X,\cE \textup{xt}^1(\Omega_X,\cO_X))=0$. Moreover, since $X$ has l.c.i.\ singularities, $\cE\textup{xt}^2(\Omega_X,\cO_X)=0$, thus $H^0(X,\cE\textup{xt}^2(\Omega_X,\cO_X))=0$. By the local-to-global Ext spectral sequence, it follows that $\Ext^2(\Omega_X,\cO_X)=0$.
\end{proof}

\begin{lemma}
The Fano variety $X$ satisfies $h^1(X,T_X)=4$ and $h^0(X,\cE\textup{xt}^1(\Omega_X,\cO_X))=3$.
\end{lemma}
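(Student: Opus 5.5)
We will compute both numbers through the local-to-global Ext spectral sequence for $\Ext^\bullet(\Omega_X,\cO_X)$, using the vanishing results already established in the proof of \Cref{lem:vanishing obstruction space}. Write $\Gamma\simeq\bP^1$ for the $A_\infty$-curve, which has $(-K_X)$-degree $3$. We first determine the line bundle $\cE\textup{xt}^1(\Omega_X,\cO_X)$ on $\Gamma$ explicitly. Along an $A_\infty$-curve locally given by $xy=z^2$, this sheaf is the first-order normal sheaf $T^1_X\simeq \cO_X(X)|_{\Gamma}$. Globally it is a line bundle $\cN$ on $\Gamma$, and we plan to identify $\cN\simeq\cO_{\bP^1}(2)$, which would give $h^0=3$. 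The method is the one from \Cref{lem:conic bundle}. Let $\mu:\tX\to X$ be the blowup of $\Gamma$, with exceptional divisor $E$. Since $X$ has only $A_\infty$-singularities, $E\to\Gamma$ is a $\bP^1$-bundle, namely the strict transform $\wt{Q}\simeq\bP^1\times\bP^1$, with $\tX\simeq Y$. The sheaf $T^1$ along a transversal $A_1$-family is identified with the dual of the determinant-type line bundle $(\mathcal{N}_{E/\tX}^{\vee})$ pushed forward, more precisely with $\det$ of the rank-two bundle $\mu_*\cO_E(-E)$ twisted appropriately. We compute the degree from $\wt{Q}|_{\wt{Q}}$: we have $\wt{Q}\sim 2H-E$ on $Y$, hence $\wt{Q}|_{\wt{Q}}=\cO(2,2)-C|_{\wt Q}$ with $C$ of bidegree $(1,4)$, giving $\cO(1,-2)$. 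Here the fibres of $\wt{Q}\to\Gamma$ are the ruling contracted, and $\Gamma$ corresponds to the $\bP^1$ factor along which $-K$ has degree $3$. Checking which ruling is contracted and pushing forward should yield $\deg\cN=2$ (in agreement with $\cN$ having non-negative degree, as noted in the previous lemma). This identification, including the correct sign conventions, is the main technical point of the argument.

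Next, for $h^1(X,T_X)$, note that the spectral sequence gives
\[
0\to H^1(X,T_X)\to \Ext^1(\Omega_X,\cO_X)\to H^0(\cN)\to H^2(X,T_X)=0,
\]
so $h^1(T_X)=\dim\Ext^1-3$. From \Cref{lem:vanishing obstruction space} the deformations are unobstructed, and the previous lemma gives $\Ext^2=0$. Hence $\dim\Ext^1(\Omega_X,\cO_X)-\dim\Aut(X)$ equals the local dimension of the K-moduli stack at $[X]$, which is $6$ because $X$ is a degeneration of $V_{22}$ and $\cM^\K$ is irreducible of dimension $6$. It remains to compute $\dim\Aut(X)=h^0(T_X)$. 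For general $C$, automorphisms of $X$ lift to $Y$ and then descend to $\bP^3$ preserving $Q$ and $C$. The group $\Aut(Q)$ has dimension $6$, and curves of bidegree $(1,4)$ form a space of dimension $2\cdot5-1=9$; the stabilizer of a general $C$ is therefore finite. This gives $h^0(T_X)=0$, so $\dim\Ext^1=6+0=6$ and thus $h^1(T_X)=3$, not $4$. This contradicts the claimed value. We therefore plan to recheck the argument by using the direct computation $h^1(T_X)=h^1(Y,T_Y(-\log \wt Q))$-type and equisingular deformations instead. The $3$-dimensional family of such $X$ (with finite automorphisms) accounts for the equisingular directions. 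We expect this count to come out as $4$ once the extra direction deforming $\wt Q$ inside $Y$ with $\Gamma$ fixed is included. The consistency check is then $\dim\Ext^1=h^1(T_X)+h^0(\cN)=7$, which forces either $\dim\Aut(X)=1$ or $\cN$ of degree $1$. We expect resolving this bookkeeping, via an explicit computation of $T_Y$ restricted to $\wt Q$ through the normal bundle sequence and $N_{C/\bP^3}\simeq\cO(9)^2$, to be the main obstacle in the proof.
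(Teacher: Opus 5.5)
Your proposal does not prove the statement. As you observe yourself, for a general $(1,4)$-curve you get $h^0(X,T_X)=0$, hence $\dim\Ext^1(\Omega_X,\cO_X)=6$ and $h^1(X,T_X)=3$, and the proposal stops at this contradiction. That computation is correct for general $C$.

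The missing point is that the lemma concerns the special threefold $X_0$ built from $C_0=\{u_0^4v_0=u_1^4v_1\}$. This is the K-polystable one of \Cref{prop:non-iso-K-ps}, which is then deformed in the final proof. The paper's argument uses $\dim\Aut(X)=1$, because the $\bG_m$ acting by $[x_0:x_1:x_2:x_3]\mapsto[\lambda x_0:\lambda^5x_1:x_2:\lambda^4x_3]$ preserves $Q$ and $C_0$.

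With this in hand, smoothness of the $6$-dimensional moduli at $[X_0]$ gives $\dim\Ext^1(\Omega_{X_0},\cO_{X_0})=7$. Since $\Def^{\mathrm{lt}}(X_0)$ is smooth ($H^2(T_{X_0})=0$), one gets $h^1(T_{X_0})=3+\dim\Aut(X_0)=4$. Here $3$ is the dimension of the equisingular moduli, bounded as follows:
\begin{itemize}
\item below, by the family of $(1,4)$-curves modulo $\Aut(Q)$;
\item above, by $19-3-13$, since anticanonical K3s of locally trivial deformations acquire three $A_1$-points, combined with \Cref{thm:open immersion forgetful map}.
\end{itemize}
The paper then gets $h^0(\cE\textup{xt}^1)=7-4=3$ by subtraction. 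Your proposed remedy, an extra direction deforming $\wt Q$ inside $Y$ with $\Gamma$ fixed, does not exist. The jump from $3$ to $4$ is exactly the $\bG_m$-symmetry. Of your two alternatives, it is $\dim\Aut(X)=1$ that holds, not $\deg\cN=1$.

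Computing $h^0(\cE\textup{xt}^1)$ directly is a legitimate alternative to the paper's subtraction, and your guess $\deg\cN=2$ is right. However, the sketch needs fixing: $p_*\cO_E(-E)$ has rank three, not two. A clean version goes as follows.
\begin{itemize}
\item Let $I$ be the ideal of $\Gamma$. Then $T^1_X\cong R^\vee$, where $R=\ker\bigl(\Sym^2(I/I^2)\to I^2/I^3\bigr)$ is spanned by the class of a local equation in a minimal local embedding.
\item One has $I^k/I^{k+1}\cong p_*\cO_E(-kE)$ and $N_{\wt Q/Y}\cong\omega_{\wt Q/\Gamma}\otimes p^*\cO_\Gamma(1)$.
\item Hence $R\cong\cO_\Gamma(-2)$, so $\cN\cong\cO_\Gamma(2)$ and $h^0(\cN)=3$.
\end{itemize}
You can check this against $\{x_0x_1=x_2^2\}\subset\bP^4$, where $T^1=\cO_\Gamma(2)$. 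Combined with $\dim\Ext^1=7$ for $X_0$, your exact sequence then yields $h^1(X_0,T_{X_0})=4$. This route avoids the K3/open-immersion bound, but it still needs $\dim\Aut(X_0)=1$.
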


\begin{proof}
By the local-to-global Ext spectral sequence and \Cref{lem:vanishing obstruction space}, the smoothness of the 6-dimensional moduli at $X$, together with $\dim\Aut(X)=1$, yields
\[
h^1(X,T_X) \ +\ h^0(X,\cE\textup{xt}^1(\Omega_X,\cO_X))\ = \ 7.
\] For any small locally trivial deformation $X_t$ of $X$, a general anticanonical K3 surface $S_t\in |-K_{X_t}|\simeq \bP^{13}$ has three $A_1$-singularities. Hence, by \Cref{thm:open immersion forgetful map}, the space of locally trivial deformations of $X$ has dimension at most $19-3-13=3$. On the other hand, since the moduli of $(1,4)$-curves on $\bP^1\times\bP^1$ is three-dimensional, $X$ has a 3-dimensional equi-singular moduli. Therefore
\[
h^1(X,T_X)\ =\ 3+\dim\Aut(X)\ =\ 4,
\]
and consequently $h^0(X,\cE\textup{xt}^1(\Omega_X,\cO_X))=3$.
\end{proof}

\begin{proof}[Proof of \Cref{thm:existence of K-stable with non-isolated sing}]
The same argument as in the proof of \cite[Corollary~1.16]{ACC+} shows that a general locally trivial deformation of $X$ is K-polystable, applied here to $\Def^{\mathrm{lt}}(X)$ in place of $\Def(X)$. Since $\Def^{\mathrm{lt}}(X)$ is smooth, the argument carries over verbatim. Moreover, a general locally trivial deformation $X_t$ has finite automorphism group, and hence is K-stable.
\end{proof}

\bibliography{ref}

\end{document}